\newcommand{\wuhao}{\fontsize{10.5pt}{\baselineskip}\selectfont}
\newtheorem{theorem}{Theorem}[section]
\newtheorem{proposition}[theorem]{Proposition}
\newtheorem{lemma}[theorem]{Lemma}
\newtheorem{corollary}[theorem]{Corollary}
\theoremstyle{definition}
\newtheorem{definition}[theorem]{Definition}
\newtheorem{remark}[theorem]{Remark}
\numberwithin{equation}{section}
\begin{document}

\title [The SLLBar equation with jump noise]{\wuhao Well-posedness and large deviations of  L\'{e}vy-driven Marcus stochastic Landau-Lifshitz-Baryakhtar equation}
\author{Fan Xu}
\address{School of Mathematics and Statistics, Hubei Key Laboratory of Engineering Modeling  and Scientific Computing, Huazhong University of Science and Technology,  Wuhan 430074, Hubei, P.R. China.}
\email{d202280019@hust.edu.cn (F. Xu)}

\author{Bin Liu}
\address{School of Mathematics and Statistics, Hubei Key Laboratory of Engineering Modeling  and Scientific Computing, Huazhong University of Science and Technology,  Wuhan 430074, Hubei, P.R. China.}
\email{binliu@mail.hust.edu.cn (B. Liu)}

\author{Lei Zhang}
\address{School of Mathematics and Statistics, Hubei Key Laboratory of Engineering Modeling  and Scientific Computing, Huazhong University of Science and Technology,  Wuhan 430074, Hubei, P.R. China.}
\email{lei\_zhang@hust.edu.cn (L. Zhang)}

\keywords{Stochastic Landau-Lifshitz-Baryakhtar equation; Well-posedness; Large deviations principle; L\'{e}vy noise.}


\begin{abstract}
This paper considers the stochastic Landau-Lifshitz-Baryakhtar (SLLBar) equation with pure jump noise in Marcus canonical form, which describes the dynamics of magnetic spin field in a ferromagnet at elevated temperatures with the effective field $\mathbf{H}_{\textrm{eff}}$ influenced by external random noise. Under the natural assumption that the magnetic body $\mathcal{O}\subset\mathbb{R}^d$ ($d=1,2,3$) is  bounded with smooth boundary, we shall prove that the initial-boundary value problem of SLLBar equation possesses a unique global probabilistically strong and analytically weak solution with initial data in the energy space $\mathbb{H}^1(\mathcal{O})$.
Then by employing the weak convergence method, we proceed to establish a  Freidlin-Wentzell type large deviation principle for pathwise solutions to the SLLBar equation.
\end{abstract}

\maketitle
\section{Introduction}\label{sec1}

The theory of ferromagnetism began with Weiss's work in 1907 \cite{brown1963micromagnetics,weiss1907hypothese}. Later, in 1935, Landau and Lifshitz \cite{landau1935theory} developed the dispersive theory of magnetization for ferromagnets and introduced the Landau-Lifshitz (LL) equation for ferromagnetic spin chains. Gilbert \cite{gilbert1955lagrangian} further advanced the theory in 1955, proposing the Landau-Lifshitz-Gilbert (LLG) equation to describe the evolution of the spin magnetic moment in magnetic systems, particularly addressing the precession and dissipation behavior under an external magnetic field. In 1997, Garanin \cite{garanin1997fokker} presented a thermodynamically consistent approach and derived the Landau-Lifshitz-Bloch (LLB) equation. In mathematics, the existence, uniqueness, and regularity of solutions to the LLG and LLB equations have been extensively discussed during the past decades, see for example \cite{alouges1992global,le2016weak,feischl2017existence,carbou2001regular,carbou2001regular1} and the reference therein. To explain certain experimental data and microscopic calculations, such as the non-local damping observed in magnetic metals and crystals or the higher-than-expected spin wave attenuation of short-wavelength magnons, Baryakhtar \cite{baryakhtar1984phenomenological,baryakhtar1997soliton,baryakhtar2013phenomenological} extended the LLG and LLB equations, introducing the so-called Landau-Lifshitz-Baryakhtar (LLBar) equation \cite{baryakhtar1984phenomenological,baryakhtar1997soliton,baryakhtar2013phenomenological}, which is a fourth-order nonlinear parabolic equation, and the dynamic behavior of solutions is described by
\begin{equation}\label{sys0}
\left\{
\begin{aligned}
\frac{\textrm{d}\mathbf{u}}{\textrm{d}t}&=\lambda_r\mathbf{H}_{\textrm{eff}}-\lambda_e\Delta\mathbf{H}_{\textrm{eff}}-\gamma\mathbf{u}\times \mathbf{H}_{\textrm{eff}},\\
\mathbf{H}_{\textrm{eff}}&=\Delta \mathbf{u}+\frac{1}{2\chi}(1-|\mathbf{u}|^2)\mathbf{u},
\end{aligned}
\right.
\end{equation}
where the unknown quantity $\mathbf{u}(t,x)\in\mathbb{R}^3$ denotes the magnetization vector of a magnetic body $\mathcal{O}\subset\mathbb{R}^d$, $d=1,2,3$. The positive constants $\lambda_r$, $\lambda_e$, and $\gamma$ are the relativistic damping constant, the exchange damping constant, and the electron gyromagnetic ratio, respectively. The positive constant $\chi$ is the magnetic susceptibility of the material. Without loss of generality, we assume that $\chi=\frac{1}{4}$, $\lambda_r=\lambda_e=\gamma=1$ in this paper. The notation $\mathbf{H}_{\textrm{eff}}$ denotes the effective field, which consists of the external magnetic field, the demagnetizing field and some quantum mechanical effects, etc.  For more details on the background of the LLG,  LLB and LLBar equations, we refer to \cite{feischl2017existence,guo2008landau,mayergoyz2009nonlinear,le2016weak,atxitia2016fundamentals,vogler2014landau,dvornik2013micromagnetic,dvornik2014thermodynamically,soenjaya2023global}.

Physical speaking, due to the inevitable stochastic perturbation stemming from the surrounding environment, it will be natural and important to studied the evolution of solutions to the associated stochastic counterparts. Indeed,  analyzing the noise-induced transitions was started by N\'{e}el \cite{neel1946bases}, and subsequent advancements were made in \cite{brown1963thermal,kamppeter1999stochastic}. In \cite{brzezniak2013weak,brzezniak2016weak}, Brze\'{z}niak and Goldys et al. successfully introduced proper Gaussian-type random noises into the LLG equation and studied quantitative property of solutions to the associated stochastic partial differential equation. Later, Brze\'{z}niak and Manna \cite{brzezniak2019weak} further discussed the stochastic LLG equation driven by L\'{e}vy noise. Recently, the large deviation principle for the LLG equation are established \cite{brzezniak2017large,gussetti2023pathwise}. Regarding references for the stochastic LLB equation, we refer to \cite{jiang2019martingale} for the existence and regularity of solutions, \cite{brzezniak2020existence} for the existence of invariant measures, and \cite{qiu2020asymptotic} for the establishment of large deviation principle.

Coming back to \eqref{sys0}, the effective field  $\mathbf{H}_{\textrm{eff}}$ will be inevitably influenced by uncertainties in practical applications, these factors can introduce randomness into the effective field, which in turn affects the evolution of the spin magnetic moment. One of most inspiration example stems from the theory of ferromagnetism, which says that describing the phase transitions between different equilibrium states induced by thermal fluctuations of the field $\mathbf{H}_{\textrm{eff}}$ is an important problem. Therefore, in order to
get a better understanding of the dynamics of the magnetic spin field in real world, it will be interesting to incorporate the random fluctuations of the effective field $\mathbf{H}_{\textrm{eff}}$ into the model \eqref{sys0} and then explore what will happen to the solutions due to the appearance of noise. By employing the similar ideas in \cite{brzezniak2013weak,brzezniak2019weak,jiang2019martingale}, one of the effective ways is to perturb the effective field $\mathbf{H}_{\textrm{eff}}$ by using an external stochastic forcing, that is, to make the substituting
$\mathbf{H}_{\textrm{eff}}\mapsto \mathbf{H}_{\textrm{eff}}+\xi(t)$ in $\eqref{sys0}$,
where $\xi(t)$ denotes a noise with respect to time variable. When there is no noise, i.e., $\xi(t)\equiv0$, the mathematical analysis for the LLBar equation started from the authors Soenjaya and Tran \cite{soenjaya2023global}, where they established the global well-posedness of weak (strong) solutions for the equation. Later, the authors of present paper \cite{xu2024wellposednessinvariantmeasuresstochastically} considered the stochastic LLBar equation driven by Gaussian noise, proving the existence and uniquenss of (local) global pathwise weak solutions and the existence of invariant measures. After finishing this work, we have learned a new result by Goldys et al. \cite{goldys2024stochasticlandaulifshitzbaryakhtarequationglobal}, where the authors demonstrated the existence and uniquenss of global pathwise strong solutions as well as the existence of invariant measures for this equation by establishing higher-order energy estimates.

In many physical systems, the state of the system can experience instantaneous and significant changes. The Marcus integral, utilizing the Marcus mapping, accounts for the second-order effects of jumps, thereby accurately capturing these jumps and considering their impact on the system. Given this physical significance, we would like to explore the effect of pure jump noise on the magnetization vector $\mathbf{u}(t,x)$. More precisely, let us consider the initial-boundary value problem (IBVP) for the SLLBar equation with pure jump noise in the Marcus canonical sense \cite{marcus1981modeling, kunita2004stochastic}:
\begin{equation}\label{sys1}
\left\{
\begin{aligned}
\mathrm{d}\mathbf{u}&= \left[\mathbf{H}_{\textrm{eff}}-\Delta \mathbf{H}_{\textrm{eff}}-\mathbf{u}\times \mathbf{H}_{\textrm{eff}}\right]\,\mathrm{d}t+\int_{\mathbb{B}}(\mathbf{u}\times \mathbf{h}+\mathbf{g})\diamond\,\mathrm{d}L(t),&&\textrm{in}~\mathbb{R}_+\times\mathcal{O},\\
\mathbf{H}_{\textrm{eff}}&=\Delta \mathbf{u}+2(1-|\mathbf{u}|^2)\mathbf{u},&&\textrm{in}~\mathbb{R}_+\times\mathcal{O},\\
\frac{\partial\mathbf{u}}{\partial\mathbf{n}}&=\frac{\partial\Delta \mathbf{u}}{\partial\mathbf{n}}=0,&&\textrm{on}~\mathbb{R}_+\times\partial\mathcal{O},\\
\mathbf{u}(0)&=\mathbf{u}_0,&&\textrm{in}~\mathcal{O}.
\end{aligned}
\right.
\end{equation}
In \eqref{sys1}, the magnetic body $\mathcal{O}\subset\mathbb{R}^d$, $d=1,2,3$, is a bounded domain with smooth boundary $\partial\mathcal{O}$, and $\mathbf{n}$ denotes the exterior unit normal vector on the boundary. Let $\mathbb{B}$ be a unit ball in $\mathbb{R}$ (excluding the center), the L\'{e}vy noise  $\{L(t)\}_{t\geq0}$ takes the form of
\begin{equation*}
\begin{split}
L(t)=\int_0^t\int_{\mathbb{B}}l\tilde{\eta}(\mathrm{d}s,\mathrm{d}l),\quad \textrm{for all}~t\geq0,
\end{split}
\end{equation*}
which is defined on a fixed probability space $\pi:=\left(\Omega,\mathcal{F},\mathbb{P}\right)$ with filtration $\mathbb{F}:=\{\mathcal{F}_t\}_{t\geq0}$.  Here $\tilde{\eta}=\eta-\textrm{Leb}\otimes\nu$ is the compensated time homogeneous Poisson random measure of $\eta$. The time-homogeneous Poisson random measure is denoted by $\eta$, and the associate intensity measure is given by $\textrm{Leb}\otimes\nu$ such that $\textrm{supp}(\nu)\subset B(0,1)/\{0\}\subset \mathbb{R}$. Moreover, $\mathbf{h}$ and $\mathbf{g}$ are space-dependent functions with suitable regularization conditions. We refer to section \ref{sec2} for more details about the definition of the Marcus integration with respect to $\diamond\,\mathrm{d}L(t)$. 

To the best of our knowledge, few results are available in regard to the mathematical analysis for the LLBar equation perturbed by pure jump noise, which arises naturally when one takes consideration of the ambient noises that influence the system intermittently. The  \emph{main purpose} of this paper is devoted to address the IBVP \eqref{sys1} in two folds:  First, we are going to investigate the existence and uniqueness of global pathwise solutions to \eqref{sys1}  under proper assumptions; Second, based on the previous well-posedness result, we shall further establish a Freidlin-Wentzell-type large deviation principle (LDP) for the pathwise solution.

The first main result can now be stated by the following theorem.

\begin{theorem}\label{the1} Assume that the initial data $\mathbf{u}_0\in\mathbb{H}^1$, the functions $
\mathbf{h}\in\mathbb{W}^{1,\infty}$ and $\mathbf{g}\in\mathbb{H}^1$. Then, the IBVP \eqref{sys1} has a unique global pathwise weak solution $\mathbf{u} $, such that
\begin{enumerate}
\item [$\bullet$] $\mathbf{u} $ is a c\`{a}dl\`{a}g $ \mathbb{F}$-predictable process in
$
L^p\left(\Omega;\mathbb{D}(0,T;\mathbb{H}^1)\cap L^2(0,T;\mathbb{H}^3)\right)$, $ p\geq1$;

\item [$\bullet$] there holds $\mathbb{P}$-a.s.
\begin{equation}\label{1def2}
\begin{split}
(\mathbf{u}(t),\phi)_{\mathbb{L}^2}&=(\mathbf{u}_0,\phi)_{\mathbb{L}^2}+\int_0^{t}\left(\nabla\mathbf{u},\nabla\phi\right)_{\mathbb{L}^2}\,\mathrm{d}s+\int_0^{t}\left(\nabla\Delta\mathbf{u},\nabla\phi\right)_{\mathbb{L}^2}\,\mathrm{d}s\\
&+2\int_0^{t}\left((1-|\mathbf{u}|^2)\mathbf{u},\phi\right)_{\mathbb{L}^2}\,\mathrm{d}s+\int_0^{t}\left(\mathbf{u}\times\nabla\mathbf{u},\nabla\phi\right)_{\mathbb{L}^2}\,\mathrm{d}s\\
&-2\int_0^{t}\left(\nabla(|\mathbf{u}|^2\mathbf{u}),\nabla\phi\right)_{\mathbb{L}^2}\,\mathrm{d}s+\int_0^t\int_{B}\left((\mathbf{u}\times \mathbf{h}+\mathbf{g})\diamond\,\mathrm{d}L(s),\phi\right)_{\mathbb{L}^2},
\end{split}
\end{equation}
for any $t\in[0,T]$ and $\phi\in \mathbb{H}^1$.
\end{enumerate}
\end{theorem}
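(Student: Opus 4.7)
The strategy is the classical scheme adapted to pure jump noise: Galerkin approximation, uniform a priori energy estimates, tightness in a Skorohod-type path space, identification of the limit as a martingale solution, and finally pathwise uniqueness combined with a Yamada--Watanabe-type argument to upgrade to a probabilistically strong solution. The key structural facts I would exploit are the cancellation $(\mathbf{u}\times\mathbf{H}_{\textrm{eff}},\mathbf{H}_{\textrm{eff}})_{\mathbb{L}^2}=0$, which kills the gyromagnetic term in the energy identity, and the affine form in $\mathbf{u}$ of $\sigma(\mathbf{u}):=\mathbf{u}\times\mathbf{h}+\mathbf{g}$, which makes the Marcus flow $\Phi_l$ an explicit map and its It\^{o} correction $\int_{\mathbb{B}}[\Phi_l(\mathbf{u})-\mathbf{u}-l\sigma(\mathbf{u})]\nu(\mathrm{d}l)$ controllable under the hypotheses $\mathbf{h}\in\mathbb{W}^{1,\infty}$ and $\int_{\mathbb{B}}l^2\nu(\mathrm{d}l)<\infty$.

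I would first convert the Marcus integral to It\^{o} form using this explicit flow, then introduce a Galerkin scheme based on the eigenfunctions of $-\Delta$ compatible with the boundary conditions $\partial_{\mathbf{n}}\mathbf{u}=\partial_{\mathbf{n}}\Delta\mathbf{u}=0$. The projected equations are finite-dimensional Poisson-driven SDEs with locally Lipschitz coefficients. A priori estimates are obtained by applying the jump It\^{o} formula to $\|\mathbf{u}^n\|_{\mathbb{L}^2}^2$ and to the energy functional $\mathcal{E}(\mathbf{u}^n)=\tfrac{1}{2}\|\nabla\mathbf{u}^n\|_{\mathbb{L}^2}^2+\tfrac{1}{4}\|1-|\mathbf{u}^n|^2\|_{\mathbb{L}^2}^2$, exploiting the above cancellation together with the Burkholder--Davis--Gundy inequality for jump martingales. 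This yields bounds in $L^p(\Omega;L^\infty(0,T;\mathbb{H}^1)\cap L^2(0,T;\mathbb{H}^3))$ uniformly in the Galerkin index, for every $p\geq 1$.

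Tightness of the Galerkin laws on $\mathbb{D}(0,T;(\mathbb{H}^1)_w)\cap L^2(0,T;\mathbb{H}^2)$ I would obtain via an Aldous-type criterion on the stopped increments. The Skorohod--Jakubowski representation theorem then provides almost surely convergent copies on a new probability space, allowing passage to the limit in each term of the weak formulation \eqref{1def2}; the compactness of $\mathbb{H}^2\hookrightarrow\mathbb{H}^1$ takes care of the quartic term $\nabla(|\mathbf{u}|^2\mathbf{u})$ and the cross-product $\mathbf{u}\times\nabla\mathbf{u}$, while Vitali's theorem identifies the stochastic integral in the limit. Pathwise uniqueness is then proved by an It\^{o} computation on $\|\mathbf{u}_1-\mathbf{u}_2\|_{\mathbb{L}^2}^2$, absorbing the difference of the quartic terms by the fourth-order dissipation and Gagliardo--Nirenberg interpolation valid in $d\leq 3$, and concluding via a stochastic Gronwall lemma. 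Yamada--Watanabe then yields the strong solution on the original probability space.

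The main obstacle will be the combined effect of the Marcus correction and the critical quartic nonlinearity in $d=3$: the term $|\mathbf{u}|^2\mathbf{u}$ is $\mathbb{H}^1$-critical when $d=3$, so the uniqueness estimate must squeeze out the full $\mathbb{H}^3$-dissipation afforded by the biharmonic term in $\mathbf{H}_{\textrm{eff}}-\Delta\mathbf{H}_{\textrm{eff}}$; moreover, the Marcus correction depends nonlinearly on $\mathbf{u}$ through the flow map $\Phi_l$, so its uniform-in-$n$ control and its convergence along the limiting subsequence must be established uniformly over jump sizes $l\in\mathbb{B}$, using the explicit affine structure of $\Phi_l$ and the square integrability of $\nu$.
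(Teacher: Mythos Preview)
Your proposal is correct and follows essentially the same route as the paper: Galerkin approximation via Neumann eigenfunctions, It\^{o}-formula energy estimates on $\|\mathbf{u}_n\|_{\mathbb{L}^2}^2$ and on the Ginzburg--Landau-type functional (the paper uses $\tfrac{1}{2}\|\nabla\mathbf{u}_n\|_{\mathbb{L}^2}^2+\tfrac{1}{2}\|\mathbf{u}_n\|_{\mathbb{L}^4}^4-\|\mathbf{u}_n\|_{\mathbb{L}^2}^2$, equivalent to your $\mathcal{E}$ up to constants), Aldous tightness, Jakubowski--Skorokhod representation, identification of the limit, pathwise uniqueness in $\mathbb{L}^2$ via the GN inequality $\|f\|_{\mathbb{L}^\infty}\leq C\|f\|_{\mathbb{H}^1}^{1/2}\|f\|_{\mathbb{H}^2}^{1/2}$ in $d\le 3$, and Yamada--Watanabe. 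The only substantive refinement in the paper is the choice of tightness space: rather than $\mathbb{D}(0,T;(\mathbb{H}^1)_w)\cap L^2(0,T;\mathbb{H}^2)$, it works in $\mathbb{D}([0,T];(\mathbb{H}^{\beta_1})^*)\cap\mathbb{D}([0,T];\mathbb{H}^1_w)\cap L^2_w(0,T;\mathbb{H}^3)\cap L^2(0,T;\mathbb{W}^{2,4})\cap L^p(0,T;\mathbb{L}^4)$, the extra $L^2(0,T;\mathbb{W}^{2,4})$ strong convergence being what cleanly closes the limit in the $\nabla(|\mathbf{u}|^2\mathbf{u})$ term in $d=3$.
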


\begin{remark}\label{222}
By using the similar argument with some key modifications, one can also establish the existence and uniqueness results for IBVP \eqref{sys1} with initial data belongs to $\mathbb{L}^2$ and $\mathbb{H}^2$, which corresponds to the global very weak solution $
\mathbf{u}\in L^p\left(\Omega;\mathbb{D}(0,T;\mathbb{L}^2)\cap L^2(0,T;\mathbb{H}^2)\right)$ and global strong solution $
\mathbf{u}\in L^p\left(\Omega;\mathbb{D}(0,T;\mathbb{H}^2)\cap L^2(0,T;\mathbb{H}^4)\right)$, respectively. To save the space, we would like to address this problem in a forthcoming  work.
\end{remark}

%
To establish the LDP, let us introduce the following basic definitions.

\begin{definition}\label{def6-1} Let $\mathbf{Z}$ be a Polish space. A function $\mathbf{I}:\mathbf{Z}\rightarrow[0,\infty]$ is called a \textsf{rate function} if $\mathbf{I}$ is lower semicontinuous. A rate function $\mathbf{I}$ is a good rate function if for arbitrary $M\in[0,\infty)$, the level set $K_M:=\{x:\mathbf{I}(x)\leq M\}$ is compact in $\mathbf{Z}$.
\end{definition}

\begin{definition}\label{def6-2} We say that a family of probability measures $\{\mathbb{P}_{\varepsilon}:\varepsilon>0\}$ satisfies the LDP on $\mathbf{Z}$ with a good rate function $\mathbf{I}:\mathbf{Z}\rightarrow[0,\infty]$, provided
\begin{enumerate}
\item [(1)] for each closed set $O_1\subset\mathbf{Z}$,
$$
\limsup_{\varepsilon\rightarrow0}\varepsilon\log\mathbb{P}_{\varepsilon}(O_1)\leq-\inf_{x\in O_1}\mathbf{I}(x);
$$
\item[(2)] for each open set $O_2\subset\mathbf{Z}$,
$$
\liminf_{\varepsilon\rightarrow0}\varepsilon\log\mathbb{P}_{\varepsilon}(O_2)\geq-\inf_{x\in O_2}\mathbf{I}(x).
$$
\end{enumerate}
\end{definition}

The LDP theory is closely related to the following random perturbations of the SLLBar equation for any $\epsilon>0$:
\begin{equation}\label{sys61}
\left\{
\begin{aligned}
\mathrm{d}\mathbf{u}^{\varepsilon}(t)&=\left[-\Delta\mathbf{u}^{\varepsilon}-\Delta^2\mathbf{u}^{\varepsilon}+2(1-|\mathbf{u}^{\varepsilon}|^2)\mathbf{u}^{\varepsilon}+2\Delta(|\mathbf{u}^{\varepsilon}|^2\mathbf{u}^{\varepsilon})-\mathbf{u}^{\varepsilon}\times\Delta \mathbf{u}^{\varepsilon}\right]\,\mathrm{d}t\\
&+\varepsilon\int_{B}(\mathbf{u}^{\varepsilon}\times \mathbf{h}+\mathbf{g})\diamond\,\mathrm{d}L^{\varepsilon^{-1}}(t),\\
\mathbf{u}^{\varepsilon}(0)&=\mathbf{u}_0\in \mathbb{H}^1,
\end{aligned}
\right.
\end{equation}
where the parameterized L\'{e}vy process $L^{\varepsilon^{-1}}(t)$ is defined in section \ref{sec6}.

Let us define the rate function $\mathbf{I}:\mathcal{Z}_T:=\mathbb{D}([0,T];\mathbb{H}^1)\cap L^2(0,T;\mathbb{H}^{3})\rightarrow[0,\infty]$ by
\begin{equation}\label{663-1}
\begin{split}
\mathbf{I}(f):=\inf\{\mathcal{L}_T(\theta):\theta\in \mathbb{S},\mathbf{u}^{\theta}=f\},~f\in \mathcal{Z}_T,
\end{split}
\end{equation}
where $\mathbf{u}^{\theta}$ satisfies \eqref{sys61-1} and the definitions of $\mathcal{L}_T(\cdot)$ and $\mathbb{S}$ are defined in \eqref{subsec6-1-5} and \eqref{ccc1}, respectively. Note that $\mathbf{I}(f)=\infty$ if the set $\{\theta\in\mathbb{S}:\mathbf{u}^{\theta}=f\}$ is empty.

Our second main result can now be stated as follows.

\begin{theorem}\label{the2} Under the same assumptions as in Theorem \ref{the1}, the family of laws $\{\mathscr{L}(\mathbf{u}^{\varepsilon}):\varepsilon\in(0,1]\}$ for the equation \eqref{sys61} on $\mathcal{Z}_T$ satisfies the large deviation principle with rate function $\mathbf{I}$ defined in \eqref{663-1}.
\end{theorem}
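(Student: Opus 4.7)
The plan is to apply the Budhiraja-Dupuis-Maroulas (BDM) weak convergence framework for SPDEs driven by Poisson random measures. Since $\mathcal{Z}_T$ is a Polish space, by the equivalence between the LDP and the Laplace principle, it suffices to verify two sufficient conditions on the skeleton map $\theta\mapsto\mathbf{u}^{\theta}$ from $\mathbb{S}$ into $\mathcal{Z}_T$: (C1) Compactness: for every $M<\infty$, the level set $\{\mathbf{u}^{\theta}:\theta\in\mathbb{S}_M\}$ is relatively compact in $\mathcal{Z}_T$, where $\mathbb{S}_M=\{\theta\in\mathbb{S}:\mathcal{L}_T(\theta)\leq M\}$; and (C2) Weak convergence: whenever $\theta^{\varepsilon}\to\theta$ in distribution as $\mathbb{S}_M$-valued random elements, the controlled solutions $\mathbf{u}^{\varepsilon,\theta^{\varepsilon}}$ obtained from \eqref{sys61} by a Girsanov-type tilt of the Poisson random measure converge in distribution to $\mathbf{u}^{\theta}$ in $\mathcal{Z}_T$.

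For (C1), I would first establish unique global solvability of the skeleton equation \eqref{sys61-1} by a Galerkin scheme mirroring the one used in the proof of Theorem \ref{the1}. The Marcus integral against the deterministic tilted intensity $\theta\,\mathrm{d}\nu\,\mathrm{d}s$ reduces there to a classical Bochner integral whose integrand is controlled using $\mathbf{h}\in\mathbb{W}^{1,\infty}$, $\mathbf{g}\in\mathbb{H}^1$ together with the entropy bound on $\theta$. Testing the skeleton equation against $\mathbf{u}^{\theta}$, $-\Delta\mathbf{u}^{\theta}$ and $\Delta^2\mathbf{u}^{\theta}$ then yields uniform bounds on $\{\mathbf{u}^{\theta}:\theta\in\mathbb{S}_M\}$ in $L^\infty(0,T;\mathbb{H}^1)\cap L^2(0,T;\mathbb{H}^3)$. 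Combined with equicontinuity estimates in a weaker topology, the Aubin-Lions-Simon lemma delivers relative compactness, and a Gronwall-type stability argument then gives the continuity of $\theta\mapsto\mathbf{u}^{\theta}$ on $\mathbb{S}_M$.

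For (C2), which is the technical heart of the proof, I would proceed in three stages. First, derive uniform-in-$\varepsilon$ energy estimates for $\mathbf{u}^{\varepsilon,\theta^{\varepsilon}}$ in $L^p(\Omega;L^\infty(0,T;\mathbb{H}^1)\cap L^2(0,T;\mathbb{H}^3))$, using an It\^o-Marcus formula and a stochastic Gronwall argument to absorb the $\varepsilon$-dependence of the controlled driver. Second, verify tightness of the family $\{\mathscr{L}(\mathbf{u}^{\varepsilon,\theta^{\varepsilon}})\}$ on $\mathcal{Z}_T$ via an Aldous-type criterion adapted to the Skorokhod $J_1$ topology on $\mathbb{D}([0,T];\mathbb{H}^1)$. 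Third, invoke the Skorokhod-Jakubowski representation theorem to obtain an almost sure subsequential limit $(\tilde{\mathbf{u}},\tilde{\theta})$ on a new probability space, and identify $\tilde{\mathbf{u}}$ with $\mathbf{u}^{\theta}$ by passing to the limit in the variational formulation of the controlled equation.

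The hard part will be precisely this limit identification in stage three, where two difficulties must be handled simultaneously. On one hand, the nonlinearities $\mathbf{u}\times\Delta\mathbf{u}$ and $\Delta(|\mathbf{u}|^2\mathbf{u})$ require strong convergence in $L^2(0,T;\mathbb{H}^2)$, which I would extract from the uniform $\mathbb{H}^3$ bound via the compact embedding $\mathbb{H}^3\hookrightarrow\mathbb{H}^2$ together with the time-regularity estimates. On the other hand, the Marcus integral decomposes into a compensated pure-jump part, a predictable drift against the tilted intensity, and a second-order Marcus correction. Under the scaling $\varepsilon L^{\varepsilon^{-1}}$, the compensated pure-jump fluctuations vanish by Burkholder-Davis-Gundy applied to the small-jump measure, while the controlled drift converges to the integral against $\theta\,\mathrm{d}\nu\,\mathrm{d}s$ and the Marcus correction term converges by continuity of the Marcus mapping combined with dominated convergence. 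Making these passages rigorous under only distributional convergence of $\theta^{\varepsilon}$, and in a functional setting that respects both the Skorokhod topology and the nonlinear fourth-order geometric structure of the SLLBar equation, will be the most delicate step of the argument.
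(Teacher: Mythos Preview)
Your overall plan via the Budhiraja--Dupuis--Maroulas framework is correct, and your treatment of (C1) essentially matches the paper's: both establish well-posedness of the skeleton equation \eqref{sys61-1} by a Galerkin scheme, derive uniform $L^\infty(0,T;\mathbb{H}^1)\cap L^2(0,T;\mathbb{H}^3)$ bounds over $S^K$, extract a limit by compactness, and then prove continuity of $\theta\mapsto\mathbf{u}^\theta$ on $S^K$ by a direct Gronwall estimate on the difference.

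Where you diverge from the paper is in (C2). You propose proving tightness of the stochastic controlled family $\{\mathbf{u}^{\varepsilon,\theta^\varepsilon}\}$ in $\mathcal{Z}_T$, applying Skorokhod--Jakubowski to the full pair $(\mathbf{u}^{\varepsilon,\theta^\varepsilon},\theta^\varepsilon)$, and then identifying the limit by passing to the limit in the variational formulation on a new probability space. The paper takes a shorter route, following Brze\'zniak--Peng--Zhai \cite{brzezniak2022well}: it sets $Y_n:=\mathcal{J}^{\varepsilon_n}(\varepsilon_n\eta^{\varepsilon_n^{-1}\varphi_{\varepsilon_n}})$ and $y_n:=\mathcal{J}^0(\varphi_{\varepsilon_n})$, and proves directly by the It\^o formula on $\|Y_n-y_n\|_{\mathbb{H}^1}^2$ (with a localizing stopping time) that $Y_n-y_n\to 0$ in probability in $\mathcal{Z}_T$. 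The stochastic integral against $\tilde{\eta}^{\varepsilon_n^{-1}\varphi_{\varepsilon_n}}$ is shown to be $O(\varepsilon_n^{1/2})$ via BDG, while all deterministic drift terms are absorbed by Gronwall. Then (C2) follows by composing with the continuity result from (C1), using Skorokhod only on the controls $\varphi_{\varepsilon_n}$, not on the solutions.

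Your route is viable, but the paper's buys you considerably less work: there is no need to establish tightness of the stochastic family or to re-identify a martingale problem on a new probability space, and the delicate limit passage in the nonlinear and Marcus terms that you flag as ``the hardest part'' is entirely avoided, since the stochastic problem is reduced to the deterministic one already handled in (C1).
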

\begin{remark}
The proof of Theorem \ref{the2} is inspired by the weak convergence approach introduced by Budhiraja
et al. \cite{budhiraja2011variational,budhiraja2013large}, and the Girsanov-type theorem for Poisson random measures provided by Brze\'{z}niak, Peng and Zhai \cite{brzezniak2022well}. Moreover, it is further expected that the similar results may be established for the very weak solutions and strong solutions as we mentioned  in Remark \ref{222}, which will also be treated elsewhere.
\end{remark}

\subsection{Organization} This paper is organized as follows.  In section \ref{sec2}, we provide some basic results and introduce the Marcus mapping to rewrite equation \eqref{sys1}. In section \ref{sec3}, we establish uniform bounded estimates for solutions of the finite-dimensional system \eqref{sys2}. In section \ref{sec4}, we provide some compactness and tightness criteria. The main results on well-posedness and large deviation principle are presented in sections \ref{sec5} and \ref{sec6}, respectively. Some auxiliary results are given in Appendix.

\section{preparation}\label{sec2}
\subsection{Functional setting}

Let $\mathbb{L}^p:=\mathbb{L}^p(\mathcal{O};\mathbb{R}^3)$ be the space of $p$-th Lebesgue integrable functions, and $\mathbb{W}^{k,p}:=\mathbb{W}^{k,p}(\mathcal{O};\mathbb{R}^3)$ be the usual Sobolev space. For $k=2$, we set $\mathbb{H}^p:=\mathbb{W}^{2,p}$. Given a Banach space $ X$, the symbol $\langle\cdot,\cdot\rangle_{X^*,X}$ stands for the duality pairing between $X$ and its dual space $X^*:=\mathcal{L}(X;\mathbb{R})$. We denote by $L^p_w(0,T;Q)$ the space $L^p(0,T;Q)$ endowed with the weak topology.

Let $(\mathbb{S},\varrho)$ be a complete and separable metric space, we denote by  $\mathbb{D}([0,T];\mathbb{S})$ the space of all $\mathbb{S}$-valued c\`{a}dl\`{a}g functions defined on $[0,T]$, which are right continuous with left limits at every $t\in[0,T]$. In the following, the space $\mathbb{D}([0,T];\mathbb{S})$ will be endowed with the Skorokhod topology such that $(\mathbb{D}([0,T];\mathbb{S}),\delta_T)$ is a complete metric space \cite{joffe1986weak}, where the metric $\delta_T$ is given by
\begin{eqnarray*}
\delta_T(f,g):=\inf_{\lambda\in\Lambda_T}\bigg(\sup_{t\in[0,T]}\varrho(f(t),g\circ\lambda(t))+\sup_{t\in[0,T]}|t-\lambda(t)|+\sup_{s\neq t}\bigg|\log\frac{\lambda(t)-\lambda(s)}{t-s}\bigg|\bigg),
\end{eqnarray*}
and $\Lambda_T$ is the set of increasing homeomorphisms of $[0,T]$. Let $Q_w$ be a Banach space $Q$ endowed with the weak topology, we define $ \mathbb{D}([0,T];Q_w)$ to be the space of weakly c\`{a}dl\`{a}g functions $f:[0,T]\rightarrow Q$ with the weakest topology such that for all $f\in Q$ the mapping $\mathbb{D}([0,T];Q_w)\ni f\mapsto(f,h)_Q\in\mathbb{D}([0,T];\mathbb{R})$ are continuous.


\subsection{The Marcus mapping}\label{subsec2}
Let  $\mathbf{h}\in\mathbb{W}^{1,\infty}$ and $\mathbf{g}\in\mathbb{H}^1$ be the two functions appearing in the assumption of Theorem \ref{the1}, we define
$$
J:\mathbb{H}^1\ni f\mapsto f\times \mathbf{h}+\mathbf{g}\in \mathbb{H}^1.
$$
It is claer that $J$ is a bounded mapping from to $\mathbb{H}^1$ to $\mathbb{H}^1$. We now define a generalized Marcus mapping
$
\Phi:\mathbb{R}_{+}\times \mathbb{R}\times \mathbb{H}^1\rightarrow \mathbb{H}^1
$
such that for each fixed $l\in \mathbb{R}$, $\mathbf{u}_0\in\mathbb{H}^1$, the function
$
\mathbb{R}_{+}\ni t\mapsto\Phi(t,l,\mathbf{u}_0)\in\mathbb{H}^1
$
is a $C^1$ solution of the ordinary differential equation
\begin{equation*}\label{sub2-1-1}
\begin{split}
\frac{\mathrm{d}\mathbf{u}}{\mathrm{d}t}=lJ(\mathbf{u}(t)),~t\geq0,\quad \mathbf{u}(0)=\mathbf{u}_0\in\mathbb{H}^1,
\end{split}
\end{equation*}
which is equivalent to
\begin{equation*}\label{sub2-1-2}
\begin{split}
\Phi(t,l,\mathbf{u}_0)=\Phi(0,l,\mathbf{u}_0)+\int_0^tlJ(\Phi(s,l,\mathbf{u}_0))\,\mathrm{d}s,~t\geq0.
\end{split}
\end{equation*}
Since $J$ is bounded from $\mathbb{H}^1$ to $\mathbb{H}^1$, the mapping $\Phi:\mathbb{R}_{+}\times \mathbb{R}\times \mathbb{H}^1\rightarrow \mathbb{H}^1$ is well defined. In what follows, we fix $t=1$ and denote $\Phi(l,\cdot):=\Phi(1,l,\cdot)$, $l\in\mathbb{R}$. Based on the Marcus mapping $\Phi$, by calculating the increments, equation \ref{sys1} with notation $\diamond$ can be rewritten as the following standard stochastic integral form.
\begin{equation}\label{pp1}
\begin{split}
\mathbf{u}(t)&=\mathbf{u}_0+\int_0^t\left(-\Delta\mathbf{u}-\Delta^2\mathbf{u}+2(1-|\mathbf{u}|^2)\mathbf{u}+2\Delta(|\mathbf{u}|^2\mathbf{u})-\mathbf{u}\times\Delta \mathbf{u}\right) \mathrm{d}s\\
&+\int_0^t\int_{\mathbb{B}}\Phi(l,\mathbf{u})-\mathbf{u}-lJ(\mathbf{u})\nu(\mathrm{d}l)\,\mathrm{d}s+\int_0^t\int_{\mathbb{B}}[\Phi(l,\mathbf{u})-\mathbf{u}]\tilde{\eta}(\mathrm{d}s,\mathrm{d}l),
\end{split}
\end{equation}
For convenience, we define the following mapping  on $\mathbb{H}^1$.
\begin{equation*}\label{sub2-1-3}
\begin{split}
&\mathbf{G}(l,f):=\Phi(l,f)-f,~\mathbf{H}(l,f):=\Phi(l,f)-f-lJ(f),~\mathbf{b}(f):=\int_{\mathbb{B}}\mathbf{H}(l,f)\nu(\mathrm{d}l)~,f\in\mathbb{H}^1.
\end{split}
\end{equation*}
Equation\eqref{pp1} then can be rewritten as follows:
\begin{equation}\label{sub2-1-6}
\begin{split}
\mathbf{u}(t)&=\mathbf{u}_0+\int_0^t\left(-\Delta\mathbf{u}-\Delta^2\mathbf{u}+2(1-|\mathbf{u}|^2)\mathbf{u}+2\Delta(|\mathbf{u}|^2\mathbf{u})-\mathbf{u}\times\Delta \mathbf{u}\right) \mathrm{d}s\\
&+\int_0^t\mathbf{b}(\mathbf{u})\,\mathrm{d}s+\int_0^t\int_{\mathbb{B}}\mathbf{G}(l,\mathbf{u})\tilde{\eta}(\mathrm{d}s,\mathrm{d}l).
\end{split}
\end{equation}

In order to get a better understanding of the terms involved in \eqref{sub2-1-6}. Let us show some basic properties satisfied by the Marcus mapping.

\begin{lemma}\label{lem2-2-1} Let $\mathbf{h}\in\mathbb{W}^{1,\infty}$ and $\mathbf{g}\in\mathbb{H}^1$, then for every $l\in \mathbb{B}$, there exists a positive constant $C$ such that
\begin{equation*}\label{lem2-2-1-1}
\begin{split}
\|\Phi(l,\mathbf{u})\|_{\mathbb{H}^1}&\leq C(1+\|\mathbf{u}\|_{\mathbb{H}^1}),~\mathbf{u}\in \mathbb{H}^1,\\
\|\Phi(l,\mathbf{u})\|_{\mathbb{L}^4}&\leq C(1+\|\mathbf{u}\|_{\mathbb{L}^4}),~\mathbf{u}\in \mathbb{L}^4.
\end{split}
\end{equation*}
\end{lemma}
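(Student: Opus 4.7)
The plan is to read off both inequalities from the defining ODE
$$
\Phi(t,l,\mathbf{u}) = \mathbf{u} + l\int_0^t\bigl[\Phi(s,l,\mathbf{u})\times\mathbf{h} + \mathbf{g}\bigr]\,\mathrm{d}s,\qquad t\in[0,1],
$$
and then specialize to $t=1$. Since $l\in\mathbb{B}$ we have $|l|\le 1$, which makes the prefactors harmless; the crucial observation in every estimate will be the pointwise identity $v\cdot(v\times\mathbf{h})=0$, which eliminates the top-order term arising from the unbounded operator $J(\cdot)=(\cdot)\times\mathbf{h}+\mathbf{g}$.

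\emph{The $\mathbb{L}^4$ bound.} First I would differentiate $\|\Phi(t,l,\mathbf{u})\|_{\mathbb{L}^4}^4$ in time, producing
$$
\frac{\mathrm{d}}{\mathrm{d}t}\|\Phi\|_{\mathbb{L}^4}^4 = 4l\int_{\mathcal{O}}|\Phi|^2\,\Phi\cdot\bigl(\Phi\times\mathbf{h}+\mathbf{g}\bigr)\,\mathrm{d}x = 4l\int_{\mathcal{O}}|\Phi|^2\,\Phi\cdot\mathbf{g}\,\mathrm{d}x,
$$
where the pointwise triple-product cancellation removes the $\mathbf{h}$-term. H\"older then gives $\frac{\mathrm{d}}{\mathrm{d}t}\|\Phi\|_{\mathbb{L}^4}\le|l|\,\|\mathbf{g}\|_{\mathbb{L}^4}$, and integrating over $[0,1]$ with $|l|\le 1$ yields $\|\Phi(l,\mathbf{u})\|_{\mathbb{L}^4}\le\|\mathbf{u}\|_{\mathbb{L}^4}+\|\mathbf{g}\|_{\mathbb{L}^4}$, which is precisely the second inequality with $C=1+\|\mathbf{g}\|_{\mathbb{L}^4}$.

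\emph{The $\mathbb{H}^1$ bound.} I would treat $\|\Phi\|_{\mathbb{L}^2}^2$ and $\|\nabla\Phi\|_{\mathbb{L}^2}^2$ separately. For the former, the same cancellation argument gives $\frac{\mathrm{d}}{\mathrm{d}t}\|\Phi\|_{\mathbb{L}^2}\le|l|\,\|\mathbf{g}\|_{\mathbb{L}^2}$. For the gradient, differentiating the ODE component-wise yields
$$
\partial_t\,\partial_i\Phi = l\bigl(\partial_i\Phi\times\mathbf{h} + \Phi\times\partial_i\mathbf{h} + \partial_i\mathbf{g}\bigr),
$$
so testing against $\partial_i\Phi$ and summing in $i$ makes the $\partial_i\Phi\times\mathbf{h}$ contribution vanish pointwise. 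The two remaining contributions are controlled using $\mathbf{h}\in\mathbb{W}^{1,\infty}$:
$$
\frac{\mathrm{d}}{\mathrm{d}t}\|\nabla\Phi\|_{\mathbb{L}^2}^2 \le 2|l|\,\|\nabla\Phi\|_{\mathbb{L}^2}\bigl(\|\nabla\mathbf{h}\|_{\mathbb{L}^\infty}\|\Phi\|_{\mathbb{L}^2} + \|\nabla\mathbf{g}\|_{\mathbb{L}^2}\bigr).
$$
Adding the two estimates yields $\frac{\mathrm{d}}{\mathrm{d}t}\|\Phi\|_{\mathbb{H}^1}^2 \le C\bigl(\|\Phi\|_{\mathbb{H}^1}^2 + \|\mathbf{g}\|_{\mathbb{H}^1}^2\bigr)$ with $C$ depending only on $\|\mathbf{h}\|_{\mathbb{W}^{1,\infty}}$, and Gr\"onwall on $[0,1]$ delivers the claimed bound
$$
\|\Phi(l,\mathbf{u})\|_{\mathbb{H}^1}\le C\bigl(1+\|\mathbf{u}\|_{\mathbb{H}^1}\bigr).
$$

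\emph{Main obstacle.} The whole argument is routine once one sees the pointwise cancellation $v\cdot(v\times\mathbf{h})=0$; the only subtlety is that this cancellation operates at the top-order level (both on $\Phi$ for the $\mathbb{L}^4$ bound and on $\nabla\Phi$ for the $\mathbb{H}^1$ bound), so all genuinely unbounded contributions disappear and only the lower-order coupling $\Phi\times\nabla\mathbf{h}$ survives. Without the assumption $\mathbf{h}\in\mathbb{W}^{1,\infty}$ this lower-order term could not be absorbed, so the regularity hypothesis on $\mathbf{h}$ is essentially sharp for the Gr\"onwall closure.
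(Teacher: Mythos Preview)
Your proof is correct, but it takes a different and unnecessarily elaborate route compared to the paper. The key point you mischaracterize is that $J(\cdot)=(\cdot)\times\mathbf{h}+\mathbf{g}$ is \emph{not} an unbounded operator on the spaces in question: since $\mathbf{h}\in\mathbb{L}^\infty$, the pointwise bound $|f\times\mathbf{h}|\le|\mathbf{h}|\,|f|$ already gives $\|f\times\mathbf{h}\|_{\mathbb{L}^4}\le\|\mathbf{h}\|_{\mathbb{L}^\infty}\|f\|_{\mathbb{L}^4}$, and with $\mathbf{h}\in\mathbb{W}^{1,\infty}$ one similarly gets $\|f\times\mathbf{h}\|_{\mathbb{H}^1}\le C\|f\|_{\mathbb{H}^1}$. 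The paper exploits this directly: it just takes norms in the integral equation $\Phi(t,l,\mathbf{u})=\mathbf{u}+l\int_0^t J(\Phi(s,l,\mathbf{u}))\,\mathrm{d}s$, obtaining $\|\Phi(t,l,\mathbf{u})\|_X\le\|\mathbf{u}\|_X+C\int_0^t(1+\|\Phi(s,l,\mathbf{u})\|_X)\,\mathrm{d}s$ for $X=\mathbb{H}^1$ or $\mathbb{L}^4$, and concludes by Gr\"onwall. No cancellation is invoked.

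Your approach, using the pointwise identity $v\cdot(v\times\mathbf{h})=0$ in energy-type computations, does yield slightly sharper constants (for instance your $\mathbb{L}^4$ bound avoids an exponential factor), and this kind of cancellation is indeed essential in other parts of the SLLBar analysis. But for this particular lemma it is overkill, and your remark that ``without the assumption $\mathbf{h}\in\mathbb{W}^{1,\infty}$ this lower-order term could not be absorbed'' misidentifies where that hypothesis enters: in the paper's argument $\mathbf{h}\in\mathbb{W}^{1,\infty}$ is what makes $J$ bounded on $\mathbb{H}^1$ in the first place, not merely what controls a residual lower-order piece after cancellation.
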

\begin{proof}[\emph{\textbf{Proof}}] According to the definition of $\Phi$, we have
\begin{equation*}
\begin{split}
\Phi(l,\mathbf{u})=\mathbf{u}+\int_0^1lJ(\Phi(s,l,\mathbf{u}))\,\mathrm{d}s.
\end{split}
\end{equation*}
Taking the $\mathbb{H}^1$-norm of both sides gives
\begin{equation*}
\begin{split}
\|\Phi(l,\mathbf{u})\|_{\mathbb{H}^1}
&\leq\|\mathbf{u}\|_{\mathbb{H}^1}+\int_0^1\|\Phi(s,l,\mathbf{u})\times \mathbf{h}+\mathbf{g}\|_{\mathbb{H}^1}\,\mathrm{d}s\\
&\leq\|\mathbf{u}\|_{\mathbb{H}^1}+C\int_0^1(1+\|\Phi(s,l,\mathbf{u})\|_{\mathbb{H}^1})\,\mathrm{d}s,
\end{split}
\end{equation*}
which combined with the Gronwall lemma implies the result \eqref{lem2-2-1-1}. Additionally, noting the conditions $\mathbf{h}\in \mathbb{W}^{1,\infty}$ and $\mathbf{g}\in \mathbb{H}^{1}$ and using the Sobolev embedding $\mathbb{H}^1\hookrightarrow \mathbb{L}^4$, we easily obtain that
$
\|\Phi(l,\mathbf{u})\|_{\mathbb{L}^4}\leq\|\mathbf{u}\|_{\mathbb{L}^4}+C\int_0^1(1+\|\Phi(s,l,\mathbf{u})\|_{\mathbb{L}^4})\,\mathrm{d}s.
$
Thus by using the Gronwall lemma, we obtain the result. The proof is complete.\end{proof}

\begin{corollary}\label{--cor2-2-2}  For every $\mathbf{u},~\mathbf{u}_1\in \mathbb{H}^1$ and $l\in \mathbb{B}$, there exists a constant $C>0$ such that
\begin{equation*}
\begin{split}
&\|\mathbf{G}(l,\mathbf{u})\|_{\mathbb{H}^1}\leq C|l|(1+\|\mathbf{u}\|_{\mathbb{H}^1}),~\|\mathbf{G}(l,\mathbf{u})-\mathbf{G}(l,\mathbf{u}_1)\|_{\mathbb{H}^1}\leq C|l|\|\mathbf{u}-\mathbf{u}_1\|_{\mathbb{H}^1};\\
&\|\mathbf{b}(\mathbf{u})\|_{\mathbb{H}^1}\leq C(1+\|\mathbf{u}\|_{\mathbb{H}^1}),~\|\mathbf{b}(\mathbf{u})-\mathbf{b}(\mathbf{u}_1)\|_{\mathbb{H}^1}\leq C\|\mathbf{u}-\mathbf{u}_1\|_{\mathbb{H}^1}.
\end{split}
\end{equation*}
\end{corollary}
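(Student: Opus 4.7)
The plan is to exploit the integral representation
\[
\Phi(s,l,\mathbf{u})=\mathbf{u}+\int_0^{s}lJ(\Phi(r,l,\mathbf{u}))\,\mathrm{d}r,\qquad s\in[0,1],
\]
together with the linearity of $J(f)=f\times\mathbf{h}+\mathbf{g}$ in $f$ and the bound $\|J(f)\|_{\mathbb{H}^1}\leq C(1+\|f\|_{\mathbb{H}^1})$ coming from $\mathbf{h}\in\mathbb{W}^{1,\infty}$ and $\mathbf{g}\in\mathbb{H}^1$. For the first bound on $\mathbf{G}$, I would write
\[
\mathbf{G}(l,\mathbf{u})=\Phi(l,\mathbf{u})-\mathbf{u}=\int_0^{1}lJ(\Phi(s,l,\mathbf{u}))\,\mathrm{d}s,
\]
take $\mathbb{H}^1$-norms, and insert the estimate $\|\Phi(s,l,\mathbf{u})\|_{\mathbb{H}^1}\leq C(1+\|\mathbf{u}\|_{\mathbb{H}^1})$ from Lemma \ref{lem2-2-1} (which is valid for every $s\in[0,1]$ by the same Gronwall argument), giving $\|\mathbf{G}(l,\mathbf{u})\|_{\mathbb{H}^1}\leq C|l|(1+\|\mathbf{u}\|_{\mathbb{H}^1})$.

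For the Lipschitz estimate on $\mathbf{G}$, I would subtract the two integral equations for $\Phi(\cdot,l,\mathbf{u})$ and $\Phi(\cdot,l,\mathbf{u}_1)$, use that $J(f)-J(g)=(f-g)\times\mathbf{h}$ (so $\|J(f)-J(g)\|_{\mathbb{H}^1}\leq C\|f-g\|_{\mathbb{H}^1}$), and apply Gronwall to obtain
\[
\|\Phi(s,l,\mathbf{u})-\Phi(s,l,\mathbf{u}_1)\|_{\mathbb{H}^1}\leq e^{C|l|s}\|\mathbf{u}-\mathbf{u}_1\|_{\mathbb{H}^1},\qquad s\in[0,1].
\]
Since $|l|\leq 1$ on $\mathbb{B}$, the exponential factor is bounded, and re-inserting into $\mathbf{G}(l,\mathbf{u})-\mathbf{G}(l,\mathbf{u}_1)=\int_0^{1}l[J(\Phi(s,l,\mathbf{u}))-J(\Phi(s,l,\mathbf{u}_1))]\,\mathrm{d}s$ yields the claimed $C|l|\|\mathbf{u}-\mathbf{u}_1\|_{\mathbb{H}^1}$ bound.

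The estimates on $\mathbf{b}$ require a cancellation. The crucial observation is that
\[
\mathbf{H}(l,\mathbf{u})=\int_0^{1}l\bigl[J(\Phi(s,l,\mathbf{u}))-J(\mathbf{u})\bigr]\,\mathrm{d}s
=l\int_0^{1}\bigl(\Phi(s,l,\mathbf{u})-\mathbf{u}\bigr)\times\mathbf{h}\,\mathrm{d}s,
\]
because the $\mathbf{g}$-term cancels out exactly when we subtract $lJ(\mathbf{u})$. Combining this with the pointwise estimate $\|\Phi(s,l,\mathbf{u})-\mathbf{u}\|_{\mathbb{H}^1}\leq C|l|(1+\|\mathbf{u}\|_{\mathbb{H}^1})$ from the first part gives $\|\mathbf{H}(l,\mathbf{u})\|_{\mathbb{H}^1}\leq C|l|^{2}(1+\|\mathbf{u}\|_{\mathbb{H}^1})$, and similarly $\|\mathbf{H}(l,\mathbf{u})-\mathbf{H}(l,\mathbf{u}_1)\|_{\mathbb{H}^1}\leq C|l|^{2}\|\mathbf{u}-\mathbf{u}_1\|_{\mathbb{H}^1}$ after another Gronwall step. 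Integrating against $\nu$ then yields the bounds on $\mathbf{b}$ provided $\int_{\mathbb{B}}|l|^{2}\nu(\mathrm{d}l)<\infty$, which is guaranteed by the standard L\'{e}vy-measure condition on the unit ball.

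The main obstacle is precisely this last step: a naive bound $\|\mathbf{H}(l,\mathbf{u})\|_{\mathbb{H}^1}\leq \|\mathbf{G}(l,\mathbf{u})\|_{\mathbb{H}^1}+|l|\|J(\mathbf{u})\|_{\mathbb{H}^1}$ produces only an $O(|l|)$ estimate, which is not integrable against a general L\'{e}vy measure. One must recognise and exploit the cancellation between $\Phi(l,\mathbf{u})-\mathbf{u}$ and its linearisation $lJ(\mathbf{u})$ to gain the extra factor of $|l|$, matching the integrability afforded by $\nu$.
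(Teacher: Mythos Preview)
Your argument for the bounds on $\mathbf{G}$ is essentially identical to the paper's: both use the integral representation of $\Phi$, the Lipschitz property of $J$, and a Gronwall step to control $\|\Phi(s,l,\mathbf{u})-\Phi(s,l,\mathbf{u}_1)\|_{\mathbb{H}^1}$ before re-inserting into the integral for $\mathbf{G}$.

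For $\mathbf{b}$ you take a genuinely different and sharper route. The paper simply applies the triangle inequality
\[
\|\mathbf{H}(l,\mathbf{u})-\mathbf{H}(l,\mathbf{u}_1)\|_{\mathbb{H}^1}
\leq\|\Phi(l,\mathbf{u})-\Phi(l,\mathbf{u}_1)\|_{\mathbb{H}^1}+\|\mathbf{u}-\mathbf{u}_1\|_{\mathbb{H}^1}+|l|\|J(\mathbf{u})-J(\mathbf{u}_1)\|_{\mathbb{H}^1}
\leq C\|\mathbf{u}-\mathbf{u}_1\|_{\mathbb{H}^1},
\]
with $C$ uniform in $l\in\mathbb{B}$, and then integrates against $\nu$. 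This implicitly requires $\nu(\mathbb{B})<\infty$. Your approach instead exploits the second-order cancellation $\mathbf{H}(l,\mathbf{u})=l\int_0^1(\Phi(s,l,\mathbf{u})-\mathbf{u})\times\mathbf{h}\,\mathrm{d}s$ to obtain the stronger estimate $\|\mathbf{H}(l,\mathbf{u})\|_{\mathbb{H}^1}\leq C|l|^2(1+\|\mathbf{u}\|_{\mathbb{H}^1})$ (and likewise for the difference), so that only the standard L\'evy condition $\int_{\mathbb{B}}|l|^2\,\nu(\mathrm{d}l)<\infty$ is needed. Your proof is correct and in fact more robust than the paper's on this point; the obstacle you flag (that the naive $O(|l|)$ bound is not $\nu$-integrable in general) is real, and the paper's argument glosses over it.
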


\begin{proof}[\emph{\textbf{Proof}}] According to the definition of $\mathbf{G}$, we have
\begin{equation*}
\begin{split}
\|\mathbf{G}(l,\mathbf{u})-\mathbf{G}(l,\mathbf{u}_1)\|_{\mathbb{H}^1}&=\|\Phi(l,\mathbf{u})-\Phi(l,\mathbf{u}_1)-\mathbf{u}+\mathbf{u}_1\|_{\mathbb{H}^1}\\
&\leq\int_0^1|l|\|J(\Phi(s,l,\mathbf{u}))-J(\Phi(s,l,\mathbf{u}_1))\|_{\mathbb{H}^1}\,\mathrm{d}s\\
&\leq\int_0^1|l|\|\mathbf{h}\|_{\mathbb{L}^{\infty}}\|\Phi(s,l,\mathbf{u})-\Phi(s,l,\mathbf{u}_1)\|_{\mathbb{H}^1}\,\mathrm{d}s.
\end{split}
\end{equation*}
By the triangle inequality, it follows that
\begin{equation*}
\begin{split}
\|\Phi(l,\mathbf{u})-\Phi(l,\mathbf{u}_1)\|_{\mathbb{H}^1}&\leq\|\mathbf{G}(l,\mathbf{u})-\mathbf{G}(l,\mathbf{u}_1)\|_{\mathbb{H}^1}+\|\mathbf{u}-\mathbf{u}_1\|_{\mathbb{H}^1}\\
&\leq\|\mathbf{u}-\mathbf{u}_1\|_{\mathbb{H}^1}+C|l|\int_0^1\|\Phi(s,l,\mathbf{u})-\Phi(s,l,\mathbf{u}_1)\|_{\mathbb{H}^1}\,\mathrm{d}s.
\end{split}
\end{equation*}
Thus we use the Gronwall lemma to obtain that
\begin{equation*}
\begin{split}
\|\Phi(l,\mathbf{u})-\Phi(l,\mathbf{u}_1)\|_{\mathbb{H}^1}\leq e^{C|l|}\|\mathbf{u}-\mathbf{u}_1\|_{\mathbb{H}^1},
\end{split}
\end{equation*}
which implies that the mapping $\mathbf{G}$ is Lipschitz continuous. Similarly, it is easy to see that
\begin{equation*}
\begin{split}
\|\mathbf{H}(l,\mathbf{u})-\mathbf{H}(l,\mathbf{u}_1)\|_{\mathbb{H}^1}&=\|\Phi(l,\mathbf{u})-\Phi(l,\mathbf{u}_1)-\mathbf{u}+\mathbf{u}_1-lJ(\mathbf{u})+lJ(\mathbf{u}_1)\|_{\mathbb{H}^1}\\
&\leq\|\Phi(l,\mathbf{u})-\Phi(l,\mathbf{u}_1)\|_{\mathbb{H}^1}+\|\mathbf{u}-\mathbf{u}_1\|_{\mathbb{L}^2}+\|J(\mathbf{u})-J(\mathbf{u}_1)\|_{\mathbb{H}^1}\\
&\leq C\|\mathbf{u}-\mathbf{u}_1\|_{\mathbb{H}^1},
\end{split}
\end{equation*}
which means that the mapping $\mathbf{b}$ is Lipschitz continuous. Since $\mathbf{G}$ and $\mathbf{b}$ are Lipschitz, they obviously have linear growth.
\end{proof}

\section{Faedo-Galerkin apprpximation}\label{sec3}
The main aim of this section is first to introduce the approximation equation with solutions in the finite-dimensional space, and then derive some necessary uniform  a priori estimates for the approximation solutions. Let $\{\mathbf{e}_i\}_{i=1}^{\infty}$ denote an orthonormal basis of $\mathbb{L}^2$ consisting of eigenvectors for the Neumann Laplacian $A=-\Delta$ such that
\begin{equation*}
\begin{split}
A\mathbf{e}_i=\lambda_i\mathbf{e}_i~~\textrm{in}~~\mathcal{O},~~\textrm{and}~~\frac{\partial\mathbf{e}_i}{\partial\mathbf{n}}=0~~\textrm{on}~~\partial\mathcal{O},
\end{split}
\end{equation*}
where $\lambda_i>0$ are the eigenvalues of $A$, associated with $\mathbf{e}_i$. According to elliptic regularity results, $\mathbf{e}_i$ is smooth up to the boundary, and we have
\begin{equation*}
\begin{split}
A^2\mathbf{e}_i=\lambda_i^2\mathbf{e}_i~~\textrm{in}~~\mathcal{O},~~\textrm{and}~~\frac{\partial\mathbf{e}_i}{\partial\mathbf{n}}=\frac{\partial\Delta\mathbf{e}_i}{\partial\mathbf{n}}=0~~\textrm{on}~~\partial\mathcal{O}.
\end{split}
\end{equation*}
Let $S_n:=\textrm{span}\{\mathbf{e}_1...,\mathbf{e}_n\}$ and $\Pi_n:\mathbb{L}^2\rightarrow S_n$ be the orthogonal projection defined by
$$
(\Pi f,g)_{\mathbb{L}^2}=(f,g)_{\mathbb{L}^2},~g\in S_n,~f\in\mathbb{L}^2.
$$
Let us define a mapping $\Phi_n$ as the solution to the following ordinary differential equation
\begin{equation}\label{sec3-1}
\begin{split}
\frac{\mathrm{d}\mathbf{u}_n}{\mathrm{d}t}=lJ_n(\mathbf{u}_n(t)),~t\geq0,~\mathbf{u}_n(0)=\Pi_n\mathbf{u}_0\in S_n,
\end{split}
\end{equation}
where $J_n:=\Pi_nJ$.  Correspondingly , we also introduce the following notations:
$ \mathbf{G}_n(l,\mathbf{u}_n):=\Phi_n(l,\mathbf{u}_n)-\mathbf{u}_n$,
$\mathbf{H}_n(l,\mathbf{u}_n):=\Phi_n(l,\mathbf{u}_n)-\mathbf{u}_n-lJ_n(\mathbf{u}_n)$,
$\mathbf{b}_n(\mathbf{u}_n):=\int_{\mathbb{B}}\mathbf{H}_n(l,\mathbf{u}_n)\nu(\mathrm{d}l)$.

\begin{remark}\label{rem331}
Notice that if one replace the mapping $\Phi$ by $\Phi_n$ and the function $\textbf{u}$ by $\textbf{u}_n$ in finite-dimensional space $S_n$, then the mappings $\Phi_n$, $\mathbf{G}_n$, $\mathbf{H}_n$ and $\mathbf{b}_n$ inherit the corresponding properties from Lemma \ref{lem2-2-1} and Corollary \ref{--cor2-2-2}. As the proof of these properties are same as before, so we shall omit the details here.
\end{remark}

Now we consider the following Galerkin approximation scheme for \eqref{sub2-1-6}
\begin{equation}\label{sys2}
\left\{
\begin{aligned}
\mathrm{d}\mathbf{u}_n=&\Pi_n\Big[-\Delta\mathbf{u}_n-\Delta^2\mathbf{u}_n+2(1-|\mathbf{u}_n|^2)\mathbf{u}_n+2\Delta(|\mathbf{u}_n|^2\mathbf{u}_n)\\
&-\mathbf{u}_n\times\Delta \mathbf{u}_n\Big] \mathrm{d}t
+\mathbf{b}_n(\mathbf{u}_n)\,\mathrm{d}t+\int_{\mathbb{B}}\mathbf{G}_n(l,\mathbf{u}_n)\tilde{\eta}(\mathrm{d}t,\mathrm{d}l),&&\textrm{in}~(0,\infty)\times\mathcal{O},\\
\mathbf{u}_n(0)=&\Pi_n\mathbf{u}_0,&&\textrm{in}~\mathcal{O}.
\end{aligned}
\right.
\end{equation}
For the convenience of the subsequent discussion, we shall use the notations
\begin{equation}\label{nota1}
\begin{split}
&F_n^1(\mathbf{u}_n):=-\Delta \mathbf{u}_n+2\mathbf{u}_n,~F_n^2(\mathbf{u}_n):=-\Delta^2 \mathbf{u}_n,~F_n^3(\mathbf{u}_n):=-2\Pi_n\left(|\mathbf{u}_n|^2\mathbf{u}_n\right),\\
&F_n^4(\mathbf{u}_n):=-\Pi_n\left(\mathbf{u}_n\times\Delta \mathbf{u}_n\right),~F_n^5(\mathbf{u}_n):=2\Pi_n\Delta\left(|\mathbf{u}_n|^2\mathbf{u}_n\right),\\
&\mathbf{H}_{\textrm{eff}}^n:=\Delta \mathbf{u}_n+2\mathbf{u}_n-2\Pi_n(|\mathbf{u}_n|^2\mathbf{u}_n),~F_n(\mathbf{u}_n,\mathbf{H}_{\textrm{eff}}^n):=\mathbf{H}_{\textrm{eff}}^n-\Delta \mathbf{H}_{\textrm{eff}}^n-\Pi_n\left(\mathbf{u}_n\times \mathbf{H}_{\textrm{eff}}^n\right).
\end{split}
\end{equation}
Obviously, $F_n(\mathbf{u}_n,\mathbf{H}_{\textrm{eff}}^n)=\sum_{j=1}^5F_n^j(\mathbf{u}_n)$. Moreover, the existence of a unique local strong solution to the SDE \eqref{sys2} is a consequence of $F_n^1$-$F_n^5$ are locally Lipschitz \cite{applebaum2009levy,xu2024wellposednessinvariantmeasuresstochastically}.

We now proceed to prove uniform bounds for the approximate solutions. To begin with, we consider the case where the initial value belongs to $\mathbb{L}^2$.

\begin{lemma}\label{lem32} Let $\mathcal{O}\subset\mathbb{R}^d$,~$d=1,2,3$, be a bounded domain with $C^{1,1}$-boundary. Then for any $p\geq1$, $n\in\mathbb{N}$ and every $t\in[0,T]$, there exists a positive constant $C=C(\|\mathbf{u}_0\|_{\mathbb{L}^2},p,\mathbf{h},\mathbf{g},T)$ independent of $n$ such that
\begin{equation}\label{1lem32}
\begin{split}
&\mathbb{E}\sup_{s\in[0,t]}\|\mathbf{u}_n(s)\|_{\mathbb{L}^2}^{2p}+\mathbb{E}\left(\int_0^t\|\mathbf{u}_n(s)\|_{\mathbb{H}^2}^2\,\mathrm{d}s\right)^p+\mathbb{E}\left(\int_0^t\|\mathbf{u}_n(s)\|_{\mathbb{L}^4}^4\,\mathrm{d}s\right)^p\leq C.
\end{split}
\end{equation}
\end{lemma}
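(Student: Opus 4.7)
The plan is to apply an Itô/Kunita-type formula for jump processes to the functional $\|\mathbf{u}_n\|_{\mathbb{L}^2}^2$ (and then to its $p$-th power), exploit the dissipative structure of the deterministic drift, and absorb the Marcus jump contributions using Lemma \ref{lem2-2-1} and Corollary \ref{--cor2-2-2}.

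First, testing \eqref{sys2} against $\mathbf{u}_n$ and performing integration by parts (the Neumann boundary conditions satisfied by the basis $\{\mathbf{e}_i\}$ justify all boundary integrals vanishing) yields the following identities for the drift terms in \eqref{nota1}:
$(F_n^1(\mathbf{u}_n),\mathbf{u}_n)_{\mathbb{L}^2}=\|\nabla \mathbf{u}_n\|_{\mathbb{L}^2}^2+2\|\mathbf{u}_n\|_{\mathbb{L}^2}^2$,
$(F_n^2(\mathbf{u}_n),\mathbf{u}_n)_{\mathbb{L}^2}=-\|\Delta \mathbf{u}_n\|_{\mathbb{L}^2}^2$,
$(F_n^3(\mathbf{u}_n),\mathbf{u}_n)_{\mathbb{L}^2}=-2\|\mathbf{u}_n\|_{\mathbb{L}^4}^4$,
$(F_n^4(\mathbf{u}_n),\mathbf{u}_n)_{\mathbb{L}^2}=0$ (by the scalar triple product),
and $(F_n^5(\mathbf{u}_n),\mathbf{u}_n)_{\mathbb{L}^2}=-2\int_\mathcal{O}|\mathbf{u}_n|^2|\nabla \mathbf{u}_n|^2-4\int_\mathcal{O}|\mathbf{u}_n\cdot\nabla \mathbf{u}_n|^2 \leq 0$.
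Thus, after Itô, the deterministic part contributes $-2\|\Delta \mathbf{u}_n\|_{\mathbb{L}^2}^2-4\|\mathbf{u}_n\|_{\mathbb{L}^4}^4$ (plus harmless lower-order terms and an innocuous $+4\|\mathbf{u}_n\|_{\mathbb{L}^2}^2$ to be handled by Gronwall). Elliptic regularity with Neumann boundary conditions then gives $\|\mathbf{u}_n\|_{\mathbb{H}^2}^2\lesssim \|\Delta \mathbf{u}_n\|_{\mathbb{L}^2}^2+\|\mathbf{u}_n\|_{\mathbb{L}^2}^2$, which is the mechanism producing the $\mathbb{H}^2$ and $\mathbb{L}^4$ terms in \eqref{1lem32}.

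For the jump component, Kunita's formula produces an Itô correction
$\int_0^t\int_{\mathbb{B}}\bigl(\|\mathbf{u}_n+\mathbf{G}_n(l,\mathbf{u}_n)\|_{\mathbb{L}^2}^2-\|\mathbf{u}_n\|_{\mathbb{L}^2}^2-2(\mathbf{G}_n(l,\mathbf{u}_n),\mathbf{u}_n)_{\mathbb{L}^2}\bigr)\nu(\mathrm{d}l)\,\mathrm{d}s$
plus the drift contribution $2\int_0^t(\mathbf{b}_n(\mathbf{u}_n),\mathbf{u}_n)_{\mathbb{L}^2}\mathrm{d}s$. Writing $\mathbf{b}_n=\int_{\mathbb{B}}(\mathbf{G}_n(l,\cdot)-lJ_n(\cdot))\nu(\mathrm{d}l)$ and adding, the cross terms $(\mathbf{G}_n,\mathbf{u}_n)_{\mathbb{L}^2}$ cancel, yielding the clean expression
$\int_0^t\int_{\mathbb{B}}\bigl(\|\Phi_n(l,\mathbf{u}_n)\|_{\mathbb{L}^2}^2-\|\mathbf{u}_n\|_{\mathbb{L}^2}^2\bigr)\nu(\mathrm{d}l)\,\mathrm{d}s-2\int_0^t\int_{\mathbb{B}}l(J_n(\mathbf{u}_n),\mathbf{u}_n)_{\mathbb{L}^2}\nu(\mathrm{d}l)\,\mathrm{d}s$.
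By (the $\mathbb{L}^2$-analogue of) Lemma \ref{lem2-2-1} and Remark \ref{rem331}, $\|\Phi_n(l,\mathbf{u}_n)\|_{\mathbb{L}^2}^2\leq C(1+\|\mathbf{u}_n\|_{\mathbb{L}^2}^2)$ with $\int_{\mathbb{B}}(1\wedge|l|^2)\nu(\mathrm{d}l)<\infty$, so this whole contribution is bounded above by $C\int_0^t(1+\|\mathbf{u}_n(s)\|_{\mathbb{L}^2}^2)\mathrm{d}s$, of Gronwall type.

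For the $p=1$ case, taking expectations kills the $\tilde\eta$-martingale and a direct Gronwall in the quantity $\mathbb{E}\|\mathbf{u}_n(t)\|_{\mathbb{L}^2}^2$ gives the $\sup_{s\leq t}\mathbb{E}\|\mathbf{u}_n(s)\|_{\mathbb{L}^2}^2$ bound and, by combining with the dissipative negative terms, the expected $\mathbb{H}^2$ and $\mathbb{L}^4$ integrals. To upgrade to $\mathbb{E}\sup$ and to $p>1$, I would either apply Itô to $\|\mathbf{u}_n\|_{\mathbb{L}^2}^{2p}$ (producing an extra factor $p\|\mathbf{u}_n\|_{\mathbb{L}^2}^{2(p-1)}$ on every term and a standard Taylor-type jump residual) or raise the $p=1$ identity to power $p$ after taking $\sup_{s\leq t}$ and splitting. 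In either route, the stochastic integral $\int_0^t\int_{\mathbb{B}}(\cdots)\tilde\eta$ is controlled by the Burkholder--Davis--Gundy inequality for Poisson integrals; the resulting quadratic-variation term is bounded via Corollary \ref{--cor2-2-2} by $\int_0^t\int_{\mathbb{B}}|l|^2(1+\|\mathbf{u}_n\|_{\mathbb{L}^2}^2)\nu(\mathrm{d}l)\mathrm{d}s$, and Young's inequality lets us absorb a small fraction of $\mathbb{E}\sup_{s\leq t}\|\mathbf{u}_n(s)\|_{\mathbb{L}^2}^{2p}$ back to the left-hand side. A final Gronwall lemma in $t\mapsto\mathbb{E}\sup_{s\leq t}\|\mathbf{u}_n(s)\|_{\mathbb{L}^2}^{2p}$ yields \eqref{1lem32} with a constant independent of $n$.

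The main technical hurdle is the bookkeeping around the Marcus correction: one must verify the cancellation between $\mathbf{b}_n$ and the jump Itô correction to get a clean $\|\Phi_n\|_{\mathbb{L}^2}^2-\|\mathbf{u}_n\|_{\mathbb{L}^2}^2$ form before estimating, and (for $p>1$) handle the higher-order Taylor residuals in the jump expansion so that Corollary \ref{--cor2-2-2} applies uniformly in $n$. The deterministic dissipative terms and the cross-product cancellation are the familiar pieces; the novelty here is purely on the stochastic side.
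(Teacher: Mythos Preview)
Your proposal is correct and follows essentially the same approach as the paper: apply the It\^o formula to $\tfrac{1}{2}\|\mathbf{u}_n\|_{\mathbb{L}^2}^2$, use the dissipative structure of the drift (the paper handles the $+\|\nabla\mathbf{u}_n\|_{\mathbb{L}^2}^2$ term via the interpolation $\|\nabla\mathbf{u}_n\|_{\mathbb{L}^2}^2\le \varepsilon\|\Delta\mathbf{u}_n\|_{\mathbb{L}^2}^2+C_\varepsilon\|\mathbf{u}_n\|_{\mathbb{L}^2}^2$, which is what your ``harmless lower-order terms'' phrase is hiding), write the Marcus jump correction in the compact form $\mathbf{F}(\Phi_n)-\mathbf{F}(\mathbf{u}_n)-l\mathbf{F}'(\mathbf{u}_n)(J_n(\mathbf{u}_n))$, then raise to the $p$-th power, take $\sup$, apply BDG with Young's inequality to absorb, and close with Gronwall. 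The paper opts for your second route for $p>1$ (raising the scalar identity to power $p$) rather than applying It\^o to $\|\mathbf{u}_n\|_{\mathbb{L}^2}^{2p}$.
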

\begin{proof}[\emph{\textbf{Proof}}] Applying the It\^{o} formula to $\{\mathbf{F}:\mathbf{u}_n\mapsto\frac{1}{2}\|\mathbf{u}_n\|_{\mathbb{L}^2}^2\}$, we have
\begin{equation}\label{2lem32}
\begin{split}
&\frac{1}{2}\|\mathbf{u}_n(t)\|_{\mathbb{L}^2}^2-\int_0^t\|\nabla \mathbf{u}_n\|_{\mathbb{L}^2}^2\,\mathrm{d}s+\int_0^t\|\Delta \mathbf{u}_n\|_{\mathbb{L}^2}^2\,\mathrm{d}s+2\int_0^t\| \mathbf{u}_n\|_{\mathbb{L}^4}^4\,\mathrm{d}s\\
&+4\int_0^t\|\mathbf{u}_n\cdot\nabla\mathbf{u}_n\|_{\mathbb{L}^2}^2\,\mathrm{d}s+2\int_0^t\||\mathbf{u}_n||\nabla\mathbf{u}_n|\|_{\mathbb{L}^2}^2\,\mathrm{d}s\\
&=\frac{1}{2}\|\mathbf{u}_n(0)\|_{\mathbb{L}^2}^2+2\int_0^t\| \mathbf{u}_n\|_{\mathbb{L}^2}^2\,\mathrm{d}s+\int_0^t\int_{\mathbb{B}}\mathbf{F}(\Phi_n(l,\mathbf{u}_n))-\mathbf{F}(\mathbf{u}_n)\tilde{\eta}(\mathrm{d}s,\mathrm{d}l)\\
&+\int_0^t\int_{\mathbb{B}}\mathbf{F}(\Phi_n(l,\mathbf{u}_n))-\mathbf{F}(\mathbf{u}_n)-l\mathbf{F}'(\mathbf{u}_n)(J_n(\mathbf{u}_n))\nu(\mathrm{d}l)\mathrm{d}s.
\end{split}
\end{equation}
Using integration by parts, the H\"{o}lder inequality and Young's inequality, we have
\begin{equation}\label{3lem32}
\begin{split}
\|\nabla \mathbf{u}_n\|_{\mathbb{L}^2}^2=-(\mathbf{u}_n,\Delta\mathbf{u}_n)_{\mathbb{L}^2}\leq\varepsilon\|\Delta\mathbf{u}_n\|_{\mathbb{L}^2}^2+C_{\varepsilon}\|\mathbf{u}_n\|_{\mathbb{L}^2}^2.
\end{split}
\end{equation}
Applying the Young inequality and Jensen inequality, we see that for all $p\geq1$
\begin{equation}\label{4lem32}
\begin{split}
&\left|\int_0^t\int_{\mathbb{B}}\mathbf{F}(\Phi_n(l,\mathbf{u}_n))-\mathbf{F}(\mathbf{u}_n)-l\mathbf{F}'(\mathbf{u}_n)(J_n(\mathbf{u}_n))\nu(\mathrm{d}l)\mathrm{d}s\right|^p\\
&\leq\left|\int_0^t\int_{\mathbb{B}}C(1+\| \mathbf{u}_n\|_{\mathbb{L}^2}^2)+|l|\|\mathbf{u}_n\|_{\mathbb{L}^2}(1+\| \mathbf{u}_n\|_{\mathbb{L}^2})\nu(\mathrm{d}l)\mathrm{d}s\right|^p\\
&\leq C+C\int_0^t\| \mathbf{u}_n\|_{\mathbb{L}^2}^{2p}\,\mathrm{d}s.
\end{split}
\end{equation}
Now using the Burkh\"{o}lder-Davis-Gundy (BDG) inequality, H\"{o}lder's inequality and Young's inequality, we see that for any $p\geq1$
\begin{equation}\label{5lem32}
\begin{split}
&\mathbb{E}\left[\sup_{s\in[0,t]}\left|\int_0^{s}\int_{\mathbb{B}}\mathbf{F}(\Phi_n(l,\mathbf{u}_n))-\mathbf{F}(\mathbf{u}_n)\tilde{\eta}(\mathrm{d}s',\mathrm{d}l)\right|^p\right]\\
&\leq C\mathbb{E}\left|\int_0^t\int_{\mathbb{B}}\left[\mathbf{F}(\Phi_n(l,\mathbf{u}_n))-\mathbf{F}(\mathbf{u}_n)\right]^2\nu(\mathrm{d}l)\mathrm{d}s\right|^\frac{p}{2}\\
&\leq C+C\mathbb{E}\left(\sup_{s\in[0,t]}\|\mathbf{u}_n(s)\|_{\mathbb{L}^2}^{2}\int_0^t\| \mathbf{u}_n\|_{\mathbb{L}^2}^{2}\,\mathrm{d}s\right)^{\frac{p}{2}}\\
&\leq C+\varepsilon\mathbb{E}\sup_{s\in[0,t]}\|\mathbf{u}_n(s)\|_{\mathbb{L}^2}^{2p} +C_{\varepsilon}\mathbb{E}\int_0^t\|\mathbf{u}_n(s)\|_{\mathbb{L}^2}^{2p}\,\mathrm{d}s.
\end{split}
\end{equation}
Thus plugging \eqref{3lem32}-\eqref{5lem32} into \eqref{2lem32} and  choosing $\varepsilon$ small enough, we infer that
\begin{equation*}
\begin{split}
&\mathbb{E}\sup_{s\in[0,t]}\|\mathbf{u}_n(s)\|_{\mathbb{L}^2}^{2p}+\mathbb{E}\left(\int_0^t\|\nabla \mathbf{u}_n(s)\|_{\mathbb{L}^2}^2\,\mathrm{d}s\right)^p+\mathbb{E}\left(\int_0^t\|\Delta \mathbf{u}_n(s)\|_{\mathbb{L}^2}^2\,\mathrm{d}s\right)^p\\
&+\mathbb{E}\left(\int_0^t\|\mathbf{u}_n(s)\|_{\mathbb{L}^4}^4\,\mathrm{d}s\right)^p+\mathbb{E}\left(\int_0^t\||\mathbf{u}_n(s)||\nabla\mathbf{u}_n(s)|\|_{\mathbb{L}^2}^2\,\mathrm{d}s\right)^p\\
&\leq C+C\int_0^t\mathbb{E}\sup_{s'\in[0,s]}\|\mathbf{u}_n(s')\|_{\mathbb{L}^2}^{2p}\,\mathrm{d}s.
\end{split}
\end{equation*}
The estimate \eqref{1lem32} then follows from the Gronwall lemma. The proof is  completed.\end{proof}

If the initial value belongs to $\mathbb{H}^1$, then we further obtain the following estimates.
\begin{lemma}\label{lem35} Let $\mathcal{O}\subset\mathbb{R}^d$,~$d=1,2,3$, be a bounded domain with $C^{2,1}$-boundary. Then for any $p\geq1$, $n\in\mathbb{N}$ and every $t\in[0,T]$, there exists a positive constant $C=C(\|\mathbf{u}_0\|_{\mathbb{H}^1},p,\mathbf{h},\mathbf{g},T)$ independent of $n$ such that
\begin{equation}\label{1lem35}
\begin{split}
&\mathbb{E}\sup_{s\in[0,t]}\|\mathbf{u}_n(s)\|_{\mathbb{H}^1}^{2p}+\mathbb{E}\left(\int_0^t\|\mathbf{u}_n\|_{\mathbb{H}^3}^2\,\mathrm{d}s\right)^p\leq C,
\end{split}
\end{equation}
and
\begin{equation}\label{1lem36}
\begin{split}
&\sum_{j=1}^5\mathbb{E}\|F_n^j(\mathbf{u}_n)\|_{L^2(0,T;(\mathbb{H}^{1})^*)}^p\leq C.
\end{split}
\end{equation}
\end{lemma}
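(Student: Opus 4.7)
The strategy is to apply the L\'{e}vy--It\^{o} formula to the functional $\Psi(\mathbf{u}):=\frac{1}{2}\|\nabla\mathbf{u}\|_{\mathbb{L}^2}^2$, whose Fr\'{e}chet derivative satisfies $\Psi'(\mathbf{u})\cdot v=-(\Delta\mathbf{u},v)_{\mathbb{L}^2}$. (Equivalently, one can work with the micromagnetic energy $\mathcal{E}(\mathbf{u}):=\frac{1}{2}\|\nabla\mathbf{u}\|_{\mathbb{L}^2}^2+\frac{1}{2}\|1-|\mathbf{u}|^2\|_{\mathbb{L}^2}^2$, whose derivative is $-\mathbf{H}_{\mathrm{eff}}$, but $\Psi$ is enough for \eqref{1lem35}.) Pairing the deterministic Galerkin drift in \eqref{sys2} against $-\Delta\mathbf{u}_n$, and using the Neumann conditions inherited by every element of $S_n$ (so that $(\Delta\mathbf{u}_n,\Delta^2\mathbf{u}_n)=-\|\nabla\Delta\mathbf{u}_n\|_{\mathbb{L}^2}^2$), produces the dissipation $-\|\nabla\Delta\mathbf{u}_n\|_{\mathbb{L}^2}^2-2\int_\mathcal{O}|\mathbf{u}_n|^2|\Delta\mathbf{u}_n|^2\,\mathrm{d}x$, while the chiral term $\mathbf{u}_n\times\Delta\mathbf{u}_n$ drops out pointwise against $-\Delta\mathbf{u}_n$. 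The spurious $+\|\Delta\mathbf{u}_n\|_{\mathbb{L}^2}^2$ coming from $-\Delta\mathbf{u}_n$ and the cross-derivative pieces of $2\Delta(|\mathbf{u}_n|^2\mathbf{u}_n)$ are then absorbed into the dissipation by Young's inequality combined with Gagliardo--Nirenberg interpolations and the $L^p(\Omega;L^2(0,T;\mathbb{H}^2)\cap L^4(0,T;\mathbb{L}^4))$ bounds already provided by Lemma \ref{lem32}.

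For the jump part, the It\^{o} formula merges the compensator drift $\Psi'(\mathbf{u}_n)\cdot\mathbf{b}_n(\mathbf{u}_n)$ with the Poisson second-order correction $\int_{\mathbb{B}}[\Psi(\Phi_n(l,\mathbf{u}_n))-\Psi(\mathbf{u}_n)-\Psi'(\mathbf{u}_n)\cdot\mathbf{G}_n(l,\mathbf{u}_n)]\nu(\mathrm{d}l)$ into a single \emph{Marcus} correction $\int_{\mathbb{B}}[\Psi(\Phi_n(l,\mathbf{u}_n))-\Psi(\mathbf{u}_n)-l\Psi'(\mathbf{u}_n)\cdot J_n(\mathbf{u}_n)]\nu(\mathrm{d}l)$; since $\Phi_n(l,\mathbf{u}_n)=\mathbf{u}_n+lJ_n(\mathbf{u}_n)+O(l^2)$, a second-order Taylor expansion shows this correction is $O(l^2)$, hence $\nu$-integrable on the bounded support $\mathbb{B}$ and polynomially bounded in $\|\mathbf{u}_n\|_{\mathbb{H}^1}$ via Lemma \ref{lem2-2-1} and Corollary \ref{--cor2-2-2}, so that it enters Gronwall benignly. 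After raising to the $p$-th power, the purely martingale term $\int_0^\cdot\int_{\mathbb{B}}[\Psi(\Phi_n(l,\mathbf{u}_n))-\Psi(\mathbf{u}_n)]\tilde{\eta}(\mathrm{d}s,\mathrm{d}l)$ is treated by the jump-form Burkholder--Davis--Gundy inequality, and a Young absorption of $\varepsilon\,\mathbb{E}\sup_{s\le t}\|\mathbf{u}_n(s)\|_{\mathbb{H}^1}^{2p}$ into the left-hand side, followed by a stopping-time localisation and Gronwall's lemma, yields \eqref{1lem35}.

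The second estimate \eqref{1lem36} is a duality bookkeeping: $\|F_n^1\|_{(\mathbb{H}^1)^*}\lesssim\|\mathbf{u}_n\|_{\mathbb{H}^1}$; $\|F_n^2\|_{(\mathbb{H}^1)^*}\lesssim\|\mathbf{u}_n\|_{\mathbb{H}^3}$ via the Neumann identity $\langle\Delta^2\mathbf{u}_n,\phi\rangle=-(\nabla\Delta\mathbf{u}_n,\nabla\phi)$; $\|F_n^3\|_{\mathbb{L}^2}\lesssim\|\mathbf{u}_n\|_{\mathbb{H}^1}^3$ using $\mathbb{H}^1\hookrightarrow\mathbb{L}^6$; $\|F_n^4\|_{(\mathbb{H}^1)^*}\lesssim\|\mathbf{u}_n\|_{\mathbb{L}^4}\|\nabla\mathbf{u}_n\|_{\mathbb{L}^4}$ after rewriting $\mathbf{u}_n\times\Delta\mathbf{u}_n=\mathrm{div}(\mathbf{u}_n\times\nabla\mathbf{u}_n)$; and $\|F_n^5\|_{(\mathbb{H}^1)^*}\lesssim\|\mathbf{u}_n\|_{\mathbb{H}^2}^2\|\mathbf{u}_n\|_{\mathbb{H}^1}$ by the chain rule. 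Taking $L^2$ in time, raising to the $p$-th power, and invoking the interpolation $\|\mathbf{u}_n\|_{\mathbb{H}^2}^2\lesssim\|\mathbf{u}_n\|_{\mathbb{H}^1}\|\mathbf{u}_n\|_{\mathbb{H}^3}$ reduces every term to \eqref{1lem35} and Lemma \ref{lem32}. \emph{The main obstacle} throughout is the joint control of the cubic nonlinearity $\Delta(|\mathbf{u}_n|^2\mathbf{u}_n)$ and the Marcus correction: the former forces Gagliardo--Nirenberg to draw on the $\mathbb{L}^4_{t,x}$ estimate of Lemma \ref{lem32}, while the latter must be reduced via its $O(l^2)$ Taylor expansion before Corollary \ref{--cor2-2-2} can tame it; that both steps coexist without destabilising the dissipation relies essentially on the boundedness of the support $\mathbb{B}$ of $\nu$.
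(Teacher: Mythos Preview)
Your parenthetical remark is not a side-note but the crux of the matter: $\Psi(\mathbf{u})=\frac12\|\nabla\mathbf{u}\|_{\mathbb{L}^2}^2$ alone is \emph{not} enough to close \eqref{1lem35} in dimension $d=3$, and the full energy $\mathcal{E}$ (equivalently the paper's $\bar{\mathbf{F}}(\mathbf{u})=\frac12\|\nabla\mathbf{u}\|_{\mathbb{L}^2}^2+\frac12\|\mathbf{u}\|_{\mathbb{L}^4}^4-\|\mathbf{u}\|_{\mathbb{L}^2}^2$) is what the argument actually needs. When you pair $2\Delta(|\mathbf{u}_n|^2\mathbf{u}_n)$ with $-\Delta\mathbf{u}_n$, the cross-derivative pieces you identify reduce (after exploiting the good-sign term $-2\int|\mathbf{u}_n|^2|\Delta\mathbf{u}_n|^2$) to a contribution of order $\int_0^t\|\nabla\mathbf{u}_n\|_{\mathbb{L}^4}^4\,\mathrm{d}s$. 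In $d=3$, Gagliardo--Nirenberg gives $\|\nabla\mathbf{u}_n\|_{\mathbb{L}^4}^4\lesssim\|\nabla\mathbf{u}_n\|_{\mathbb{L}^2}^{5/2}\|\nabla\Delta\mathbf{u}_n\|_{\mathbb{L}^2}^{3/2}$, and after Young-absorbing the $\mathbb{H}^3$-dissipation you are left with $C_\varepsilon\int_0^t\|\nabla\mathbf{u}_n\|_{\mathbb{L}^2}^{10}\,\mathrm{d}s$ on the right. This produces a \emph{quintic} Gronwall inequality in $\|\nabla\mathbf{u}_n\|_{\mathbb{L}^2}^2$, which blows up. The $L^2(0,T;\mathbb{H}^2)\cap L^4(0,T;\mathbb{L}^4)$ information from Lemma~\ref{lem32} does not rescue this: from $L^\infty_t\mathbb{L}^2\cap L^2_t\mathbb{H}^2$ you can interpolate at best $\mathbf{u}_n\in L^4_t\mathbb{H}^1$, which is not enough to make $\int_0^t\|\nabla\mathbf{u}_n\|_{\mathbb{L}^2}^{8}\,\mathrm{d}s$ an admissible Gronwall coefficient without already knowing $\sup_t\|\mathbf{u}_n\|_{\mathbb{H}^1}$---precisely what you are proving.

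The paper avoids this circularity by testing against $-\mathbf{H}_{\mathrm{eff}}^n$ (the variational derivative of $\bar{\mathbf{F}}$), which turns the entire deterministic drift into the clean dissipation $\|\mathbf{H}_{\mathrm{eff}}^n\|_{\mathbb{L}^2}^2+\|\nabla\mathbf{H}_{\mathrm{eff}}^n\|_{\mathbb{L}^2}^2$ with \emph{no} nonlinear cross terms to absorb. The cubic nonlinearity is hidden inside $\mathbf{H}_{\mathrm{eff}}^n$ itself, and only after Gronwall has delivered $\sup_t\|\mathbf{u}_n\|_{\mathbb{H}^1}$ and $\sup_t\|\mathbf{u}_n\|_{\mathbb{L}^4}$ does one extract $\int_0^t\|\nabla\Delta\mathbf{u}_n\|_{\mathbb{L}^2}^2$ from $\int_0^t\|\nabla\mathbf{H}_{\mathrm{eff}}^n\|_{\mathbb{L}^2}^2$ by a separate bootstrap step. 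The price is that the jump part now involves $\|\Phi_n(l,\mathbf{u}_n)\|_{\mathbb{L}^4}^4$, handled via the second estimate in Lemma~\ref{lem2-2-1}; this is the only place where the $\mathbb{L}^4$-part of $\bar{\mathbf{F}}$ costs anything, and it is cheap. Your treatment of the Marcus correction and of \eqref{1lem36} is otherwise in line with the paper.
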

\begin{proof}[\emph{\textbf{Proof}}]Here we will make full use of the functional $\{\mathbf{\bar{F}}: \mathbf{u}_n\mapsto\frac{1}{2}\|\nabla\mathbf{u}_n\|_{\mathbb{L}^2}^2+\frac{1}{2}\|\mathbf{u}_n\|_{\mathbb{L}^4}^4-\|\mathbf{u}_n\|_{\mathbb{L}^2}^2\}$. Applying the It\^{o} formula to $\{\mathbf{F}_1: \mathbf{u}_n\mapsto\frac{1}{2}\|\nabla\mathbf{u}_n\|_{\mathbb{L}^2}^2\}$, $\{\mathbf{F}_2: \mathbf{u}_n\mapsto\frac{1}{2}\|\mathbf{u}_n\|_{\mathbb{L}^4}^4\}$ as well as $\{\mathbf{F}_3: \mathbf{u}_n\mapsto\|\mathbf{u}_n\|_{\mathbb{L}^2}^2\}$ respectively and recalling the definition of $\mathbf{H}_{\textrm{eff}}^n$ in \eqref{nota1}, we infer that
\begin{equation}\label{2lem35}
\begin{split}
&\mathbf{\bar{F}}(\mathbf{u}_n)=\frac{1}{2}\|\nabla\mathbf{u}_n(0)\|_{\mathbb{L}^2}^2+\frac{1}{2}\|\mathbf{u}_n(0)\|_{\mathbb{L}^4}^4-\|\mathbf{u}_n(0)\|_{\mathbb{L}^2}^2\\
&-\int_0^t(F_n(\mathbf{u}_n,\mathbf{H}_{\textrm{eff}}^n),\mathbf{H}_{\textrm{eff}}^n)_{\mathbb{L}^2}\,\mathrm{d}s+\sum_{j=1}^3\int_0^t\int_{\mathbb{B}}\mathbf{F}_j(\Phi_n(l,\mathbf{u}_n))-\mathbf{F}_j(\mathbf{u}_n)\tilde{\eta}(\mathrm{d}s,\mathrm{d}l)\\
&+\sum_{j=1}^3\int_0^t\int_{\mathbb{B}}\mathbf{F}_j(\Phi_n(l,\mathbf{u}_n))-\mathbf{F}_j(\mathbf{u}_n)\nu(\mathrm{d}l)\mathrm{d}s+\int_0^t\int_{\mathbb{B}}l(J_n(\mathbf{u}_n),\mathbf{H}_{\textrm{eff}}^n)_{\mathbb{L}^2}\nu(\mathrm{d}l)\mathrm{d}s.
\end{split}
\end{equation}
Noting that $(F _n(\mathbf{u}_n,\mathbf{H}_{\textrm{eff}}^n),\mathbf{H}_{\textrm{eff}}^n)_{\mathbb{L}^2}=\|\mathbf{H}_{\textrm{eff}}^n\|_{\mathbb{L}^2}^2+\|\nabla\mathbf{H}_{\textrm{eff}}^n\|_{\mathbb{L}^2}^2$,
we see from \eqref{2lem35} that
\begin{equation}\label{4lem35}
\begin{split}
&\mathbf{\bar{F}}(\mathbf{u}_n)+\int_0^t\|\mathbf{H}_{\textrm{eff}}^n\|_{\mathbb{L}^2}^2\,\mathrm{d}s+\int_0^t\|\nabla\mathbf{H}_{\textrm{eff}}^n\|_{\mathbb{L}^2}^2\,\mathrm{d}s\\
&\leq C+\sum_{j=1}^3\int_0^t\int_{\mathbb{B}}\mathbf{F}_j(\Phi_n(l,\mathbf{u}_n))-\mathbf{F}_j(\mathbf{u}_n)\tilde{\eta}(\mathrm{d}s,\mathrm{d}l)\\
&+\sum_{j=1}^3\int_0^t\int_{\mathbb{B}}\mathbf{F}_j(\Phi_n(l,\mathbf{u}_n))-\mathbf{F}_j(\mathbf{u}_n)\nu(\mathrm{d}l)\mathrm{d}s+\int_0^t\int_{\mathbb{B}}l(J_n(\mathbf{u}_n),\mathbf{H}_{\textrm{eff}}^n)_{\mathbb{L}^2}\nu(\mathrm{d}l)\mathrm{d}s.
\end{split}
\end{equation}
According to Lemma \ref{lem2-2-1} and Remark \ref{rem331}, we infer that
\begin{equation}\label{5lem35}
\begin{split}
&\sum_{j=1}^3\int_0^t\int_{\mathbb{B}}\mathbf{F}_j(\Phi_n(l,\mathbf{u}_n))-\mathbf{F}_j(\mathbf{u}_n)\nu(\mathrm{d}l)\mathrm{d}s\leq\int_0^tC(1+\|\mathbf{u}_n\|_{\mathbb{H}^1}^2+\|\mathbf{u}_n\|_{\mathbb{L}^4}^4)\,\mathrm{d}s.
\end{split}
\end{equation}
To estimate the last term of \eqref{4lem35}, by H\"{o}lder's inequality and Young's inequality, we have
\begin{equation}\label{6lem35}
\begin{split}
&\left|\int_0^t\int_{\mathbb{B}}l(J_n(\mathbf{u}_n),\mathbf{H}_{\textrm{eff}}^n)_{\mathbb{L}^2}\nu(\mathrm{d}l)\mathrm{d}s\right|\leq\varepsilon\int_0^t\|\mathbf{H}_{\textrm{eff}}^n\|_{\mathbb{L}^2}^2\,\mathrm{d}s+C_{\varepsilon}\int_0^t1+\|\mathbf{u}_n\|_{\mathbb{L}^2}^2\,\mathrm{d}s.
\end{split}
\end{equation}
Moreover,  using the BDG inequality, the H\"{o}lder inequality as well as Young's inequality, and then noting Remark \ref{rem331} and applying Lemma \ref{lem32}, we infer that for all $p\geq1$
\begin{equation}\label{7lem35}
\begin{split}
&\sum_{j=1}^3\mathbb{E}\left[\sup_{s\in[0,t]}\left|\int_0^s\int_{\mathbb{B}}\mathbf{F}_j(\Phi_n(l,\mathbf{u}_n))-\mathbf{F}_j(\mathbf{u}_n)\tilde{\eta}(\mathrm{d}s',\mathrm{d}l)\right|^p\right]\\
&\leq \sum_{j=1}^3C\mathbb{E}\left|\int_0^t\int_{\mathbb{B}}\left[\mathbf{F}_j(\Phi_n(l,\mathbf{u}_n))-\mathbf{F}_j(\mathbf{u}_n)\right]^2\nu(\mathrm{d}l)\mathrm{d}s\right|^\frac{p}{2}\\
&\leq C+C\mathbb{E}\left(\int_0^t\| \mathbf{u}_n\|_{\mathbb{L}^2}^{4}\,\mathrm{d}s\right)^{\frac{p}{2}}+C\mathbb{E}\left(\int_0^t\| \nabla\mathbf{u}_n\|_{\mathbb{L}^2}^{4}\,\mathrm{d}s\right)^{\frac{p}{2}}+C\mathbb{E}\left(\int_0^t\| \mathbf{u}_n\|_{\mathbb{L}^4}^{8}\,\mathrm{d}s\right)^{\frac{p}{2}}\\
&\leq C+C\mathbb{E}\left(\sup_{s\in[0,t]}\|\nabla\mathbf{u}_n(s)\|_{\mathbb{L}^2}^{2}\int_0^t\|\nabla \mathbf{u}_n\|_{\mathbb{L}^2}^{2}\,\mathrm{d}s\right)^{\frac{p}{2}}+C\mathbb{E}\left(\sup_{s\in[0,t]}\|\mathbf{u}_n(s)\|_{\mathbb{L}^4}^{4}\int_0^t\| \mathbf{u}_n\|_{\mathbb{L}^4}^{4}\,\mathrm{d}s\right)^{\frac{p}{2}}\\
&\leq C+\varepsilon\mathbb{E}\sup_{s\in[0,t]}\|\nabla\mathbf{u}_n(s)\|_{\mathbb{L}^2}^{2p} +C_{\varepsilon}\mathbb{E}\left(\int_0^t\|\nabla\mathbf{u}_n\|_{\mathbb{L}^2}^{2}\,\mathrm{d}s\right)^p+\varepsilon\mathbb{E}\sup_{s\in[0,t]}\|\mathbf{u}_n(s)\|_{\mathbb{L}^4}^{4p}\\
& +C_{\varepsilon}\mathbb{E}\left(\int_0^t\|\mathbf{u}_n\|_{\mathbb{L}^4}^{4}\,\mathrm{d}s\right)^p.
\end{split}
\end{equation}
Plugging \eqref{5lem35}-\eqref{7lem35} into \eqref{4lem35} and choosing $\varepsilon$ small enough, we derive that
\begin{equation*}
\begin{split}
&\mathbb{E}\sup_{s\in[0,t]}\left(\|\nabla\mathbf{u}_n(s)\|_{\mathbb{L}^2}^{2p}+\|\mathbf{u}_n(s)\|_{\mathbb{L}^4}^{4p}\right)+\mathbb{E}\left(\int_0^t\|\mathbf{H}_{\textrm{eff}}^n\|_{\mathbb{H}^1}^2\,\mathrm{d}s\right)^p\\
&\leq C+C\mathbb{E}\int_0^t\|\nabla\mathbf{u}_n\|_{\mathbb{L}^2}^{2p}+\|\mathbf{u}_n\|_{\mathbb{L}^4}^{4p}\,\mathrm{d}s,
\end{split}
\end{equation*}
which combined with the Gronwall lemma implies that
\begin{equation}\label{ss1}
\begin{split}
&\mathbb{E}\sup_{s\in[0,t]}\left(\|\nabla\mathbf{u}_n(s)\|_{\mathbb{L}^2}^{2p}+\|\mathbf{u}_n(s)\|_{\mathbb{L}^4}^{4p}\right)+\mathbb{E}\left(\int_0^t\|\mathbf{H}_{\textrm{eff}}^n\|_{\mathbb{H}^1}^2\,\mathrm{d}s\right)^p\leq C.
\end{split}
\end{equation}
Moreover, recalling the definition of $\mathbf{H}_{\textrm{eff}}^n$ and using the embedding $\mathbb{H}^1\hookrightarrow \mathbb{L}^{6}$, we see from \eqref{1lem32} and \eqref{ss1} that
\begin{equation*}
\begin{split}
&\mathbb{E}\left(\int_0^t\|\nabla\Delta\mathbf{u}_n\|_{\mathbb{L}^2}^2\,\mathrm{d}s\right)^p\\
&\leq C\mathbb{E}\left(\int_0^t\|\nabla\mathbf{u}_n\|_{\mathbb{L}^2}^2\,\mathrm{d}s\right)^p+C\mathbb{E}\left(\int_0^t\|\nabla\mathbf{H}_{\textrm{eff}}^n\|_{\mathbb{L}^2}^2\,\mathrm{d}s\right)^p+C\mathbb{E}\left(\int_0^t\|\nabla(|\mathbf{u}_n|^2\mathbf{u}_n)\|_{\mathbb{L}^2}^2\,\mathrm{d}s\right)^p\\
&\leq C+\mathbb{E}\left(\int_0^t\|\mathbf{u}_n\|_{\mathbb{L}^6}^4\|\nabla\mathbf{u}_n\|_{\mathbb{L}^6}^2\,\mathrm{d}s\right)^p\leq C+C\mathbb{E}\left(\sup_{s\in[0,t]}\|\mathbf{u}_n(s)\|_{\mathbb{H}^1}^4\int_0^t\|\mathbf{u}_n\|_{\mathbb{H}^2}^2\,\mathrm{d}s\right)^p\\
&\leq C+C\mathbb{E}\sup_{s\in[0,t]}\|\mathbf{u}_n(s)\|_{\mathbb{H}^1}^{8p}+C\mathbb{E}\left(\int_0^t\|\mathbf{u}_n\|_{\mathbb{H}^2}^2\,\mathrm{d}s\right)^{2p}\leq C.
\end{split}
\end{equation*}
Inequality \eqref{1lem35} then follows from the standard elliptic regularity result with Neumann boundary data \cite{grisvard2011elliptic}.

The following demonstrates the validity of inequality \eqref{1lem36}. Let $\phi\in \mathbb{H}^1$. By the H\"{o}lder inequality and $\mathbb{H}^1\hookrightarrow \mathbb{L}^6$, it follows that
\begin{equation*}
\begin{split}
|\left(F_n^1(\mathbf{u}_n),\phi\right)_{\mathbb{L}^2}|&\leq|\left(\nabla\mathbf{u}_n,\nabla\phi\right)_{\mathbb{L}^2}|+|\left(\mathbf{u}_n,\phi\right)_{\mathbb{L}^2}|\leq C\|\mathbf{u}_n\|_{\mathbb{H}^1}\|\phi\|_{\mathbb{H}^1},\\
|\left(F_n^2(\mathbf{u}_n),\phi\right)_{\mathbb{L}^2}|&=|\left(\nabla\Delta\mathbf{u}_n,\nabla\phi\right)_{\mathbb{L}^2}|\leq\|\nabla\Delta\mathbf{u}_n\|_{\mathbb{L}^2}\|\phi\|_{\mathbb{H}^1},\\
|\left(F_n^3(\mathbf{u}_n),\phi\right)_{\mathbb{L}^2}|&\leq C\|\mathbf{u}_n\|_{\mathbb{L}^2}\|\mathbf{u}_n\|_{\mathbb{L}^6}^2\|\phi\|_{\mathbb{L}^6}\leq C\|\mathbf{u}_n\|_{\mathbb{L}^2}\|\mathbf{u}_n\|_{\mathbb{H}^1}^2\|\phi\|_{\mathbb{H}^1},\\
|\left(F_n^4(\mathbf{u}_n),\phi\right)_{\mathbb{L}^2}|&\leq C|\left(\mathbf{u}_n\times\nabla\mathbf{u}_n,\nabla\phi\right)_{\mathbb{L}^2}|\leq C\|\mathbf{u}_n\|_{\mathbb{L}^4}\|\nabla\mathbf{u}_n\|_{\mathbb{L}^4}\|\nabla\phi\|_{\mathbb{L}^2}\\
&\leq C\|\mathbf{u}_n\|_{\mathbb{H}^1}\|\mathbf{u}_n\|_{\mathbb{H}^2}\|\phi\|_{\mathbb{H}^1},\\
|\left(F_n^5(\mathbf{u}_n),\phi\right)_{\mathbb{L}^2}|&\leq C\|\nabla(|\mathbf{u}_n|^2\mathbf{u}_n)\|_{\mathbb{L}^2}\|\nabla\phi\|_{\mathbb{L}^2}\leq C\||\mathbf{u}_n|^2|\nabla\mathbf{u}_n|\|_{\mathbb{L}^2}\|\nabla\phi\|_{\mathbb{L}^2}\\
&\leq C\|\mathbf{u}_n\|_{\mathbb{L}^6}^2\|\nabla\mathbf{u}_n\|_{\mathbb{L}^6}\|\nabla\phi\|_{\mathbb{L}^2}\leq C\|\mathbf{u}_n\|_{\mathbb{H}^1}^2\|\mathbf{u}_n\|_{\mathbb{H}^2}\|\phi\|_{\mathbb{H}^1}.
\end{split}
\end{equation*}
Thus by using the inequality \eqref{1lem35}, we derive that for all $p\geq1$
\begin{equation*}\label{3lem36}
\begin{split}
&\sum_{j=1}^5\mathbb{E}\|F_n^j(\mathbf{u}_n)\|_{L^2(0,T;(\mathbb{H}^{1})^*)}^p\\
&\leq C+C\mathbb{E}\left(\int_0^T\|\mathbf{u}_n\|_{\mathbb{H}^{1}}^2\,\mathrm{d}s\right)^{\frac{p}{2}}+C\mathbb{E}\left(\int_0^T\|\nabla\Delta\mathbf{u}_n\|_{\mathbb{L}^{2}}^2\,\mathrm{d}s\right)^{\frac{p}{2}}+C\mathbb{E}\left(\int_0^T\|\mathbf{u}_n\|_{\mathbb{L}^{2}}^2\|\mathbf{u}_n\|_{\mathbb{H}^{1}}^4\,\mathrm{d}s\right)^{\frac{p}{2}}\\
&+C\mathbb{E}\left(\int_0^T\|\mathbf{u}_n\|_{\mathbb{H}^{1}}^2\|\mathbf{u}_n\|_{\mathbb{H}^{2}}^2\,\mathrm{d}s\right)^{\frac{p}{2}}+C\mathbb{E}\left(\int_0^T\|\mathbf{u}_n\|_{\mathbb{H}^{1}}^4\|\mathbf{u}_n\|_{\mathbb{H}^{2}}^2\,\mathrm{d}s\right)^{\frac{p}{2}}\\
&\leq C+C\mathbb{E}\left[\sup_{t\in[0,T]}\|\mathbf{u}_n(t)\|_{\mathbb{H}^{1}}^{2p}\left(\int_0^T\|\mathbf{u}_n\|_{\mathbb{H}^{2}}^2\,\mathrm{d}s\right)^{\frac{p}{2}}\right]\\
&\leq C+C\mathbb{E}\sup_{t\in[0,T]}\|\mathbf{u}_n(t)\|_{\mathbb{H}^{1}}^{4p}+C\mathbb{E}\left(\int_0^T\|\mathbf{u}_n\|_{\mathbb{H}^{2}}^2\,\mathrm{d}s\right)^{p}\leq C.
\end{split}
\end{equation*}
The proof is completed.\end{proof}

\section{Compactness result}\label{sec4}
In this section we shall provide a compactness criterion in the following phase space.
\begin{equation*}\label{def4-4}
\begin{split}
\mathcal{Z}_T^1:=\mathbb{D}([0,T];(\mathbb{H}^{\beta_1})^*)\cap\mathbb{D}([0,T];\mathbb{H}^1_w)\cap L^2_w(0,T;\mathbb{H}^3) \cap L^2(0,T;\mathbb{W}^{2,4})\cap L^p(0,T;\mathbb{L}^4),
\end{split}
\end{equation*}
for some $\beta_1>1$ and $p>1$. Let $\mathcal{T}^1$ be the supremum of the corresponding topology.

At first we prove a strong convergence result that can be effectively used in stochastic parabolic partial differential equations driven by L\'{e}vy noise.

\begin{lemma}\label{adlem2} Let $B_0\subset B_1\subset B_2$ be three Banach spaces where $B_0$ is also a reflexive space. We assume that the embedding of $B_1$ in $B_2$ is continuous and the embedding of $B_0$ in $B_1$ is compact. Let $p\in(1,\infty)$. If there exists a sequence $\{f_n\}_{n\in\mathbb{N}}$ such that
\begin{enumerate}
\item[(1)] $f_n\rightarrow f$ in $L^p(0,T;B_2)$;
\item[(2)] $\sup_{n\in\mathbb{N}}\|f_n\|_{L^p(0,T;B_0)}<\infty$,
\end{enumerate}
then there exists a subsequence $\{f_{n_k}\}$ such that
$
f_{n_k}\rightarrow f~\textrm{in}~L^p(0,T;B_1).
$
\end{lemma}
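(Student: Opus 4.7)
The plan is to combine the reflexivity of $B_0$ with the classical Ehrling--Lions interpolation inequality: under the standing hypotheses (compact embedding $B_0 \hookrightarrow B_1$ and continuous embedding $B_1 \hookrightarrow B_2$), for every $\varepsilon > 0$ there exists $C_\varepsilon > 0$ such that
\begin{equation*}
\|u\|_{B_1} \leq \varepsilon \|u\|_{B_0} + C_\varepsilon \|u\|_{B_2}, \quad \forall\, u \in B_0.
\end{equation*}
Raising to the $p$-th power (at the cost of a factor depending only on $p$) and integrating over $[0,T]$ yields the corresponding inequality in the Bochner spaces. The strategy is therefore to extract the subsequence from the $L^p(0,T;B_0)$-bound, identify the limit as $f$, and then exploit the interpolation inequality to conclude.

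First, I would show that $f \in L^p(0,T;B_0)$. Since $B_0$ is reflexive and $1 < p < \infty$, the space $L^p(0,T;B_0)$ is reflexive. By hypothesis (2) and the Banach--Alaoglu theorem, there is a subsequence (not relabeled) and some $g \in L^p(0,T;B_0)$ with $f_n \rightharpoonup g$ weakly in $L^p(0,T;B_0)$. The composition $B_0 \hookrightarrow B_1 \hookrightarrow B_2$ is continuous, so the weak convergence transfers to $L^p(0,T;B_2)$; by hypothesis (1) the strong limit there is $f$, hence $g = f$ almost everywhere and $\|f\|_{L^p(0,T;B_0)} \leq \liminf_n \|f_n\|_{L^p(0,T;B_0)} =: M < \infty$.

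Second, I would apply the integrated Ehrling--Lions inequality to $f_n - f$ to obtain
\begin{equation*}
\|f_n - f\|_{L^p(0,T;B_1)}^p \leq C_p\, \varepsilon^p \|f_n - f\|_{L^p(0,T;B_0)}^p + C_p\, C_\varepsilon^p \|f_n - f\|_{L^p(0,T;B_2)}^p.
\end{equation*}
The first term on the right is bounded by $C_p\, \varepsilon^p (2M)^p$ uniformly in $n$, while the second tends to $0$ as $n \to \infty$ by hypothesis (1). Taking $\limsup_{n \to \infty}$ followed by $\varepsilon \downarrow 0$ yields $\|f_n - f\|_{L^p(0,T;B_1)} \to 0$ along the extracted subsequence.

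The main (mild) obstacle is the first step: showing that the limit $f$ actually lies in $L^p(0,T;B_0)$, which is where the reflexivity hypothesis on $B_0$ is essential. Without it, one would not be guaranteed a weakly convergent subsequence in $L^p(0,T;B_0)$, and the $B_0$-term in the Ehrling inequality could not be controlled uniformly. Once this is secured, everything else is a routine use of Ehrling's inequality, and no Aubin--Lions style time-regularity assumption is needed because the strong convergence in $L^p(0,T;B_2)$ is already part of the hypotheses.
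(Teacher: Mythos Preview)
Your proposal is correct and follows essentially the same approach as the paper: extract a weakly convergent subsequence in $L^p(0,T;B_0)$ using reflexivity, identify the weak limit with $f$ via condition~(1), and then apply the Ehrling--Lions interpolation inequality to $f_{n_k}-f$, letting $\varepsilon\downarrow 0$ after taking $\limsup$. The paper's proof is organized identically, with only cosmetic differences in how the interpolation inequality is stated (it writes the $p$-th power version directly).
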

\begin{proof}[\emph{\textbf{Proof}}] By the Lions lemma, for every $\varepsilon>0$ there exists a constant $C_{\varepsilon}$ such that
\begin{equation*}
\begin{split}
\|g\|_{B_1}^p\leq\varepsilon\|g\|_{B_0}^p+C_{\varepsilon}\|g\|_{B_2}^p,~g\in B_0.
\end{split}
\end{equation*}
Noting that $B_0$ is a reflexive Banach space and $\{f_n\}$ is bounded in $L^p(0,T;B_0)$, by the Banach-Alaoglu theorem there exist a subsequence $\{f_{n_k}\}$ and a point $\tilde{f}$ such that $f_{n_k}$ weakly converges $\tilde{f}$ in $L^p(0,T;B_0)$. Due to the uniqueness of the weak limit, we see from condition $(1)$ that $f_{n_k}$ weakly converges $f$ in $L^p(0,T;B_0)$. Thus for almost all $t\in[0,T]$
\begin{equation*}
\begin{split}
\|f_{n_k}(t)-f(t)\|_{B_1}^p\leq\varepsilon\|f_{n_k}(t)-f(t)\|_{B_0}^p+C_{\varepsilon}\|f_{n_k}(t)-f(t)\|_{B_2}^p,
\end{split}
\end{equation*}
which means that
\begin{equation*}
\begin{split}
\|f_{n_k}-f\|_{L^p(0,T;B_1)}^p\leq\varepsilon\|f_{n_k}-f\|_{L^p(0,T;B_0)}^p+C_{\varepsilon}\|f_{n_k}-f\|_{L^p(0,T;B_2)}^p.
\end{split}
\end{equation*}
Passing the upper limit as $k\rightarrow\infty$ and noting that
\begin{equation*}
\begin{split}
\|f_{n_k}-f\|_{L^p(0,T;B_0)}^p\leq C_p(\|f_{n_k}\|_{L^p(0,T;B_0)}^p+\|f\|_{L^p(0,T;B_0)}^p)\leq C,
\end{split}
\end{equation*}
we infer that
$
\limsup_{k\rightarrow\infty}\|f_{n_k}-f\|_{L^p(0,T;B_1)}^p\leq C\varepsilon.
$
Due to the arbitrariness of $\varepsilon$, we have
$
\lim_{k\rightarrow\infty}\|f_{n_k}-f\|_{L^p(0,T;B_1)}^p=0.
$
The proof is thus complete.\end{proof}

\begin{remark}\label{adrem2} If the condition (1) is replace by : $f_n\rightarrow f$ in $\mathbb{D}([0,T];B_2)$, then the result is also valid. Since $f_n\rightarrow f$ in $\mathbb{D}([0,T];B_2)$, $f_n(t)\rightarrow f(t)$ in $B_2$ for all continuity points of function $f$ \cite{billingsley2013convergence}. By condition (2) and Lebesgue dominated convergence theorem, it follows that for all $p\in(1,\infty)$, $f_n\rightarrow f~\textrm{in}~L^p(0,T;B_2)$, which means that Lemma \ref{adlem2} can be used.
\end{remark}

The key compactness result is as follows.
\begin{proposition}\label{pro4-2}
A set $\mathcal{K}\subset\mathcal{Z}_T^1$ is $\mathcal{T}^1$-relatively compact if the following conditions hold:
\begin{enumerate}
\item[(1)] $\sup_{f\in\mathcal{K}}\sup_{t\in[0,T]}\|f(t)\|_{\mathbb{H}^1}<\infty$;
\item[(2)] $\sup_{f\in\mathcal{K}}\int_0^T\|f(t)\|_{\mathbb{H}^3}^2\textrm{d}t<\infty$;
\item[(3)] $\lim_{\delta\rightarrow 0}\sup_{f\in\mathcal{K}}\sup_{s,t\in[0,T],|t-s|\leq\delta}\|f(t)-f(s)\|_{(\mathbb{H}^{\beta_1})^*}=0$.
\end{enumerate}
\end{proposition}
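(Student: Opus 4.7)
The plan is to extract from an arbitrary sequence $\{f_n\}\subset\mathcal{K}$ a single subsequence converging in every one of the five topologies that compose $\mathcal{Z}_T^1$, working outwards from the strongest one. Concretely I will first produce a candidate limit $f$ via convergence in $\mathbb{D}([0,T];(\mathbb{H}^{\beta_1})^*)$, then upgrade to the weak convergences in $\mathbb{D}([0,T];\mathbb{H}^1_w)$ and $L^2_w(0,T;\mathbb{H}^3)$ using conditions~(1)--(2), and finally bootstrap to the strong convergences in $L^2(0,T;\mathbb{W}^{2,4})$ and $L^p(0,T;\mathbb{L}^4)$ by applying Lemma~\ref{adlem2} together with Remark~\ref{adrem2} twice.

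For the first step, since $\beta_1>1$ the embedding $\mathbb{H}^1\hookrightarrow(\mathbb{H}^{\beta_1})^*$ (factoring through $\mathbb{L}^2$) is compact by Rellich--Kondrachov. Condition~(1) gives pointwise relative compactness of $\{f_n(t)\}$ in $(\mathbb{H}^{\beta_1})^*$ uniformly in $t\in[0,T]$, while condition~(3) yields uniform equicontinuity in the same space. In particular each $f\in\mathcal{K}$ is actually continuous in time with values in $(\mathbb{H}^{\beta_1})^*$, so that on $\mathcal{K}$ the Skorokhod topology coincides with the uniform one. The classical Arzel\`a--Ascoli theorem then furnishes a subsequence (still denoted $\{f_n\}$) converging uniformly on $[0,T]$ to some $f\in C([0,T];(\mathbb{H}^{\beta_1})^*)\subset\mathbb{D}([0,T];(\mathbb{H}^{\beta_1})^*)$.

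Next, condition~(1) and the lower semicontinuity of $\|\cdot\|_{\mathbb{H}^1}$ with respect to the strong $(\mathbb{H}^{\beta_1})^*$-topology imply $f\in L^\infty(0,T;\mathbb{H}^1)$ and, at each $t\in[0,T]$, $f_n(t)\rightharpoonup f(t)$ in $\mathbb{H}^1$ along a subsequence; combining the uniform $\mathbb{H}^1$-bound with the density of $\mathbb{H}^{\beta_1}$ in $\mathbb{H}^1$, one can test against an arbitrary $\phi\in\mathbb{H}^1$ and conclude $f_n\to f$ in $\mathbb{D}([0,T];\mathbb{H}^1_w)$ in the sense of the definition recalled in Section~\ref{sec2}. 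Banach--Alaoglu applied to condition~(2) extracts a further subsequence converging weakly in $L^2(0,T;\mathbb{H}^3)$; uniqueness in the sense of distributions identifies the weak limit with $f$, so that $f_n\to f$ in $L^2_w(0,T;\mathbb{H}^3)$.

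Finally I invoke Lemma~\ref{adlem2} via Remark~\ref{adrem2} twice. For the $L^2(0,T;\mathbb{W}^{2,4})$-convergence I take $B_0=\mathbb{H}^3$, $B_1=\mathbb{W}^{2,4}$, $B_2=(\mathbb{H}^{\beta_1})^*$: the embedding $\mathbb{H}^3\hookrightarrow\mathbb{W}^{2,4}$ is compact for $d\le3$ by Sobolev/Rellich, condition~(2) supplies the $L^2(0,T;B_0)$-bound, and the first step provides the required convergence in $\mathbb{D}([0,T];B_2)$. For the $L^p(0,T;\mathbb{L}^4)$-convergence I take $B_0=\mathbb{H}^1$, $B_1=\mathbb{L}^4$, $B_2=(\mathbb{H}^{\beta_1})^*$: the embedding $\mathbb{H}^1\hookrightarrow\mathbb{L}^4$ is compact for $d\le3$, and condition~(1) yields $\sup_n\|f_n\|_{L^p(0,T;\mathbb{H}^1)}\le T^{1/p}\sup_n\|f_n\|_{L^\infty(0,T;\mathbb{H}^1)}<\infty$. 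I anticipate the main technical hurdle to be the careful verification of the $\mathbb{D}([0,T];\mathbb{H}^1_w)$-convergence, since its definition demands that $(f_n(\cdot),\phi)_{\mathbb{H}^1}\to(f(\cdot),\phi)_{\mathbb{H}^1}$ in $\mathbb{D}([0,T];\mathbb{R})$ for \emph{every} $\phi\in\mathbb{H}^1$; this will be handled by a density argument using the strong $(\mathbb{H}^{\beta_1})^*$-convergence combined with the uniform $\mathbb{H}^1$-bound from condition~(1), plus consistency of the various diagonal subsequences.
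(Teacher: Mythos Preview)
Your proof is correct and follows essentially the same route as the paper's: extract convergence in $\mathbb{D}([0,T];(\mathbb{H}^{\beta_1})^*)$ from conditions~(1) and~(3), upgrade to $\mathbb{D}([0,T];\mathbb{H}^1_w)$ via condition~(1), get $L^2_w(0,T;\mathbb{H}^3)$ from Banach--Alaoglu, and then apply Lemma~\ref{adlem2}/Remark~\ref{adrem2} with the same two choices of $(B_0,B_1,B_2)$ for the strong convergences. The only cosmetic differences are that the paper cites the c\`adl\`ag compactness criterion (Lemma~\ref{lem4-1}) and Lemma~\ref{adlem4-1} where you invoke Arzel\`a--Ascoli directly (having noted that condition~(3) as written is classical equicontinuity, so the functions are actually continuous in $(\mathbb{H}^{\beta_1})^*$) and sketch the density argument for $\mathbb{H}^1_w$-convergence by hand.
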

\begin{proof}[\emph{\textbf{Proof}}] Without loss of generality, we assume that $\mathcal{K}$ is a closed subset of $\mathcal{Z}_T^1$. Let $(f_m)$ be sequence in $\mathcal{K}$. We shall first prove that $(f_m)$ is compact in $\mathbb{D}([0,T];(\mathbb{H}^{\beta_1})^*)$. Indeed, from condition (1) we see that for every $t\in[0,T]$, the set $\{f_m(t)\}$ is bounded in $\mathbb{H}^1$. Since the embedding $\mathbb{H}^1\subset (\mathbb{H}^{\beta_1})^*$ is compact \cite{adams2003sobolev}, the set $\{f_m(t)\}$ is compact in $(\mathbb{H}^{\beta_1})^*$. Moreover by using the condition (3) and the result provided Lemma \ref{lem4-1}, we infer that there exists a subsequence $(f_{m_k})$ such that
\begin{equation*}
\begin{split}
f_{m_k}\rightarrow f~\textrm{in}~\mathbb{D}([0,T];(\mathbb{H}^{\beta_1})^*).
\end{split}
\end{equation*}
This together with condition (1) and Lemma \ref{adlem4-1} implies that there exists a subsequence of $(f_{m_k})$ (still denoted by $(f_{m_k})$) such that
\begin{equation*}
\begin{split}
f_{m_k}\rightarrow f~\textrm{in}~\mathbb{D}([0,T];(\mathbb{H}^{\beta_1})^*)\cap \mathbb{D}([0,T];\mathbb{B}_w^1).
\end{split}
\end{equation*}

In addition, by the Banach-Alaoglu theorem, condition (2) implies that the set $\mathcal{K}$ is compact in $L^2_w([0,T];\mathbb{H}^3)$. Thus there exists a subsequence of $(f_{m_k})$ (still denoted by $(f_{m_k})$) such that
\begin{equation*}
\begin{split}
f_{m_k}\rightarrow f~\textrm{in}~\mathbb{D}([0,T];(\mathbb{H}^{\beta_1})^*)\cap\mathbb{D}([0,T];\mathbb{B}^1_w)\cap L^2_w(0,T;\mathbb{H}^3).
\end{split}
\end{equation*}

Moreover, noting that $\mathbb{H}^3\hookrightarrow\hookrightarrow\mathbb{W}^{2,4}\hookrightarrow(\mathbb{H}^{\beta_1})^*$ and using the condition (2), we derive from Lemma \ref{adlem2} and Remark \ref{adrem2} that
there exists a subsequence of $(f_{m_k})$ (still denoted by $(f_{m_k})$) such that
$
f_{m_k}\rightarrow f~\textrm{in}~L^2(0,T;\mathbb{W}^{2,4})~\textrm{as}~k\rightarrow\infty.
$ Thus
\begin{equation*}
\begin{split}
f_{m_k}\rightarrow f~\textrm{in}~\mathbb{D}([0,T];(\mathbb{H}^{\beta_1})^*)\cap \mathbb{D}([0,T];\mathbb{B}^1_w)\cap L^2_w(0,T;\mathbb{H}^3) \cap L^2(0,T;\mathbb{W}^{2,4}).
\end{split}
\end{equation*}
Similarly, using the fact that $\mathbb{H}^1\hookrightarrow\hookrightarrow\mathbb{L}^4\hookrightarrow(\mathbb{H}^{\beta_1})^*$ and noting condition (1), we infer that there exists a subsequence of $(f_{m_k})$ (still denoted by $(f_{m_k})$) such that
$
f_{m_k}\rightarrow f~\textrm{in}~L^p(0,T;\mathbb{L}^{4})~\textrm{as}~k\rightarrow\infty,
$ which means that
\begin{equation*}
\begin{split}
f_{m_k}\rightarrow f~\textrm{in}~\mathbb{D}([0,T];(\mathbb{H}^{\beta_1})^*)\cap \mathbb{D}([0,T];\mathbb{B}^1_w)\cap L^2_w(0,T;\mathbb{H}^3) \cap L^2(0,T;\mathbb{W}^{2,4})\cap L^p(0,T;\mathbb{L}^4).
\end{split}
\end{equation*}
Noting that $\mathbb{D}([0,T];\mathbb{B}_w^1)$ is a metric subspace of $\mathbb{D}([0,T];\mathbb{H}_w^1)$, we naturally obtain the conclusion of Proposition \ref{pro4-2}.\end{proof}

Based on the aforementioned deterministic compactness result, we further obtain the following tightness criterion.
\begin{corollary}\label{cor4-1} Let $\mathbb{P}_n^1$ be the law of $\mathbf{u}_n$ on $\mathcal{Z}_T^1$. If there exists a positive constant $C_1>0$ such that
\begin{equation*}
\begin{split}
\sup_{n\in\mathbb{N}}\mathbb{E}(\sup_{t\in[0,T]}\|\mathbf{u}_n(t)\|_{\mathbb{H}^1}^2)+\sup_{n\in\mathbb{N}}\mathbb{E}\left(\int_0^T\|\mathbf{u}_n(t)\|_{\mathbb{H}^3}^2\,\mathrm{d}t\right)\leq C_1
\end{split}
\end{equation*}
and moreover $(\mathbf{u}_n)_{n\in\mathbb{N}}$ satisfies the Aldous condition \cite{aldous1978stopping} in $(\mathbb{H}^{\beta_1})^*$, then for every $\varepsilon>0$ there exists a compact subset $K_{\varepsilon}^1$ of $\mathcal{Z}_T^1$ such that $\mathbb{P}_n^1(K_{\varepsilon}^1)\geq1-\varepsilon$.
\end{corollary}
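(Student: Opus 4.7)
The plan is to realize $K_\varepsilon^1$ as the closure of the intersection of three events, each enforcing one of the three hypotheses of Proposition \ref{pro4-2}, and to bound the probability of each complement by $\varepsilon/3$ via Chebyshev's inequality and the Aldous condition. Since Proposition \ref{pro4-2} is a purely deterministic compactness criterion, once the three level-set-type events are identified on which its hypotheses hold, the proof reduces to a standard union bound.

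First, I would handle conditions (1) and (2) directly by Chebyshev's inequality. Define
\begin{equation*}
A_{R_1} := \Big\{f\in\mathcal{Z}_T^1 : \sup_{t\in[0,T]}\|f(t)\|_{\mathbb{H}^1}\leq R_1\Big\},\quad B_{R_2} := \Big\{f\in\mathcal{Z}_T^1 : \int_0^T\|f(t)\|_{\mathbb{H}^3}^2\,\mathrm{d}t\leq R_2\Big\}.
\end{equation*}
The assumed moment bound gives $\mathbb{P}_n^1(A_{R_1}^c)\leq C_1/R_1^2$ and $\mathbb{P}_n^1(B_{R_2}^c)\leq C_1/R_2$, so selecting $R_1,R_2$ large enough makes each probability at most $\varepsilon/3$, uniformly in $n$.

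Next, I would use the Aldous condition to control the $(\mathbb{H}^{\beta_1})^*$-modulus of continuity required in condition (3). A classical consequence of the Aldous condition, via Markov's inequality applied to random shifts and discretisation (see \cite{aldous1978stopping} and the formulation adopted in \cite{joffe1986weak}), is that for every $\eta,\varepsilon'>0$ there exists $\delta>0$ with
\begin{equation*}
\sup_{n\in\mathbb{N}}\mathbb{P}\Big(\sup_{s,t\in[0,T],\,|t-s|\leq\delta}\|\mathbf{u}_n(t)-\mathbf{u}_n(s)\|_{(\mathbb{H}^{\beta_1})^*}>\eta\Big)\leq\varepsilon'.
\end{equation*}
Applying this with $\eta_k=1/k$ and $\varepsilon_k'=\varepsilon\,2^{-k}/3$ yields a sequence $\delta_k\downarrow 0$ such that the set
\begin{equation*}
D:=\bigcap_{k\in\mathbb{N}}\Big\{f\in\mathcal{Z}_T^1 : \sup_{s,t\in[0,T],\,|t-s|\leq\delta_k}\|f(t)-f(s)\|_{(\mathbb{H}^{\beta_1})^*}\leq 1/k\Big\}
\end{equation*}
satisfies $\mathbb{P}_n^1(D)\geq 1-\varepsilon/3$ for every $n$, and every $f\in D$ verifies condition (3) of Proposition \ref{pro4-2}.

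Finally, I would set $K_\varepsilon^1:=\overline{A_{R_1}\cap B_{R_2}\cap D}$, where the closure is taken in $(\mathcal{Z}_T^1,\mathcal{T}^1)$. Using weak lower semicontinuity of the $\mathbb{H}^1$- and $\mathbb{H}^3$-norms (which are preserved on the $\mathbb{D}([0,T];\mathbb{H}^1_w)$ and $L^2_w(0,T;\mathbb{H}^3)$ components of $\mathcal{Z}_T^1$) together with continuity of the $(\mathbb{H}^{\beta_1})^*$-modulus in the Skorokhod topology, one checks that every $f\in K_\varepsilon^1$ still fulfils conditions (1)--(3); hence $K_\varepsilon^1$ is $\mathcal{T}^1$-relatively compact, and being closed it is compact. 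A union bound then gives $\mathbb{P}_n^1(K_\varepsilon^1)\geq 1-\varepsilon$ for all $n$. I expect the main obstacle to be the transition from the stopping-time formulation of the Aldous condition to the deterministic uniform modulus statement above, because $(\mathbb{H}^{\beta_1})^*$ is infinite-dimensional; this is handled by Aldous's classical discretisation argument, which uses only separability of $(\mathbb{H}^{\beta_1})^*$ and the càdlàg property of $\mathbf{u}_n$.
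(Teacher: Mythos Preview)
Your overall architecture---Chebyshev for the $\mathbb{H}^1$ and $\mathbb{H}^3$ level sets, Aldous for the oscillation control, then closure and Proposition~\ref{pro4-2}---matches the paper exactly. The paper carries out the third step by invoking Lemma~\ref{lem4-2} (from \cite{motyl2013stochastic}), which produces a set $A_{\varepsilon/3}\subset\mathbb{D}([0,T];(\mathbb{H}^{\beta_1})^*)$ on which the \emph{Skorokhod} modulus $w_{[0,T],(\mathbb{H}^{\beta_1})^*}(f,\delta)$ (Definition~\ref{def4-1}) is uniformly small, and then takes $K_\varepsilon^1$ to be the closure of $S_1\cap S_2\cap A_{\varepsilon/3}$.

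There is, however, a genuine gap in your version of the third step. You claim that the Aldous condition yields, for every $\eta,\varepsilon'>0$, a $\delta>0$ with $\sup_n\mathbb{P}\big(\sup_{|t-s|\le\delta}\|\mathbf{u}_n(t)-\mathbf{u}_n(s)\|_{(\mathbb{H}^{\beta_1})^*}>\eta\big)\le\varepsilon'$. This is false for c\`adl\`ag processes with jumps: the uniform modulus $\sup_{|t-s|\le\delta}\|f(t)-f(s)\|$ is bounded below by the largest jump of $f$ regardless of $\delta$, so for the L\'evy-driven $\mathbf{u}_n$ here your set $D$ has $\mathbb{P}_n^1$-measure zero. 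What Aldous actually delivers (and what Lemma~\ref{lem4-2} states) is control of the Skorokhod modulus $w_{[0,T],(\mathbb{H}^{\beta_1})^*}(f,\delta)$, which allows finitely many jumps by partitioning $[0,T]$. Condition~(3) of Proposition~\ref{pro4-2}, though literally written with the uniform modulus, is applied in its proof via Lemma~\ref{lem4-1}, which uses $w$; you should read and verify condition~(3) with the Skorokhod modulus, and replace your set $D$ by the $A_{\varepsilon/3}$ furnished by Lemma~\ref{lem4-2}.
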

\begin{proof}[\emph{\textbf{Proof}}] According to the Chebyshev inequality, we see that for any $R>0$
\begin{equation*}
\begin{split}
\mathbb{P}\left(\sup_{t\in[0,T]}\|\mathbf{u}_n(t)\|_{\mathbb{H}^1}>R\right)\leq\frac{\mathbb{E}(\sup_{t\in[0,T]}\|\mathbf{u}_n(t)\|_{\mathbb{H}^1}^2)}{R^2}\leq\frac{C_1}{R^2}.
\end{split}
\end{equation*}
Choosing $R_1\geq\sqrt{\frac{3C_1}{\varepsilon}}$, then we have
$
\mathbb{P}\left(\sup_{t\in[0,T]}\|\mathbf{u}_n(t)\|_{\mathbb{H}^1}>R_1\right)\leq\frac{\varepsilon}{3}.
$ Let $S_1:=\{f\in\mathcal{Z}_T^1:\sup_{t\in[0,T]}\|f(t)\|_{\mathbb{H}^1}\leq R_1\}$. Similarly,  by the Chebyshev inequality we infer that
$
\mathbb{P}\left(\|\mathbf{u}_n\|_{L^2(0,T;\mathbb{H}^3)}>R_1\right)\leq\frac{\varepsilon}{3}.
$ Let $S_2:=\{f\in\mathcal{Z}_T^1:\|f\|_{L^2(0,T;\mathbb{H}^3)}\leq R_1\}$. Moreover, by Lemma \ref{lem4-2}, there exists a subset $A_{\frac{\varepsilon}{3}}\subset\mathbb{D}([0,T];\mathbb{H}^{-\beta_1})$ such that $\mathbb{P}_n^1(A_{\frac{\varepsilon}{3}})\geq1-\frac{\varepsilon}{3}$ and $\lim_{\delta\rightarrow0}\sup_{f\in A_{\frac{\varepsilon}{3}}}w_{[0,T],\mathbb{H}^{-\beta_1}}(f,\delta)=0$. Thus it is sufficient to define $K_{\varepsilon}^1$ as the closure of the set $S_1\cap S_2\cap A_{\frac{\varepsilon}{3}}$ in $\mathcal{Z}_T^1$. By using Proposition \ref{pro4-2}, we infer that $K_{\varepsilon}^1$ is compact in $\mathcal{Z}_T^1$. The proof is thus complete.\end{proof}

\section{Existence and uniqueness of pathwise solution}\label{sec5}
This section is devoted to provide a complete proof for Theorem \ref{the1}. Let us denote $\eta_n:=\eta,~n\in\mathbb{N}$ and $\bar{\mathbb{N}}:=\mathbb{N}\cup\{\infty\}$. Let $(\mathcal{S},\varrho)$ be a measurable space and let $M_{\bar{\mathbb{N}}}(\mathcal{S})$ be the set of all $\bar{\mathbb{N}}$-valued measures on $(\mathcal{S},\varrho)$.
\begin{proof}[\emph{\textbf{Proof of Theorem \ref{the1}}}] At first we shall prove that the sequence of laws $\mathscr{L}(\mathbf{u}_n)$ is tight on the space $\mathcal{Z}_T^1$.  Thanks to Corollary \ref{cor4-1} and Lemma \ref{lem35}, it is sufficient to show that the sequence $(\mathbf{u}_n)$ satisfies the Aldous condition on the space $(\mathbb{H}^{\beta_1})^*$. Assume that for every sequence $(\tau_n)_{n\in\mathbb{N}}$ of $\mathbb{F}$-stopping times with $\tau_n\leq T$ and $\theta>0$. Then we have
\begin{equation*}
\begin{split}
&\mathbf{u}_n(\tau_n+\theta)-\mathbf{u}_n(\tau_n)=\sum_{j=1}^5\int_{\tau_n}^{\tau_n+\theta}F_n^j(\mathbf{u}_n)\,\mathrm{d}t+\int_{\tau_n}^{\tau_n+\theta}\mathbf{b}_n(\mathbf{u}_n)\,\mathrm{d}t+\int_{\tau_n}^{\tau_n+\theta}\int_{\mathbb{B}}\mathbf{G}_n(l,\mathbf{u}_n)\tilde{\eta}(\mathrm{d}t,\mathrm{d}l).
\end{split}
\end{equation*}
Since $\mathbb{L}^2\hookrightarrow(\mathbb{H}^{1})^*\hookrightarrow(\mathbb{H}^{\beta_1})^*$ for $\beta_1>1$, we see from Remark \ref{rem331} and inequality \eqref{1lem36} that
\begin{equation*}
\begin{split}
&\sum_{j=1}^5\mathbb{E}\left\|\int_{\tau_n}^{\tau_n+\theta}F_n^j(\mathbf{u}_n)\,\mathrm{d}t\right\|_{(\mathbb{H}^{\beta_1})^*}+\mathbb{E}\left\|\int_{\tau_n}^{\tau_n+\theta}\mathbf{b}_n(\mathbf{u}_n)\,\mathrm{d}t\right\|_{(\mathbb{H}^{\beta_1})^*}\\
&\leq\sum_{j=1}^5C\mathbb{E}\int_{\tau_n}^{\tau_n+\theta}\|F_n^j(\mathbf{u}_n)\|_{(\mathbb{H}^{1})^*}\,\mathrm{d}t+C\mathbb{E}\int_{\tau_n}^{\tau_n+\theta}\|\mathbf{b}_n(\mathbf{u}_n)\|_{\mathbb{L}^{2}}\,\mathrm{d}t\\
&\leq\sum_{j=1}^5C\mathbb{E}\left(\int_{0}^{T}\|F_n^j(\mathbf{u}_n)\|_{(\mathbb{H}^{1})^*}^2\,\mathrm{d}t\right)^{\frac{1}{2}}\theta^{\frac{1}{2}}+C\mathbb{E}\left(\int_{0}^{T}\|\mathbf{b}_n(\mathbf{u}_n)\|_{\mathbb{L}^{2}}^2\,\mathrm{d}t\right)^{\frac{1}{2}}\theta^{\frac{1}{2}}\\
&\leq C\theta^{\frac{1}{2}}.
\end{split}
\end{equation*}
Moreover, by the It\^{o}-L\'{e}vy Isometry, we have
\begin{equation*}
\begin{split}
&\mathbb{E}\left\|\int_{\tau_n}^{\tau_n+\theta}\int_{\mathbb{B}}\mathbf{G}_n(l,\mathbf{u}_n)\tilde{\eta}(\mathrm{d}t,\mathrm{d}l)\right\|_{(\mathbb{H}^{\beta_1})^*}^2\leq C\mathbb{E}\left\|\int_{\tau_n}^{\tau_n+\theta}\int_{\mathbb{B}}\mathbf{G}_n(l,\mathbf{u}_n)\tilde{\eta}(\mathrm{d}t,\mathrm{d}l)\right\|_{\mathbb{L}^{2}}^2\\
&=C\mathbb{E}\int_{\tau_n}^{\tau_n+\theta}\int_{\mathbb{B}}\left\|\mathbf{G}_n(l,\mathbf{u}_n)\right\|_{\mathbb{L}^{2}}^2\nu(\mathrm{d}l)\mathrm{d}t\leq C\mathbb{E}\int_{\tau_n}^{\tau_n+\theta}1+\|\mathbf{u}_n\|_{\mathbb{L}^2}^2\,\mathrm{d}t\\
&\leq C\theta.
\end{split}
\end{equation*}
Thus the sequence of laws $\mathscr{L}(\mathbf{u}_n)$ is tight on the space $\mathcal{Z}_T^1$, which allows us to apply the generalised Jakubowski-Skorokhod embedding theorem \cite{brzezniak2018stochastic,motyl2013stochastic}. Hence there exists a subsequence $(\mathbf{u}_{n_k},\eta_{n_k})_{k\in\mathbb{N}}$, a probability space $(\Omega',\mathcal{F}',\mathbb{F}',\mathbb{P}')$, and, on this space, $\mathcal{Z}_T^1\times M_{\bar{\mathbb{N}}}([0,T]\times \mathbb{B})$-valued variables $(\mathbf{u}',\eta')$, $(\mathbf{u}_k',\eta_k'),~k\in\mathbb{N}$, such that
\begin{enumerate}
\item[(1)] $\mathscr{L}((\mathbf{u}_k',\eta_k'))=\mathscr{L}((\mathbf{u}_{n_k},\eta_{n_k}))$ for all $k\in\mathbb{N}$;
\item[(2)] $(\mathbf{u}_k',\eta_k')\rightarrow(\mathbf{u}',\eta')$ in $\mathcal{Z}_T^1\times M_{\bar{\mathbb{N}}}([0,T]\times \mathbb{B})$, $\mathbb{P}'$-a.s. as $k\rightarrow\infty$;
\item[(3)] $\eta'_n(\omega')=\eta'(\omega')$, for all $\omega'\in\Omega'$.
\end{enumerate}
We will denote above sequences again by $(\mathbf{u}_{n},\eta_{n})_{n\in\mathbb{N}}$ and $(\mathbf{u}_{n}',\eta_{n}')_{n\in\mathbb{N}}$. In particular, there exists a constant $C>0$ independent of $n$ such that for every $p\geq1$
\begin{equation}\label{1lem55-4}
\begin{split}
&\mathbb{E}'\sup_{s\in[0,t]}\|\mathbf{u}_n'(s)\|_{\mathbb{H}^1}^{2p}+\mathbb{E}'\left(\int_0^t\| \mathbf{u}_n'(s)\|_{\mathbb{H}^3}^2\,\mathrm{d}s\right)^p+\sum_{j=1}^5\mathbb{E}'\|F_n^j(\mathbf{u}_n')\|_{L^2(0,T;(\mathbb{H}^{1})^*)}^p\leq C.
\end{split}
\end{equation}
Furthermore, we have the following weak convergence result.
\begin{lemma}\label{lem5-5} For any $p\geq1$, there holds
\begin{equation*}\label{1lem5-5}
\begin{split}
\mathbf{u}_n'\rightarrow\mathbf{u}'~\textrm{weakly in}~L^{2p}(\Omega';L^{\infty}(0,T;\mathbb{H}^1)\cap L^{2}(0,T;\mathbb{H}^3).
\end{split}
\end{equation*}
\end{lemma}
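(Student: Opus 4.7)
The plan is to combine the uniform moment bounds in \eqref{1lem55-4} with the almost-sure convergence on $\mathcal{Z}_T^1$ furnished by the Skorokhod embedding, and invoke Banach--Alaoglu to extract a weak / weak-$*$ limit that must coincide with $\mathbf{u}'$. From \eqref{1lem55-4}, the sequence $\{\mathbf{u}_n'\}$ is uniformly bounded in both $L^{2p}(\Omega';L^\infty(0,T;\mathbb{H}^1))$ and $L^{2p}(\Omega';L^2(0,T;\mathbb{H}^3))$. The latter is reflexive (since $2p\geq 2$ and $\mathbb{H}^3$ is a separable Hilbert space), so the Banach--Alaoglu theorem produces a subsequence (not relabelled) and some $\widetilde{\mathbf{u}}$ with $\mathbf{u}_n'\rightharpoonup\widetilde{\mathbf{u}}$ weakly there. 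Analogously, $L^{2p}(\Omega';L^\infty(0,T;\mathbb{H}^1))$ is the topological dual of $L^{q}(\Omega';L^1(0,T;(\mathbb{H}^1)^*))$ with $q=2p/(2p-1)$, so a further subsequence converges weakly-$*$ to some $\widehat{\mathbf{u}}$.

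Next, I would identify both limits with $\mathbf{u}'$. Because $\mathbf{u}_n'\to\mathbf{u}'$ in $\mathcal{Z}_T^1\subset \mathbb{D}([0,T];\mathbb{H}^1_w)\cap L^2_w(0,T;\mathbb{H}^3)$ $\mathbb{P}'$-almost surely, for any $A\in\mathcal{F}'$ and $\psi\in L^2(0,T;(\mathbb{H}^3)^*)$ we have
\begin{equation*}
\mathbf{1}_A\int_0^T\langle\mathbf{u}_n'(t)-\mathbf{u}'(t),\psi(t)\rangle\,\mathrm{d}t\longrightarrow 0\quad \mathbb{P}'\text{-a.s.}
\end{equation*}
The uniform moment bound above (via Cauchy--Schwarz in $t$ and H\"older in $\omega'$) provides an integrable dominant, so Vitali's theorem upgrades this to $L^1(\Omega')$-convergence. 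Since simple tensors of the form $\mathbf{1}_A\otimes\psi$ are dense in the predual $L^{q'}(\Omega';L^2(0,T;(\mathbb{H}^3)^*))$, we conclude $\widetilde{\mathbf{u}}=\mathbf{u}'$ by uniqueness of weak limits. An entirely analogous argument with $\psi\in L^1(0,T;(\mathbb{H}^1)^*)$ (exploiting the $\mathbb{D}([0,T];\mathbb{H}^1_w)$-convergence at continuity points of $\mathbf{u}'$) forces $\widehat{\mathbf{u}}=\mathbf{u}'$. Since every subsequence admits a further subsequence along which the above reasoning gives the common limit $\mathbf{u}'$, the full sequence converges weakly / weakly-$*$ in both spaces, which is the claimed convergence in the intersection.

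The principal subtlety is the non-reflexivity of $L^\infty(0,T;\mathbb{H}^1)$: one must work in the weak-$*$ topology and carefully identify the predual, which is legitimate here thanks to the separability and reflexivity of $\mathbb{H}^1$. The uniform $L^{2p}$-in-$\omega'$ bound provided by \eqref{1lem55-4} is precisely the uniform integrability needed to convert the pathwise weak convergence supplied by the Skorokhod construction into weak (respectively weak-$*$) convergence at the level of the Bochner spaces.
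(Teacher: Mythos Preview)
Your proposal is correct and follows essentially the same strategy as the paper: extract a weak/weak-$*$ limit via Banach--Alaoglu from the uniform moment bound \eqref{1lem55-4}, then identify that limit with $\mathbf{u}'$ by combining the $\mathbb{P}'$-a.s.\ convergence from the Skorokhod construction with uniform integrability (Vitali). The only cosmetic difference is that the paper performs the identification once, in the single embedded space $L^{4/3}(\Omega';L^4(0,T;\mathbb{L}^2))$, and then transfers it by uniqueness of weak limits, whereas you test separately against $L^2(0,T;(\mathbb{H}^3)^*)$- and $L^1(0,T;(\mathbb{H}^1)^*)$-valued functions; both routes are equivalent.
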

\begin{proof}[\emph{\textbf{Proof}}] We shall fist prove that
\begin{equation}\label{2lem5-5}
\begin{split}
\mathbf{u}_n'\rightarrow\mathbf{u}'~\textrm{weakly in}~L^{\frac{4}{3}}(\Omega';L^{4}(0,T;\mathbb{L}^2)).
\end{split}
\end{equation}
Since $\mathbf{u}_n'\rightarrow\mathbf{u}'~\textrm{weakly in}~L^{4}(0,T;\mathbb{L}^4)\hookrightarrow L^{4}(0,T;\mathbb{L}^2)$, $\mathbb{P}'$-a.s., for any $\phi\in L^{4}(\Omega';L^{\frac{4}{3}}(0,T;\mathbb{L}^2))$ there holds
$
\int_0^T(\mathbf{u}_n',\phi)_{\mathbb{L}^2}\,\mathrm{d}t\rightarrow\int_0^T(\mathbf{u}',\phi)_{\mathbb{L}^2}\,\mathrm{d}t.
$
Moreover, by using inequality \eqref{1lem55-4}, we have
\begin{equation*}
\begin{split}
&\sup_{n\in\mathbb{N}}\mathbb{E}'\left|\int_0^T(\mathbf{u}_n',\phi)_{\mathbb{L}^2}\,\mathrm{d}t\right|^2\leq\sup_{n\in\mathbb{N}}\mathbb{E}'\left(\|\mathbf{u}_n'\|_{L^{\infty}(0,T;\mathbb{L}^2)}^2\|\phi\|_{L^1(0,T;\mathbb{L}^2)}^2\right)\\
&\leq\sup_{n\in\mathbb{N}}\|\mathbf{u}_n'\|_{L^{4}(\Omega';L^{\infty}(0,T;\mathbb{L}^2))}^2\|\phi\|_{L^{4}(\Omega';L^{1}(0,T;\mathbb{L}^2))}^2<\infty.
\end{split}
\end{equation*}
Thus by using the Vitali convergence theorem we have
$
\mathbb{E}'\int_0^T(\mathbf{u}_n',\phi)_{\mathbb{L}^2}\,\mathrm{d}t\rightarrow\mathbb{E}'\int_0^T(\mathbf{u}',\phi)_{\mathbb{L}^2}\,\mathrm{d}t,
$
which means the result \eqref{2lem5-5}. On the other hand, by using the Banach-Alaoglu theorem we infer from \eqref{1lem55-4} that there exists a subsequence of $\{\mathbf{u}_n'\}$ (still denoted by $\{\mathbf{u}_n'\}$) and $\mathbf{v}\in L^{2p}(\Omega';L^{\infty}(0,T;\mathbb{H}^1)\cap L^{2}(0,T;\mathbb{H}^3))$ such that
\begin{equation*}
\begin{split}
\mathbf{u}_n'\rightarrow\mathbf{v}~\textrm{weakly in}~L^{2p}(\Omega';L^{\infty}(0,T;\mathbb{H}^1)\cap L^{2}(0,T;\mathbb{H}^3))\subset L^{\frac{4}{3}}(\Omega';L^{4}(0,T;\mathbb{L}^2)).
\end{split}
\end{equation*}
By the uniqueness of weak limit, we infer that
\begin{equation*}
\begin{split}
\mathbf{u}'=\mathbf{v}~\textrm{in}~L^{2p}(\Omega';L^{\infty}(0,T;\mathbb{H}^1)\cap L^{2}(0,T;\mathbb{H}^3)).
\end{split}
\end{equation*}
The proof is thus complete.\end{proof}

Let us define
\begin{equation}\label{def111}
\begin{split}
\sum_{j=1}^5F^j(\mathbf{u}'):=(-\Delta\mathbf{u}'+2\mathbf{u}')-\Delta^2\mathbf{u}'-2|\mathbf{u}'|^2\mathbf{u}'-\mathbf{u}'\times\Delta \mathbf{u}'+2\Delta(|\mathbf{u}'|^2\mathbf{u}').
\end{split}
\end{equation}
Let $\phi_n$ be  sequence in $S_n$ such that $\phi_n\rightarrow\phi~\text{in}~\mathbb{H}^1$. We obtain the following convergence result, which allows us to construct the desired martingale solution.
\begin{proposition}\label{cor5-1}  For any $p\geq1$, there holds
\begin{equation}\label{1cor5-1}
\begin{split}
\lim_{n\rightarrow\infty}\sum_{j=1}^5\mathbb{E}'\left|\int_0^T\left\langle F^j_n(\mathbf{u}'_n),\phi_n\right\rangle_{(\mathbb{H}^{1})^*,\mathbb{H}^1}-\left\langle F^j(\mathbf{u}'),\phi\right\rangle_{(\mathbb{H}^{1})^*,\mathbb{H}^1}\,\mathrm{d}s\right|^p=0,
\end{split}
\end{equation}
\begin{equation}\label{1lem5-7}
\begin{split}
\lim_{n\rightarrow\infty}\mathbb{E}'\left|\int_0^T\left(\mathbf{b}_n(\mathbf{u}_n'),\phi_n\right)_{\mathbb{L}^2}-\left( \mathbf{b}(\mathbf{u}'),\phi\right)_{\mathbb{L}^2}\,\mathrm{d}s\right|^2=0,
\end{split}
\end{equation}
\begin{equation}\label{2lem5-7}
\begin{split}
\lim_{n\rightarrow\infty}\mathbb{E}'\left|\int_0^T\int_{\mathbb{B}}\left(\mathbf{G}_n(l,\mathbf{u}_n'),\phi_n\right)_{\mathbb{L}^2}-\left(\mathbf{G}(l,\mathbf{u}'),\phi\right)_{\mathbb{L}^2}\tilde{\eta}(\mathrm{d}s,\mathrm{d}l)\right|^2=0.
\end{split}
\end{equation}
\end{proposition}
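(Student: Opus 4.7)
The plan is to prove each of the three limits by combining pathwise a.s.\ convergence—extracted from the Skorokhod-type a.s.\ convergence $(\mathbf{u}_n',\eta_n')\to(\mathbf{u}',\eta')$ in $\mathcal{Z}_T^1\times M_{\bar{\mathbb{N}}}([0,T]\times\mathbb{B})$—with uniform $L^p(\Omega')$-moment bounds supplied by \eqref{1lem55-4} and Lemma~\ref{lem5-5}, and then passing to the limit under the expectation via the Vitali convergence theorem. The key pathwise modes I will use are: strong convergence in $L^2(0,T;\mathbb{W}^{2,4})$ and in $L^p(0,T;\mathbb{L}^4)$; weak convergence in $L^2_w(0,T;\mathbb{H}^3)$; and convergence in $\mathbb{D}([0,T];\mathbb{H}^1_w)$. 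The strong $\phi_n\to\phi$ in $\mathbb{H}^1$ and the self-adjointness of $\Pi_n$ on $S_n\ni\phi_n$ are used to eliminate the projections.

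For \eqref{1cor5-1} I would handle the five terms separately. For $j=1,2$ the integrand is linear in $\mathbf{u}_n'$ after integration by parts against $\phi_n$: $\langle F_n^1,\phi_n\rangle=(\nabla\mathbf{u}_n',\nabla\phi_n)_{\mathbb{L}^2}+2(\mathbf{u}_n',\phi_n)_{\mathbb{L}^2}$ and $\langle F_n^2,\phi_n\rangle=-(\nabla\Delta\mathbf{u}_n',\nabla\phi_n)_{\mathbb{L}^2}$; pathwise convergence follows from the weak convergences above and from $\phi_n\to\phi$. For $j=3$, strong convergence in $L^4(0,T;\mathbb{L}^4)$ (a consequence of strong $L^p(\mathbb{L}^4)$ convergence plus the uniform $L^\infty(\mathbb{H}^1)\hookrightarrow L^\infty(\mathbb{L}^6)$ bound and interpolation) gives $|\mathbf{u}_n'|^2\mathbf{u}_n'\to|\mathbf{u}'|^2\mathbf{u}'$ in $L^{4/3}(0,T;\mathbb{L}^{4/3})$, enough to test against $\phi_n\in\mathbb{L}^4$. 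For $j=4$, the Neumann boundary condition justifies the identity $-(\mathbf{u}_n'\times\Delta\mathbf{u}_n',\phi_n)_{\mathbb{L}^2}=(\mathbf{u}_n'\times\nabla\mathbf{u}_n',\nabla\phi_n)_{\mathbb{L}^2}$, after which the strong $L^2(\mathbb{W}^{2,4})$ convergence (hence $\nabla\mathbf{u}_n'\to\nabla\mathbf{u}'$ in $L^2(\mathbb{L}^4)$) combined with strong $L^4(\mathbb{L}^4)$ convergence of $\mathbf{u}_n'$ closes the bilinear product. For $j=5$ the same integration by parts reduces matters to $-(\nabla(|\mathbf{u}_n'|^2\mathbf{u}_n'),\nabla\phi_n)_{\mathbb{L}^2}$, handled by the same strong convergences. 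In every case the uniform $L^p(\Omega')$ bounds from \eqref{1lem55-4} dominate the integrand (via H\"older and the explicit estimates already derived in Lemma~\ref{lem35}), so Vitali upgrades a.s.\ convergence to $L^p(\Omega')$ convergence. The same kind of argument, now using the $\mathbb{L}^2$ (or $\mathbb{H}^1$) Lipschitz property of $\mathbf{b}$ from Corollary~\ref{--cor2-2-2} together with strong $L^2(\mathbb{L}^2)$ convergence of $\mathbf{u}_n'$, proves \eqref{1lem5-7}.

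For the stochastic integral \eqref{2lem5-7}, I would first apply the It\^o isometry (permissible since $\eta'_n\equiv\eta'$) to rewrite the squared expectation as
\begin{equation*}
\mathbb{E}'\int_0^T\!\!\int_{\mathbb{B}}\bigl|(\mathbf{G}_n(l,\mathbf{u}_n'),\phi_n)_{\mathbb{L}^2}-(\mathbf{G}(l,\mathbf{u}'),\phi)_{\mathbb{L}^2}\bigr|^2\nu(\mathrm{d}l)\,\mathrm{d}s,
\end{equation*}
and then split the integrand by the triangle inequality into three pieces: the projection error $(\mathbf{G}_n-\mathbf{G})(l,\mathbf{u}_n')$ tested against $\phi_n$; the Lipschitz piece $\mathbf{G}(l,\mathbf{u}_n')-\mathbf{G}(l,\mathbf{u}')$, controlled by $C|l|\|\mathbf{u}_n'-\mathbf{u}'\|_{\mathbb{L}^2}$ using Corollary~\ref{--cor2-2-2}; and $(\mathbf{G}(l,\mathbf{u}'),\phi_n-\phi)_{\mathbb{L}^2}$. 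The pathwise pointwise limit of each piece is zero, while the linear-growth bound $\|\mathbf{G}(l,\cdot)\|_{\mathbb{L}^2}\leq C|l|(1+\|\cdot\|_{\mathbb{L}^2})$ combined with $\int_{\mathbb{B}}|l|^2\nu(\mathrm{d}l)<\infty$ and the uniform bound on $\mathbb{E}'\sup_{t}\|\mathbf{u}_n'\|_{\mathbb{H}^1}^{2}$ provides a dominating integrable majorant. The dominated convergence theorem then delivers the claimed $L^2(\Omega')$ convergence.

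The principal obstacle I anticipate is the term $F_n^4$ (the $\mathbf{u}\times\Delta\mathbf{u}$ coupling), because weak $L^2(\mathbb{H}^3)$ convergence alone is not enough to pass to the limit in the bilinear product; one must crucially exploit the \emph{strong} $L^2(0,T;\mathbb{W}^{2,4})$ convergence embedded in the definition of $\mathcal{Z}_T^1$ (which is precisely why that component was included in the phase space in Section~\ref{sec4}). A secondary, more routine difficulty is bookkeeping the projection errors $\Pi_n-I$ against $\phi_n-\phi$, which are controlled by the fact that $\{\mathbf{e}_i\}$ is an orthonormal basis in $\mathbb{L}^2$ consisting of Neumann Laplacian eigenfunctions that is also complete (and orthogonal) in $\mathbb{H}^1$ after rescaling, so $\Pi_n\to I$ strongly in $\mathcal{L}(\mathbb{H}^1)$ when evaluated on any fixed $\phi$.
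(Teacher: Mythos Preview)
Your proposal is correct and follows essentially the same route as the paper: uniform $L^p(\Omega')$ bounds from \eqref{1lem55-4} plus pathwise a.s.\ convergence in the components of $\mathcal{Z}_T^1$, combined via Vitali, with integration by parts for $j=4,5$ and the It\^o--L\'evy isometry for the stochastic term. The only small wrinkle is your invocation of the dominated convergence theorem for \eqref{2lem5-7}: since the integrand involves $\mathbf{u}_n'$, there is no fixed majorant, so you should appeal (as you do elsewhere) to uniform integrability/Vitali rather than DCT; the uniform moment bound you cite is exactly what is needed for that.
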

\begin{proof}[\emph{\textbf{Proof}}] According to \eqref{1lem55-4}, we infer that for $p\geq1$ and $j\in\{1,2,3,4,5\}$,
\begin{equation}\label{4cor5-1}
\begin{split}
&\sup_{n\in\mathbb{N}}\mathbb{E}'\left|\int_0^T\left\langle F^j_n(\mathbf{u}'_n),\phi_n\right\rangle_{(\mathbb{H}^{1})^*,\mathbb{H}^1}\,\mathrm{d}s\right|^p\leq\sup_{n\in\mathbb{N}}\|\phi_n\|_{\mathbb{H}^1}^p\|F^j_n(\mathbf{u}'_n)\|_{L^p(\Omega';L^2(0,T;(\mathbb{H}^{1})^*))}<\infty.
\end{split}
\end{equation}
Thus by the Vitali convergence theorem, to prove \eqref{1cor5-1}, it is sufficient to show that $\mathbb{P}'$-a.s.
\begin{equation}\label{3cor5-1}
\begin{split}
\left\langle F^j_n(\mathbf{u}'_n),\phi_n\right\rangle_{(\mathbb{H}^{1})^*,\mathbb{H}^1}=(F^j_n(\mathbf{u}'_n),\phi_n)_{\mathbb{L}^2}\rightarrow\left\langle F^j(\mathbf{u}'),\phi\right\rangle_{(\mathbb{H}^{1})^*,\mathbb{H}^1},~j=1,2,3,4,5.
\end{split}
\end{equation}
To show $(F^1_n(\mathbf{u}'_n),\phi_n)_{\mathbb{L}^2}\rightarrow\left\langle F^1(\mathbf{u}'),\phi\right\rangle_{(\mathbb{H}^{1})^*,\mathbb{H}^1}$ a.s, it is sufficient to prove that
$$
\lim_{n\rightarrow\infty}\int_0^t\left(\nabla \mathbf{u}_n',\nabla\phi_n\right)_{\mathbb{L}^2}\,\mathrm{d}s=\int_0^t\left(\nabla\mathbf{u}',\nabla\phi\right)_{\mathbb{L}^2}\,\mathrm{d}s,~\textrm{a.s.}
$$
Since $\mathbf{u}_n'\rightarrow \mathbf{u}'$ in $L^2(0,T;\mathbb{H}^2)$ a.s., the above limiting process is clearly valid. Similarly, by the fact that $\mathbf{u}_n'\rightarrow\mathbf{u}'$ weakly in $L^2(0,T;\mathbb{H}^3)$ a.s., it is easy to prove that $(F^2_n(\mathbf{u}'_n),\phi_n)_{\mathbb{L}^2}\rightarrow\left\langle F^2(\mathbf{u}'),\phi\right\rangle_{(\mathbb{H}^{1})^*,\mathbb{H}^1}$ a.s. To show $(F^3_n(\mathbf{u}'_n),\phi_n)_{\mathbb{L}^2}\rightarrow\left\langle F^3(\mathbf{u}'),\phi\right\rangle_{\mathbb{H}^{1})^*,\mathbb{H}^1}$ a.s., it is sufficient to prove that
$$
\lim_{n\rightarrow\infty}\int_0^T\left(|\mathbf{u}_n'|^2\mathbf{u}_n',\phi_n\right)_{\mathbb{L}^2}\,\mathrm{d}s=\int_0^T\left(|\mathbf{u}'|^2\mathbf{u}',\phi\right)_{\mathbb{L}^2}\,\mathrm{d}s,~\textrm{a.s.}
$$
By the H\"{o}lder inequality and the fact that $\mathbf{u}_n'\rightarrow\mathbf{u}'$ in $\mathcal{Z}_T^1$ a.s, we have
\begin{equation*}
\begin{split}
&\left|\int_0^T\left(|\mathbf{u}_n'|^2\mathbf{u}_n',\phi_n\right)_{\mathbb{L}^2}\,\mathrm{d}s-\int_0^T\left(|\mathbf{u}'|^2\mathbf{u}',\phi\right)_{\mathbb{L}^2}\,\mathrm{d}s\right|\\
&\leq\left|\int_0^T\left(|\mathbf{u}_n'|^2\mathbf{u}_n',\phi_n-\phi\right)_{\mathbb{L}^2}\,\mathrm{d}s\right|+\left|\int_0^T\left(|\mathbf{u}_n'|^2\left(\mathbf{u}_n'-\mathbf{u}'\right),\phi\right)_{\mathbb{L}^2}\,\mathrm{d}s\right|+\left|\int_0^T\left(\left(|\mathbf{u}_n'|^2-|\mathbf{u}'|^2\mathbf{u}'\right),\phi\right)_{\mathbb{L}^2}\,\mathrm{d}s\right|\\
&\leq\|\phi_n-\phi\|_{\mathbb{L}^{2}}\|\mathbf{u}_n'\|_{L^3(0,T;\mathbb{L}^6)}^3+\|\phi\|_{\mathbb{L}^{2}}\|\mathbf{u}_n'\|_{L^4(0,T;\mathbb{L}^6)}^2\|\mathbf{u}_n'-\mathbf{u}'\|_{L^2(0,T;\mathbb{L}^6)}\\
&+\|\phi\|_{\mathbb{L}^{2}}\|\mathbf{u}_n'-\mathbf{u}'\|_{L^2(0,T;\mathbb{L}^6)}\|\mathbf{u}_n'+\mathbf{u}'\|_{L^4(0,T;\mathbb{L}^6)}\|\mathbf{u}'\|_{L^4(0,T;\mathbb{L}^6)}\rightarrow0~\textrm{as}~n\rightarrow\infty.
\end{split}
\end{equation*}
To show $(F^4_n(\mathbf{u}'_n),\phi_n)_{\mathbb{L}^2}\rightarrow\left\langle F^4(\mathbf{u}'),\phi\right\rangle_{(\mathbb{H}^{1})^*,\mathbb{H}^1}$ a.s., it is sufficient to prove that
$$
\lim_{n\rightarrow\infty}\int_0^t\left(\mathbf{u}'_n\times\nabla\mathbf{u}'_n,\nabla\phi_n\right)_{\mathbb{L}^2}\,\mathrm{d}s=\int_0^t\left(\mathbf{u}'\times\nabla\mathbf{u}',\nabla\phi\right)_{\mathbb{L}^2}\,\mathrm{d}s,~\textrm{a.s.}
$$
Since $\mathbf{u}_n'\rightarrow\mathbf{u}'$ in $\mathcal{Z}_T^1$, it follows that
\begin{equation*}
\begin{split}
&\left|\int_0^T\left(\mathbf{u}'_n\times\nabla\mathbf{u}'_n,\nabla\phi_n\right)_{\mathbb{L}^2}\,\mathrm{d}s-\int_0^T\left(\mathbf{u}'\times\nabla\mathbf{u}',\nabla\phi\right)_{\mathbb{L}^2}\,\mathrm{d}s\right|\\
&\leq\left|\int_0^T\left(\mathbf{u}'_n\times\nabla\mathbf{u}'_n,\nabla\phi_n-\nabla\phi\right)_{\mathbb{L}^2}\,\mathrm{d}s\right|+\left|\int_0^T\left(\mathbf{u}'_n\times\nabla\left(\mathbf{u}'_n-\mathbf{u}'\right),\nabla\phi\right)_{\mathbb{L}^2}\,\mathrm{d}s\right|\\
&+\left|\int_0^T\left(\left(\mathbf{u}'_n-\mathbf{u}'\right)\times\nabla\mathbf{u}',\nabla\phi\right)_{\mathbb{L}^2}\,\mathrm{d}s\right|\\
&\leq\|\nabla(\phi_n-\phi)\|_{\mathbb{L}^2}\|\mathbf{u}_n'\|_{L^2(0,T;\mathbb{H}^2)}^2+\|\nabla\phi\|_{\mathbb{L}^2}\|\mathbf{u}_n'\|_{L^2(0,T;\mathbb{L}^4)}\|\mathbf{u}_n'-\mathbf{u}'\|_{L^2(0,T;\mathbb{H}^2)}\\
&+\|\nabla\phi\|_{\mathbb{L}^2}\|\nabla\mathbf{u}'\|_{L^2(0,T;\mathbb{L}^4)}\|\mathbf{u}_n'-\mathbf{u}'\|_{L^2(0,T;\mathbb{L}^4)}\rightarrow0~\textrm{as}~n\rightarrow\infty.
\end{split}
\end{equation*}
To show $(F^5_n(\mathbf{u}'_n),\phi_n)_{\mathbb{L}^2}\rightarrow\left\langle F^5(\mathbf{u}'),\phi\right\rangle_{(\mathbb{H}^{1})^*,\mathbb{H}^1}$ a.s., it is sufficient to prove that
$$
\lim_{n\rightarrow\infty}\int_0^t\left(\nabla(|\mathbf{u}'_n|^2\mathbf{u}'_n),\nabla\phi_n\right)_{\mathbb{L}^2}\,\mathrm{d}s=\int_0^t\left(\nabla(|\mathbf{u}'|^2\mathbf{u}'),\nabla\phi\right)_{\mathbb{L}^2}\,\mathrm{d}s,~\textrm{a.s.}
$$
By using the triangle inequality, the H\"{o}lder inequality, the embedding $\mathbb{H}^1\hookrightarrow\mathbb{L}^6$ as well as the fact $\mathbf{u}_n'\rightarrow\mathbf{u}'$ in $\mathcal{Z}_T^1$, we infer that
\begin{equation*}
\begin{split}
&\left|\int_0^T\left(\nabla\left(|\mathbf{u}_n'|^2\mathbf{u}_n'\right),\nabla\phi_n\right)_{\mathbb{L}^2}\,\mathrm{d}s-\int_0^T\left(\nabla\left(|\mathbf{u}'|^2\mathbf{u}'\right),\nabla\phi\right)_{\mathbb{L}^2}\,\mathrm{d}s\right|\\
&\leq\left|\int_0^T\left(\nabla\left(|\mathbf{u}_n'|^2\mathbf{u}_n'\right),\nabla(\phi_n-\phi)\right)_{\mathbb{L}^2}\,\mathrm{d}s\right|+\left|\int_0^T\left(\nabla\left(|\mathbf{u}_n'|^2\mathbf{u}_n'\right)-\nabla\left(|\mathbf{u}_n'|^2\mathbf{u}'\right),\nabla\phi\right)_{\mathbb{L}^2}\,\mathrm{d}s\right|\\
&+\left|\int_0^T\left(\nabla\left(|\mathbf{u}_n'|^2\mathbf{u}'\right)-\nabla\left(|\mathbf{u}'|^2\mathbf{u}'\right),\nabla\phi\right)_{\mathbb{L}^2}\,\mathrm{d}s\right|\\
&\leq C\|\nabla(\phi-\phi_n)|_{\mathbb{L}^2}\int_0^T\|\mathbf{u}_n'\|_{\mathbb{L}^6}^2\|\nabla\mathbf{u}_n'\|_{\mathbb{L}^6}\,\mathrm{d}s+C\|\nabla\phi\|_{\mathbb{L}^2}\int_0^T\|\mathbf{u}_n'\|_{\mathbb{L}^6}\|\nabla\mathbf{u}_n'\|_{\mathbb{L}^6}\|\mathbf{u}_n'-\mathbf{u}'\|_{\mathbb{L}^6}\,\mathrm{d}s\\
&+C\|\nabla\phi\|_{\mathbb{L}^2}\int_0^T\|\mathbf{u}_n'\|_{\mathbb{L}^6}^2\|\nabla\left(\mathbf{u}_n'-\mathbf{u}'\right)\|_{\mathbb{L}^6}\,\mathrm{d}s+C\|\nabla\phi\|_{\mathbb{L}^2}\int_0^T\|\mathbf{u}_n'\|_{\mathbb{L}^6}\|\mathbf{u}'\|_{\mathbb{L}^6}\|\nabla(\mathbf{u}_n'-\mathbf{u}')\|_{\mathbb{L}^6}\,\mathrm{d}s\\
&+C\|\nabla\phi\|_{\mathbb{L}^2}\int_0^T\|\mathbf{u}'\|_{\mathbb{L}^6}\|\nabla\mathbf{u}'\|_{\mathbb{L}^6}\|\mathbf{u}_n'-\mathbf{u}'\|_{\mathbb{L}^6}\,\mathrm{d}s\\
&\leq C\|\phi-\phi_n\|_{\mathbb{H}^1}\|\mathbf{u}_n'\|_{L^{\infty}(0,T;\mathbb{H}^1)}^2\|\mathbf{u}_n'\|_{L^{2}(0,T;\mathbb{H}^2)}\\ &+C\|\phi\|_{\mathbb{H}^1}\|\mathbf{u}_n'\|_{L^{\infty}(0,T;\mathbb{H}^1)}\|\mathbf{u}_n'\|_{L^{2}(0,T;\mathbb{H}^2)}\|\mathbf{u}_n'-\mathbf{u}'\|_{L^{2}(0,T;\mathbb{H}^1)}\\
&+C\|\phi\|_{\mathbb{H}^1}\|\mathbf{u}_n'\|_{L^{\infty}(0,T;\mathbb{H}^1)}^2\|\mathbf{u}_n'-\mathbf{u}'\|_{L^{2}(0,T;\mathbb{H}^2)}\\
&+C\|\phi\|_{\mathbb{H}^1}\|\mathbf{u}_n'\|_{L^{\infty}(0,T;\mathbb{H}^1)}\|\mathbf{u}'\|_{L^{\infty}(0,T;\mathbb{H}^1)}\|\mathbf{u}_n'-\mathbf{u}'\|_{L^{2}(0,T;\mathbb{H}^2)}\\
&+C\|\phi\|_{\mathbb{H}^1}\|\mathbf{u}'\|_{L^{\infty}(0,T;\mathbb{H}^1)}\|\mathbf{u}'\|_{L^{2}(0,T;\mathbb{H}^2)}\|\mathbf{u}_n'-\mathbf{u}'\|_{L^{2}(0,T;\mathbb{H}^1)}\rightarrow0~\textrm{as}~n\rightarrow\infty.
\end{split}
\end{equation*}
Therefore \eqref{1cor5-1} is true.

To show \eqref{1lem5-7}, it is sufficient to prove that
\begin{equation*}
\begin{split}
\lim_{n\rightarrow\infty}\mathbb{E}'\left|\int_0^T\left(\mathbf{b}_n(\mathbf{u}_n')-\mathbf{b}(\mathbf{u}'),\phi\right)_{\mathbb{L}^2}\,\mathrm{d}s\right|^2=0,
\end{split}
\end{equation*}
By the H\"{o}lder inequality, it follows from Remark \ref{rem331} that
\begin{equation*}
\begin{split}
&\mathbb{E}'\left|\int_0^T\left(\mathbf{b}_n(\mathbf{u}_n')-\mathbf{b}(\mathbf{u}'),\phi\right)_{\mathbb{L}^2}\,\mathrm{d}s\right|^2\leq C\|\phi\|_{\mathbb{L}^2}^2\mathbb{E}'\int_0^T\|\mathbf{b}_n(\mathbf{u}_n')-\mathbf{b}(\mathbf{u}')\|_{\mathbb{L}^2}^2\,\mathrm{d}s\\
&\leq C\mathbb{E}'\int_0^T\|\mathbf{u}_n'-\mathbf{u}'\|_{\mathbb{L}^2}^2\,\mathrm{d}s\rightarrow 0~\textrm{as}~n\rightarrow\infty.
\end{split}
\end{equation*}

To show \eqref{2lem5-7}, by the It\^{o}-L\'{e}vy Isometry, it is sufficient to prove that
\begin{equation*}
\begin{split}
\lim_{n\rightarrow\infty}\mathbb{E}'\int_0^T\int_{\mathbb{B}}\left|\left(\mathbf{G}(l,\mathbf{u}_n')-\mathbf{G}(l,\mathbf{u}'),\phi\right)_{\mathbb{L}^2}\right|^2\nu(\mathrm{d}l)\mathrm{d}s=0.
\end{split}
\end{equation*}
Thanks to Remark \ref{rem331}, we have
\begin{equation*}
\begin{split}
&\mathbb{E}'\int_0^T\int_{\mathbb{B}}\left|\left(\mathbf{G}(l,\mathbf{u}_n')-\mathbf{G}(l,\mathbf{u}'),\phi\right)_{\mathbb{L}^2}\right|^2\nu(\mathrm{d}l)\mathrm{d}s\leq\|\phi\|_{\mathbb{L}^2}^2\mathbb{E}'\left(\int_{\mathbb{B}}C(l)\nu(\mathrm{d}l)\int_0^T\|\mathbf{u}_n'-\mathbf{u}'\|_{\mathbb{L}^2}^2\,\mathrm{d}s\right)\\
&\leq C\mathbb{E}'\int_0^T\|\mathbf{u}_n'-\mathbf{u}'\|_{\mathbb{L}^2}^2\,\mathrm{d}s\rightarrow 0~\textrm{as}~n\rightarrow\infty.
\end{split}
\end{equation*}
The proof is thus completed.
\end{proof}

Now let us define
\begin{equation*}\label{1pro55-1}
\begin{split}
&\mathbf{M}_n^1(\mathbf{u}_n',\tilde{\eta}_n',\phi_n)(t):=(\mathbf{u}'_n(0),\phi_n)_{\mathbb{L}^2}+\int_0^t\left\langle\sum_{j=1}^5F^j_n(\mathbf{u}'_n),\phi_n \right\rangle_{(\mathbb{H}^{1})^*,\mathbb{H}^1}\,\mathrm{d}s\\
&+\int_0^t\left\langle\mathbf{b}_n(\mathbf{u}_n'),\phi_n \right\rangle_{(\mathbb{H}^{1})^*,\mathbb{H}^1}\,\mathrm{d}s+\int_0^t\int_{\mathbb{B}}\left\langle\mathbf{G}_n(l,\mathbf{u}_n'),\phi_n\right\rangle_{(\mathbb{H}^{1})^*,\mathbb{H}^1}\tilde{\eta}'_n(\mathrm{d}s,\mathrm{d}l),
\end{split}
\end{equation*}
\begin{equation*}\label{2pro55-1}
\begin{split}
&\mathbf{M}^1(\mathbf{u}',\tilde{\eta}',\phi)(t):=(\mathbf{u}'(0),\phi)_{\mathbb{L}^2}+\int_0^t\left\langle\sum_{j=1}^5F^j(\mathbf{u}'),\phi \right\rangle_{(\mathbb{H}^{1})^*,\mathbb{H}^1}\,\mathrm{d}s\\
&+\int_0^t\left\langle\mathbf{b}(\mathbf{u}'),\phi \right\rangle_{(\mathbb{H}^{1})^*,\mathbb{H}^1}\,\mathrm{d}s+\int_0^t\int_{\mathbb{B}}\left\langle\mathbf{G}(l,\mathbf{u}'),\phi\right\rangle_{(\mathbb{H}^{1})^*,\mathbb{H}^1}\tilde{\eta}'(\mathrm{d}s,\mathrm{d}l).
\end{split}
\end{equation*}
Since $\mathbf{u}'_n\rightarrow\mathbf{u}'$ in $L^2(0,T;\mathbb{L}^2)$ a.s., we easily prove that
$
\int_0^T\left|\left(\mathbf{u}_n',\phi_n\right)_{\mathbb{L}^2}-\left( \mathbf{u}',\phi\right)_{\mathbb{L}^2}\right|^2\,\mathrm{d}s=0,
$ a.s. Moreover, it is easy to check that for $p\geq1$,
$
\sup_{n\in\mathbb{N}}\mathbb{E}'\left|\int_0^T\left|\left(\mathbf{u}_n',\phi_n\right)_{\mathbb{L}^2}-\left( \mathbf{u}',\phi\right)_{\mathbb{L}^2}\right|^2\,\mathrm{d}s\right|^p<\infty.
$
Thus by the Vitali theorem we infer that
\begin{equation}\label{3pro5-1}
\begin{split}
\lim_{n\rightarrow\infty}\|(\mathbf{u}_n',\phi_n)_{\mathbb{L}^2}-(\mathbf{u}',\phi)_{\mathbb{L}^2}\|_{L^2(\Omega'\times[0,T])}=0.
\end{split}
\end{equation}
Moreover thanks to Proposition \ref{cor5-1}, we have
$$
\lim_{n\rightarrow\infty}\mathbb{E}'\left|\mathbf{M}_n^1(\mathbf{u}_n',\tilde{\eta}_n',\phi_n)(t)-\mathbf{M}^1(\mathbf{u}',\tilde{\eta}',\phi)(t)\right|^2=0.
$$
And it is not hard to check that
$
\sup_{n\in{\mathbb{N}}}\mathbb{E}'\left|\mathbf{M}_n^1(\mathbf{u}_n',\tilde{\eta}_n',\phi_n)(t)-\mathbf{M}^1(\mathbf{u}',\tilde{\eta}',\phi)(t)\right|^2<\infty.
$ Thus by the Dominated convergence theorem, we obtain that
\begin{equation}\label{4pro5-1}
\begin{split}
\lim_{n\rightarrow\infty}\|\mathbf{M}_n^1(\mathbf{u}_n',\tilde{\eta}_n',\phi_n)(t)-\mathbf{M}^1(\mathbf{u}',\tilde{\eta}',\phi)(t)\|_{L^2(\Omega'\times[0,T])}=0.
\end{split}
\end{equation}
Since $\mathbf{u}_n$ is a solution of the Galerkin equation, for all $t\in[0,T]$ and $\mathbb{P}$-a.s.
\begin{equation*}
\begin{split}
(\mathbf{u}_n(t),\phi_n)_{\mathbb{L}^2}=\mathbf{M}_n^1(\mathbf{u}_n,\tilde{\eta}_n,\phi_n)(t).
\end{split}
\end{equation*}
Moreover, since $\mathscr{L}((\mathbf{u}_n,\eta_n))=\mathscr{L}((\mathbf{u}_{n}',\eta_{n}'))$ for all $n\in\mathbb{N}$,
\begin{equation*}
\begin{split}
\int_0^T\mathbb{E}'\left|(\mathbf{u}_n'(t),\phi_n)_{\mathbb{L}^2}-\mathbf{M}_n^1(\mathbf{u}_n',\tilde{\eta}_n',\phi_n)(t)\right|^2\,\mathrm{d}s=\int_0^T\mathbb{E}\left|(\mathbf{u}_n(t),\phi_n)_{\mathbb{L}^2}-\mathbf{M}_n^1(\mathbf{u}_n,\tilde{\eta}_n,\phi_n)(t)\right|^2\,\mathrm{d}s=0
\end{split}
\end{equation*}
Thus by \eqref{3pro5-1} and \eqref{4pro5-1}, we have
\begin{equation*}
\begin{split}
\int_0^T\mathbb{E}'\left|(\mathbf{u}'(t),\phi)_{\mathbb{L}^2}-\mathbf{M}^1(\mathbf{u}',\tilde{\eta}',\phi)(t)\right|^2\,\mathrm{d}s=0,
\end{split}
\end{equation*}
which means that for leb-almost all $t\in[0,T]$ and $\mathbb{P}'$-a.s.
$
\left(\mathbf{u}'(t),\phi \right)_{\mathbb{L}^2}-\mathbf{M}^1(\mathbf{u}',\tilde{\eta}',\phi)(t)=0.
$ Since $\mathbf{u}'$ is a $\mathcal{Z}_T^1$-valued random variable, in particular $\mathbf{u}'\in\mathbb{D}([0,T];\mathbb{H}^1_w)$, i.e. $\mathbf{u}'$ is weakly c\`{a}dl\`{a}g. Moreover, since two c\`{a}dl\`{a}g functions equal for leb-almost all $t\in[0,T]$ must be equal for all $t\in[0,T]$, we derive that for all $\phi\in \mathbb{H}^1$ and $t\in[0,T]$, the equality
\begin{equation}\label{5pro5-1}
\begin{split}
\left(\mathbf{u}',\phi \right)_{\mathbb{L}^2}=\mathbf{M}^1(\mathbf{u}',\tilde{\eta}',\phi)(t)
\end{split}
\end{equation}
is valid, $\mathbb{P}'$-a.s. Therefore $(\Omega',\mathcal{F}',\mathbb{F}',\mathbb{P}',\mathbf{u}',\eta')$ is a martingale weak solution of \eqref{sys1}.

The pathwise uniqueness result comes from the following proposition.
\begin{proposition}\label{pro5-2} Let $\mathcal{O}\subset\mathbb{R}^d,~d=1,2,3$ be a bounded domain with $C^{2,1}$-boundary and let $\mathbf{u}_0\in \mathbb{H}^1$ be fixed. Assume that $(\Omega,\mathcal{F},\mathbb{F},\mathbb{P},\mathbf{u}_1,\eta')$ and $(\Omega,\mathcal{F},\mathbb{F},\mathbb{P},\mathbf{u}_2,\eta')$ are two martingale weak solution of \eqref{sys1} such that for $i=1,2$,
\begin{equation*}
\begin{split}
\mathbf{u}_i(0)=\mathbf{u}(0);~\mathbf{u}_i\in L^{\infty}(0,T;\mathbb{H}^1)\cap L^2(0,T;\mathbb{H}^3)\textrm{~a.s.};~\mathbf{u}_i~\textrm{satisfies the equation}~\eqref{5pro5-1}.
\end{split}
\end{equation*}
Then
$
\mathbf{u}_1(\cdot,\omega)=\mathbf{u}_2(\cdot,\omega)
$, $\mathbb{P}$-a.s.
\end{proposition}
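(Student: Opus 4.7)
Set $\mathbf{w} := \mathbf{u}_1 - \mathbf{u}_2$, so that $\mathbf{w}(0) = 0$ and, in the variational sense,
\begin{equation*}
d\mathbf{w} = \bigl[-\Delta\mathbf{w} - \Delta^2\mathbf{w} + 2\mathbf{w} - 2\mathcal{N} + 2\Delta\mathcal{N} - \mathcal{C}\bigr]dt + (\mathbf{b}(\mathbf{u}_1)-\mathbf{b}(\mathbf{u}_2))\,dt + \int_{\mathbb{B}}[\mathbf{G}(l,\mathbf{u}_1)-\mathbf{G}(l,\mathbf{u}_2)]\,\tilde{\eta}(dt,dl),
\end{equation*}
where $\mathcal{N} := |\mathbf{u}_1|^2\mathbf{u}_1-|\mathbf{u}_2|^2\mathbf{u}_2$ and $\mathcal{C}:=\mathbf{u}_1\times\Delta\mathbf{u}_1-\mathbf{u}_2\times\Delta\mathbf{u}_2$. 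Because the regularity $\mathbf{u}_i\in L^\infty(0,T;\mathbb{H}^1)\cap L^2(0,T;\mathbb{H}^3)$ only holds almost surely, I would introduce the localizing stopping times
\begin{equation*}
\tau_N := \inf\Bigl\{t\in[0,T]: \int_0^t\bigl(\|\mathbf{u}_1(s)\|_{\mathbb{H}^3}^2+\|\mathbf{u}_2(s)\|_{\mathbb{H}^3}^2\bigr)\,ds \ge N\Bigr\}\wedge T,
\end{equation*}
with $\tau_N\uparrow T$ a.s., and carry out all estimates on $[0,\tau_N]$ before passing to the limit $N\to\infty$.

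\medskip

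Next I would apply the It\^{o} formula to $t\mapsto\|\mathbf{w}(t\wedge\tau_N)\|_{\mathbb{L}^2}^2$ in the Gelfand triple $\mathbb{H}^3\hookrightarrow\mathbb{L}^2\hookrightarrow(\mathbb{H}^3)^*$. The linear part of the drift contributes the main dissipation $-2\int_0^\cdot\|\Delta\mathbf{w}\|_{\mathbb{L}^2}^2\,ds$ from $-\Delta^2\mathbf{w}$, together with the anti-dissipative $2\int_0^\cdot\|\nabla\mathbf{w}\|_{\mathbb{L}^2}^2\,ds$, which I absorb via the interpolation $\|\nabla\mathbf{w}\|^2\le\varepsilon\|\Delta\mathbf{w}\|^2+C_\varepsilon\|\mathbf{w}\|^2$. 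The compensated Poisson noise splits into a martingale term and the It\^{o} correction $\int_0^\cdot\int_{\mathbb{B}}\|\mathbf{G}(l,\mathbf{u}_1)-\mathbf{G}(l,\mathbf{u}_2)\|_{\mathbb{L}^2}^2\,\nu(dl)\,ds$; the $\mathbb{L}^2$-analog of Corollary \ref{--cor2-2-2} (whose proof carries through verbatim since $J$ is Lipschitz in $\mathbb{L}^2$) bounds this by $C\int_0^\cdot\|\mathbf{w}\|_{\mathbb{L}^2}^2\,ds$, and likewise $(\mathbf{b}(\mathbf{u}_1)-\mathbf{b}(\mathbf{u}_2),\mathbf{w})_{\mathbb{L}^2}\le C\|\mathbf{w}\|_{\mathbb{L}^2}^2$.

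\medskip

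The crux is the treatment of the nonlinear terms. For the gyromagnetic contribution, decompose
\begin{equation*}
\mathcal{C} = \mathbf{w}\times\Delta\mathbf{u}_1 + \mathbf{u}_2\times\Delta\mathbf{w},
\end{equation*}
and use the pointwise identity $\mathbf{a}\cdot(\mathbf{a}\times\mathbf{b})=0$ to annihilate $(\mathbf{w}\times\Delta\mathbf{u}_1,\mathbf{w})_{\mathbb{L}^2}$; the surviving piece is bounded by $\|\mathbf{u}_2\|_{\mathbb{L}^\infty}\|\Delta\mathbf{w}\|_{\mathbb{L}^2}\|\mathbf{w}\|_{\mathbb{L}^2}\le\varepsilon\|\Delta\mathbf{w}\|^2+C\|\mathbf{u}_2\|_{\mathbb{H}^3}^2\|\mathbf{w}\|_{\mathbb{L}^2}^2$, where the Sobolev embedding $\mathbb{H}^3\hookrightarrow\mathbb{L}^\infty$ (valid since $d\le 3$) makes $\|\mathbf{u}_2\|_{\mathbb{H}^3}^2\in L^1(0,\tau_N)$. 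For the cubic $\mathcal{N}$, factor $\mathcal{N} = |\mathbf{u}_1|^2\mathbf{w} + [(\mathbf{u}_1+\mathbf{u}_2)\cdot\mathbf{w}]\mathbf{u}_2$; then $(\mathcal{N},\mathbf{w})_{\mathbb{L}^2}\le C(\|\mathbf{u}_1\|_{\mathbb{L}^\infty}^2+\|\mathbf{u}_2\|_{\mathbb{L}^\infty}^2)\|\mathbf{w}\|^2$, and the $\Delta\mathcal{N}$ term, after integration by parts, produces terms of the schematic form $C\|\mathbf{u}_i\|_{\mathbb{L}^\infty}^2\|\nabla\mathbf{w}\|^2 + C\|\mathbf{u}_i\|_{\mathbb{L}^\infty}\|\nabla\mathbf{u}_i\|_{\mathbb{L}^\infty}\|\mathbf{w}\|\|\nabla\mathbf{w}\|$, bounded once again by $\varepsilon\|\Delta\mathbf{w}\|^2 + K(s)\|\mathbf{w}\|_{\mathbb{L}^2}^2$ via the interpolation above and the embedding $\mathbb{H}^3\hookrightarrow\mathbb{W}^{1,\infty}$. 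Choosing $\varepsilon$ small, all $\|\Delta\mathbf{w}\|^2$ terms are absorbed into the $-\Delta^2$ dissipation.

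\medskip

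Taking expectation up to $\tau_N$ and invoking BDG to verify that the compensated Poisson martingale $2\int_0^{\cdot\wedge\tau_N}\int_{\mathbb{B}}(\mathbf{w},\mathbf{G}(l,\mathbf{u}_1)-\mathbf{G}(l,\mathbf{u}_2))_{\mathbb{L}^2}\tilde{\eta}(ds,dl)$ has zero mean, I would arrive at
\begin{equation*}
\mathbb{E}\|\mathbf{w}(t\wedge\tau_N)\|_{\mathbb{L}^2}^2 \le \int_0^t \mathbb{E}\bigl[K(s\wedge\tau_N)\|\mathbf{w}(s\wedge\tau_N)\|_{\mathbb{L}^2}^2\bigr]\,ds,
\end{equation*}
with $K(s) = C(1+\|\mathbf{u}_1(s)\|_{\mathbb{H}^3}^2+\|\mathbf{u}_2(s)\|_{\mathbb{H}^3}^2)$ uniformly $L^1$ on $[0,\tau_N]$ by construction. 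Gronwall's lemma yields $\mathbb{E}\|\mathbf{w}(t\wedge\tau_N)\|_{\mathbb{L}^2}^2=0$ for every $t$, hence $\mathbf{w}\equiv 0$ on $[0,\tau_N]$ a.s., and letting $N\to\infty$ concludes. \textbf{The main obstacle} is the gyromagnetic term $\mathbf{u}\times\Delta\mathbf{u}$: it is the highest-order nonlinearity, and only half of its contribution is killed by the elementary orthogonality $\mathbf{a}\cdot(\mathbf{a}\times\mathbf{b})=0$, so the remainder must be absorbed through $\mathbb{L}^\infty$ control of $\mathbf{u}_2$. This is precisely where the dimensional restriction $d\le 3$ enters, via $\mathbb{H}^3\hookrightarrow\mathbb{L}^\infty$, combined with the $L^2_t\mathbb{H}^3_x$ regularity furnished by Theorem \ref{the1}.
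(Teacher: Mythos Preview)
Your overall strategy is right and matches the paper's, but two interlocking technical points need repair.

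First, the claimed Gronwall coefficient $K(s)=C(1+\|\mathbf{u}_1\|_{\mathbb{H}^3}^2+\|\mathbf{u}_2\|_{\mathbb{H}^3}^2)$ is too optimistic for the $\Delta\mathcal{N}$ contribution. Whether you integrate by parts or pair $\mathcal{N}$ directly with $\Delta\mathbf{w}$, the Young step produces $C\|\mathbf{u}_i\|_{\mathbb{L}^\infty}^4\|\mathbf{w}\|_{\mathbb{L}^2}^2$ (and the additive interpolation you cite, applied to $\|\mathbf{u}_i\|_{\mathbb{L}^\infty}^2\|\nabla\mathbf{w}\|^2$, leaves a \emph{random} coefficient $\varepsilon\|\mathbf{u}_i\|_{\mathbb{L}^\infty}^2$ in front of $\|\Delta\mathbf{w}\|^2$ that cannot be absorbed). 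With the crude embedding $\mathbb{H}^3\hookrightarrow\mathbb{L}^\infty$ this becomes $\|\mathbf{u}_i\|_{\mathbb{H}^3}^4$, which is \emph{not} in $L^1(0,\tau_N)$ under your stopping time. The paper's remedy is the sharper Gagliardo--Nirenberg inequality $\|f\|_{\mathbb{L}^\infty}\le C\|f\|_{\mathbb{H}^1}^{1/2}\|f\|_{\mathbb{H}^2}^{1/2}$, giving $\|\mathbf{u}_i\|_{\mathbb{L}^\infty}^4\le C\|\mathbf{u}_i\|_{\mathbb{H}^1}^2\|\mathbf{u}_i\|_{\mathbb{H}^2}^2\in L^1_t$; correspondingly the stopping time must also localize $\sup_s\|\mathbf{u}_i(s)\|_{\mathbb{H}^1}$, not just $\int\|\mathbf{u}_i\|_{\mathbb{H}^3}^2$.

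Second, once $K$ is random you cannot take expectation first and then apply Gronwall: the inequality $\mathbb{E}\|\mathbf{w}(t\wedge\tau_N)\|^2\le\int_0^t\mathbb{E}[K(s)\|\mathbf{w}(s)\|^2]\,ds$ does not close. The paper applies Gronwall \emph{pathwise} (the stopping time makes $\int_0^{\xi^K}\mathbf{F}\le C_K$ a deterministic bound), obtaining $\sup_s\|\mathbf{w}(s\wedge\xi^K)\|^2\le C_K\cdot(\text{sup of the stochastic integrals})$, and only then takes expectation using BDG, followed by a second (now deterministic-coefficient) Gronwall. With these two adjustments your argument goes through.
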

\begin{proof}[\emph{\textbf{Proof}}] Let $\mathbf{u}^*:=\mathbf{u}_1-\mathbf{u}_2$. Then $\mathbf{u}^*$ satisfies the following equation
\begin{equation*}
\begin{split}
\mathbf{u}^*(t)=\sum_{j=1}^5\int_0^tF^j(\mathbf{u}_1)-F^j(\mathbf{u}_2)\,\mathrm{d}s+\int_0^t\mathbf{b}(\mathbf{u}_1)-\mathbf{b}(\mathbf{u}_2)\,\mathrm{d}s+\int_0^t\int_{\mathbb{B}}\mathbf{G}(l,\mathbf{u}_1)-\mathbf{G}(l,\mathbf{u}_2)\tilde{\eta}(\mathrm{d}s,\mathrm{d}l)
\end{split}
\end{equation*}
in $(\mathbb{H}^{1})^*$ with $\mathbf{u}^*_0=0$. Let
\begin{equation*}
\begin{split}
\xi^K:=\inf\left\{t\geq0:\|\mathbf{u}_1(t)\|_{\mathbb{H}^1}^2+\|\mathbf{u}_2(t)\|_{\mathbb{H}^1}^2+\int_0^t\|\mathbf{u}_1\|_{\mathbb{H}^3}^2\,\mathrm{d}s+\int_0^t\|\mathbf{u}_2\|_{\mathbb{H}^3}^2\,\mathrm{d}s>K\right\}\wedge T,~K>0.
\end{split}
\end{equation*}
According to \eqref{1lem5-5}, it follows that
$
\xi^K\nearrow T~\textrm{as}~K\rightarrow\infty,~\mathbb{P}\textrm{-a.s.}
$ Since $\mathbb{H}^1\subset \mathbb{L}^2\subset (\mathbb{H}^{1})^*$ is a Gelfand triple for Hilbert space, we can use the It\^{o} formula (cf. Gy\"{o}ngi and Krylov \cite{gyongy1982stochastics}) to $\|\mathbf{u}^*\|_{\mathbb{L}^2}^2$. Thus we have
\begin{equation}\label{2pro5-2}
\begin{split}
&\|\mathbf{u}^*(t\wedge\xi^K)\|_{\mathbb{L}^2}^2+2\int_0^{t\wedge\xi^K}\|\Delta\mathbf{u}^*\|_{\mathbb{L}^2}^2\,\mathrm{d}s\\
&=2\int_0^{t\wedge\xi^K}\|\nabla\mathbf{u}^*\|_{\mathbb{L}^2}^2\,\mathrm{d}s+4\int_0^{t\wedge\xi^K}\left(\mathbf{u}^*-|\mathbf{u}_1|^2\mathbf{u}^*+\mathbf{u}_2\left(|\mathbf{u}_2|^2-|\mathbf{u}_1|^2\right),\mathbf{u}^*\right)_{\mathbb{L}^2}\,\mathrm{d}s\\
&+4\int_0^{t\wedge\xi^K}\left(|\mathbf{u}_1|^2\mathbf{u}_1-|\mathbf{u}_2|^2\mathbf{u}_2,\Delta\mathbf{u}^*\right)_{\mathbb{L}^2}\,\mathrm{d}s+2\int_0^{t\wedge\xi^K}\left(\mathbf{u}_1\times\nabla\mathbf{u}_1-\mathbf{u}_2\times\nabla\mathbf{u}_2,\nabla\mathbf{u}^*\right)_{\mathbb{L}^2}\,\mathrm{d}s\\
&+2\int_0^{t\wedge\xi^K}(\mathbf{b}(\mathbf{u}_1)-\mathbf{b}(\mathbf{u}_2),\mathbf{u}^*)_{\mathbb{L}^2}\,\mathrm{d}s+2\int_0^{t\wedge\xi^K}\int_{\mathbb{B}}\left(\mathbf{G}(l,\mathbf{u}_1)-\mathbf{G}(l,\mathbf{u}_2),\mathbf{u}^*\right)_{\mathbb{L}^2}\tilde{\eta}(\mathrm{d}s,\mathrm{d}l)\\
&+\int_0^{t\wedge\xi^K}\int_{\mathbb{B}}\|\mathbf{G}(l,\mathbf{u}_1)-\mathbf{G}(l,\mathbf{u}_2)\|_{L^2}^2\tilde{\eta}(\mathrm{d}s,\mathrm{d}l)+\int_0^{t\wedge\xi^K}\int_{\mathbb{B}}\|\mathbf{G}(l,\mathbf{u}_1)-\mathbf{G}(l,\mathbf{u}_2)\|_{L^2}^2\nu(\mathrm{d}l)\mathrm{d}s\\
&:=\sum_{j=1}^8I_j(t\wedge\xi^K).
\end{split}
\end{equation}
By using integration by parts, H\"{o}lder's inequality and Young's inequality, we have
\begin{equation}\label{3pro5-2}
\begin{split}
|I_1(t\wedge\xi^K)|\leq\varepsilon\int_0^{t\wedge\xi^K}\|\Delta\mathbf{u}^*\|_{\mathbb{L}^2}^2\,\mathrm{d}s+C_{\varepsilon}\int_0^{t\wedge\xi^K}\|\mathbf{u}^*\|_{\mathbb{L}^2}^2\,\mathrm{d}s.
\end{split}
\end{equation}
By using the triangle inequality, we see that
\begin{equation}\label{4pro5-2}
\begin{split}
I_2(t\wedge\xi^K)\leq C\int_0^{t\wedge\xi^K}\|\mathbf{u}^*\|_{\mathbb{L}^2}^2\,\mathrm{d}s+C\int_0^{t\wedge\xi^K}\|\mathbf{u}_2\|_{\mathbb{L}^{\infty}}\left(\|\mathbf{u}_1\|_{\mathbb{L}^{\infty}}+\|\mathbf{u}_2\|_{\mathbb{L}^{\infty}}\right)\|\mathbf{u}^*\|_{\mathbb{L}^2}^2\,\mathrm{d}s.
\end{split}
\end{equation}
Similarly, by the H\"{o}lder inequality and Young's inequality, it follows that
\begin{equation}\label{5pro5-2}
\begin{split}
&|I_3(t\wedge\xi^K)|\leq C\left|\int_0^{t\wedge\xi^K}\left(|\mathbf{u}_1|^2\mathbf{u}^*+\left(|\mathbf{u}_1|^2-|\mathbf{u}_2|^2\right)\mathbf{u}_2,\Delta\mathbf{u}^*\right)_{\mathbb{L}^2}\,\mathrm{d}s\right|\\
&\leq C\int_0^{t\wedge\xi^K}\|\mathbf{u}_1\|_{\mathbb{L}^{\infty}}^2\|\mathbf{u}^*\|_{\mathbb{L}^2}\|\Delta\mathbf{u}^*\|_{\mathbb{L}^2}\,\mathrm{d}s\\
&+C\int_0^{t\wedge\xi^K}(\|\mathbf{u}_1\|_{\mathbb{L}^{\infty}}\|\mathbf{u}_2\|_{\mathbb{L}^{\infty}}+\|\mathbf{u}_2\|_{\mathbb{L}^{\infty}}^2)\|\mathbf{u}^*\|_{\mathbb{L}^2}\|\Delta\mathbf{u}^*\|_{\mathbb{L}^2}\,\mathrm{d}s\\
&\leq \varepsilon\int_0^{t\wedge\xi^K}\|\Delta\mathbf{u}^*\|_{\mathbb{L}^{2}}^{2}\,\mathrm{d}s+C_{\varepsilon}\int_0^{t\wedge\xi^K}(\|\mathbf{u}_1\|_{\mathbb{L}^{\infty}}^4+\|\mathbf{u}_2\|_{\mathbb{L}^{\infty}}^4)\|\mathbf{u}^*\|_{\mathbb{L}^2}^2\,\mathrm{d}s.
\end{split}
\end{equation}
For the fourth term, it follows that
\begin{equation}\label{6pro5-2}
\begin{split}
&|I_4(t\wedge\xi^K)|=2\left|\int_0^{t\wedge\xi^K}\left(\nabla\mathbf{u}_1\times\mathbf{u}^*,\nabla\mathbf{u}^*\right)_{\mathbb{L}^2}\,\mathrm{d}s\right|= 2\left|\int_0^{t\wedge\xi^K}\left(\mathbf{u}_1\times\mathbf{u}^*,\Delta\mathbf{u}^*\right)_{\mathbb{L}^2}\,\mathrm{d}s\right|\\
&\leq C\int_0^{t\wedge\xi^K}\|\mathbf{u}_1\|_{\mathbb{L}^{\infty}}\|\mathbf{u}^*\|_{\mathbb{L}^{2}}\|\Delta\mathbf{u}^*\|_{\mathbb{L}^{2}}\,\mathrm{d}s\\
&\leq\varepsilon\int_0^{t\wedge\xi^K}\|\Delta\mathbf{u}^*\|_{\mathbb{L}^{2}}^{2}\,\mathrm{d}s+C_{\varepsilon}\int_0^{t\wedge\xi^K}\|\mathbf{u}_1\|_{\mathbb{L}^{\infty}}^2\|\mathbf{u}^*\|_{\mathbb{L}^{2}}^2\,\mathrm{d}s.
\end{split}
\end{equation}
Moreover by Corollary \ref{--cor2-2-2}, we infer that
\begin{equation}\label{7pro5-2}
\begin{split}
&|I_5+I_8|(t\wedge\xi^K)\leq C\int_0^{t\wedge\xi^K}\|\mathbf{u}^*\|_{\mathbb{L}^2}^2\,\mathrm{d}s.
\end{split}
\end{equation}
Thus plugging \eqref{3pro5-2}-\eqref{7pro5-2} into \eqref{2pro5-2} and choosing $\varepsilon$ small enough, we infer that
\begin{equation*}
\begin{split}
\|\mathbf{u}^*(t\wedge\xi^K)\|_{\mathbb{L}^2}^2\leq C\int_0^{t\wedge\xi^K}\mathbf{F}(s)\|\mathbf{u}^*(s)\|_{\mathbb{L}^{2}}^2\,\mathrm{d}s+I_6(t\wedge\xi^K)+I_7(t\wedge\xi^K),
\end{split}
\end{equation*}
where
$
\mathbf{F}:=1+\|\mathbf{u}_1\|_{\mathbb{L}^{\infty}}^4+\|\mathbf{u}_2\|_{\mathbb{L}^{\infty}}^4.
$
According to the GN inequality,
\begin{equation*}
\begin{split}
\|f\|_{\mathbb{L}^{\infty}}\leq C\|f\|_{\mathbb{H}^{1}}^{\frac{1}{2}}\|f\|_{\mathbb{H}^{2}}^{\frac{1}{2}},~f\in\mathbb{H}^2,~d=1,2,3.
\end{split}
\end{equation*}
Thus
\begin{equation*}
\begin{split}
\int_0^{t\wedge\xi^K}\mathbf{F}(s)\,\mathrm{d}s&\leq t+C\sup_{s\in[0,t\wedge\xi^K]}\|\mathbf{u}_1(s)\|_{\mathbb{H}^{1}}^2\int_0^{t\wedge\xi^K}\|\mathbf{u}_1(s)\|_{\mathbb{H}^{2}}^2\,\mathrm{d}s\\
&+C\sup_{s\in[0,t\wedge\xi^K]}\|\mathbf{u}_2(s)\|_{\mathbb{H}^{1}}^2\int_0^{t\wedge\xi^K}\|\mathbf{u}_2(s)\|_{\mathbb{H}^{2}}^2\,\mathrm{d}s\leq C_k.
\end{split}
\end{equation*}
Thus by using the Gronwall lemma and BDG inequality, we have
\begin{equation*}
\begin{split}
&\mathbb{E}\sup_{s\in[0,t]}\|\mathbf{u}^*(s\wedge\xi^K)\|_{\mathbb{L}^2}^2\leq C\mathbb{E}\left[\exp\left(\int_0^{t\wedge\xi^K}\mathbf{F}(s)\,\mathrm{d}s\right)\left(\sup_{s\in[0,t]}|I_6(s\wedge\xi^K)|+\sup_{s\in[0,t]}|I_7(s\wedge\xi^K)|\right)\right]\\
&\leq C_K\mathbb{E}\left(\int_0^{t\wedge\xi^K}\|\mathbf{u}^*(s)\|_{\mathbb{L}^{2}}^4\,\mathrm{d}s\right)^{\frac{1}{2}}\leq\frac{1}{2}\mathbb{E}\sup_{s\in[0,t]}\|\mathbf{u}^*(s\wedge\xi^K)\|_{\mathbb{L}^2}^2+C_K\mathbb{E}\int_0^{t\wedge\xi^K}\|\mathbf{u}^*(s)\|_{\mathbb{L}^{2}}^2\,\mathrm{d}s.
\end{split}
\end{equation*}
Using the Gronwall lemma again we infer that $\sup_{s\in[0,t\wedge\xi^K]}\|\mathbf{u}^*(s)\|_{\mathbb{L}^2}^2=0$, $\mathbb{P}$-a.s. By the monotone convergence theorem and the fact that $\xi^K\nearrow T$ as $K\rightarrow\infty$, it follows that $\mathbb{P}$-a.s.,
$
\sup_{s\in[0,T]}\|\mathbf{u}^*(s)\|_{\mathbb{L}^2}^2=0,
$
which implies the uniqueness.\end{proof}
Theorem \ref{the1} then follows from the the Yamada-Watanabe theorem.\end{proof}

\section{Large deviations principle}\label{sec6}
In this section, we shall establish a Freidlin-Wentzell type LDP for pathwise weak solutions of the SLLBar equation \eqref{sys1}.

Let $\varepsilon>0$ be fixed. Define a time scaling of the L\'evy process by
$
L^{\varepsilon^{-1}}(t):=L(\varepsilon^{-1}t),~t\geq0.
$
Let $\eta^{\varepsilon^{-1}}$ be the time-homogeneous Poisson random measure (PRM) of the L\'evy process $L^{\varepsilon^{-1}}$ and $\nu^{\varepsilon^{-1}}$ be its compensator. Then we have
\begin{equation*}
\begin{split}
\eta^{\varepsilon^{-1}}([0,t]\times\Gamma)&:=\#\left\{s\in[0,t]:L^{\varepsilon^{-1}}(s)-L^{\varepsilon^{-1}}(s-)\in\Gamma\right\}\\
&=\#\left\{s\in[0,\varepsilon^{-1}t]:L(s)-L(s-)\in\Gamma\right\}=\eta([0,\varepsilon^{-1}t]\times\Gamma),
\end{split}
\end{equation*}
which implies that
$
t\nu^{\varepsilon^{-1}}(\Gamma)=\mathbb{E}[\eta^{\varepsilon^{-1}}([0,t]\times\Gamma)]=\mathbb{E}[\eta([0,\varepsilon^{-1}t]\times\Gamma)]=\varepsilon^{-1}t\nu(\Gamma),
$
and
$
\nu^{\varepsilon^{-1}}(\Gamma)=\varepsilon^{-1}\nu(\Gamma).
$
If we denote by $\tilde{\eta}^{\varepsilon^{-1}}$  the time-homogeneous compensated PRM of the L\'evy process $L^{\varepsilon^{-1}}$, then
$
\tilde{\eta}^{\varepsilon^{-1}}([0,t]\times\Gamma)=\eta^{\varepsilon^{-1}}([0,t]\times\Gamma)-\varepsilon^{-1}t\nu(\Gamma),
$
and so
\begin{equation}\label{63}
\begin{split}
\tilde{\eta}^{\varepsilon^{-1}}(\mathrm{d}t,\mathrm{d}l)=\eta^{\varepsilon^{-1}}(\mathrm{d}t,\mathrm{d}l)-\varepsilon^{-1}\nu(\mathrm{d}l)\mathrm{d}t.
\end{split}
\end{equation}

By using the representation \eqref{63}, equation \eqref{sys61} can be rewritten in the   form of
\begin{equation}\label{sys62}
\begin{split}
\mathbf{u}^{\varepsilon}(t)&=\mathbf{u}_0^{\varepsilon}+\int_0^t-\Delta\mathbf{u}^{\varepsilon}-\Delta^2\mathbf{u}^{\varepsilon}+2(1-|\mathbf{u}^{\varepsilon}|^2)\mathbf{u}^{\varepsilon}+2\Delta(|\mathbf{u}^{\varepsilon}|^2\mathbf{u}^{\varepsilon})-\mathbf{u}^{\varepsilon}\times\Delta \mathbf{u}^{\varepsilon}\,\mathrm{d}s\\
&+\varepsilon\int_0^t\Phi(l,\mathbf{u}^{\varepsilon})-\mathbf{u}^{\varepsilon}\tilde{\eta}^{\varepsilon^{-1}}(\mathrm{d}s,\mathrm{d}l)+\varepsilon\int_0^t\int_{\mathbb{B}}\Phi(l,\mathbf{u}^{\varepsilon})-\mathbf{u}^{\varepsilon}-lJ(\mathbf{u}^{\varepsilon})\nu^{\varepsilon^{-1}}(\mathrm{d}l)\mathrm{d}s\\
&:=\mathbf{u}_0^{\varepsilon}+\sum_{j=1}^5\int_0^tF^j(\mathbf{u}^{\varepsilon})\,\mathrm{d}s+\varepsilon\int_0^t\mathbf{G}(l,\mathbf{u}^{\varepsilon})\tilde{\eta}^{\varepsilon^{-1}}(\mathrm{d}s,\mathrm{d}l)+\int_0^t\mathbf{b}(\mathbf{u}^{\varepsilon})\nu(\mathrm{d}l)\mathrm{d}s.
\end{split}
\end{equation}

Similar to the proof of Theorem \ref{the1}, we have the following result.
\begin{lemma}\label{the61}
Let $\mathcal{O}\subset\mathbb{R}^d$, $d=1,2,3$, be a bounded domain with $C^{2,1}$-boundary.   Assume that $\mathbf{u}_0^{\varepsilon}\in\mathbb{H}^1$, the functions $
\mathbf{h}\in\mathbb{W}^{1,\infty}$ and $\mathbf{g}\in\mathbb{H}^1$. Then there exists a unique pathwise weak solution $\mathbf{u}^{\varepsilon}(t)$ to \eqref{sys62}.
\end{lemma}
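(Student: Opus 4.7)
The plan is to mirror step by step the proof of Theorem~\ref{the1}, tracking how the prefactor $\varepsilon$ and the rescaled intensity $\varepsilon^{-1}\nu$ enter the estimates. Since $\varepsilon>0$ is fixed throughout, equation \eqref{sys62} has exactly the structural form of \eqref{sub2-1-6}: the Marcus-driven noise is replaced by the compensated integral against $\tilde{\eta}^{\varepsilon^{-1}}$ with amplitude $\varepsilon\mathbf{G}(l,\cdot)$, while the drift correction $\varepsilon\int_0^t\!\!\int_{\mathbb{B}}\mathbf{H}(l,\mathbf{u}^{\varepsilon})\nu^{\varepsilon^{-1}}(\mathrm{d}l)\mathrm{d}s$ collapses to $\int_0^t\mathbf{b}(\mathbf{u}^{\varepsilon})\mathrm{d}s$ because $\varepsilon\cdot\varepsilon^{-1}\nu=\nu$. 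Thus the functional-analytic framework developed in Sections \ref{sec2}--\ref{sec5} transfers without any change; only the constants will acquire $\varepsilon$-dependence.

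First I would introduce the Galerkin approximations $\mathbf{u}_n^\varepsilon$ by projecting \eqref{sys62} onto $S_n$ as in \eqref{sys2}, with $\eta$ replaced by $\eta^{\varepsilon^{-1}}$ and the stochastic integrand multiplied by $\varepsilon$, and then derive the analogues of Lemmas \ref{lem32} and \ref{lem35} via It\^o's formula. The only novelty lies in the BDG/isometry step: after substituting $\nu^{\varepsilon^{-1}}=\varepsilon^{-1}\nu$ inside the quadratic variation,
\begin{equation*}
\mathbb{E}\left|\varepsilon\int_0^t\!\!\int_{\mathbb{B}}\bigl[\mathbf{F}(\Phi_n(l,\mathbf{u}_n^\varepsilon))-\mathbf{F}(\mathbf{u}_n^\varepsilon)\bigr]\tilde{\eta}^{\varepsilon^{-1}}(\mathrm{d}s,\mathrm{d}l)\right|^p\leq C\varepsilon^{p/2}\mathbb{E}\left|\int_0^t\!\!\int_{\mathbb{B}}[\mathbf{F}(\Phi_n(l,\mathbf{u}_n^\varepsilon))-\mathbf{F}(\mathbf{u}_n^\varepsilon)]^2\nu(\mathrm{d}l)\mathrm{d}s\right|^{p/2},
\end{equation*}
and Lemma \ref{lem2-2-1} together with Remark \ref{rem331} bring the right-hand side back to the same shape treated in Lemmas \ref{lem32}--\ref{lem35}. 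Gronwall then yields $\mathbb{E}\sup_{t\leq T}\|\mathbf{u}_n^\varepsilon\|_{\mathbb{H}^1}^{2p}+\mathbb{E}(\int_0^T\|\mathbf{u}_n^\varepsilon\|_{\mathbb{H}^3}^2\mathrm{d}t)^p\le C_\varepsilon$, as well as the corresponding bounds on $F_n^j(\mathbf{u}_n^\varepsilon)$ in $L^2(0,T;(\mathbb{H}^1)^*)$.

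Next I would establish tightness of the laws of $(\mathbf{u}_n^\varepsilon,\eta^{\varepsilon^{-1}})$ on $\mathcal{Z}_T^1\times M_{\bar{\mathbb{N}}}([0,T]\times\mathbb{B})$ by verifying the Aldous condition in $(\mathbb{H}^{\beta_1})^*$ as in Section \ref{sec5}; the estimates on $F_n^j$ and $\mathbf{b}_n$ are unchanged, and for the stochastic part
\begin{equation*}
\mathbb{E}\left\|\varepsilon\int_{\tau_n}^{\tau_n+\theta}\!\!\int_{\mathbb{B}}\mathbf{G}_n(l,\mathbf{u}_n^\varepsilon)\tilde{\eta}^{\varepsilon^{-1}}(\mathrm{d}s,\mathrm{d}l)\right\|_{(\mathbb{H}^{\beta_1})^*}^{2}\leq C\varepsilon\,\mathbb{E}\int_{\tau_n}^{\tau_n+\theta}\!\!\int_{\mathbb{B}}\|\mathbf{G}_n(l,\mathbf{u}_n^\varepsilon)\|_{\mathbb{L}^2}^2\nu(\mathrm{d}l)\mathrm{d}s\leq C\theta.
\end{equation*}
The generalised Jakubowski-Skorokhod embedding then provides an a.s.-convergent subsequence on a new probability space, and the passage to the limit in the Galerkin equation is a direct transcription of Proposition \ref{cor5-1}; the convergence arguments depend only on the Lipschitz and linear-growth properties of $\mathbf{G}$ and $\mathbf{b}$ furnished by Corollary \ref{--cor2-2-2}, hence carry through verbatim.

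Pathwise uniqueness is then obtained exactly as in Proposition \ref{pro5-2}: set $\mathbf{u}^*:=\mathbf{u}_1^\varepsilon-\mathbf{u}_2^\varepsilon$, apply It\^o's formula to $\|\mathbf{u}^*\|_{\mathbb{L}^2}^2$ with the localising stopping time $\xi^K$, and control the deterministic nonlinear contributions by the same cutoff argument used there. The jump contributions produce
\begin{equation*}
2\varepsilon\!\int_0^{t\wedge\xi^K}\!\!\int_{\mathbb{B}}(\mathbf{G}(l,\mathbf{u}_1^\varepsilon)-\mathbf{G}(l,\mathbf{u}_2^\varepsilon),\mathbf{u}^*)_{\mathbb{L}^2}\tilde{\eta}^{\varepsilon^{-1}}(\mathrm{d}s,\mathrm{d}l)+\varepsilon\!\int_0^{t\wedge\xi^K}\!\!\int_{\mathbb{B}}\|\mathbf{G}(l,\mathbf{u}_1^\varepsilon)-\mathbf{G}(l,\mathbf{u}_2^\varepsilon)\|_{\mathbb{L}^2}^2\nu(\mathrm{d}l)\mathrm{d}s,
\end{equation*}
the second of which is bounded by $C_\varepsilon\int_0^t\|\mathbf{u}^*\|_{\mathbb{L}^2}^2\mathrm{d}s$ by Corollary \ref{--cor2-2-2}, and the first vanishes in expectation after BDG and Young's inequality. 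Gronwall's lemma then gives $\mathbf{u}_1^\varepsilon=\mathbf{u}_2^\varepsilon$ $\mathbb{P}$-a.s., and an application of the Yamada-Watanabe theorem upgrades the martingale solution to a pathwise weak solution. The only real obstacle is careful bookkeeping of the $\varepsilon$-factors across the isometry and the compensator identity; no genuinely new analytic difficulty appears because $\varepsilon$ is held fixed here, the delicate $\varepsilon\to 0$ regime being deferred to the large deviation analysis of Theorem \ref{the2}.
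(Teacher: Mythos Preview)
Your proposal is correct and takes exactly the approach indicated by the paper, which provides no proof beyond the single remark ``Similar to the proof of Theorem~\ref{the1}, we have the following result.'' You have accurately expanded this hint: the key observations that $\varepsilon\cdot\varepsilon^{-1}\nu=\nu$ collapses the drift correction back to $\mathbf{b}(\mathbf{u}^\varepsilon)$, and that the BDG/isometry step against $\tilde{\eta}^{\varepsilon^{-1}}$ produces only an extra $\varepsilon^{p/2}$ prefactor, are precisely what is needed to see that the entire machinery of Sections~\ref{sec3}--\ref{sec5} carries over verbatim for fixed $\varepsilon>0$.
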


\begin{remark}\label{the62} The uniqueness in law and the existence of a weak solution hold for equation \eqref{sys62} in the following sense:
\begin{enumerate}
\item [(1)] If $(\Omega,\mathcal{F},\mathbb{F},\mathbb{P},\mathbf{u}_1^{\varepsilon},\eta^{\varepsilon^{-1}})$ and $(\Omega',\mathcal{F}',\mathbb{F}',\mathbb{P}',\mathbf{u}_2^{\varepsilon},(\eta^{\varepsilon^{-1}})')$ are two martingale weak solutions to the problem \eqref{sys62}, such that both $\mathbf{u}_1^{\varepsilon}$ and $\mathbf{u}_2^{\varepsilon}$ are $\mathcal{Z}_T$-valued random variables, then $\mathbf{u}_1^{\varepsilon}$ and $\mathbf{u}_2^{\varepsilon}$ have the same laws on $\mathcal{Z}_T$.
\item[(2)] For every $\varepsilon>0$ there exists a Borel measurable function $\mathcal{J}^{\varepsilon}:\bar{\mathbb{M}}_T\rightarrow \mathcal{Z}_T$ (see subsection \ref{subsec6-1} for the definition of $\bar{\mathbb{M}}_T$) such that the following statement holds: If $(\Omega,\mathcal{F},\mathbb{F},\mathbb{P})$ is an arbitrary filtered probability space, $\eta$ is a arbitrary $\mathbb{R}$-valued time homogeneous PRM defined on $(\Omega,\mathcal{F},\mathbb{F},\mathbb{P})$, and $
X^{\varepsilon}:\Omega\ni\omega\mapsto\mathcal{J}^{\varepsilon}(\varepsilon\eta^{\varepsilon^{-1}}(\omega))\in\mathcal{Z}_T$, then $(\Omega,\mathcal{F},\mathbb{F},\mathbb{P},X^{\varepsilon},\eta^{\varepsilon^{-1}})$ is a martingale weak solution to the problem \eqref{sys62}.
\end{enumerate}
\end{remark}

Before proving the second main result of this paper, we first provide some necessary definitions and framework.

\subsection{The framework}\label{subsec6-1}
Let $\mathbb{B}_T:=[0,T]\times\mathbb{B}$, $\mathbb{X}:=\mathbb{B}\times[0,\infty)$ and $\mathbb{X}_T:=[0,T]\times\mathbb{B}\times[0,\infty)$. Let $\mathbb{M}_T:=\mathcal{M}(\mathbb{B}_T)$ be the space of all nonnegative measures $\vartheta$ on $(\mathbb{B}_T,\mathcal{B}(\mathbb{B}_T))$ such that $\vartheta(K)<\infty$ for every compact set $K$ of $\mathbb{B}_T$. We endow the set $\mathbb{M}_T$ with the weakest topology, denoted by $\mathcal{T}(\mathbb{M}_T)$, such that for every $g\in C_c(\mathbb{B}_T)$, the map
\begin{equation*}
\begin{split}
\vartheta\mapsto(g,\vartheta):=\int_{\mathbb{B}_T}g(l,s)\vartheta(\mathrm{d}l,\mathrm{d}s)\in \mathbb{R}
\end{split}
\end{equation*}
is continuous. This topology can be metrized such that $\mathbb{M}_T$ is a Polish space \cite[section 2]{budhiraja2011variational}. Analogously we define $\bar{\mathbb{M}}_T:=\mathcal{M}(\mathbb{X}_T)$ and $\mathcal{T}(\bar{\mathbb{M}}_T)$. Then there exists a unique probability measure $\bar{\mathbb{P}}$ on $(\bar{\mathbb{M}}_T,\mathcal{T}(\bar{\mathbb{M}}_T))$ \cite[section I.8]{ikeda2014stochastic}, under which the canonical map $\bar{\eta}(\bar{m})=\bar{m}$ is a PRM with intensity measure $\nu(\mathrm{d}l)\mathrm{d}t\mathrm{d}r$. The corresponding compensated PRM is denoted by $\tilde{\bar{\eta}}$ and is defined by
$
\tilde{\bar{\eta}}(\mathrm{d}t\mathrm{d}l\mathrm{d}r):=\bar{\eta}(\mathrm{d}t\mathrm{d}l\mathrm{d}r)-\nu(\mathrm{d}l)\mathrm{d}t\mathrm{d}r.
$
Denote $\mathcal{F}_t:=\sigma\{\bar{\eta}((0,s]\times D):s\in[0,t],~D\in \mathcal{B}(\mathbb{X})\}$ and let $\bar{\mathcal{F}}_t$ the completion under $\bar{\mathbb{P}}$. Let $\bar{\mathbb{F}}:=(\bar{\mathcal{F}}_t)_{t\in[0,T]}$. Set $\bar{\mathcal{P}}$ be the $\bar{\mathbb{F}}$-predictable $\sigma$-field on $[0,T]\times\bar{\mathbb{M}}_T$, with the filtration $\{\bar{\mathcal{F}}_t:t\in[0,T]\}$ on $(\bar{\mathbb{M}}_T,\mathcal{B}(\bar{\mathbb{M}}_T))$. Let $\bar{\mathcal{A}}$ be the class of all $(\bar{\mathcal{P}}\otimes\mathcal{B}(\mathbb{B}))\backslash\mathcal{B}[0,\infty)$-measurable maps $\varphi:\mathbb{B}_T\times \bar{\mathbb{M}}_T\rightarrow[0,\infty)$. For $\varphi\in\bar{\mathcal{A}}$, define counting process $\eta_c^{\varphi}$ on $\mathbb{B}$ by
\begin{equation*}\label{subsec6-1-1}
\begin{split}
\eta_c^{\varphi}((0,t]\times D):=\int_{(0,t]\times D\times(0,\infty)}1_{[0,\varphi(s,l)]}(r)\bar{\eta}(\mathrm{d}s\mathrm{d}l\mathrm{d}r),~t\in[0,T],~D\in \mathcal{B}(\mathbb{B}).
\end{split}
\end{equation*}
Similarly, we define
\begin{equation*}\label{subsec6-1-2}
\begin{split}
\tilde{\eta}_c^{\varphi}((0,t]\times D):=\int_{(0,t]\times D\times(0,\infty)}1_{[0,\varphi(s,l)]}(r)\tilde{\bar{\eta}}(\mathrm{d}s\mathrm{d}l\mathrm{d}r),~t\in[0,T],~D\in \mathcal{B}(\mathbb{B}).
\end{split}
\end{equation*}
Clearly, $\eta^{\varphi}$ is the controlled PRM, and
$
\tilde{\eta}_c^{\varphi}((0,t]\times D)=\eta_c^{\varphi}((0,t]\times D)-\int_{(0,t]\times D}\varphi(s,l)\nu(\mathrm{d}l)\mathrm{d}s.
$
For $K\in \mathbb{N}$, we denote
$
S^K:=\{\theta:\mathbb{B}_T\rightarrow[0,\infty):\mathcal{L}_T\leq K\},
$
where
\begin{equation}\label{subsec6-1-5}
\begin{split}
\mathcal{L}_T(\theta):=\int_0^T\int_{\mathbb{B}}\theta(t,l)\log\theta(t,l)-\theta(t,l)+1\nu(\mathrm{d}l)\mathrm{d}t.
\end{split}
\end{equation}
A function $\theta\in S^K$ can be identified with  a measure $\nu^{\theta}\in \mathbb{M}_T$, defined by
\begin{equation*}
\begin{split}
\nu^{\theta}(D_T):=\int_{D_T}\theta(t,l)\nu(\mathrm{d}l)\mathrm{d}t,~D_T\in \mathcal{B}(\mathbb{B}_T).
\end{split}
\end{equation*}
That is,
$
\frac{\nu^{\theta}(\mathrm{d}l,\mathrm{d}t)}{\nu(\mathrm{d}l)\mathrm{d}t}=\theta.
$
This identification induces a topology on $S^K$, under which $S^K$ is a compact space (see Appendix of \cite{budhiraja2013large}). Let us denote
\begin{equation}\label{ccc1}
\begin{split}
\mathbb{S}=\bigcup_{K\in\mathbb{N}} S^K.
\end{split}
\end{equation}

\subsection{Two auxiliary equations}\label{subsec6-2}
This subsection is devoted to introduce two important equations associated with \eqref{sys61} which will play major role in proving the sufficient conditions for the LDP.

\subsubsection{Deterministic control equation}\label{subsubsec6-1}
For any $\theta\in \mathbb{S}$, we consider the following skeleton equation:
\begin{equation} \label{sys61-1}
\left\{
\begin{aligned}
\mathrm{d}\mathbf{u}^{\theta}(t)&=\left[-\Delta\mathbf{u}^{\theta}-\Delta^2\mathbf{u}^{\theta}+2(1-|\mathbf{u}^{\theta}|^2)\mathbf{u}^{\theta}+2\Delta(|\mathbf{u}^{\theta}|^2\mathbf{u}^{\theta})-\mathbf{u}^{\theta}\times\Delta \mathbf{u}^{\theta}\right]\,\mathrm{d}t\\
&+\mathbf{b}(\mathbf{u}^{\theta})+\int_{\mathbb{B}}\mathbf{G}(l,\mathbf{u}^{\theta})(\theta(t,l)-1)\nu(\mathrm{d}l)\mathrm{d}t,~t\in[0,T],\\
\mathbf{u}^{\theta}(0)&=\mathbf{u}_0\in \mathbb{H}^1.
\end{aligned}
\right.
\end{equation}

The following result determines the solvability of equation \eqref{sys61-1}.
\begin{lemma}\label{the61-1}
Let $\mathcal{O}\subset\mathbb{R}^d$, $d=1,2,3$, be a bounded domain with $C^{2,1}$-boundary. Let $\theta\in\mathbb{S}$ and $\mathbf{u}_0\in\mathbb{H}^1$. Then the equation \eqref{sys61-1} admits a unique weak solution
$
\mathbf{u}^{\theta}\in C([0,T];\mathbb{H}^1)\cap L^2(0,T;\mathbb{H}^3)
$
such that for all $\phi\in\mathbb{H}^1$
\begin{equation}\label{1the61-1}
\begin{split}
(\mathbf{u}^{\theta}(t),\phi)_{\mathbb{L}^2}=&(\mathbf{u}_0^{\theta},\phi)_{\mathbb{L}^2}+\int_0^{t}\left(\nabla\mathbf{u}^{\theta},\nabla\phi\right)_{\mathbb{L}^2}\,\mathrm{d}s+\int_0^{t}\left(\nabla\Delta\mathbf{u}^{\theta},\nabla\phi\right)_{\mathbb{L}^2}\,\mathrm{d}s\\
&+2\int_0^{t}\left((1-|\mathbf{u}^{\theta}|^2)\mathbf{u}^{\theta},\phi\right)_{\mathbb{L}^2}\,\mathrm{d}s+\int_0^{t}\left(\mathbf{u}^{\theta}\times\nabla\mathbf{u}^{\theta},\nabla\phi\right)_{\mathbb{L}^2}\,\mathrm{d}s\\
&-2\int_0^{t}\left(\nabla(|\mathbf{u}^{\theta}|^2\mathbf{u}^{\theta}),\nabla\phi\right)_{\mathbb{L}^2}\,\mathrm{d}s+\int_0^{t}\left(\mathbf{b}(\mathbf{u}^{\theta}),\phi\right)_{\mathbb{L}^2}\,\mathrm{d}s\\
&+\int_0^t\int_{\mathbb{B}}\left(\mathbf{G}(l,\mathbf{u}^{\theta})(\theta(t,l)-1),\phi\right)_{\mathbb{L}^2}\nu(\mathrm{d}l)\mathrm{d}s.
\end{split}
\end{equation}
Moreover, for every $K\in \mathbb{N}$, there exists $C_K>0$ such that
\begin{equation*}\label{2the61-1}
\begin{split}
\sup_{\theta\in S^K}\left(\sup_{t\in[0,T]}\|\mathbf{u}^{\theta}\|_{\mathbb{H}^1}^2+\int_0^T\|\mathbf{u}^{\theta}\|_{\mathbb{H}^3}^2\,\mathrm{d}s\right)\leq C_K.
\end{split}
\end{equation*}
\end{lemma}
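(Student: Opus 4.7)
The proof of Lemma \ref{the61-1} will proceed by a deterministic Faedo-Galerkin scheme that parallels Sections \ref{sec3}-\ref{sec5}, with the crucial new task being a uniform-in-$\theta\in S^K$ treatment of the drift induced by the control. The plan is as follows.

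First, I would project \eqref{sys61-1} onto the finite-dimensional space $S_n$ from Section \ref{sec3} to obtain a system of ODEs for $\mathbf{u}_n^\theta$ whose right-hand side consists of the operators $F_n^j$, the Marcus drift $\mathbf{b}_n(\mathbf{u}_n^\theta)$, and the control term $\int_{\mathbb{B}}\mathbf{G}_n(l,\mathbf{u}_n^\theta)(\theta(t,l)-1)\nu(\mathrm{d}l)$. Local existence follows from local Lipschitz continuity of each piece (exactly as in \cite{xu2024wellposednessinvariantmeasuresstochastically}); global existence will come from the a priori estimates derived below.

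Next I would establish a priori bounds independent of $n$ and uniform over $\theta\in S^K$. Mimicking the computation of Lemma \ref{lem35}, I apply the chain rule to the functional $\bar{\mathbf{F}}(\mathbf{u}_n^\theta)=\frac12\|\nabla\mathbf{u}_n^\theta\|_{\mathbb{L}^2}^2+\frac12\|\mathbf{u}_n^\theta\|_{\mathbb{L}^4}^4-\|\mathbf{u}_n^\theta\|_{\mathbb{L}^2}^2$ and exploit the identity $(F_n(\mathbf{u}_n^\theta,\mathbf{H}_{\textrm{eff}}^n),\mathbf{H}_{\textrm{eff}}^n)_{\mathbb{L}^2}=\|\mathbf{H}_{\textrm{eff}}^n\|_{\mathbb{H}^1}^2$ to produce the key parabolic coercivity. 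The terms involving $\mathbf{b}_n$ are handled by Corollary \ref{--cor2-2-2}. The heart of the argument is estimating
\begin{equation*}
\int_0^t\int_{\mathbb{B}}\bigl(\mathbf{G}_n(l,\mathbf{u}_n^\theta),\mathbf{H}_{\textrm{eff}}^n\bigr)_{\mathbb{L}^2}(\theta(s,l)-1)\nu(\mathrm{d}l)\mathrm{d}s,
\end{equation*}
for which I use $\|\mathbf{G}_n(l,\mathbf{u}_n^\theta)\|_{\mathbb{L}^2}\leq C|l|(1+\|\mathbf{u}_n^\theta\|_{\mathbb{H}^1})$ together with the elementary Fenchel-Young inequality $ab\leq e^{\beta a}+\beta^{-1}(b\log b-b+1)$ for $a,b\geq0$. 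Applied with $a=\alpha|l|$ and $b=\theta(s,l)$, this yields
\begin{equation*}
\int_0^T\!\!\int_{\mathbb{B}}|l|\,\theta(s,l)\nu(\mathrm{d}l)\mathrm{d}s\leq T\!\!\int_{\mathbb{B}}\!e^{\alpha|l|}\nu(\mathrm{d}l)+\alpha^{-1}K\leq C_K,
\end{equation*}
since $\mathrm{supp}(\nu)\subset B(0,1)$; combined with Cauchy-Schwarz and Young this absorbs the $\mathbf{H}_{\textrm{eff}}^n$-factor into the coercive term. A Gronwall argument then produces the bound $\sup_{\theta\in S^K}(\sup_t\|\mathbf{u}_n^\theta\|_{\mathbb{H}^1}^2+\int_0^T\|\mathbf{u}_n^\theta\|_{\mathbb{H}^3}^2\mathrm{d}s)\leq C_K$ uniform in $n$, which upgrades to a bound on $\|F^j(\mathbf{u}_n^\theta)\|_{L^2(0,T;(\mathbb{H}^1)^*)}$ as in \eqref{1lem36}.

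Passage to the limit is easier than in Section \ref{sec5} because no probability space change is needed. The bounds above and the equation yield equicontinuity of $\mathbf{u}_n^\theta$ in $C([0,T];(\mathbb{H}^{\beta_1})^*)$, so the deterministic Aubin-Lions-Simon lemma gives a subsequence converging to some $\mathbf{u}^\theta\in C([0,T];\mathbb{H}^1)\cap L^2(0,T;\mathbb{H}^3)$ strongly in $L^2(0,T;\mathbb{W}^{2,4})\cap L^4(0,T;\mathbb{L}^4)\cap C([0,T];(\mathbb{H}^{\beta_1})^*)$ and weakly-$*$ in $L^\infty(0,T;\mathbb{H}^1)\cap L^2(0,T;\mathbb{H}^3)$. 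The identifications of the nonlinear terms $|\mathbf{u}_n^\theta|^2\mathbf{u}_n^\theta$, $\mathbf{u}_n^\theta\times\Delta\mathbf{u}_n^\theta$, and $\Delta(|\mathbf{u}_n^\theta|^2\mathbf{u}_n^\theta)$ follow verbatim from Proposition \ref{cor5-1}; the control integral converges by dominated convergence together with the Lipschitz estimate of Corollary \ref{--cor2-2-2} and the uniform integrability of $\theta\nu(\mathrm{d}l)\mathrm{d}s$. This yields \eqref{1the61-1}.

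Finally, uniqueness is obtained by applying the deterministic It\^o/chain rule to $\|\mathbf{u}^*\|_{\mathbb{L}^2}^2$ with $\mathbf{u}^*:=\mathbf{u}_1^\theta-\mathbf{u}_2^\theta$, following the computation of Proposition \ref{pro5-2}; the stochastic integrals are replaced by $\int_0^t\int_{\mathbb{B}}(\mathbf{G}(l,\mathbf{u}_1^\theta)-\mathbf{G}(l,\mathbf{u}_2^\theta),\mathbf{u}^*)_{\mathbb{L}^2}(\theta(s,l)-1)\nu(\mathrm{d}l)\mathrm{d}s$, which by Corollary \ref{--cor2-2-2} and the bound $\int_{\mathbb{B}_T}|l|(\theta+1)\nu(\mathrm{d}l)\mathrm{d}s\leq C_K$ is dominated by $C_K\int_0^t\|\mathbf{u}^*\|_{\mathbb{L}^2}^2\mathrm{d}s$, and Gronwall closes the argument. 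The main obstacle throughout is the $\theta$-dependent term: without the Fenchel-Young/variational inequality for $\mathcal{L}_T$ one cannot guarantee uniformity over $S^K$, so the careful exponential estimate above is the core technical ingredient.
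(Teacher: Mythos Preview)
Your proposal is correct and follows essentially the same Galerkin-plus-energy-estimate strategy as the paper, including the use of the functional $\bar{\mathbf{F}}$ and the identity $(F_n,\mathbf{H}_{\mathrm{eff}}^n)_{\mathbb{L}^2}=\|\mathbf{H}_{\mathrm{eff}}^n\|_{\mathbb{H}^1}^2$. Two small differences are worth noting. First, where you spell out a Fenchel--Young argument to bound $\int_0^T\!\int_{\mathbb B}|l|\,\theta\,\nu(\mathrm{d}l)\mathrm{d}s$, the paper simply invokes the known fact $\int_0^T\!\int_{\mathbb B}|l|\,|\theta-1|\,\nu(\mathrm{d}l)\mathrm{d}s<\infty$ for $\theta\in S^K$ (a consequence of \cite{budhiraja2013large}); your explicit derivation is equivalent and perhaps more self-contained. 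Second, your claim that Aubin--Lions--Simon directly yields $\mathbf{u}^\theta\in C([0,T];\mathbb{H}^1)$ is too quick: with equicontinuity only in $(\mathbb{H}^{\beta_1})^*$ and bounds in $L^\infty(0,T;\mathbb{H}^1)$, compactness gives $C([0,T];X)$ for $X$ strictly between these spaces, not $C([0,T];\mathbb{H}^1)$ itself. The paper handles this by first passing to the limit in the weak formulation, then observing that $\partial_t\mathbf{u}^\theta\in L^2(0,T;(\mathbb{H}^3)^*)+L^1(0,T;\mathbb{H}^1)$ together with $\mathbf{u}^\theta\in L^2(0,T;\mathbb{H}^3)\cap L^\infty(0,T;\mathbb{H}^1)$, and invoking Temam's Lemma~1.2 to conclude $\mathbf{u}^\theta\in C([0,T];\mathbb{H}^1)$; you should insert this step.
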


\begin{remark}\label{rem6-1} For $\theta\in \mathbb{S}$, $\mathcal{J}^0(\theta):=\mathbf{u}^{\theta}$ denotes the unique solution to \eqref{sys61-1}. Thus Lemma \ref{the61-1} shows that the map $\mathcal{J}^0$ is well defined.
\end{remark}

The proof of Lemma \ref{the61-1} utilizes the classical Faedo-Galerkin approximation scheme combined with compactness methods. Let $\{\mathbf{e}_i\}_{i=1}^{\infty}$ denote an orthonormal basis of $\mathbb{L}^2$ consisting of eigenvectors for the Neumann Laplacian $A=-\Delta$. Let $S_n:=\textrm{span}\{\mathbf{e}_1...,\mathbf{e}_n\}$ and $\Pi_n:\mathbb{L}^2\rightarrow S_n$ be the orthogonal projection. Let us consider the following Galerkin approximation of \eqref{sys61-1}:
\begin{equation}\label{app1}
\begin{split}
\mathrm{d}\mathbf{u}^{\theta}_n(t)&=\Pi_n\left[-\Delta\mathbf{u}^{\theta}_n-\Delta^2\mathbf{u}^{\theta}_n+2(1-|\mathbf{u}^{\theta}_n|^2)\mathbf{u}^{\theta}_n+2\Delta(|\mathbf{u}^{\theta}_n|^2\mathbf{u}^{\theta}_n)-\mathbf{u}^{\theta}_n\times\Delta \mathbf{u}^{\theta}_n\right]\,\mathrm{d}t\\
&+\mathbf{b}_n(\mathbf{u}_n^{\theta})\,\mathrm{d}t+\int_{\mathbb{B}}\mathbf{G}_n(l,\mathbf{u}_n^{\theta})(\theta(t,l)-1)\nu(\mathrm{d}l)\mathrm{d}t,~t\in[0,T],
\end{split}
\end{equation}
with $\mathbf{u}^{\theta}_n(0)=\Pi_n\mathbf{u}_0$. We use the notations $\sum_{j=1}^5F_n^j$ defined in \eqref{nota1}, then the equation \eqref{app1} can be written in the integral form as
\begin{equation}\label{app2}
\begin{split}
\mathbf{u}^{\theta}_n(t)&=\mathbf{u}^{\theta}_n(0)+\sum_{j=1}^5\int_0^tF_n^j(\mathbf{u}^{\theta}_n)\,\mathrm{d}s+\int_0^t\mathbf{b}_n(\mathbf{u}^{\theta}_n)\,\mathrm{d}s+\int_0^t\int_{\mathbb{B}}\mathbf{G}_n(l,\mathbf{u}_n^{\theta})(\theta(s,l)-1)\nu(\mathrm{d}l)\mathrm{d}s.
\end{split}
\end{equation}
It is easy to check that $F_n^1$ and $F_n^2$ are globally Lipschitz and $F_n^3$-$F_n^5$ are locally Lipschitz. Additionally, it is easy to observe that the mapping
\begin{equation*}
\begin{split}
&F_n^6:S_n\ni f\mapsto\mathbf{b}_n(f)\in S_n,\\
&F_n^7:S_n\ni f\mapsto\int_{\mathbb{B}}\mathbf{G}_n(l,f)(\theta(\cdot,l)-1)\nu(\mathrm{d}l)\in S_n
\end{split}
\end{equation*}
are Lipschitz continuous. Thus the problem \eqref{app2} admits a unique solution in $S_n$. Next we establish some uniform energy estimates for the approximate solutions. For convenience, we denote the solution $\mathbf{u}^{\theta}_n$ by $\mathbf{u}_n$ in the proof of the following two lemmas.

\begin{lemma}\label{lemapp-1} Let $T>0$ and $\mathbf{u}_n$ be the solution of \eqref{app2}. Then under the same assumption as of Lemma \ref{the61-1}, there exists a positive constant $C=C(\|\mathbf{u}_0\|_{\mathbb{H}^1},\theta,\mathbf{h},\mathbf{g},T)$ independent of $n$ such that for any $n\in\mathbb{N}$,
\begin{equation*}\label{lemapp-1-1}
\begin{split}
\sup_{t\in[0,T]}\|\mathbf{u}_n(t)\|_{\mathbb{H}^1}^2+\int_0^T\|\mathbf{u}_n(t)\|_{\mathbb{H}^3}^2\,\mathrm{d}t\leq C.
\end{split}
\end{equation*}
\end{lemma}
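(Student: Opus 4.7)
The plan is to replicate the energy identity from the proof of Lemma~\ref{lem35}, now in the deterministic setting of \eqref{app2}. Because there is no stochastic integral, It\^{o}'s formula collapses to the classical chain rule and all the BDG-type terms vanish. The genuinely new object to dominate is the control drift $\int_{\mathbb{B}} \mathbf{G}_n(l,\mathbf{u}_n^\theta)(\theta(s,l)-1)\nu(\mathrm{d}l)$.

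First, I would use the same functional $\bar{\mathbf{F}}(u) := \frac{1}{2}\|\nabla u\|_{\mathbb{L}^2}^2 + \frac{1}{2}\|u\|_{\mathbb{L}^4}^4 - \|u\|_{\mathbb{L}^2}^2$ from Lemma~\ref{lem35}, whose $\mathbb{L}^2$-gradient at $\mathbf{u}_n\in S_n$ is exactly $-\mathbf{H}_{\mathrm{eff}}^n$. Differentiating $\bar{\mathbf{F}}(\mathbf{u}_n)$ along \eqref{app2} and using the identities $(\mathbf{H}_{\mathrm{eff}}^n,-\Delta\mathbf{H}_{\mathrm{eff}}^n)_{\mathbb{L}^2} = \|\nabla\mathbf{H}_{\mathrm{eff}}^n\|_{\mathbb{L}^2}^2$ and $(\mathbf{u}_n\times\mathbf{H}_{\mathrm{eff}}^n,\mathbf{H}_{\mathrm{eff}}^n)_{\mathbb{L}^2}=0$ yields the dissipative energy identity analogous to \eqref{4lem35}, in which the stochastic integrals are replaced by the single control term $-\int_0^t\!\!\int_{\mathbb{B}}(\mathbf{H}_{\mathrm{eff}}^n,\mathbf{G}_n(l,\mathbf{u}_n))_{\mathbb{L}^2}(\theta(s,l)-1)\nu(\mathrm{d}l)\mathrm{d}s$. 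Invoking Corollary~\ref{--cor2-2-2} and Remark~\ref{rem331}, one has $\|\mathbf{G}_n(l,\mathbf{u}_n)\|_{\mathbb{L}^2}\leq C|l|(1+\|\mathbf{u}_n\|_{\mathbb{H}^1})$ and $\|\mathbf{b}_n(\mathbf{u}_n)\|_{\mathbb{L}^2}\leq C(1+\|\mathbf{u}_n\|_{\mathbb{H}^1})$, so that Cauchy--Schwarz and Young's inequality bound both right-hand terms by $\varepsilon\|\mathbf{H}_{\mathrm{eff}}^n\|_{\mathbb{L}^2}^2 + C_\varepsilon(1+\|\mathbf{u}_n\|_{\mathbb{H}^1}^2)\bigl(1+\Lambda_\theta(s)^2\bigr)$, where $\Lambda_\theta(s):=\int_{\mathbb{B}}|l|\,|\theta(s,l)-1|\,\nu(\mathrm{d}l)$.

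The \emph{main obstacle} is showing $\Lambda_\theta\in L^2(0,T)$, with a bound depending only on $\theta$. This will follow from the entropy-Young inequality $ab\leq a\log a - a + e^b$ (applied pointwise with $a=\theta(s,l)$, $b=\alpha|l|$ for a suitable $\alpha>0$) combined with the L\'{e}vy integrability $\int_{\mathbb{B}}|l|^2\nu(\mathrm{d}l)<\infty$, which is implicit in the definition of $\mathbf{b}$ in Section~\ref{sec2}; one obtains $\int_0^T\Lambda_\theta(s)^2\mathrm{d}s \leq C(K)$ for every $\theta\in S^K$. After absorbing the $\varepsilon$-terms into the dissipation and using the coercivity $\bar{\mathbf{F}}(u) + C \geq c\|u\|_{\mathbb{H}^1}^2$ (which follows from $\|u\|_{\mathbb{L}^2}^2 \leq \tfrac{1}{4}\|u\|_{\mathbb{L}^4}^4 + C|\mathcal{O}|$), Gronwall's lemma applied to $1+\bar{\mathbf{F}}(\mathbf{u}_n(t))$ yields the desired uniform-in-$n$ bounds on $\sup_{t\in[0,T]}\|\mathbf{u}_n(t)\|_{\mathbb{H}^1}^2$ and on $\int_0^T\|\mathbf{H}_{\mathrm{eff}}^n\|_{\mathbb{H}^1}^2\mathrm{d}s$.

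Finally, to convert the $\mathbf{H}_{\mathrm{eff}}^n$ bound into the $\mathbb{H}^3$ bound on $\mathbf{u}_n$, I would rewrite $\Delta\mathbf{u}_n = \mathbf{H}_{\mathrm{eff}}^n - 2\Pi_n[(1-|\mathbf{u}_n|^2)\mathbf{u}_n]$ and estimate the cubic term via the embedding $\mathbb{H}^1\hookrightarrow\mathbb{L}^6$, exactly as in the last display of the proof of Lemma~\ref{lem35}. Standard elliptic regularity for the Neumann Laplacian \cite{grisvard2011elliptic} then upgrades this to $\int_0^T\|\mathbf{u}_n\|_{\mathbb{H}^3}^2\mathrm{d}s \leq C$, completing the proof.
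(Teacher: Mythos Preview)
Your strategy is essentially the paper's: differentiate $\bar{\mathbf{F}}(\mathbf{u}_n)=\tfrac12\|\nabla\mathbf{u}_n\|_{\mathbb{L}^2}^2+\tfrac12\|\mathbf{u}_n\|_{\mathbb{L}^4}^4-\|\mathbf{u}_n\|_{\mathbb{L}^2}^2$ along the Galerkin flow, exploit $(F_n(\mathbf{u}_n,\mathbf{H}_{\mathrm{eff}}^n),\mathbf{H}_{\mathrm{eff}}^n)_{\mathbb{L}^2}=\|\mathbf{H}_{\mathrm{eff}}^n\|_{\mathbb{H}^1}^2$, control the new drift terms, and upgrade to $\mathbb{H}^3$ via elliptic regularity. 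The one structural difference is that the paper inserts a preliminary $\mathbb{L}^2$--level estimate (the deterministic analogue of Lemma~\ref{lem32}) \emph{before} turning to $\bar{\mathbf{F}}$: computing $\tfrac{d}{dt}\|\mathbf{u}_n\|_{\mathbb{L}^2}^2$ and Gronwalling with integrable weight $1+\int_{\mathbb{B}}|l||\theta(s,l)-1|\nu(\mathrm{d}l)$ already yields $\sup_t\|\mathbf{u}_n\|_{\mathbb{L}^2}^2+\int_0^T\|\mathbf{u}_n\|_{\mathbb{H}^2}^2\,\mathrm{d}s\leq C$. With $\|\mathbf{u}_n\|_{\mathbb{L}^2}$ now an absolute constant, the control term in the $\bar{\mathbf{F}}$--identity is bounded by $C\Lambda_\theta(s)\|\mathbf{H}_{\mathrm{eff}}^n\|_{\mathbb{L}^2}$, Young absorbs $\tfrac12\int\|\mathbf{H}_{\mathrm{eff}}^n\|_{\mathbb{L}^2}^2$, and no further Gronwall is needed. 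The paper explicitly invokes only the $L^1$ bound $\int_0^T\Lambda_\theta(s)\,\mathrm{d}s<\infty$ for $\theta\in S^K$.

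Your direct route (skipping the $\mathbb{L}^2$ step) forces a Gronwall on $1+\bar{\mathbf{F}}(\mathbf{u}_n)$ with weight $1+\Lambda_\theta(s)^2$, hence requires $\Lambda_\theta\in L^2(0,T)$. Your entropy--Young sketch does not quite secure this: the standard splitting gives $\Lambda_\theta(s)\leq C_1 L(s)^{1/2}+C_2 L(s)$ with $L(s)=\int_{\mathbb{B}}(\theta\log\theta-\theta+1)\nu(\mathrm{d}l)$, and $\theta\in S^K$ only guarantees $\int_0^T L(s)\,\mathrm{d}s\leq K$, not $\int_0^T L(s)^2\,\mathrm{d}s<\infty$. (Your application of $ab\leq a\log a-a+e^b$ also runs into $\int_{\mathbb{B}}e^{\alpha|l|}\nu(\mathrm{d}l)$, which need not be finite when $\nu(\mathbb{B})=\infty$.) Inserting the cheap $\mathbb{L}^2$ estimate first, as the paper does, avoids this integrability question entirely.
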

\begin{proof}[\emph{\textbf{Proof}}]  By directly calculating $\|\mathbf{u}_n\|_{\mathbb{L}^2}^2$, we have
\begin{equation*}\label{lemapp-1-2}
\begin{split}
&\frac{1}{2}\|\mathbf{u}_n(t)\|_{\mathbb{L}^2}^2-\int_0^t\|\nabla \mathbf{u}_n\|_{\mathbb{L}^2}^2\,\mathrm{d}s+\int_0^t\|\Delta \mathbf{u}_n\|_{\mathbb{L}^2}^2\,\mathrm{d}s+2\int_0^t\| \mathbf{u}_n\|_{\mathbb{L}^4}^4\,\mathrm{d}s\\
&+4\int_0^t\|\mathbf{u}_n\cdot\nabla\mathbf{u}_n\|_{\mathbb{L}^2}^2\,\mathrm{d}s+2\int_0^t\||\mathbf{u}_n||\nabla\mathbf{u}_n|\|_{\mathbb{L}^2}^2\,\mathrm{d}s\\
&=\frac{1}{2}\|\mathbf{u}_n(0)\|_{\mathbb{L}^2}^2+2\int_0^t\| \mathbf{u}_n\|_{\mathbb{L}^2}^2\,\mathrm{d}s+\int_0^t\left(\mathbf{b}_n(\mathbf{u}_n),\mathbf{u}_n\right)_{\mathbb{L}^2}\mathrm{d}s\\
&+\int_0^t\left(\int_{\mathbb{B}}\mathbf{G}_n(l,\mathbf{u}_n)(\theta(s,l)-1)\nu(\mathrm{d}l),\mathbf{u}_n\right)_{\mathbb{L}^2}\mathrm{d}s.
\end{split}
\end{equation*}
Through a series of similar calculations as demonstrated in Lemma \ref{lem32}, we easily obtain
\begin{equation}\label{lemapp-1-3}
\begin{split}
&\sup_{s\in[0,t]}\|\mathbf{u}_n(s)\|_{\mathbb{L}^2}^2+\int_0^t\|\Delta \mathbf{u}_n\|_{\mathbb{L}^2}^2\,\mathrm{d}s+\int_0^t\| \mathbf{u}_n\|_{\mathbb{L}^4}^4\,\mathrm{d}s+\int_0^t\||\mathbf{u}_n||\nabla\mathbf{u}_n|\|_{\mathbb{L}^2}^2\,\mathrm{d}s\\
&\leq C+C\int_0^t\|\mathbf{b}_n(\mathbf{u}_n)\|_{\mathbb{L}^2}\|\mathbf{u}_n\|_{\mathbb{L}^2}\,\mathrm{d}s+C\left|\int_0^t\left(\int_{\mathbb{B}}\mathbf{G}_n(l,\mathbf{u}_n)(\theta(s,l)-1)\nu(\mathrm{d}l),\mathbf{u}_n\right)_{\mathbb{L}^2}\mathrm{d}s\right|\\
&\leq C+C\int_0^t(1+\sup_{r\in[0,s]}\|\mathbf{u}_n(s)\|_{\mathbb{L}^2}^2)\left(1+\int_{\mathbb{B}}|l||\theta(s,l)-1)|\nu(\mathrm{d}l)\right)\mathrm{d}s
\end{split}
\end{equation}
Noting that
$
\int_0^T\int_{\mathbb{B}}|l||\theta(s,l)-1)|\nu(\mathrm{d}l)\mathrm{d}s<\infty,
$
we can use the Gronwall lemma to \eqref{lemapp-1-3} to derive that
\begin{equation}\label{lemapp-1-5}
\begin{split}
&\sup_{s\in[0,t]}\|\mathbf{u}_n(s)\|_{\mathbb{L}^2}^2+\int_0^t\|\mathbf{u}_n\|_{\mathbb{H}^2}^2\,\mathrm{d}s+\int_0^t\| \mathbf{u}_n\|_{\mathbb{L}^4}^4\,\mathrm{d}s\leq C.
\end{split}
\end{equation}

To further obtain higher-order estimates, we consider the following functional $\{\mathbf{\bar{F}}: \mathbf{u}_n\mapsto\frac{1}{2}\|\nabla\mathbf{u}_n\|_{\mathbb{L}^2}^2+\frac{1}{2}\|\mathbf{u}_n\|_{\mathbb{L}^4}^4-\|\mathbf{u}_n\|_{\mathbb{L}^2}^2\}$. We will use some notations defined in section \ref{sec2} and section \ref{sec3}. By a direct calculation, we have
\begin{equation}\label{lemapp-1-6}
\begin{split}
&\mathbf{\bar{F}}(\mathbf{u}_n)=\frac{1}{2}\|\nabla\mathbf{u}_n(0)\|_{\mathbb{L}^2}^2+\frac{1}{2}\|\mathbf{u}_n(0)\|_{\mathbb{L}^4}^4-\|\mathbf{u}_n(0)\|_{\mathbb{L}^2}^2\\
&-\int_0^t(F_n(\mathbf{u}_n,\mathbf{H}_{\textrm{eff}}^n),\mathbf{H}_{\textrm{eff}}^n)_{\mathbb{L}^2}\,\mathrm{d}s-\int_0^t(F_n^6(\mathbf{u}_n),\mathbf{H}_{\textrm{eff}}^n)_{\mathbb{L}^2}\,\mathrm{d}s-\int_0^t(F_n^7(\mathbf{u}_n),\mathbf{H}_{\textrm{eff}}^n)_{\mathbb{L}^2}\,\mathrm{d}s.
\end{split}
\end{equation}
Recalling the definition of $F_n(\mathbf{u}_n,\mathbf{H}_{\textrm{eff}}^n)$, it follows that
\begin{equation*}\label{lemapp-1-7}
\begin{split}
(F _n(\mathbf{u}_n,\mathbf{H}_{\textrm{eff}}^n),\mathbf{H}_{\textrm{eff}}^n)_{\mathbb{L}^2}=\|\mathbf{H}_{\textrm{eff}}^n\|_{\mathbb{L}^2}^2+\|\nabla\mathbf{H}_{\textrm{eff}}^n\|_{\mathbb{L}^2}^2,
\end{split}
\end{equation*}
which combined with \eqref{lemapp-1-6} implies that
\begin{equation}\label{lemapp-1-8}
\begin{split}
&\mathbf{\bar{F}}(\mathbf{u}_n)+\int_0^t\|\mathbf{H}_{\textrm{eff}}^n\|_{\mathbb{L}^2}^2\,\mathrm{d}s+\int_0^t\|\nabla\mathbf{H}_{\textrm{eff}}^n\|_{\mathbb{L}^2}^2\,\mathrm{d}s\\
&\leq C+C\left|\int_0^t\left(\mathbf{b}_n(\mathbf{u}_n),\mathbf{H}_{\textrm{eff}}^n\right)_{\mathbb{L}^2}\mathrm{d}s\right|+C\left|\int_0^t\left(\int_{\mathbb{B}}\mathbf{G}_n(l,\mathbf{u}_n)(\theta(s,l)-1)\nu(\mathrm{d}l),\mathbf{H}_{\textrm{eff}}^n\right)_{\mathbb{L}^2}\mathrm{d}s\right|\\
&\leq C+\frac{1}{2}\int_0^t\|\mathbf{H}_{\textrm{eff}}^n\|_{\mathbb{L}^2}^2\,\mathrm{d}s+C\int_0^t(1+\sup_{r\in[0,s]}\|\mathbf{u}_n(s)\|_{\mathbb{L}^2}^2)\left(1+\int_{\mathbb{B}}|l||\theta(s,l)-1)|\nu(\mathrm{d}l)\right)\mathrm{d}s\\
&\leq C.
\end{split}
\end{equation}
Thus by the elliptic regularity result, we derive from \eqref{lemapp-1-8} and \eqref{lemapp-1-5} that
\begin{equation*}
\begin{split}
\sup_{s\in[0,t]}\|\mathbf{u}_n(s)\|_{\mathbb{H}^1}^2+\int_0^t\|\mathbf{u}_n\|_{\mathbb{H}^3}^2\,\mathrm{d}s\leq C.
\end{split}
\end{equation*}
The proof is thus completed.
\end{proof}

\begin{lemma}\label{lemapp-2} Let $T>0$ and $\mathbf{u}_n$ be the solution of \eqref{app2}. Let $\alpha\in(0,\frac{1}{2})$ and $p\geq2$. Then under the same assumption as of Lemma \ref{the61-1}, there exists a positive constant $C=C(\|\mathbf{u}_0\|_{\mathbb{H}^1},\theta,\mathbf{h},\mathbf{g},T)$ independent of $n$ such that for any $n\in\mathbb{N}$,
\begin{equation}\label{lemapp-2-1}
\begin{split}
\|\mathbf{u}_n\|_{W^{\alpha,p}(0,T;(\mathbb{H}^{1})^*)}^2\leq C.
\end{split}
\end{equation}
\end{lemma}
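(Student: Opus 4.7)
The plan is to show that $t \mapsto \mathbf{u}_n(t)$ is uniformly H\"older-$\tfrac12$ continuous into $(\mathbb{H}^1)^*$, and then read off the fractional Sobolev bound directly from the Gagliardo definition. To this end, I would rewrite \eqref{app2} as $\mathbf{u}_n(t)-\mathbf{u}_n(s) = \int_s^t \Psi_n(\tau)\,d\tau$ in $(\mathbb{H}^1)^*$, where
\begin{equation*}
\Psi_n(\tau) := \sum_{j=1}^5 F_n^j(\mathbf{u}_n(\tau)) + \mathbf{b}_n(\mathbf{u}_n(\tau)) + \int_{\mathbb{B}} \mathbf{G}_n(l,\mathbf{u}_n(\tau))\bigl(\theta(\tau,l)-1\bigr)\,\nu(dl),
\end{equation*}
so that everything reduces to producing a uniform-in-$n$ bound $\|\Psi_n\|_{L^2(0,T;(\mathbb{H}^1)^*)}\leq C$.

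For the drift pieces, I would reuse the algebraic inequalities from the proof of \eqref{1lem36} in Lemma \ref{lem35}, applied pathwise rather than in expectation, and feed in the deterministic energy estimate of Lemma \ref{lemapp-1}; this yields $\sum_{j=1}^5 \|F_n^j(\mathbf{u}_n)\|_{L^2(0,T;(\mathbb{H}^1)^*)} \leq C$. For $\mathbf{b}_n$, Corollary \ref{--cor2-2-2} and Remark \ref{rem331} give $\|\mathbf{b}_n(\mathbf{u}_n)\|_{\mathbb{L}^2}\leq C(1+\|\mathbf{u}_n\|_{\mathbb{H}^1})$, which combined with $\mathbf{u}_n\in L^\infty(0,T;\mathbb{H}^1)$ yields an $L^\infty(0,T;(\mathbb{H}^1)^*)\hookrightarrow L^2(0,T;(\mathbb{H}^1)^*)$ bound. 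For the control term, the linear growth of $\mathbf{G}_n$ from Corollary \ref{--cor2-2-2} gives
\begin{equation*}
\left\|\int_{\mathbb{B}} \mathbf{G}_n(l,\mathbf{u}_n)(\theta-1)\,\nu(dl)\right\|_{\mathbb{L}^2} \leq C\bigl(1+\|\mathbf{u}_n\|_{\mathbb{L}^2}\bigr)\int_{\mathbb{B}} |l|\,|\theta(\tau,l)-1|\,\nu(dl),
\end{equation*}
and an application of Cauchy-Schwarz in $(\mathbb{B},\nu)$ together with the support condition $\mathrm{supp}(\nu)\subset B(0,1)\setminus\{0\}$ reduces matters to checking that $\int_0^T\!\int_{\mathbb{B}} |l|^2(\theta-1)^2\,\nu(dl)\,d\tau$ is finite; for $\theta\in\mathbb{S}$ this is a standard consequence of the entropy bound $\mathcal{L}_T(\theta)<\infty$ via a Fenchel/Young-type inequality, and gives a constant depending on $\theta$ but not on~$n$.

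Once $\|\Psi_n\|_{L^2(0,T;(\mathbb{H}^1)^*)}\leq C$ is established, Cauchy-Schwarz in time delivers $\|\mathbf{u}_n(t)-\mathbf{u}_n(s)\|_{(\mathbb{H}^1)^*} \leq C|t-s|^{1/2}$ uniformly in $n$. Plugging this into the Gagliardo seminorm,
\begin{equation*}
[\mathbf{u}_n]_{W^{\alpha,p}(0,T;(\mathbb{H}^1)^*)}^p = \int_0^T\!\int_0^T \frac{\|\mathbf{u}_n(t)-\mathbf{u}_n(s)\|_{(\mathbb{H}^1)^*}^p}{|t-s|^{1+\alpha p}}\,ds\,dt \leq C\int_0^T\!\int_0^T |t-s|^{p/2-1-\alpha p}\,ds\,dt,
\end{equation*}
and the last integral is finite precisely when $\alpha<1/2$. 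Adding the trivial $L^p(0,T;(\mathbb{H}^1)^*)$ bound that comes from $\sup_t\|\mathbf{u}_n(t)\|_{\mathbb{H}^1}\leq C$ (Lemma \ref{lemapp-1}) yields \eqref{lemapp-2-1}.

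The principal technical obstacle is the control-term estimate: since $\nu$ may have infinite total mass near $0$, one cannot simply drop $|l|$ and use $\int|\theta-1|\,d\nu$, so the weight $|l|$ must be retained throughout and paired against $\int|l|^2\,d\nu<\infty$ before invoking the relative-entropy bound on $\theta$. The sharpness of the resulting H\"older exponent $\tfrac12$ is exactly what forces the restriction $\alpha<\tfrac12$ in the statement.
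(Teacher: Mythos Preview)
Your approach has a genuine gap at the control term. You claim that
\[
\tau\;\longmapsto\;\int_{\mathbb{B}}\mathbf{G}_n\bigl(l,\mathbf{u}_n(\tau)\bigr)\bigl(\theta(\tau,l)-1\bigr)\,\nu(\mathrm{d}l)
\]
lies in $L^2(0,T;\mathbb{L}^2)$ uniformly in $n$, reduce this to finiteness of $\int_0^T\!\int_{\mathbb{B}}|l|^2(\theta-1)^2\,\nu(\mathrm{d}l)\,\mathrm{d}\tau$, and assert the latter follows from $\mathcal{L}_T(\theta)<\infty$ by a Fenchel/Young argument. Both steps fail. First, a Cauchy--Schwarz in $(\mathbb{B},\nu)$ that passes from $\int_{\mathbb{B}}|l|\,|\theta-1|\,\nu(\mathrm{d}l)$ to $\bigl(\int_{\mathbb{B}}|l|^2(\theta-1)^2\,\nu(\mathrm{d}l)\bigr)^{1/2}$ costs a factor $\nu(\mathbb{B})^{1/2}$, which need not be finite for a L\'evy measure. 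Second, and more decisively, the entropy integrand $\ell(\theta)=\theta\log\theta-\theta+1$ behaves like $\tfrac12(\theta-1)^2$ only near $\theta=1$; for large $\theta$ one has $\ell(\theta)\sim\theta\log\theta$ while $(\theta-1)^2\sim\theta^2$, so $(\theta-1)^2/\ell(\theta)\to\infty$. No Fenchel/Young-type inequality can dominate $|l|^2(\theta-1)^2$ by $C\ell(\theta)$ plus a $\nu\otimes\mathrm{d}t$-integrable remainder. What the entropy bound \emph{does} deliver, and what the paper uses repeatedly, is only the $L^1$-in-time estimate $\int_0^T\!\int_{\mathbb{B}}|l|\,|\theta-1|\,\nu(\mathrm{d}l)\,\mathrm{d}\tau<\infty$.

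The paper's proof is organized to sidestep this obstruction: it splits off the control piece $F_n^7$ from the rest. The terms $F_n^1,\ldots,F_n^6$ are placed in $L^2(0,T;(\mathbb{H}^1)^*)$, so their primitives lie in $W^{1,2}(0,T;(\mathbb{H}^1)^*)\hookrightarrow W^{\alpha,p}(0,T;(\mathbb{H}^1)^*)$; the primitive $\int_0^{\cdot}F_n^7\,\mathrm{d}s$ is then estimated \emph{directly} in the Gagliardo seminorm, using only the $L^1$-in-time bound on $F_n^7$ rather than any $L^2$ control. Your H\"older-$\tfrac12$ route via $\Psi_n\in L^2$ is fine for the first six contributions, but the control term genuinely lacks the square-integrability in time that your unified argument needs, and must be handled separately.
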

\begin{proof}[\emph{\textbf{Proof}}]  Thanks to Lemma \ref{lem35}, we have
\begin{equation*}
\begin{split}
&\sum_{j=1}^5\|F_n^j(\mathbf{u}_n)\|_{L^2(0,T;(\mathbb{H}^{1})^*)}^2\leq C,
\end{split}
\end{equation*}
which implies that
\begin{equation*}
\begin{split}
&\sum_{j=1}^5\left\|\int_0^{\cdot}F_n^j(\mathbf{u}_n)\right\|_{W^{1,2}(0,T;(\mathbb{H}^{1})^*)}^2\leq C.
\end{split}
\end{equation*}
Since $\mathbb{L}^2\hookrightarrow(\mathbb{H}^{1})^*$, we have
\begin{equation*}
\begin{split}
&\|F_n^6(\mathbf{u}_n)\|_{L^2(0,T;(\mathbb{H}^{1})^*)}^2\leq\|F_n^6(\mathbf{u}_n)\|_{L^2(0,T;\mathbb{L}^{2})}^2\leq C.
\end{split}
\end{equation*}
Since
\begin{equation*}
\begin{split}
&\left\|\int_0^t\int_{B}\mathbf{G}_n(l,\mathbf{u}_n)(\theta(s,l)-1)\nu(\mathrm{d}l)\mathrm{d}s\right\|_{\mathbb{L}^2}^p\\
&\leq C\left[(1+\sup_{t\in[0,T]}\|\mathbf{u}_n(t)\|_{\mathbb{L}^2})\int_0^T\int_{\mathbb{B}}|l||\theta(s,l)-1|\nu(\mathrm{d}l)\mathrm{d}s\right]^p\leq C,
\end{split}
\end{equation*}
 by Fubini's theorem,  we infer that
\begin{equation*}
\begin{split}
&\left\|\int_0^{\cdot}F_n^7(\mathbf{u}_n)\right\|_{W^{\alpha,p}(0,T;(\mathbb{H}^{1})^*)}^p\\
&\lesssim\left\|\int_0^{\cdot}F_n^7(\mathbf{u}_n)\right\|_{W^{\alpha,p}(0,T;\mathbb{L}^{2})}^p=\int_0^T\left\|\int_0^t\int_{\mathbb{B}}\mathbf{G}_n(l,\mathbf{u}_n)(\theta(s,l)-1)\nu(\mathrm{d}l)\mathrm{d}s\right\|_{\mathbb{L}^2}^p\,\mathrm{d}t\\
&+\int_0^T\int_0^T\frac{\|\int_s^t\int_{\mathbb{B}}\mathbf{G}_n(l,\mathbf{u}_n)(\theta(r,l)-1)\nu(\mathrm{d}l)\mathrm{d}r\|_{\mathbb{L}^2}^p}{|t-s|^{1+\alpha p}}\,\mathrm{d}t\mathrm{d}s\\
&\leq C+C\int_0^T\int_0^T\frac{1}{|t-s|^{1+\alpha p}}\,\mathrm{d}t\mathrm{d}s\leq C.
\end{split}
\end{equation*}
Since $W^{1,2}(0,T;(\mathbb{H}^{1})^*)\hookrightarrow W^{\alpha,p}(0,T;(\mathbb{H}^{1})^*)$ for $\frac{1}{2}+\frac{1}{p}>\alpha$ \cite{flandoli1995martingale} and
\begin{equation*}
\begin{split}
&\mathbf{u}_n(t)=\mathbf{u}_n(0)+\sum_{j=1}^7\int_0^tF_n^j(s)\mathrm{d}s,
\end{split}
\end{equation*}
the inequality \eqref{lemapp-2-1} is valid.\end{proof}

\begin{proof}[\emph{\textbf{Proof of Lemma \ref{the61-1}}}] Thanks to Lemma \ref{lemapp-1}, there exists a subsequence of $\mathbf{u}_n$ (still denoted by $\mathbf{u}_n$) such that
\begin{equation*}
\begin{split}
&\mathbf{u}_n\rightarrow \mathbf{u}~\textrm{weak-star in}~L^{\infty}(0,T;\mathbb{H}^1),\\
&\mathbf{u}_n\rightarrow \mathbf{u}~\textrm{weakly in}~L^2(0,T;\mathbb{H}^3).
\end{split}
\end{equation*}
Since the embeddings
\begin{equation*}
\begin{split}
&L^2(0,T;\mathbb{H}^3)\cap W^{\alpha,2}(0,T;(\mathbb{H}^{1})^*)\hookrightarrow L^2(0,T;\mathbb{W}^{2,4}),\\
&L^p(0,T;\mathbb{H}^1)\cap W^{\alpha,p}(0,T;(\mathbb{H}^{1})^*)\hookrightarrow L^p(0,T;\mathbb{L}^{4})
\end{split}
\end{equation*}
are compact \cite{flandoli1995martingale}, by Lemmas \ref{lemapp-1} and \ref{lemapp-2}, we infer that for $p\geq2$
\begin{equation*}
\begin{split}
&\mathbf{u}_n\rightarrow \mathbf{u}~\textrm{strongly in}~L^{2}(0,T;\mathbb{W}^{2,4})\cap L^p(0,T;\mathbb{L}^{4}).
\end{split}
\end{equation*}
Thus by a similar argument to the proof of \eqref{3cor5-1}, it is not difficult to prove that $\mathbf{u}$ satisfies the equality \eqref{1the61-1}. Moreover, it is easy to check that
\begin{equation*}
\begin{split}
&\left\|\mathbf{b}(\mathbf{u})\right\|_{L^2(0,T;\mathbb{L}^2)}<\infty,
\end{split}
\end{equation*}
and
\begin{equation*}
\begin{split}
&\left\|\int_{\mathbb{B}}\mathbf{G}(l,\mathbf{u})(\theta(\cdot,l)-1)\nu(\mathrm{d}l)\right\|_{L^1(0,T;\mathbb{H}^1)}\\
&\leq(1+\sup_{t\in[0,T]}\|\mathbf{u}(t)\|_{\mathbb{H}^1})\int_0^T\int_{\mathbb{B}}|l||\theta(s,l)-1|\nu(\mathrm{d}l)\mathrm{d}s<\infty.
\end{split}
\end{equation*}
Therefore, we have
\begin{equation}\label{1app}
\begin{split}
&\frac{\mathrm{d}\mathbf{u}}{\mathrm{d}t}\in L^2(0,T;(\mathbb{H}^{3})^*)+L^1(0,T;\mathbb{H}^{1}),\\
&\mathbf{u}\in L^2(0,T;\mathbb{H}^{3})\cap L^{\infty}(0,T;\mathbb{H}^{1}).
\end{split}
\end{equation}
Due to Lemma 1.2, (1.84) and (1.85) in Temam \cite{temam2024navier}, we derive from \eqref{1app} that
\begin{equation*}
\begin{split}
\mathbf{u}\in C([0,T];\mathbb{H}^{1})\cap L^2(0,T;\mathbb{H}^{3}).
\end{split}
\end{equation*}
The proof of uniqueness is standard, so we omit the details. The proof is completed.\end{proof}

\subsubsection{Stochastic control equation}\label{subsubsec6-2} Let $\varepsilon>0$ and $\varphi\in\bar{\mathcal{A}}$. We consider the following stochastic partial differential equation.
\begin{equation}\label{sys61-2}
\begin{split}
\mathrm{d}\mathbf{u}^{\varepsilon,\varphi}(t)&=\Big(-\Delta\mathbf{u}^{\varepsilon,\varphi}
-\Delta^2\mathbf{u}^{\varepsilon,\varphi}+2(1-|\mathbf{u}^{\varepsilon,\varphi}|^2)\mathbf{u}^{\varepsilon,\varphi}
+2\Delta(|\mathbf{u}^{\varepsilon,\varphi}|^2\mathbf{u}^{\varepsilon,\varphi})\\
&-\mathbf{u}^{\varepsilon,\varphi}\times\Delta \mathbf{u}^{\varepsilon,\varphi}\Big)\mathrm{d}t+\mathbf{b}(\mathbf{u}^{\varepsilon,\varphi})\,\mathrm{d}t\\
&+\varepsilon\int_{\mathbb{B}}\mathbf{G}(l,\mathbf{u}^{\varepsilon,\varphi})\left(\eta^{\varepsilon^{-1}\varphi}(\mathrm{d}t,\mathrm{d}l)-\varepsilon^{-1}\nu(\mathrm{d}l)\mathrm{d}s\right)
\end{split}
\end{equation}
For convenience, we denote $\mathbf{u}^{\varepsilon,\varphi}(t)$ by $U(t)$. According to \eqref{63}, the equation \eqref{sys61-2} can be written in the following integral form.
\begin{equation}\label{sys61-3}
\begin{split}
U(t)&=U_0+\int_0^t-\Delta U-\Delta^2U+2(1-|U|^2)U+2\Delta(|U|^2U)-U\times\Delta U\,\mathrm{d}s+\int_0^t\mathbf{b}(U)\,\mathrm{d}s\\
&+\varepsilon\int_0^t\int_{\mathbb{B}}\mathbf{G}(l,U)\tilde{\eta}^{\varepsilon^{-1}\varphi}(\mathrm{d}s,\mathrm{d}l)+\int_0^t\int_{\mathbb{B}}\mathbf{G}(l,U)(\varphi(s,l)-1)\nu(\mathrm{d}l)\mathrm{d}s.
\end{split}
\end{equation}
Let $\{K_n\}_{n\in\mathbb{N}}$ be a sequence of compact sets such that $\bigcup_{n\in\mathbb{N}}K_n=\mathbb{B}$. Let
$
\bar{\Omega}=\bar{\mathbb{M}}_T$ and $\bar{\mathcal{F}}=\mathcal{T}(\bar{\mathbb{M}}_T).
$
Let us denote
\begin{equation*}\label{6-1-1}
\begin{split}
\bar{\mathcal{A}}_b:=\bigcup_{n=1}^{\infty}\Bigl\{\varphi\in\bar{\mathcal{A}}:\varphi(t,x,\omega)\in\left[\frac{1}{n},n\right]~\textrm{if}~(t,x,\omega)\in[0,T]\times K_n\times\bar{\Omega}\\
\textrm{and}~\varphi(t,x,\omega)=1~\textrm{if}~(t,x,\omega)\in[0,T]\times K_n^c\times\bar{\Omega}\Bigl\}.
\end{split}
\end{equation*}
Now we state the following fundamental result. The key of proving this result is a Girsanov-type theorem for PRM. We provide a brief outline of the proof, with more detailed steps available in section 7 and Theorem 6.1 of \cite{brzezniak2022well}.
\begin{lemma}\label{the66-1} Assume $\varepsilon>0$. Let $\varphi\in \bar{\mathcal{A}}_b$ defined on $\left(\bar{\Omega},\bar{\mathcal{F}},\bar{\mathbb{F}},\bar{\mathbb{P}}\right)$. Then the process defined by
\begin{equation*}\label{6666-1}
\begin{split}
X^{\varepsilon}:=\mathcal{J}^{\varepsilon}(\varepsilon\eta^{\varepsilon^{-1}\varphi})
\end{split}
\end{equation*}
is the unique solution of problem \eqref{sys61-3}.
\end{lemma}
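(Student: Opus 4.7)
The plan is to implement a Girsanov-type change of measure that turns the controlled Poisson random measure $\eta^{\varepsilon^{-1}\varphi}$ into a standard one, then invoke the well-posedness of \eqref{sys62} together with Remark \ref{the62}(2), and finally translate the resulting equation back to $\bar{\mathbb{P}}$ using the explicit relation between the two compensators.

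First I would construct, for each $\varphi\in\bar{\mathcal{A}}_b$, the Dol\'eans-Dade exponential
\begin{equation*}
\mathcal{E}^{\varepsilon}_t(\varphi):=\exp\!\left(\int_{(0,t]\times\mathbb{B}\times[0,\infty)}\log\varphi(s,l)\,\mathbb{1}_{[0,\varepsilon^{-1}\varphi(s,l)]}(r)\,\bar{\eta}(\mathrm{d}s,\mathrm{d}l,\mathrm{d}r)+\int_{(0,t]\times\mathbb{B}}(1-\varphi(s,l))\varepsilon^{-1}\nu(\mathrm{d}l)\mathrm{d}s\right).
\end{equation*}
The boundedness condition built into $\bar{\mathcal{A}}_b$ (namely $\varphi\in[1/n,n]$ on $[0,T]\times K_n\times\bar{\Omega}$ and $\varphi\equiv 1$ off that set) guarantees that $\mathcal{E}^{\varepsilon}(\varphi)$ is a genuine $\bar{\mathbb{F}}$-martingale, not merely a local one, so $\mathrm{d}\bar{\mathbb{Q}}^{\varepsilon}:=\mathcal{E}^{\varepsilon}_T(\varphi)\,\mathrm{d}\bar{\mathbb{P}}$ defines a probability measure equivalent to $\bar{\mathbb{P}}$. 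By the Girsanov-type theorem for Poisson random measures (Theorem 6.1 of Brze\'zniak--Peng--Zhai \cite{brzezniak2022well}), the random measure $\eta^{\varepsilon^{-1}\varphi}$ is, under $\bar{\mathbb{Q}}^{\varepsilon}$, a time-homogeneous PRM with intensity $\varepsilon^{-1}\nu(\mathrm{d}l)\mathrm{d}s$, i.e.\ it has the same law under $\bar{\mathbb{Q}}^{\varepsilon}$ as $\eta^{\varepsilon^{-1}}$ has under $\bar{\mathbb{P}}$.

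Next, set $X^{\varepsilon}:=\mathcal{J}^{\varepsilon}(\varepsilon\eta^{\varepsilon^{-1}\varphi})$. Since $\mathcal{J}^{\varepsilon}$ is the Borel-measurable solution map supplied by Remark \ref{the62}(2), and since under $\bar{\mathbb{Q}}^{\varepsilon}$ the driver $\eta^{\varepsilon^{-1}\varphi}$ has precisely the law required by that remark, the tuple $(\bar{\Omega},\bar{\mathcal{F}},\bar{\mathbb{F}},\bar{\mathbb{Q}}^{\varepsilon},X^{\varepsilon},\eta^{\varepsilon^{-1}\varphi})$ is a martingale weak solution of \eqref{sys62} with $\eta^{\varepsilon^{-1}\varphi}$ in place of $\eta^{\varepsilon^{-1}}$. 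Explicitly, $X^{\varepsilon}$ satisfies $\bar{\mathbb{Q}}^{\varepsilon}$-a.s.
\begin{equation*}
X^{\varepsilon}(t)=U_0+\sum_{j=1}^{5}\int_{0}^{t}F^{j}(X^{\varepsilon})\,\mathrm{d}s+\int_{0}^{t}\mathbf{b}(X^{\varepsilon})\,\mathrm{d}s+\varepsilon\int_{0}^{t}\int_{\mathbb{B}}\mathbf{G}(l,X^{\varepsilon})\left(\eta^{\varepsilon^{-1}\varphi}(\mathrm{d}s,\mathrm{d}l)-\varepsilon^{-1}\nu(\mathrm{d}l)\mathrm{d}s\right).
\end{equation*}
Because $\bar{\mathbb{Q}}^{\varepsilon}\sim\bar{\mathbb{P}}$ and the identity above is a pathwise equation (integrals against $\eta^{\varepsilon^{-1}\varphi}$ and Lebesgue measure do not depend on the reference measure), the same relation holds $\bar{\mathbb{P}}$-a.s. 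Under $\bar{\mathbb{P}}$, however, the compensator of $\eta^{\varepsilon^{-1}\varphi}$ is $\varepsilon^{-1}\varphi(s,l)\nu(\mathrm{d}l)\mathrm{d}s$, so splitting
\begin{equation*}
\varepsilon\bigl(\eta^{\varepsilon^{-1}\varphi}(\mathrm{d}s,\mathrm{d}l)-\varepsilon^{-1}\nu(\mathrm{d}l)\mathrm{d}s\bigr)=\varepsilon\tilde{\eta}^{\varepsilon^{-1}\varphi}(\mathrm{d}s,\mathrm{d}l)+(\varphi(s,l)-1)\nu(\mathrm{d}l)\mathrm{d}s
\end{equation*}
transforms the equation into exactly \eqref{sys61-3}. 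This gives existence.

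For uniqueness, suppose $X_1,X_2$ both satisfy \eqref{sys61-3} on $(\bar{\Omega},\bar{\mathcal{F}},\bar{\mathbb{F}},\bar{\mathbb{P}})$. Reversing the computation above, both processes solve \eqref{sys62} driven by $\eta^{\varepsilon^{-1}\varphi}$ on $(\bar{\Omega},\bar{\mathcal{F}},\bar{\mathbb{F}},\bar{\mathbb{Q}}^{\varepsilon})$; since on that space $\eta^{\varepsilon^{-1}\varphi}$ is a PRM of the required intensity, the pathwise uniqueness part of Lemma \ref{the61} (combined with Remark \ref{the62}(1) and the Yamada--Watanabe theorem) forces $X_1=X_2$ $\bar{\mathbb{Q}}^{\varepsilon}$-a.s., hence $\bar{\mathbb{P}}$-a.s.\ by equivalence of the measures. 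The main technical hurdle I anticipate is verifying the true-martingale property of $\mathcal{E}^{\varepsilon}(\varphi)$ (which is precisely the role played by the structural restrictions embedded in $\bar{\mathcal{A}}_b$) and carefully tracking which compensator is in force at each stage of the argument; the remainder is essentially bookkeeping once the Girsanov transformation is in place.
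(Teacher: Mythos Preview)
Your proposal is correct and follows essentially the same Girsanov-based strategy as the paper: construct an exponential martingale density (the paper writes it in terms of $\psi:=\varphi^{-1}$ rather than $\varphi$, so double-check the sign in your explicit formula for $\mathcal{E}^{\varepsilon}_t$), use the $\bar{\mathcal{A}}_b$ bounds to secure the true-martingale property, apply Remark \ref{the62}(2) under the new measure where $\eta^{\varepsilon^{-1}\varphi}$ becomes a genuine PRM with intensity $\varepsilon^{-1}\nu$, and then transfer back to $\bar{\mathbb{P}}$ by equivalence. Your compensator-splitting step and the uniqueness argument via pathwise uniqueness under $\bar{\mathbb{Q}}^{\varepsilon}$ are exactly what the paper outlines by pointing to \cite{brzezniak2022well}.
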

\begin{proof}[\emph{\textbf{Proof}}]  Let $\varphi\in \bar{\mathcal{A}}_b$ and $\psi:=\varphi^{-1}$. Then $\psi\in \bar{\mathcal{A}}$. Thus there exists $n\in\mathbb{N}$ and a compact set $K_n$ such that
\begin{equation*}\label{the66-1-1}
\begin{split}
&\psi(t,l,\omega)\in\left[\frac{1}{n},n\right]~\textrm{if}~(t,l,\omega)\in[0,T]\times K_n\times\bar{\Omega},\\
&\psi(t,l,\omega)=1~\textrm{if}~(t,l,\omega)\in[0,T]\times K_n^c\times\bar{\Omega}.
\end{split}
\end{equation*}
Let
\begin{equation*}\label{the66-1-2}
\begin{split}
E_t^{\varepsilon}(\psi)&:=\exp\Bigl\{\int_{(0,t]\times\mathbb{B}\times [0,\varepsilon^{-1}\varphi(s,l)]}\log(\psi(s,l))\bar{\eta}(\mathrm{d}l,\mathrm{d}s,\mathrm{d}r)\\
&+\int_{(0,t]\times\mathbb{B}\times [0,\varepsilon^{-1}\varphi(s,l)]}-(\psi(s,l)+1)\nu(\mathrm{d}l)\mathrm{d}s\mathrm{d}r\Bigl\}\\
&=\exp\Bigl\{\int_{(0,t]\times K_n\times [0,\varepsilon^{-1}\varphi(s,l)]}\log(\psi(s,l))\bar{\eta}(\mathrm{d}l,\mathrm{d}s,\mathrm{d}r)\\
&+\int_{(0,t]\times K_n\times [0,\varepsilon^{-1}\varphi(s,l)]}-(\psi(s,l)+1)\nu(\mathrm{d}l)\mathrm{d}s\mathrm{d}r\Bigl\}.
\end{split}
\end{equation*}
Then the following statements are true.
\begin{enumerate}
\item [(1)] According to Lemma 2.3 in \cite{budhiraja2011variational}, $E_t^{\varepsilon}(\psi)$ is a $\bar{\mathbb{F}}$-martingale on $(\bar{\Omega},\bar{\mathcal{F}},\bar{\mathbb{F}},\mathbb{\bar{P}})$.
\item[(2)] By Lemma \ref{the61} and Remark \ref{the62}, the problem \eqref{sys62} admits a uniqueness solution
$
\mathbf{u}^{\varepsilon}=\mathcal{J}^{\varepsilon}(\varepsilon\eta^{\varepsilon^{-1}}),
$
defined on $\left(\bar{\Omega},\bar{\mathcal{F}},\bar{\mathbb{F}},\bar{\mathbb{P}}\right)$.
\item[(3)] The formula
$
\mathbb{P}_T^{\varepsilon}(A):=\int_{A}E_T^{\varepsilon}(\psi)\mathrm{d}\bar{\mathbb{P}},~A\in\bar{\mathcal{F}},
$
defines a probability measure on $(\bar{\Omega},\bar{\mathcal{F}})$. $\mathbb{P}_T^{\varepsilon}$ and $\bar{\mathbb{P}}$ are equivalent on $(\bar{\Omega},\bar{\mathcal{F}})$.
\item[(4)] On $(\bar{\Omega},\bar{\mathcal{F}},\bar{\mathbb{F}},\mathbb{P}_T^{\varepsilon})$, $\varepsilon\eta^{\varepsilon^{-1}\varphi}$ has the same law as that of $\varepsilon\eta^{\varepsilon^{-1}}$ on $(\bar{\Omega},\bar{\mathcal{F}},\bar{\mathbb{F}},\bar{\mathbb{P}})$.
\end{enumerate}
Thus by (2) and (4), the process $\mathcal{J}^{\varepsilon}(\varepsilon\eta^{\varepsilon^{-1}\varphi})$ defined on $(\bar{\Omega},\bar{\mathcal{F}},\bar{\mathbb{F}},\mathbb{P}_T^{\varepsilon})$ is the unique solution of problem \eqref{sys61-3}. Moreover by a standard argument as shown in Lemma 7.1 of \cite{brzezniak2022well}, it is not hard to derive that $\mathcal{J}^{\varepsilon}(\varepsilon\eta^{\varepsilon^{-1}\varphi})$ defined on $(\bar{\Omega},\bar{\mathcal{F}},\bar{\mathbb{F}},\bar{\mathbb{P}})$ is the unique solution of \eqref{sys61-3}.\end{proof}

\subsection{Proof of Theorem \ref{the2}}\label{subsec6-3} Now we state the sufficient conditions for establishing an LDP for the family $\mathcal{J}^{\varepsilon}(\varepsilon\eta^{\varepsilon^{-1}})$.

\textsf{Condition 1}:  For all $K\in\mathbb{N}$, let $\theta_n,~\theta\in S^K$ and $\theta_n\rightarrow\theta$ in $S^K$ (that is $\nu^{\theta_n}\rightarrow\nu^{\theta}$) as $n\rightarrow\infty$, then
\begin{equation*}\label{663-2}
\begin{split}
\mathcal{J}^0(\theta_n)\rightarrow\mathcal{J}^0(\theta),~\textrm{i.e.},~\mathbf{u}^{\theta_n}\rightarrow\mathbf{u}^{\theta}~\textrm{in}~\mathcal{Z}_T.
\end{split}
\end{equation*}

\textsf{Condition 2}: Let $K_n,~n\in\mathbb{N}$ be an increasing sequence of compact subsets of $\mathbb{B}$ such that $\bigcup_{n\in\mathbb{N}}K_n=\mathbb{B}$. Let us denote
$
\mathcal{U}^K:=\{\varphi\in\bar{\mathcal{A}}_b:\varphi\in S^K,~\bar{\mathbb{P}}\textrm{-a.s.}\}.
$
Let $\{\varepsilon_n\}_{n\in\mathbb{N}}$ be a $(0,1]$-valued sequence converging to $0$. For all $K\in\mathbb{N}$, let $\varphi_{\varepsilon_n},~\varphi\in S^K$ be such that $\varphi_{\varepsilon_n}$ converges in law to $\varphi$ as $\varepsilon_n\rightarrow 0$. Then
\begin{equation*}\label{663-3}
\begin{split}
\mathcal{J}^{\varepsilon_n}(\varepsilon_n\eta^{\varepsilon_n^{-1}\varphi_{\varepsilon_n}})~\textrm{converges in law to}~\mathcal{J}^0(\varphi)~\textrm{in}~\mathcal{Z}_T.
\end{split}
\end{equation*}

\subsubsection{Verification of Condition 1}\label{subsubsec6-4-1} Condition 1 is a consequence of the following Lemma.
\begin{lemma}\label{lem666-1}   Let $K\in\mathbb{N}$ and let $\theta_n,~\theta\in S^K$, be such that
\begin{equation*}
\begin{split}
\theta_n\rightarrow\theta~\textrm{in}~S^K~\textrm{as}~n\rightarrow\infty.
\end{split}
\end{equation*}
Then
$
\mathcal{J}^0(\theta_n)\rightarrow\mathcal{J}^0(\theta)~\textrm{in}~\mathcal{Z}_T~\textrm{as}~n\rightarrow\infty.
$
In particular if $\theta$ and $\tilde{\theta}\in S^K$, possibly defined on different probability spaces $\Omega,~\tilde{\Omega}$, with the same laws, then the laws of the random variables
\begin{equation*}
\begin{split}
\Omega\ni\omega\mapsto\mathcal{J}^0(\theta_n)\in \mathcal{Z}_T~\textrm{and}~\tilde{\Omega}\ni\tilde{\omega}\mapsto\mathcal{J}^0(\tilde{\theta}_n)\in \mathcal{Z}_T
\end{split}
\end{equation*}
are equal.
\end{lemma}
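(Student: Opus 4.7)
The plan is to run a compactness-and-passage-to-the-limit argument entirely analogous to the proof of Lemma~\ref{the61-1}, but carried out along the sequence $\mathbf{u}^{\theta_n}$ instead of the Galerkin approximants. The uniform a priori estimates behind the existence theory are already available, since $\theta_n \in S^K$ yields a uniform bound $\int_0^T \int_{\mathbb{B}} |l||\theta_n(s,l)-1|\nu(\mathrm{d}l)\mathrm{d}s \leq C_K$ (from $\mathcal{L}_T(\theta_n)\leq K$ together with the standard entropy-controls Young inequality), which is exactly the quantity used in Lemmas~\ref{lemapp-1}--\ref{lemapp-2} to drive the bounds. Consequently Lemma~\ref{the61-1} gives
\[
\sup_n\Big(\sup_{t\in[0,T]}\|\mathbf{u}^{\theta_n}(t)\|_{\mathbb{H}^1}^2 + \int_0^T\|\mathbf{u}^{\theta_n}\|_{\mathbb{H}^3}^2\,\mathrm{d}s\Big)+\sup_n\|\mathbf{u}^{\theta_n}\|_{W^{\alpha,2}(0,T;(\mathbb{H}^1)^*)} \leq C_K,
\]
for any $\alpha\in(0,1/2)$.

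From these bounds I extract a subsequence (not relabeled) such that $\mathbf{u}^{\theta_n}\to \bar{\mathbf{u}}$ weak-$\star$ in $L^\infty(0,T;\mathbb{H}^1)$, weakly in $L^2(0,T;\mathbb{H}^3)$, and, by the Aubin--Lions--Simon embeddings recorded in the proof of Lemma~\ref{the61-1}, strongly in $L^2(0,T;\mathbb{W}^{2,4})\cap L^p(0,T;\mathbb{L}^4)\cap C([0,T];\mathbb{L}^2)$. With these convergences in hand, the passage to the limit in the polynomial, gradient and Laplacian terms of \eqref{1the61-1} is identical to the arguments used to prove \eqref{3cor5-1} in Proposition~\ref{cor5-1}, and the convergence of $\int_0^t (\mathbf{b}(\mathbf{u}^{\theta_n}),\phi)_{\mathbb{L}^2}\,\mathrm{d}s$ uses the global Lipschitz property of $\mathbf{b}$ from Corollary~\ref{--cor2-2-2}.

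The only genuinely new ingredient is the control-dependent drift
\[
\mathcal{I}_n(t,\phi):=\int_0^t\!\!\int_{\mathbb{B}}\big(\mathbf{G}(l,\mathbf{u}^{\theta_n})(\theta_n(s,l)-1),\phi\big)_{\mathbb{L}^2}\nu(\mathrm{d}l)\,\mathrm{d}s.
\]
I split
\[
\mathcal{I}_n - \mathcal{I} = \int_0^t\!\!\int_{\mathbb{B}}\big(\mathbf{G}(l,\mathbf{u}^{\theta_n})-\mathbf{G}(l,\bar{\mathbf{u}}),\phi\big)_{\mathbb{L}^2}(\theta_n-1)\nu(\mathrm{d}l)\,\mathrm{d}s + \int_0^t\!\!\int_{\mathbb{B}}\big(\mathbf{G}(l,\bar{\mathbf{u}}),\phi\big)_{\mathbb{L}^2}(\theta_n-\theta)\nu(\mathrm{d}l)\,\mathrm{d}s.
\]
The first integrand is bounded by $C|l|\,|\theta_n-1|\,\|\mathbf{u}^{\theta_n}-\bar{\mathbf{u}}\|_{\mathbb{L}^2}\|\phi\|_{\mathbb{L}^2}$ via Corollary~\ref{--cor2-2-2}, so by Cauchy--Schwarz against the uniform bound on $\int\!\!\int |l|(\theta_n-1)\,\nu\,\mathrm{d}s$ it vanishes from the $C([0,T];\mathbb{L}^2)$ convergence of $\mathbf{u}^{\theta_n}$. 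The second integral is the real obstacle: it requires the convergence
\[
\int_0^t\!\!\int_{\mathbb{B}} \Psi(s,l)(\theta_n-\theta)(s,l)\,\nu(\mathrm{d}l)\,\mathrm{d}s \longrightarrow 0,\qquad \Psi(s,l):=\big(\mathbf{G}(l,\bar{\mathbf{u}}(s)),\phi\big)_{\mathbb{L}^2},
\]
for $\Psi$ that grows linearly in $|l|$. I handle this via the standard lemma for $S^K$-convergence, e.g.\ Lemma~3.11 in \cite{budhiraja2011variational}: because $\{\theta_n\}\subset S^K$ the family $\{(\theta_n-\theta)\nu\otimes\mathrm{d}s\}$ is uniformly integrable against $|l|\,\mathbf{1}_{|l|\leq M}$, so truncating at $|l|\leq M$ and letting $M\to\infty$ at the end reduces the claim to weak convergence of bounded continuous test functions, which holds by definition of $\theta_n\to\theta$ in $S^K$. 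Once $\bar{\mathbf{u}}$ is shown to satisfy \eqref{1the61-1} with control $\theta$, the uniqueness in Lemma~\ref{the61-1} forces $\bar{\mathbf{u}}=\mathbf{u}^\theta$, and a standard subsequence-of-subsequence argument promotes the convergence to the full sequence. Finally, since $\mathbf{u}^\theta \in C([0,T];\mathbb{H}^1)$, the weak convergence in $\mathbb{D}([0,T];\mathbb{H}^1)$ together with $L^2(0,T;\mathbb{H}^3)$ convergence gives convergence in the Skorokhod topology of $\mathcal{Z}_T$; the ``in particular'' assertion is then immediate since $\mathcal{J}^0$ is a deterministic Borel map.
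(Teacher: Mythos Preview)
Your compactness-and-identification argument for showing that any subsequential limit of $\mathbf{u}^{\theta_n}$ solves the skeleton equation with control $\theta$ is correct and essentially matches the first half of the paper's proof. The handling of the control-dependent drift via the splitting and the $S^K$-convergence lemma is also fine.

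The gap is in your final sentence. You assert that ``weak convergence in $\mathbb{D}([0,T];\mathbb{H}^1)$ together with $L^2(0,T;\mathbb{H}^3)$ convergence gives convergence in the Skorokhod topology of $\mathcal{Z}_T$,'' but you have only established \emph{weak} convergence in $L^2(0,T;\mathbb{H}^3)$ and weak-$\star$ convergence in $L^\infty(0,T;\mathbb{H}^1)$ (plus some strong convergences in strictly weaker spaces such as $L^2(0,T;\mathbb{W}^{2,4})$ and $C([0,T];\mathbb{L}^2)$). None of these upgrades to strong convergence in $\mathbb{D}([0,T];\mathbb{H}^1)$ or in $L^2(0,T;\mathbb{H}^3)$, which is what $\mathcal{Z}_T$-convergence requires. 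Compactness arguments alone cannot close this gap because $\mathbb{H}^1$ and $\mathbb{H}^3$ sit at the top of the scale here---there is no extra room to interpolate.

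The paper supplies the missing step by a direct stability estimate on the difference $X_n=\mathbf{u}^{\theta_n}-\mathbf{u}^{\theta}$: one computes $\frac{d}{dt}\|X_n\|_{\mathbb{L}^2}^2$ and $\frac{d}{dt}\|\nabla X_n\|_{\mathbb{L}^2}^2$, estimates all nonlinear cross-terms to obtain
\[
\|X_n(t)\|_{\mathbb{H}^1}^2+\int_0^t\|X_n\|_{\mathbb{H}^3}^2\,\mathrm{d}s
\leq C\int_0^t\|X_n\|_{\mathbb{H}^1}^2\,\Lambda(s)\,\mathrm{d}s
+C\sup_{f\in S^K}\int_0^t\|X_n\|_{\mathbb{H}^1}\!\int_{\mathbb{B}}|l||f(s,l)-1|\,\nu(\mathrm{d}l)\,\mathrm{d}s,
\]
with $\int_0^T\Lambda<\infty$, and then applies Gronwall. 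The forcing term on the right is shown to vanish by a separate argument (splitting $[0,T]$ into $\{\|X_n\|_{\mathbb{H}^1}\geq\delta\}$ and its complement, using the $L^2(0,T;\mathbb{H}^1)$ convergence already obtained). This yields genuine strong convergence in $C([0,T];\mathbb{H}^1)\cap L^2(0,T;\mathbb{H}^3)$. Your proof needs exactly this additional energy/Gronwall step.
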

\begin{proof}[\emph{\textbf{Proof}}] Assume $\theta\in \mathbb{S}$. Set $\mathbf{u}^{\theta}$ be the solution of the skeleton equation given by \begin{equation}\label{sys666-1}
\begin{split}
\mathrm{d}\mathbf{u}^{\theta}(t)&=\left[-\Delta\mathbf{u}^{\theta}-\Delta^2\mathbf{u}^{\theta}+2(1-|\mathbf{u}^{\theta}|^2)\mathbf{u}^{\theta}+2\Delta(|\mathbf{u}^{\theta}|^2\mathbf{u}^{\theta})-\mathbf{u}^{\theta}\times\Delta \mathbf{u}^{\theta}\right]\,\mathrm{d}t\\
&+\mathbf{b}(\mathbf{u}^{\theta})\,\mathrm{d}t+\int_{\mathbb{B}}\mathbf{G}(l,\mathbf{u}^{\theta})(\theta(t,l)-1)\nu(\mathrm{d}l)\mathrm{d}t.
\end{split}
\end{equation}
Similarly $\mathbf{u}^{\theta_n}$ is a solution of the above with $\theta$ replaced by $\theta_n$. For convenience, we define the solution of the skeleton equation \eqref{sys666-1} by
$
\mathbf{u}=\mathbf{u}^{\theta}:=\mathcal{J}^0(\theta)~\textrm{and}~\mathbf{u}_n=\mathbf{u}^{\theta_n}:=\mathcal{J}^0(\theta_n).
$

We shall prove that $\mathbf{u}_n\rightarrow\mathbf{u}$ in $\mathcal{Z}_T$. The equation \eqref{sys666-1} can be written in the terms of $\mathbf{u}_n$ in the integral form as follows:
\begin{equation}\label{sys666-2}
\begin{split}
\mathbf{u}_n(t)&=\mathbf{u}_0+\int_0^t\left[-\Delta\mathbf{u}_n-\Delta^2\mathbf{u}_n+2(1-|\mathbf{u}_n|^2)\mathbf{u}_n+2\Delta(|\mathbf{u}_n|^2\mathbf{u}_n)-\mathbf{u}_n\times\Delta \mathbf{u}_n\right]\,\mathrm{d}s\\
&+\int_0^t\mathbf{b}(\mathbf{u}_n)\,\mathrm{d}s+\int_0^t\int_{\mathbb{B}}\mathbf{G}(l,\mathbf{u}_n)(\theta_n(s,l)-1)\nu(\mathrm{d}l)\mathrm{d}s.
\end{split}
\end{equation}
Using the energy estimates provided by Lemmas \ref{lemapp-1} and \ref{lemapp-2}, we infer that
\begin{equation}\label{sys666-3}
\begin{split}
&\mathbf{u}_n\rightarrow \mathbf{u}^*~\textrm{weak-star in}~L^{\infty}(0,T;\mathbb{H}^1),\\
&\mathbf{u}_n\rightarrow \mathbf{u}^*~\textrm{weakly in}~L^2(0,T;\mathbb{H}^3),\\
&\mathbf{u}_n\rightarrow \mathbf{u}^*~\textrm{strongly in}~L^{2}(0,T;\mathbb{W}^{2,4})\cap L^p(0,T;\mathbb{L}^{4}).
\end{split}
\end{equation}
In particular, let
\begin{equation}\label{sys666-31}
\begin{split}
\sup_{n\in\mathbb{N}}\sup_{t\in[0,T]}\|\mathbf{u}_n\|_{\mathbb{H}^1}+\sup_{t\in[0,T]}\|\mathbf{u}^*\|_{\mathbb{H}^1}=:\mathbf{c}_0<\infty.
\end{split}
\end{equation}

At first we will prove that $\mathbf{u}^*$ is a solution of \eqref{sys666-1}, which means that $\mathbf{u}^*=\mathbf{u}$. To complete this goal, we use similar arguments as in the proof of Proposition $V.1.3$ in \cite{boyer2012mathematical} (see also Temam \cite{temam2024navier}).  Let $h(t)$ be a continuously differentiable function on $[0,T]$ with $h(T)=0$. Let $\{\mathbf{e}_i\}_{i=1}^{\infty}$ denote an orthonormal basis of $\mathbb{L}^2$ consisting of eigenvectors for the Neumann Laplacian. According to Lemma \ref{the61-1}, we see that
\begin{equation}\label{sys666-4}
\begin{split}
&-\int_0^T(\mathbf{u}_n(t),h'(t)\mathbf{e}_i)_{\mathbb{L}^2}\,\mathrm{d}t=(\mathbf{u}_0,h(0)\mathbf{e}_i)_{\mathbb{L}^2}-\int_0^{T}\left\langle\Delta\mathbf{u}_n,h(t)\mathbf{e}_i\right\rangle_{\mathbb{H}^{-1},\mathbb{H}^1}\,\mathrm{d}t\\
&-\int_0^{T}\left\langle\Delta^2\mathbf{u}_n,h(t)\mathbf{e}_i\right\rangle_{\mathbb{H}^{-1},\mathbb{H}^1}\,\mathrm{d}t+2\int_0^{T}\left\langle(1-|\mathbf{u}_n|^2)\mathbf{u}_n,h(t)\mathbf{e}_i\right\rangle_{\mathbb{H}^{-1},\mathbb{H}^1}\,\mathrm{d}t\\
&-\int_0^{T}\left\langle\mathbf{u}_n\times\Delta\mathbf{u}_n,h(t)\mathbf{e}_i\right\rangle_{\mathbb{H}^{-1},\mathbb{H}^1}\,\mathrm{d}t+2\int_0^{T}\left\langle\Delta(|\mathbf{u}_n|^2\mathbf{u}_n),h(t)\mathbf{e}_i\right\rangle_{\mathbb{H}^{-1},\mathbb{H}^1}\,\mathrm{d}t\\
&+\int_0^T\left(\mathbf{b}(\mathbf{u}_n),h(t)\mathbf{e}_i\right)_{\mathbb{L}^2}\,\mathrm{d}t+\int_0^T\int_{\mathbb{B}}\left(\mathbf{G}(l,\mathbf{u}_n)(\theta_n(t,l)-1),h(t)\mathbf{e}_i\right)_{\mathbb{L}^2}\nu(\mathrm{d}l)\mathrm{d}t\\
&:=\sum_{j=1}^7Z_j(\mathbf{u}_n)+Z_8(\mathbf{u}_n,\theta_n).
\end{split}
\end{equation}
Using the result \eqref{sys666-3} and the standard arguments to the proof of Theorem 3.1 in \cite{temam2024navier}, we infer that
 \begin{equation}\label{sys666-4-1}
\begin{split}
&\lim_{n\rightarrow\infty}-\int_0^T(\mathbf{u}_n(t),h'(t)\mathbf{e}_i)_{\mathbb{L}^2}\,\mathrm{d}t-\sum_{j=1}^7Z_j(\mathbf{u}_n)=-\int_0^T(\mathbf{u}^*(t),h'(t)\mathbf{e}_i)_{\mathbb{L}^2}\,\mathrm{d}t-\sum_{j=1}^7Z_j(\mathbf{u}^*)
\end{split}
\end{equation}
Moreover, as $\theta_n\rightarrow\theta$ in $\mathbb{S}$, by Lemma 3.11 in \cite{budhiraja2013large}, we have
 \begin{equation}\label{sys666-5}
\begin{split}
&\lim_{n\rightarrow\infty}Z_8(\mathbf{u}^*,\theta_n)=Z_8(\mathbf{u}^*,\theta).
\end{split}
\end{equation}
Set for $\delta>0$, $A_{n,\delta}:=\{t\in[0,T],\|\mathbf{u}_n(t)-\mathbf{u}^*\|_{\mathbb{H}^1}\geq\delta\}$. Let $\lambda_T$ denotes the Lebesgue measure on $[0,T]$. Since $\mathbf{u}_n\rightarrow\mathbf{u}^*$ strongly in $L^2(0,T;\mathbb{H}^1)$,
\begin{equation}\label{sys666-6}
\begin{split}
&\lim_{n\rightarrow\infty}\lambda_T(A_{n,\delta})\leq\lim_{n\rightarrow\infty}\frac{C}{\delta^2}\int_0^T\|\mathbf{u}_n-\mathbf{u}^*\|_{\mathbb{H}^1}^2\,\mathrm{d}t=0.
\end{split}
\end{equation}
Thus for any $\delta>0$, we have
\begin{equation*}
\begin{split}
&|Z_8(\mathbf{u}_n,\theta_n)-Z_8(\mathbf{u}^*,\theta_n)|=\left|\int_0^T\int_{\mathbb{B}}\left((\mathbf{G}(l,\mathbf{u}_n)-\mathbf{G}(l,\mathbf{u}^*))(\theta_n(t,l)-1),h(t)\mathbf{e}_i\right)_{\mathbb{L}^2}\nu(\mathrm{d}l)\mathrm{d}t\right|\\
&\leq C_{h}\int_0^T\int_{\mathbb{B}}|l|\|\mathbf{u}_n-\mathbf{u}^*\|_{\mathbb{H}^1}|\theta_n(t,l)-1|\nu(\mathrm{d}l)\mathrm{d}t\\
&\leq 2\mathbf{c}_0C_{h}\int_{A_{n,\delta}}\int_{\mathbb{B}}|l||\theta_n(t,l)-1|\nu(\mathrm{d}l)\mathrm{d}t+\delta C_h\int_{A_{n,\delta}^c}\int_{\mathbb{B}}|l||\theta_n(t,l)-1|\nu(\mathrm{d}l)\mathrm{d}t
\end{split}
\end{equation*}
Taking the limit, using \eqref{sys666-6} and noting the arbitrariness of $\delta$, we infer that
\begin{equation}\label{sys666-7}
\begin{split}
&\lim_{n\rightarrow\infty}|Z_8(\mathbf{u}_n,\theta_n)-Z_8(\mathbf{u}^*,\theta_n)|=0.
\end{split}
\end{equation}
Now combining \eqref{sys666-5} and \eqref{sys666-7}, we infer that
\begin{equation}\label{sys666-8}
\begin{split}
&\lim_{n\rightarrow\infty}|Z_8(\mathbf{u}_n,\theta_n)-Z_8(\mathbf{u}^*,\theta)|=0.
\end{split}
\end{equation}
Thus we see from \eqref{sys666-4}, \eqref{sys666-4-1} and \eqref{sys666-8} that
\begin{equation*}\label{sys666-9}
\begin{split}
&\lim_{n\rightarrow\infty}-\int_0^T(\mathbf{u}_n(t),h'(t)\mathbf{e}_i)_{\mathbb{L}^2}\,\mathrm{d}t=-\int_0^T(\mathbf{u}^*(t),h'(t)\mathbf{e}_i)_{\mathbb{L}^2}\,\mathrm{d}t\\
&=\lim_{n\rightarrow\infty}\sum_{j=1}^7Z_j(\mathbf{u}_n)+Z_8(\mathbf{u}_n,\theta_n)=\sum_{j=1}^7Z_j(\mathbf{u}^*)+Z_8(\mathbf{u}^*,\theta^*).
\end{split}
\end{equation*}
Using similar arguments as in the proof of Theorem 3.1, Chapter 3, Temam \cite{temam2024navier}, we conclude that $\mathbf{u}^*$ is the desired solution of problem \eqref{sys666-1}, that is $\mathbf{u}^*=\mathbf{u}=\mathbf{u}^{\theta}$.

Next we shall show that $\mathbf{u}_n\rightarrow\mathbf{u}$ in  $C([0,T];\mathbb{H}^1)\cap L^2(0,T;\mathbb{H}^3)$. We observe that the above proof of \eqref{sys666-7} yields the following result, which will be used later on:
\begin{equation}\label{sys666-10}
\begin{split}
&\lim_{n\rightarrow\infty}\sup_{f\in S^K}\int_0^T\|\mathbf{u}_n(t)-\mathbf{u}(t)\|_{\mathbb{H}^1}\int_{\mathbb{B}}|l||f(t,l)-1|\nu(\mathrm{d}l)\mathrm{d}t=0.
\end{split}
\end{equation}
We use some notations defined in \eqref{def111}. Let $X_n:=\mathbf{u}_n-\mathbf{u}$. Then,
\begin{equation}\label{sys666-11}
\begin{split}
&X_n(t)=\sum_{j=1}^5\int_0^tF^j(\mathbf{u}_n)-F^j(\mathbf{u})\,\mathrm{d}s+\int_0^t\mathbf{b}(\mathbf{u}_n)-\mathbf{b}(\mathbf{u})\,\mathrm{d}s\\
&+\int_0^t\int_{\mathbb{B}}\mathbf{G}(l,\mathbf{u}_n)(\theta_n(s,l)-1)-\mathbf{G}(l,\mathbf{u})(\theta(s,l)-1)\nu(\mathrm{d}l)\mathrm{d}s.
\end{split}
\end{equation}
By directly calculating $\|X_n(t)\|_{\mathbb{L}^2}^2$ and $\|\nabla X_n(t)\|_{\mathbb{L}^2}^2$ and using integration by parts, we obtain
\begin{equation}\label{sys666-12}
\begin{split}
&\|X_n(t)\|_{\mathbb{L}^2}^2+2\int_0^{t}\|\Delta X_n\|_{\mathbb{L}^2}^2\,\mathrm{d}s\\
&=2\int_0^{t}\|\nabla X_n\|_{\mathbb{L}^2}^2\,\mathrm{d}s+4\int_0^{t}\left(X_n-|\mathbf{u}_n|^2X_n+\mathbf{u}\left(|\mathbf{u}|^2-|\mathbf{u}_n|^2\right),X_n\right)_{\mathbb{L}^2}\,\mathrm{d}s\\
&+4\int_0^{t}\left(|\mathbf{u}_n|^2\mathbf{u}_n-|\mathbf{u}|^2\mathbf{u},\Delta X_n\right)_{\mathbb{L}^2}\,\mathrm{d}s+2\int_0^{t}\left(\mathbf{u}_n\times\nabla\mathbf{u}_n-\mathbf{u}\times\nabla\mathbf{u},\nabla X_n\right)_{\mathbb{L}^2}\,\mathrm{d}s\\
&+2\int_0^{t}\left(\mathbf{b}(\mathbf{u}_n)-\mathbf{b}(\mathbf{u}),X_n\right)_{\mathbb{L}^2}\,\mathrm{d}s\\
&+2\int_0^t\int_{\mathbb{B}}\left(\mathbf{G}(l,\mathbf{u}_n)(\theta_n(s,l)-1)-\mathbf{G}(l,\mathbf{u})(\theta(s,l)-1),X_n\right)_{\mathbb{L}^2}\nu(\mathrm{d}l)\mathrm{d}s\\
&:=\sum_{j=1}^6M_j(t),
\end{split}
\end{equation}
and
\begin{equation}\label{sys666-13}
\begin{split}
&\|\nabla X_n(t)\|_{\mathbb{L}^2}^2+2\int_0^{t}\|\nabla\Delta X_n\|_{\mathbb{L}^2}^2\,\mathrm{d}s\\
&=2\int_0^{t}\|\Delta X_n\|_{\mathbb{L}^2}^2\,\mathrm{d}s+4\int_0^{t}\left(\nabla((1-|\mathbf{u}_n|^2)\mathbf{u}_n-(1-|\mathbf{u}|^2)\mathbf{u}),\nabla X_n\right)_{\mathbb{L}^2}\,\mathrm{d}s\\
&+4\int_0^{t}\left(\nabla(|\mathbf{u}_n|^2\mathbf{u}_n-|\mathbf{u}|^2\mathbf{u}),\nabla\Delta X_n\right)_{\mathbb{L}^2}\,\mathrm{d}s+2\int_0^{t}\left(\mathbf{u}_n\times\Delta\mathbf{u}_n-\mathbf{u}\times\Delta\mathbf{u},\Delta X_n\right)_{\mathbb{L}^2}\,\mathrm{d}s\\
&+2\int_0^{t}\left(\nabla\left(\mathbf{b}(\mathbf{u}_n)-\mathbf{b}(\mathbf{u})\right),\nabla X_n\right)_{\mathbb{L}^2}\,\mathrm{d}s\\
&+2\int_0^t\int_{\mathbb{B}}\left(\nabla\mathbf{G}(l,\mathbf{u}_n)(\theta_n(s,l)-1)-\nabla\mathbf{G}(l,\mathbf{u})(\theta(s,l)-1),\nabla X_n\right)_{\mathbb{L}^2}\nu(\mathrm{d}l)\mathrm{d}s\\
&:=\sum_{j=1}^6N_j(t).
\end{split}
\end{equation}
By the H\"{o}lder inequality and Young's inequality, we have
\begin{equation}\label{sys666-14}
\begin{split}
&|M_1(t)|\leq \varepsilon\int_0^{t}\|\Delta X_n\|_{\mathbb{L}^2}^2\,\mathrm{d}s+C_{\varepsilon}\int_0^{t}\|X_n\|_{\mathbb{L}^2}^2\,\mathrm{d}s.
\end{split}
\end{equation}
And by the Sobolev embedding $\mathbb{H}^2\hookrightarrow L^{\infty}$ we also have
\begin{equation}\label{sys666-15}
\begin{split}
&|M_2(t)|\leq C\int_0^{t}(1+\|\mathbf{u}_n\|_{\mathbb{L}^{\infty}}^2+\|\mathbf{u}\|_{\mathbb{L}^{\infty}}^2)\|X_n\|_{\mathbb{L}^2}^2\,\mathrm{d}s\\
&\leq C\int_0^{t}(1+\|\mathbf{u}_n\|_{\mathbb{H}^{2}}^2+\|\mathbf{u}\|_{\mathbb{H}^{2}}^2)\|X_n\|_{\mathbb{L}^2}^2\,\mathrm{d}s.
\end{split}
\end{equation}
By the GN inequality, the following inequality is valid for $d=1,2,3$.
\begin{equation}\label{addd1}
\begin{split}
\|f\|_{\mathbb{L}^{\infty}}\leq C\|f\|_{\mathbb{H}^{1}}^{\frac{1}{2}}\|f\|_{\mathbb{H}^{2}}^{\frac{1}{2}},~f\in\mathbb{H}^2.
\end{split}
\end{equation}
Thus we have
\begin{equation}\label{sys666-16}
\begin{split}
&|M_3(t)|\leq C\int_0^{t}(1+\|\mathbf{u}_n\|_{\mathbb{L}^{\infty}}^2+\|\mathbf{u}\|_{\mathbb{L}^{\infty}}^2)\|X_n\|_{\mathbb{L}^2}\|\Delta X_n\|_{\mathbb{L}^2}\,\mathrm{d}s\\
&\leq \varepsilon\int_0^{t}\|\Delta X_n\|_{\mathbb{L}^2}^2\,\mathrm{d}s+C_{\varepsilon}\int_0^{t}(1+\|\mathbf{u}_n\|_{\mathbb{L}^{\infty}}^4+\|\mathbf{u}\|_{\mathbb{L}^{\infty}}^4)\|X_n\|_{\mathbb{L}^2}^2\,\mathrm{d}s\\
&\leq \varepsilon\int_0^{t}\|\Delta X_n\|_{\mathbb{L}^2}^2\,\mathrm{d}s+C_{\varepsilon}\int_0^{t}(1+\|\mathbf{u}_n\|_{\mathbb{H}^{1}}^2\|\mathbf{u}_n\|_{\mathbb{H}^{2}}^2+\|\mathbf{u}\|_{\mathbb{H}^{1}}^2\|\mathbf{u}\|_{\mathbb{H}^{2}}^2)\|X_n\|_{\mathbb{L}^2}^2\,\mathrm{d}s
\end{split}
\end{equation}
Using the fact $(f\times g,g)_{\mathbb{L}^2}=0$, we infer from \eqref{sys666-14} that
\begin{equation}\label{sys666-17}
\begin{split}
&|M_4(t)|\leq C\int_0^{t}\|\mathbf{u}_n\|_{\mathbb{L}^{\infty}}\|\nabla X_n\|_{\mathbb{L}^2}\|X_n\|_{\mathbb{L}^2}\,\mathrm{d}s\\
&\leq \varepsilon\int_0^{t}\|\Delta X_n\|_{\mathbb{L}^2}^2\,\mathrm{d}s+C_{\varepsilon}\int_0^{t}\|\mathbf{u}_n\|_{\mathbb{H}^{2}}^2\|X_n\|_{\mathbb{L}^2}^2\,\mathrm{d}s.
\end{split}
\end{equation}
By using Corollary \ref{--cor2-2-2}, we have
\begin{equation}\label{sys666-17-1}
\begin{split}
&|M_5(t)|\leq C\int_0^{t}\|X_n\|_{\mathbb{L}^2}^2\,\mathrm{d}s.
\end{split}
\end{equation}
Similarly, we have
\begin{equation}\label{sys666-18}
\begin{split}
&|M_6(t)|\leq C\int_0^t\|X_n\|_{\mathbb{L}^2}^2\left(\int_{\mathbb{B}}|l||\theta_n(s,l)-1|\nu(\mathrm{d}l)\right)\mathrm{d}s\\
&+C(1+\sup_{s\in[0,t]}\|\mathbf{u}(s)\|_{\mathbb{L}^2})\int_0^t\|X_n\|_{\mathbb{L}^2}\left(\int_{\mathbb{B}}|l|(|\theta_n(s,l)-1|+|\theta(s,l)-1|)\nu(\mathrm{d}l)\right)\mathrm{d}s\\
&\leq C\int_0^t\|X_n\|_{\mathbb{L}^2}^2\left(\int_{\mathbb{B}}|l||\theta_n(s,l)-1|\nu(\mathrm{d}l)\right)\mathrm{d}s\\
&+C(\mathbf{c}_0)\sup_{f\in S^K}\int_0^t\|X_n\|_{\mathbb{L}^2}\left(\int_{\mathbb{B}}|l||f(s,l)-1|\nu(\mathrm{d}l)\right)\mathrm{d}s,
\end{split}
\end{equation}
where $\mathbf{c}_0$ is defined by \eqref{sys666-31}. Now, plugging \eqref{sys666-14}-\eqref{sys666-18} into \eqref{sys666-12}, choosing $\varepsilon$ small enough and then using the elliptic regularity result, we infer that
\begin{equation}\label{sys666-19}
\begin{split}
&\|X_n(t)\|_{\mathbb{L}^2}^2+\int_0^{t}\|X_n\|_{\mathbb{H}^2}^2\,\mathrm{d}s\\
&\leq C(\mathbf{c}_0)\sup_{f\in S^K}\int_0^t\|X_n\|_{\mathbb{L}^2}\left(\int_{\mathbb{B}}|l||f(s,l)-1|\nu(\mathrm{d}l)\right)\mathrm{d}s\\
&+C\int_0^t\|X_n\|_{\mathbb{L}^2}^2\left(\int_{\mathbb{B}}|l||\theta_n(s,l)-1|\nu(\mathrm{d}l)+1+\|\mathbf{u}_n\|_{\mathbb{H}^{1}}^2\|\mathbf{u}_n\|_{\mathbb{H}^{2}}^2+\|\mathbf{u}\|_{\mathbb{H}^{1}}^2\|\mathbf{u}\|_{\mathbb{H}^{2}}^2+\|\mathbf{u}_n\|_{\mathbb{H}^{2}}^2+\|\mathbf{u}\|_{\mathbb{H}^{2}}^2\right)\,\mathrm{d}s\\
&:=C(\mathbf{c}_0)\sup_{f\in S^K}\int_0^t\|X_n\|_{\mathbb{L}^2}\left(\int_{\mathbb{B}}|l||f(s,l)-1|\nu(\mathrm{d}l)\right)\mathrm{d}s+C\int_0^t\|X_n\|_{\mathbb{L}^2}^2\mathbf{V}(s)\,\mathrm{d}s.
\end{split}
\end{equation}

Moreover, we have
\begin{equation}\label{sys666-21}
\begin{split}
&|N_1(t)|\leq \varepsilon\int_0^{t}\|\nabla\Delta X_n\|_{\mathbb{L}^2}^2\,\mathrm{d}s+C_{\varepsilon}\int_0^{t}\|\nabla X_n\|_{\mathbb{L}^2}^2\,\mathrm{d}s.
\end{split}
\end{equation}
Since $\mathbb{H}^2\hookrightarrow L^{\infty}$,
\begin{equation}\label{sys666-22}
\begin{split}
&|N_2(t)|\\
&\leq C\int_0^{t}(1+\|\mathbf{u}_n\|_{\mathbb{L}^{\infty}}^2)\|\nabla X_n\|_{\mathbb{L}^2}^2\,\mathrm{d}s+C\int_0^{t}(\|\mathbf{u}_n\|_{\mathbb{L}^{\infty}}+\|\mathbf{u}\|_{\mathbb{L}^{\infty}})\|\nabla \mathbf{u}\|_{\mathbb{L}^2}\|X_n\|_{\mathbb{L}^{\infty}}\|\nabla X_n\|_{\mathbb{L}^2}\,\mathrm{d}s\\
&\leq \varepsilon\int_0^{t}\|X_n\|_{\mathbb{L}^{\infty}}^2\,\mathrm{d}s+C_{\varepsilon}\int_0^{t}(1+\|\mathbf{u}_n\|_{\mathbb{L}^{\infty}}^2+\|\mathbf{u}_n\|_{\mathbb{L}^{\infty}}^2\|\nabla \mathbf{u}\|_{\mathbb{L}^2}^2+\|\mathbf{u}\|_{\mathbb{L}^{\infty}}^2\|\nabla \mathbf{u}\|_{\mathbb{L}^2}^2)\|\nabla X_n\|_{\mathbb{L}^2}^2\,\mathrm{d}s\\
&\leq \varepsilon\int_0^{t}\|X_n\|_{\mathbb{H}^{2}}^2\,\mathrm{d}s+C_{\varepsilon}\int_0^{t}(1+\|\mathbf{u}_n\|_{\mathbb{H}^{2}}^2+\|\mathbf{u}_n\|_{\mathbb{H}^{2}}^2\|\mathbf{u}\|_{\mathbb{H}^1}^2+\|\mathbf{u}\|_{\mathbb{H}^{2}}^2\|\mathbf{u}\|_{\mathbb{H}^1}^2)\|\nabla X_n\|_{\mathbb{L}^2}^2\,\mathrm{d}s
\end{split}
\end{equation}
Additionally, by $\mathbb{H}^1\hookrightarrow \mathbb{L}^{6}$ as well as \eqref{addd1}
\begin{equation}\label{sys666-23}
\begin{split}
&|N_3(t)|\\
&\leq C\int_0^{t}\|\mathbf{u}_n\|_{\mathbb{L}^{\infty}}^2\|\nabla X_n\|_{\mathbb{L}^2}\|\nabla\Delta X_n\|_{\mathbb{L}^2}\,\mathrm{d}s+C\int_0^{t}(\|\mathbf{u}_n\|_{\mathbb{L}^{3}}+\|\mathbf{u}\|_{\mathbb{L}^{3}})\|\nabla \mathbf{u}\|_{\mathbb{L}^{\infty}}\|X_n\|_{\mathbb{L}^{6}}\|\nabla\Delta X_n\|_{\mathbb{L}^2}\,\mathrm{d}s\\
&\leq\varepsilon\int_0^{t}\|\nabla\Delta X_n\|_{\mathbb{L}^2}^2\,\mathrm{d}s+C_{\varepsilon}\int_0^{t}\|\mathbf{u}_n\|_{\mathbb{L}^{\infty}}^4\|\nabla X_n\|_{\mathbb{L}^2}^2\,\mathrm{d}s+C_{\varepsilon}\int_0^{t}(\|\mathbf{u}_n\|_{\mathbb{H}^{1}}^2\|\mathbf{u}\|_{\mathbb{H}^3}^2+\|\mathbf{u}\|_{\mathbb{H}^{1}}^2\|\mathbf{u}\|_{\mathbb{H}^3}^2)\|X_n\|_{\mathbb{H}^1}^2\,\mathrm{d}s\\
&\leq\varepsilon\int_0^{t}\|\nabla\Delta X_n\|_{\mathbb{L}^2}^2\,\mathrm{d}s+C_{\varepsilon}\int_0^{t}(\|\mathbf{u}_n\|_{\mathbb{H}^{1}}^2\|\mathbf{u}_n\|_{\mathbb{H}^{2}}^2+\|\mathbf{u}_n\|_{\mathbb{H}^{1}}^2\|\mathbf{u}\|_{\mathbb{H}^3}^2+\|\mathbf{u}\|_{\mathbb{H}^{1}}^2\|\mathbf{u}\|_{\mathbb{H}^3}^2)\|X_n\|_{\mathbb{H}^1}^2\,\mathrm{d}s.
\end{split}
\end{equation}
Moreover, by using \eqref{sys666-21}, we have
\begin{equation}\label{sys666-24}
\begin{split}
&|N_4(t)|\leq C\int_0^{t}\|\Delta\mathbf{u}\|_{\mathbb{L}^{3}}\|X_n\|_{\mathbb{L}^6}\|\Delta X_n\|_{\mathbb{L}^2}\,\mathrm{d}s\\
&\leq \varepsilon\int_0^{t}\|\Delta X_n\|_{\mathbb{L}^2}^2\,\mathrm{d}s+C_{\varepsilon}\int_0^{t}\|\mathbf{u}\|_{\mathbb{H}^{3}}^2\|X_n\|_{\mathbb{H}^1}^2\,\mathrm{d}s\\
&\leq\varepsilon\int_0^{t}\|\nabla\Delta X_n\|_{\mathbb{L}^2}^2\,\mathrm{d}s+C_{\varepsilon}\int_0^{t}(1+\|\mathbf{u}\|_{\mathbb{H}^{3}}^2)\|X_n\|_{\mathbb{H}^1}^2\,\mathrm{d}s.
\end{split}
\end{equation}
By using Corollary \ref{--cor2-2-2}, we have
\begin{equation}\label{sys666-24-1}
\begin{split}
&|N_5(t)|\leq C\int_0^{t}\|X_n\|_{\mathbb{H}^1}^2\,\mathrm{d}s.
\end{split}
\end{equation}
And we also have
\begin{equation}\label{sys666-25}
\begin{split}
&|N_6(t)|\leq C\int_0^t\|\nabla X_n\|_{\mathbb{L}^2}^2\left(\int_{\mathbb{B}}|l||\theta_n(s,l)-1|\nu(\mathrm{d}l)\right)\mathrm{d}s\\
&+C(1+\sup_{s\in[0,t]}\|\nabla\mathbf{u}(s)\|_{\mathbb{L}^2})\int_0^t\|\nabla X_n\|_{\mathbb{L}^2}\left(\int_{\mathbb{B}}|l|(|\theta_n(s,l)-1|+|\theta(s,l)-1|)\nu(\mathrm{d}l)\right)\mathrm{d}s\\
&\leq C\int_0^t\|\nabla X_n\|_{\mathbb{L}^2}^2\left(\int_{\mathbb{B}}|l||\theta_n(s,l)-1|\nu(\mathrm{d}l)\right)\mathrm{d}s\\
&+C(\mathbf{c}_0)\sup_{f\in S^K}\int_0^t\|\nabla X_n\|_{\mathbb{L}^2}\left(\int_{\mathbb{B}}|l||f(s,l)-1|\nu(\mathrm{d}l)\right)\mathrm{d}s.
\end{split}
\end{equation}
Plugging \eqref{sys666-21}-\eqref{sys666-25} into \eqref{sys666-13} and choosing $\varepsilon$ small enough, we infer that
\begin{equation}\label{sys666-26}
\begin{split}
&\|\nabla X_n(t)\|_{\mathbb{L}^2}^2+\int_0^{t}\|\nabla\Delta X_n\|_{\mathbb{L}^2}^2\,\mathrm{d}s-\frac{1}{2}\int_0^{t}\|X_n\|_{\mathbb{H}^{2}}^2\,\mathrm{d}s\\
&\leq C(\mathbf{c}_0)\sup_{f\in S^K}\int_0^t\|\nabla X_n\|_{\mathbb{L}^2}\left(\int_{\mathbb{B}}|l||f(s,l)-1|\nu(\mathrm{d}l)\right)\mathrm{d}s+C\int_0^t\|X_n\|_{\mathbb{H}^1}^2\Biggl(\int_{\mathbb{B}}|l||\theta_n(s,l)-1|\nu(\mathrm{d}l)\\
&+1+\|\mathbf{u}_n\|_{\mathbb{H}^{2}}^2+\|\mathbf{u}_n\|_{\mathbb{H}^{2}}^2\|\mathbf{u}\|_{\mathbb{H}^1}^2+\|\mathbf{u}\|_{\mathbb{H}^{3}}^2+\|\mathbf{u}_n\|_{\mathbb{H}^{1}}^2\|\mathbf{u}_n\|_{\mathbb{H}^{2}}^2+\|\mathbf{u}_n\|_{\mathbb{H}^{1}}^2\|\mathbf{u}\|_{\mathbb{H}^3}^2+\|\mathbf{u}\|_{\mathbb{H}^{1}}^2\|\mathbf{u}\|_{\mathbb{H}^3}^2\Biggl)\,\mathrm{d}s\\
&:=C(\mathbf{c}_0)\sup_{f\in S^K}\int_0^t\|\nabla X_n\|_{\mathbb{L}^2}\left(\int_{\mathbb{B}}|l||f(s,l)-1|\nu(\mathrm{d}l)\right)\mathrm{d}s+C\int_0^t\|X_n\|_{\mathbb{H}^1}^2\mathbf{V}_1(s)\,\mathrm{d}s.
\end{split}
\end{equation}
Now combining \eqref{sys666-19} and \eqref{sys666-26} and using the elliptic regularity result, we infer that
\begin{equation}\label{sys666-27}
\begin{split}
&\|X_n(t)\|_{\mathbb{H}^1}^2+\int_0^{t}\|X_n\|_{\mathbb{H}^3}^2\,\mathrm{d}s\\
&\leq C(\mathbf{c}_0)\sup_{f\in S^K}\int_0^t\|X_n\|_{\mathbb{H}^1}\left(\int_{\mathbb{B}}|l||f(s,l)-1|\nu(\mathrm{d}l)\right)\mathrm{d}s+C\int_0^t\|X_n\|_{\mathbb{H}^1}^2(\mathbf{V}(s)+\mathbf{V}_1(s))\,\mathrm{d}s.
\end{split}
\end{equation}
Noting that
\begin{equation*}
\begin{split}
&\int_0^T\mathbf{V}(t)+\mathbf{V}_1(t)\,\mathrm{d}t<\infty,
\end{split}
\end{equation*}
thus by applying the Gronwall lemma to \eqref{sys666-27}, we infer that
\begin{equation}\label{sys666-28}
\begin{split}
&\sup_{t\in[0,T]}\|X_n(t)\|_{\mathbb{H}^1}^2+\int_0^{T}\|X_n\|_{\mathbb{H}^3}^2\,\mathrm{d}s\\
&\leq C(\mathbf{c}_0,T)\sup_{f\in S^K}\int_0^T\|X_n\|_{\mathbb{H}^1}\left(\int_{\mathbb{B}}|l||f(s,l)-1|\nu(\mathrm{d}l)\right)\mathrm{d}s.
\end{split}
\end{equation}
Finally, by using the result \eqref{sys666-10}, we derive from \eqref{sys666-28} that
\begin{equation*}\label{sys666-29}
\begin{split}
&\lim_{n\rightarrow\infty}\left(\sup_{t\in[0,T]}\|X_n(t)\|_{\mathbb{H}^1}^2+\int_0^{T}\|X_n\|_{\mathbb{H}^3}^2\,\mathrm{d}s\right)=0.
\end{split}
\end{equation*}
The proof is thus complete.\end{proof}

\subsubsection{Verification of Condition 2}\label{subsubsec6-4-1} Let us recall the statement of Condition 2. Let $\{\varepsilon_n\}_{n\in\mathbb{N}}$ be a $(0,1]$-valued sequence converging to $0$. For all $K\in\mathbb{N}$, let $\varphi_{\varepsilon_n},~\varphi\in S^K$ be such that $\varphi_{\varepsilon_n}$ converges in law to $\varphi$ as $\varepsilon_n\rightarrow 0$. For simplicity, we denote
\begin{equation*}\label{sys66666-1}
\begin{split}
&Y_n:=\mathcal{J}^{\varepsilon_n}(\varepsilon_n\eta^{\varepsilon_n^{-1}\varphi_{\varepsilon_n}}),~y_n:=\mathcal{J}^{0}(\varphi_{\varepsilon_n}).
\end{split}
\end{equation*}
By Lemma \ref{the66-1}, $Y_n$ has the same law with the unique solution $\mathbf{u}^{\varepsilon,\varphi}$ of the stochastic control equation \eqref{sys61-2}. In particular, there exists a constant $C>0$ independent of $n$ such that
\begin{equation}\label{adddd}
\begin{split}
&\sup_{n\in\mathbb{N}}\mathbb{E}\left(\sup_{t\in[0,T]}\|Y_n(t)\|_{\mathbb{H}^1}^2+\int_0^{T}\|Y_n(t)\|_{\mathbb{H}^3}^2\,\mathrm{d}t\right)\leq C.
\end{split}
\end{equation}
Let $n,~R\in\mathbb{N}$. Let us define a stopping time $\tau^R_n$ as follows:
\begin{equation*}\label{sys66666-2}
\begin{split}
&\tau^R_n:=\inf\left\{t:\sup_{s\in[0,t]}\|Y_n(s)\|_{\mathbb{H}^1}+\int_0^t\|Y_n(s)\|_{\mathbb{H}^3}^2\,\mathrm{d}s+\sup_{s\in[0,t]}\|y_n(s)\|_{\mathbb{H}^1}+\int_0^t\|y_n(s)\|_{\mathbb{H}^3}^2\,\mathrm{d}s>R\right\}\wedge T.
\end{split}
\end{equation*}
For convenience, we denote $\tau^R_n$ by $\tau_n$. Now we state the following result.
\begin{lemma}\label{lem666-1*}  Let $\tau_n$ be defined as above, then we have
\begin{equation}\label{sys66666-3}
\begin{split}
\lim_{n\rightarrow\infty}\mathbb{E}\left(\sup_{t\in[0,T\wedge\tau_n]}\|Y_n(t)-y_n(t)\|_{\mathbb{H}^1}^2+\int_0^{T\wedge\tau_n}\|Y_n(t)-y_n(t)\|_{\mathbb{H}^3}^2\,\mathrm{d}t\right)=0.
\end{split}
\end{equation}
\end{lemma}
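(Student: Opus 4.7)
The plan is to run an energy estimate for $Z_n := Y_n - y_n$ up to the stopping time $\tau_n$, in the spirit of the pathwise-uniqueness calculation of Proposition \ref{pro5-2} and the deterministic convergence of Lemma \ref{lem666-1}, and then to exploit the prefactor $\varepsilon_n$ in the noise to let the stochastic contribution vanish. Subtracting the equation for $y_n = \mathcal{J}^0(\varphi_{\varepsilon_n})$ from the equation for $Y_n$ gives
\begin{align*}
Z_n(t) &= \sum_{j=1}^{5}\int_0^{t}\bigl[F^{j}(Y_n)-F^{j}(y_n)\bigr]\,\mathrm{d}s + \int_0^{t}\bigl[\mathbf{b}(Y_n)-\mathbf{b}(y_n)\bigr]\,\mathrm{d}s \\
&\quad + \int_0^{t}\!\int_{\mathbb{B}}\bigl[\mathbf{G}(l,Y_n)-\mathbf{G}(l,y_n)\bigr](\varphi_{\varepsilon_n}(s,l)-1)\,\nu(\mathrm{d}l)\mathrm{d}s + \varepsilon_n M_n(t),
\end{align*}
where $M_n(t):=\int_0^{t}\int_{\mathbb{B}}\mathbf{G}(l,Y_n)\,\tilde{\eta}^{\varepsilon_n^{-1}\varphi_{\varepsilon_n}}(\mathrm{d}s,\mathrm{d}l)$ is a genuine $\bar{\mathbb{P}}$-martingale because $\tilde{\eta}^{\varepsilon_n^{-1}\varphi_{\varepsilon_n}} = \eta^{\varepsilon_n^{-1}\varphi_{\varepsilon_n}} - \varepsilon_n^{-1}\varphi_{\varepsilon_n}\nu\,\mathrm{d}s$ is the martingale-compensated version of the controlled PRM. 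I would then apply the jump It\^{o} formula to $\tfrac12\|Z_n\|_{\mathbb{L}^2}^{2}+\tfrac12\|\nabla Z_n\|_{\mathbb{L}^2}^{2}$, after a routine Faedo--Galerkin truncation to legitimize the formal computation at the $\mathbb{H}^1$ level.

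On the event $\{t\leq\tau_n\}$ the bounds $\|Y_n\|_{\mathbb{H}^1},\|y_n\|_{\mathbb{H}^1}\leq R$ and $\int_0^{t\wedge\tau_n}(\|Y_n\|_{\mathbb{H}^3}^2+\|y_n\|_{\mathbb{H}^3}^2)\,\mathrm{d}s\leq 2R$ are in force. The deterministic drift differences $F^{j}(Y_n)-F^{j}(y_n)$ are then handled exactly as in the proof of Lemma \ref{lem666-1}: after integration by parts the Gagliardo--Nirenberg estimate $\|f\|_{\mathbb{L}^\infty}\lesssim\|f\|_{\mathbb{H}^1}^{1/2}\|f\|_{\mathbb{H}^2}^{1/2}$ combined with Young's inequality absorbs all top-order contributions into $\tfrac12\int_0^{t\wedge\tau_n}\|Z_n\|_{\mathbb{H}^3}^2\,\mathrm{d}s$, leaving a lower-order term controlled by $C_R\int_0^{t\wedge\tau_n}\|Z_n\|_{\mathbb{H}^1}^2\,\mathrm{d}s$. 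Corollary \ref{--cor2-2-2} bounds both the $\mathbf{b}$-difference and the controlled-drift term by
\[
C_R\int_0^{t\wedge\tau_n}\|Z_n\|_{\mathbb{H}^1}^2\Bigl(1+\int_{\mathbb{B}}|l||\varphi_{\varepsilon_n}(s,l)-1|\nu(\mathrm{d}l)\Bigr)\mathrm{d}s,
\]
whose time integrand is uniformly $L^1(0,T)$ over $\varphi_{\varepsilon_n}\in S^K$.

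The noise contribution is where the small parameter pays off. The It\^{o} correction coming from $\varepsilon_n M_n$ equals
\[
\varepsilon_n^2\int_0^{t\wedge\tau_n}\!\!\int_{\mathbb{B}}\|\mathbf{G}(l,Y_n)\|_{\mathbb{H}^1}^2\,\varepsilon_n^{-1}\varphi_{\varepsilon_n}(s,l)\,\nu(\mathrm{d}l)\mathrm{d}s = \varepsilon_n\int_0^{t\wedge\tau_n}\!\!\int_{\mathbb{B}}\|\mathbf{G}(l,Y_n)\|_{\mathbb{H}^1}^2\varphi_{\varepsilon_n}\,\nu(\mathrm{d}l)\mathrm{d}s,
\]
which is bounded by $C_R\varepsilon_n$ using $\|\mathbf{G}(l,\cdot)\|_{\mathbb{H}^1}\leq C|l|(1+\|\cdot\|_{\mathbb{H}^1})$ and $\sup_{\varphi\in S^K}\iint|l|^2\varphi\,\nu\,\mathrm{d}s<\infty$. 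The martingale cross-term $2\varepsilon_n\int_0^{\cdot\wedge\tau_n}\int_{\mathbb{B}}(\mathbf{G}(l,Y_n),Z_n)_{\mathbb{H}^1}\,\tilde{\eta}^{\varepsilon_n^{-1}\varphi_{\varepsilon_n}}(\mathrm{d}s,\mathrm{d}l)$ is controlled, via the BDG inequality and Cauchy--Schwarz, by $\tfrac12\,\mathbb{E}\sup_{s\leq t\wedge\tau_n}\|Z_n(s)\|_{\mathbb{H}^1}^2+C_R\varepsilon_n$, which can be absorbed on the left-hand side. Assembling all estimates and invoking the Gronwall lemma with the integrable weight above yields
\[
\mathbb{E}\Bigl(\sup_{t\in[0,T\wedge\tau_n]}\|Z_n(t)\|_{\mathbb{H}^1}^2+\int_0^{T\wedge\tau_n}\|Z_n\|_{\mathbb{H}^3}^2\,\mathrm{d}s\Bigr) \leq C_{R,T}\,\varepsilon_n \longrightarrow 0.
\]

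The main obstacle is the careful bookkeeping of compensators for the controlled PRM: the measure $\tilde{\eta}^{\varepsilon_n^{-1}\varphi_{\varepsilon_n}}$ appearing in \eqref{sys61-3} is the $\bar{\mathbb{P}}$-martingale-compensated version (subtracting $\varepsilon_n^{-1}\varphi_{\varepsilon_n}\nu\,\mathrm{d}s$, not $\varepsilon_n^{-1}\nu\,\mathrm{d}s$), and it is precisely this identification that forces the quadratic variation in the It\^{o} correction to carry the factor $\varepsilon_n^2\cdot\varepsilon_n^{-1}=\varepsilon_n$ rather than $\varepsilon_n^{-1}$; the drift discrepancy $(\varphi_{\varepsilon_n}-1)\nu\,\mathrm{d}s$ is precisely what becomes the third summand above and is then swept into the Gronwall weight through Corollary \ref{--cor2-2-2}. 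A secondary subtlety is that the $\mathbf{u}\times\Delta\mathbf{u}$ and $\Delta(|\mathbf{u}|^2\mathbf{u})$ contributions require the energy to be split into $\|Z_n\|_{\mathbb{L}^2}^2$ and $\|\nabla Z_n\|_{\mathbb{L}^2}^2$ pieces, tested respectively against $Z_n$ and $-\Delta Z_n$ in the appropriate Gelfand triples, but this is exactly the split already carried out in the proof of Lemma \ref{lem666-1} and can be imported verbatim.
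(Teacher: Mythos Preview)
Your proposal is correct and follows essentially the same route as the paper: apply the jump It\^{o} formula to $\|Z_n\|_{\mathbb{L}^2}^2$ and $\|\nabla Z_n\|_{\mathbb{L}^2}^2$ separately, recycle the drift estimates from Lemma \ref{lem666-1} on the stopped interval, use BDG plus the $\varepsilon_n$ prefactor to kill the stochastic integrals, and close with Gronwall. One minor correction: the It\^{o} jump-quadratic term is an integral against the random measure $\eta^{\varepsilon_n^{-1}\varphi_{\varepsilon_n}}$ itself, not only its compensator, so there is an additional martingale piece (the paper splits this explicitly in its treatment of $\mathbf{I}_8,\mathbf{J}_8$), but it carries the same $\varepsilon_n$ scaling and is equally harmless.
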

\begin{proof}[\emph{\textbf{Proof}}] We use some notations defined in \eqref{def111}, then $Y_n$ is a solution to the equation
\begin{equation*}\label{sys66666-4}
\begin{split}
&\mathrm{d}Y_n(t)=\sum_{j=1}^5F^j(Y_n(t))\mathrm{d}t+\mathbf{b}(Y_n(t))\,\mathrm{d}t+\varepsilon_n\int_{\mathbb{B}}\mathbf{G}(l,Y_n(t))\tilde{\eta}^{\varepsilon^{-1}_n\varphi_{\varepsilon_n}}(\mathrm{d}t,\mathrm{d}l)\\
&+\int_{\mathbb{B}}\mathbf{G}(l,Y_n(t))(\varphi_{\varepsilon_n}(t,l)-1)\nu(\mathrm{d}l)\mathrm{d}t,
\end{split}
\end{equation*}
with $Y_n(0)=\mathbf{u}_0$. Similarly, $y_n$ is a solution to the equation
\begin{equation*}\label{sys66666-5}
\begin{split}
&\mathrm{d}y_n(t)=\sum_{j=1}^5F^j(y_n(t))\mathrm{d}t+\mathbf{b}(y_n(t))\,\mathrm{d}t+\int_{\mathbb{B}}\mathbf{G}(l,y_n(t))(\varphi_{\varepsilon_n}(t,l)-1)\nu(\mathrm{d}l)\mathrm{d}t,
\end{split}
\end{equation*}
with $y_n(0)=\mathbf{u}_0$. Let $Z_n(t):=Y_n(t)-y_n(t)$, then for $n\in\mathbb{N}$
\begin{equation}\label{sys66666-6}
\begin{split}
&\mathrm{d}Z_n(t)=\sum_{j=1}^5[F^j(Y_n(t))-F^j(y_n(t))]\mathrm{d}t+[\mathbf{b}(Y_n(t))-\mathbf{b}(y_n(t))]\,\mathrm{d}t\\
&+\int_{\mathbb{B}}[\mathbf{G}(l,Y_n(t))-\mathbf{G}(l,y_n(t))](\varphi_{\varepsilon_n}(t,l)-1)\nu(\mathrm{d}l)\mathrm{d}t+\varepsilon_n\int_{\mathbb{B}}\mathbf{G}(l,Y_n(t))\tilde{\eta}^{\varepsilon^{-1}_n\varphi_{\varepsilon_n}}(\mathrm{d}t,\mathrm{d}l)
\end{split}
\end{equation}
with $Z_n(0)=0$. Applying It\^{o}'s formula to $\|Z_n(t)\|_{\mathbb{L}^2}^2$ and $\|\nabla Z_n(t)\|_{\mathbb{L}^2}^2$ respectively, and integrating by parts, we infer that
\begin{equation}\label{sys66666-7}
\begin{split}
&\|Z_n(t)\|_{\mathbb{L}^2}^2+2\int_0^{t}\|\Delta Z_n\|_{\mathbb{L}^2}^2\,\mathrm{d}s\\
&=2\int_0^{t}\|\nabla Z_n\|_{\mathbb{L}^2}^2\,\mathrm{d}s+4\int_0^{t}\left(Z_n-|Y_n|^2Z_n+y_n\left(|y_n|^2-|Y_n|^2\right),Z_n\right)_{\mathbb{L}^2}\,\mathrm{d}s\\
&+4\int_0^{t}\left(|Y_n|^2Y_n-|y_n|^2y_n,\Delta Z_n\right)_{\mathbb{L}^2}\,\mathrm{d}s+2\int_0^{t}\left(Y_n\times\nabla Y_n-y_n\times\nabla y_n,\nabla Z_n\right)_{\mathbb{L}^2}\,\mathrm{d}s\\
&+2\int_0^{t}\left(\mathbf{b}(Y_n)-\mathbf{b}(y_n),Z_n\right)_{\mathbb{L}^2}\,\mathrm{d}s+2\int_0^t\int_{\mathbb{B}}(\varphi_{\varepsilon_n}(s,l)-1)\left(\mathbf{G}(l,Y_n)-\mathbf{G}(l,y_n),Z_n\right)_{\mathbb{L}^2}\nu(\mathrm{d}l)\mathrm{d}s\\
&+2\varepsilon_n\int_0^t\int_{\mathbb{B}}\left(\mathbf{G}(l,Y_n),Z_n\right)_{\mathbb{L}^2}\tilde{\eta}^{\varepsilon^{-1}_n\varphi_{\varepsilon_n}}(\mathrm{d}s,\mathrm{d}l)+\varepsilon_n^2\int_0^t\int_{\mathbb{B}}\|\mathbf{G}(l,Y_n)\|_{\mathbb{L}^2}^2\eta^{\varepsilon^{-1}_n\varphi_{\varepsilon_n}}(\mathrm{d}s,\mathrm{d}l)\\
&:=\sum_{j=1}^{8}\mathbf{I}_j(t),
\end{split}
\end{equation}
and
\begin{equation}\label{sys66666-8}
\begin{split}
&\|\nabla Z_n(t)\|_{\mathbb{L}^2}^2+2\int_0^{t}\|\nabla\Delta Z_n\|_{\mathbb{L}^2}^2\,\mathrm{d}s\\
&=2\int_0^{t}\|\Delta Z_n\|_{\mathbb{L}^2}^2\,\mathrm{d}s+4\int_0^{t}\left(\nabla(Z_n-|Y_n|^2Z_n+y_n\left(|y_n|^2-|Y_n|^2\right)),\nabla Z_n\right)_{\mathbb{L}^2}\,\mathrm{d}s\\
&+4\int_0^{t}\left(\nabla(|Y_n|^2Y_n-|y_n|^2y_n),\nabla\Delta Z_n\right)_{\mathbb{L}^2}\,\mathrm{d}s+2\int_0^{t}\left(Y_n\times\Delta Y_n-y_n\times\Delta y_n,\Delta Z_n\right)_{\mathbb{L}^2}\,\mathrm{d}s\\
&+2\int_0^{t}\left(\nabla(\mathbf{b}(Y_n)-\mathbf{b}(y_n)),\nabla Z_n\right)_{\mathbb{L}^2}\,\mathrm{d}s+2\int_0^t\int_{\mathbb{B}}(\varphi_{\varepsilon_n}(s,l)-1)\left(\nabla(\mathbf{G}(l,Y_n)-\mathbf{G}(l,y_n)),\nabla Z_n\right)_{\mathbb{L}^2}\nu(\mathrm{d}l)\mathrm{d}s\\
&+2\varepsilon_n\int_0^t\int_{\mathbb{B}}\left(\nabla\mathbf{G}(l,Y_n),\nabla Z_n\right)_{\mathbb{L}^2}\tilde{\eta}^{\varepsilon^{-1}_n\varphi_{\varepsilon_n}}(\mathrm{d}s,\mathrm{d}l)+\varepsilon_n^2\int_0^t\int_{\mathbb{B}}\|\nabla\mathbf{G}(l,Y_n)\|_{\mathbb{L}^2}^2\eta^{\varepsilon^{-1}_n\varphi_{\varepsilon_n}}(\mathrm{d}s,\mathrm{d}l)\\
&:=\sum_{j=1}^{8}\mathbf{J}_j(t).
\end{split}
\end{equation}

Using the estimates \eqref{sys666-14}-\eqref{sys666-17-1} from the proof of Lemma \ref{lem666-1}, it is not difficult to obtain that for some small $\varepsilon^*>0$
\begin{equation}\label{sys66666-9}
\begin{split}
&\sum_{j=1}^5|\mathbf{I}_j(t)|\leq \varepsilon^*\int_0^t\|\Delta Z_n\|_{\mathbb{L}^2}^2\\
&+C_{\varepsilon^*}\int_0^t\|Z_n\|_{\mathbb{L}^2}^2\left(1+\|Y_n\|_{\mathbb{H}^{1}}^2\|Y_n\|_{\mathbb{H}^{2}}^2+\|y_n\|_{\mathbb{H}^{1}}^2\|y_n\|_{\mathbb{H}^{2}}^2+\|Y_n\|_{\mathbb{H}^{2}}^2+\|y_n\|_{\mathbb{H}^{2}}^2\right)\,\mathrm{d}s.
\end{split}
\end{equation}
Using the linear property of $\mathbf{G}$, we have
\begin{equation}\label{sys66666-10}
\begin{split}
&|\mathbf{I}_6(t)|\leq C\int_0^t\|Z_n\|_{\mathbb{L}^2}^2\left(\int_{\mathbb{B}}|l||\varphi_{\varepsilon_n}(s,l)-1|\nu(\mathrm{d}l)\right)\,\mathrm{d}s.
\end{split}
\end{equation}
Plugging \eqref{sys66666-9} and \eqref{sys66666-10} into \eqref{sys66666-7} and choosing $\varepsilon^*$ small enough, we have
\begin{equation}\label{sys66666-11}
\begin{split}
&\|Z_n(t)\|_{\mathbb{L}^2}^2+\int_0^{t}\|Z_n\|_{\mathbb{H}^2}^2\,\mathrm{d}s\\
&\leq C\int_0^t\|Z_n\|_{\mathbb{L}^2}^2\left(\int_{\mathbb{B}}|l||\varphi_{\varepsilon_n}(s,l)-1|\nu(\mathrm{d}l)+1+\|Y_n\|_{\mathbb{H}^{1}}^2\|Y_n\|_{\mathbb{H}^{2}}^2+\|y_n\|_{\mathbb{H}^{1}}^2\|y_n\|_{\mathbb{H}^{2}}^2+\|Y_n\|_{\mathbb{H}^{2}}^2+\|y_n\|_{\mathbb{H}^{2}}^2\right)\,\mathrm{d}s\\
&+\sum_{j=7}^{8}\mathbf{I}_j(t)\\
&:=C\int_0^t\|Z_n\|_{\mathbb{L}^2}^2\mathbf{W}(s)\,\mathrm{d}s+\sum_{j=7}^{8}\mathbf{I}_j(t).
\end{split}
\end{equation}

Similarly, using the estimates \eqref{sys666-21}-\eqref{sys666-24-1} from the proof of Lemma \ref{lem666-1}, we infer that for some small $\varepsilon^*>0$
\begin{equation}\label{sys66666-12}
\begin{split}
&\sum_{j=1}^5|\mathbf{J}_j(t)|\leq \varepsilon^*\int_0^t\|Z_n\|_{\mathbb{H}^2}^2\,\mathrm{d}s+\varepsilon^*\int_0^t\|\nabla\Delta Z_n\|_{\mathbb{L}^2}^2\,\mathrm{d}s\\
&+C_{\varepsilon^*}\int_0^t\|Z_n\|_{\mathbb{H}^1}^2\Bigl(1+\|Y_n\|_{\mathbb{H}^{2}}^2+\|Y_n\|_{\mathbb{H}^{2}}^2\|y_n\|_{\mathbb{H}^1}^2+\|y_n\|_{\mathbb{H}^{3}}^2+\|Y_n\|_{\mathbb{H}^{1}}^2\|Y_n\|_{\mathbb{H}^{2}}^2\\
&+\|Y_n\|_{\mathbb{H}^{1}}^2\|y_n\|_{\mathbb{H}^3}^2+\|y_n\|_{\mathbb{H}^{1}}^2\|y_n\|_{\mathbb{H}^3}^2\Bigl)\,\mathrm{d}s.
\end{split}
\end{equation}
According to Corollary \ref{--cor2-2-2}, we have
\begin{equation}\label{sys66666-13}
\begin{split}
&|\mathbf{J}_6(t)|\leq C\int_0^t\|Z_n\|_{\mathbb{H}^2}^2\left(\int_{\mathbb{B}}|l||\varphi_{\varepsilon_n}(s,l)-1|\nu(\mathrm{d}l)\right)\,\mathrm{d}s.
\end{split}
\end{equation}
Plugging \eqref{sys66666-12} and \eqref{sys66666-13} into \eqref{sys66666-8} and choosing $\varepsilon^*$ small enough, we have
\begin{equation}\label{sys66666-14}
\begin{split}
&\|\nabla Z_n(t)\|_{\mathbb{L}^2}^2+\int_0^{t}\|\nabla\Delta Z_n\|_{\mathbb{L}^2}^2\,\mathrm{d}s-\frac{1}{2}\int_0^{t}\| Z_n\|_{\mathbb{H}^2}^2\,\mathrm{d}s\\
&\leq C\int_0^t\|Z_n\|_{\mathbb{H}^1}^2\Bigl(\int_{\mathbb{B}}|l||\varphi_{\varepsilon_n}(s,l)-1|\nu(\mathrm{d}l)+1+\|Y_n\|_{\mathbb{H}^{2}}^2+\|Y_n\|_{\mathbb{H}^{2}}^2\|y_n\|_{\mathbb{H}^1}^2+\|y_n\|_{\mathbb{H}^{3}}^2\\
&+\|Y_n\|_{\mathbb{H}^{1}}^2\|Y_n\|_{\mathbb{H}^{2}}^2+\|Y_n\|_{\mathbb{H}^{1}}^2\|y_n\|_{\mathbb{H}^3}^2+\|y_n\|_{\mathbb{H}^{1}}^2\|y_n\|_{\mathbb{H}^3}^2\Bigl)\,\mathrm{d}s+\sum_{j=7}^{8}\mathbf{J}_j(t)\\
&:=C\int_0^t\|Z_n\|_{\mathbb{H}^1}^2\mathbf{W}_1(s)\,\mathrm{d}s+\sum_{j=7}^{8}\mathbf{J}_j(t).
\end{split}
\end{equation}

Moreover combining \eqref{sys66666-11} and \eqref{sys66666-14} and using the elliptic regularity result, we infer that
\begin{equation*}\label{sys66666-15}
\begin{split}
&\sup_{t\in[0,T\wedge\tau_n]}\|Z_n(t)\|_{\mathbb{H}^1}^2+\int_0^{T\wedge\tau_n}\|Z_n(s)\|_{\mathbb{H}^3}^2\,\mathrm{d}s\\
&\leq C\int_0^{T\wedge\tau_n}\|Z_n(s)\|_{\mathbb{H}^1}^2(\mathbf{W}(s)+\mathbf{W}_1(s))\,\mathrm{d}s+\sum_{j=7}^{8}\sup_{t\in[0,T\wedge\tau_n]}\mathbf{I}_j(t)+\sum_{j=7}^{8}\sup_{t\in[0,T\wedge\tau_n]}\mathbf{J}_j(t).
\end{split}
\end{equation*}
Noting that
\begin{equation*}\label{sys66666-15-1}
\begin{split}
&\int_0^{T\wedge\tau_n}\mathbf{W}(s)+\mathbf{W}_1(s)\,\mathrm{d}s\leq C_R<\infty,
\end{split}
\end{equation*}
we can use the Gronwall lemma to infer that
\begin{equation}\label{sys66666-15-2}
\begin{split}
&\sup_{t\in[0,T\wedge\tau_n]}\|Z_n(t)\|_{\mathbb{H}^1}^2+\int_0^{T\wedge\tau_n}\|Z_n(s)\|_{\mathbb{H}^3}^2\,\mathrm{d}s\\
&\leq\left(\sum_{j=7}^{8}\sup_{t\in[0,T\wedge\tau_n]}|\mathbf{I}_j(t)|+\sum_{j=7}^{8}\sup_{t\in[0,T\wedge\tau_n]}|\mathbf{J}_j(t)|\right)e^{\int_0^{T\wedge\tau_n}\mathbf{W}(s)+\mathbf{W}_1(s)\,\mathrm{d}s}\\
&\leq C_R\left(\sum_{j=7}^{8}\sup_{t\in[0,T\wedge\tau_n]}|\mathbf{I}_j(t)|+\sum_{j=7}^{8}\sup_{t\in[0,T\wedge\tau_n]}|\mathbf{J}_j(t)|\right).
\end{split}
\end{equation}

The BDG inequality and the linear growth property of $\mathbf{G}$ imply that
\begin{equation}\label{sys66666-16}
\begin{split}
&\mathbb{E}\sup_{t\in[0,T\wedge\tau_n]}|\mathbf{I}_7(t)|\leq C\varepsilon_n\mathbb{E}\left(\int_0^{T\wedge\tau_n}\int_{\mathbb{B}}\varepsilon_n^{-1}\varphi_{\varepsilon_n}(s,l)\|\mathbf{G}(l,Y_n)\|_{\mathbb{L}^2}^2\|Z_n\|_{\mathbb{L}^2}^2\nu(\mathrm{d}l)\mathrm{d}s\right)^{\frac{1}{2}}\\
&\leq C\varepsilon_n^{\frac{1}{2}}\mathbb{E}\left(\sup_{s\in[0,T\wedge\tau_n]}\left(1+\|Y_n(s)\|_{\mathbb{L}^2}^2\right)\int_0^{T\wedge\tau_n}\|Z_n\|_{\mathbb{L}^2}^2\int_{\mathbb{B}}|l|^2\varphi_{\varepsilon_n}(s,l)\nu(\mathrm{d}l)\mathrm{d}s\right)^{\frac{1}{2}}\\
&\leq C_R\varepsilon_n^{\frac{1}{2}}+C\varepsilon_n^{\frac{1}{2}}\mathbb{E}\left(\int_0^{T\wedge\tau_n}\|Z_n\|_{\mathbb{L}^2}^2\int_{\mathbb{B}}|l|^2\varphi_{\varepsilon_n}(s,l)\nu(\mathrm{d}l)\mathrm{d}s\right).
\end{split}
\end{equation}
Additionally we infer that
\begin{equation}\label{sys66666-17}
\begin{split}
&\mathbb{E}\sup_{t\in[0,T\wedge\tau_n]}|\mathbf{I}_8(t)|\leq \varepsilon_n^2\mathbb{E}\left(\sup_{t\in[0,T\wedge\tau_n]}\left|\int_0^t\int_{\mathbb{B}}\|\mathbf{G}(l,Y_n)\|_{\mathbb{L}^2}^2\tilde{\eta}^{\varepsilon^{-1}_n\varphi_{\varepsilon_n}}(\mathrm{d}s,\mathrm{d}l)\right|\right)\\
&+\varepsilon_n^2\mathbb{E}\left(\sup_{t\in[0,T\wedge\tau_n]}\left|\int_0^t\int_{\mathbb{B}}\|\mathbf{G}(l,Y_n)\|_{\mathbb{L}^2}^2\nu^{\varepsilon^{-1}_n\varphi_{\varepsilon_n}}(\mathrm{d}l)\mathrm{d}s\right|\right)\\
&\leq C\varepsilon_n^2\mathbb{E}\left(\int_0^{T\wedge\tau_n}\int_{\mathbb{B}}\varepsilon_n^{-1}\varphi_{\varepsilon_n}(s,l)\|\mathbf{G}(l,Y_n)\|_{\mathbb{L}^2}^4\nu(\mathrm{d}l)\mathrm{d}s\right)^{\frac{1}{2}}\\
&+\varepsilon_n\mathbb{E}\left(\int_0^{T\wedge\tau_n}\int_{\mathbb{B}}\varphi_{\varepsilon_n}(s,l)\|\mathbf{G}(l,Y_n)\|_{\mathbb{L}^2}^2\nu(\mathrm{d}l)\mathrm{d}s\right)\\
&\leq C_R\varepsilon_n^{\frac{3}{2}}\int_0^{T\wedge\tau_n}\int_{\mathbb{B}}|l|^4\varphi_{\varepsilon_n}(s,l)\nu(\mathrm{d}l)\mathrm{d}s+C_R\varepsilon_n\int_0^{T\wedge\tau_n}\int_{\mathbb{B}}|l|^2\varphi_{\varepsilon_n}(s,l)\nu(\mathrm{d}l)\mathrm{d}s\\
&\leq C_R(\varepsilon_n^{\frac{3}{2}}+\varepsilon_n).
\end{split}
\end{equation}
Here for the above estimate, we have use the fact (see Lemma 3.4 in \cite{budhiraja2013large}) that
\begin{equation*}\label{sys66666-18}
\begin{split}
&\sup_{f\in S^N}\int_0^{T}\int_{\mathbb{B}}(|l|^2+|l|^4)f(s,l)\nu(\mathrm{d}l)\mathrm{d}s<\infty.
\end{split}
\end{equation*}
Similarly, we have
\begin{equation}\label{sys66666-19}
\begin{split}
&\mathbb{E}\sup_{t\in[0,T\wedge\tau_n]}|\mathbf{J}_7(t)|\leq C\varepsilon_n\mathbb{E}\left(\int_0^{T\wedge\tau_n}\int_{\mathbb{B}}\varepsilon_n^{-1}\varphi_{\varepsilon_n}(s,l)\|\mathbf{G}(l,Y_n)\|_{\mathbb{H}^1}^2\|\nabla Z_n\|_{\mathbb{L}^2}^2\nu(\mathrm{d}l)\mathrm{d}s\right)^{\frac{1}{2}}\\
&\leq C_R\varepsilon_n^{\frac{1}{2}}+C\varepsilon_n^{\frac{1}{2}}\mathbb{E}\left(\int_0^{T\wedge\tau_n}\|Z_n\|_{\mathbb{H}^1}^2\int_{\mathbb{B}}|l|^2\varphi_{\varepsilon_n}(s,l)\nu(\mathrm{d}l)\mathrm{d}s\right),
\end{split}
\end{equation}
and
\begin{equation}\label{sys66666-20}
\begin{split}
&\mathbb{E}\sup_{t\in[0,T\wedge\tau_n]}|\mathbf{J}_8(t)|\leq \varepsilon_n^2\mathbb{E}\left(\sup_{t\in[0,T\wedge\tau_n]}\left|\int_0^t\int_{\mathbb{B}}\|\nabla\mathbf{G}(l,Y_n)\|_{\mathbb{L}^2}^2\tilde{\eta}^{\varepsilon^{-1}_n\varphi_{\varepsilon_n}}(\mathrm{d}s,\mathrm{d}l)\right|\right)\\
&+\varepsilon_n^2\mathbb{E}\left(\sup_{t\in[0,T\wedge\tau_n]}\left|\int_0^t\int_{\mathbb{B}}\|\nabla\mathbf{G}(l,Y_n)\|_{\mathbb{L}^2}^2\nu^{\varepsilon^{-1}_n\varphi_{\varepsilon_n}}(\mathrm{d}l)\mathrm{d}s\right|\right)\\
&\leq C_R(\varepsilon_n^{\frac{3}{2}}+\varepsilon_n).
\end{split}
\end{equation}
Now plugging \eqref{sys66666-16}-\eqref{sys66666-20} into \eqref{sys66666-15-2}, we infer that
\begin{equation*}\label{sys66666-21}
\begin{split}
&\mathbb{E}\sup_{t\in[0,T\wedge\tau_n]}\|Z_n(t)\|_{\mathbb{H}^1}^2+\mathbb{E}\left(\int_0^{T\wedge\tau_n}\|Z_n(s)\|_{\mathbb{H}^3}^2\,\mathrm{d}s\right)\leq C_R(\varepsilon_n^{\frac{3}{2}}+\varepsilon_n+\varepsilon_n^{\frac{1}{2}})\\
&+C_R\varepsilon_n^{\frac{1}{2}}\int_0^{T\wedge\tau_n}\mathbb{E}\left(\sup_{r\in[0,s\wedge\tau_n]}\|Z_n(r)\|_{\mathbb{H}^1}^2\right)\int_{\mathbb{B}}|l|^2\varphi_{\varepsilon_n}(s,l)\nu(\mathrm{d}l)\,\mathrm{d}s.
\end{split}
\end{equation*}
Applying the Gronwall lemma again, we infer that
\begin{equation*}\label{sys66666-22}
\begin{split}
&\mathbb{E}\sup_{t\in[0,T\wedge\tau_n]}\|Z_n(t)\|_{\mathbb{H}^1}^2+\mathbb{E}\left(\int_0^{T\wedge\tau_n}\|Z_n(s)\|_{\mathbb{H}^3}^2\,\mathrm{d}s\right)\\
&\leq C_R(\varepsilon_n^{\frac{3}{2}}+\varepsilon_n+\varepsilon_n^{\frac{1}{2}})e^{C_R\varepsilon_n^{\frac{1}{2}}}\rightarrow0~\textrm{as}~n\rightarrow\infty.
\end{split}
\end{equation*}
This completes the proof.\end{proof}

\begin{lemma}\label{lem666-2} $
\mathcal{J}^{\varepsilon_n}(\varepsilon_n\eta^{\varepsilon_n^{-1}\varphi_{\varepsilon_n}})-\mathcal{J}^{0}(\varphi_{\varepsilon_n})~\textrm{converges to}~0~\textrm{in probability}.
$
\end{lemma}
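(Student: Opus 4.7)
The plan is to upgrade the localized mean-square estimate of Lemma \ref{lem666-1*} to genuine convergence in probability on the whole interval $[0,T]$ by a standard localization-at-large-energy argument. Since the Skorokhod metric on $\mathbb{D}([0,T];\mathbb{H}^1)$ is dominated by the uniform metric, the topology of $\mathcal{Z}_T=\mathbb{D}([0,T];\mathbb{H}^1)\cap L^2(0,T;\mathbb{H}^{3})$ is weaker than the one generated by
$$
\mathcal{N}(f,g):=\sup_{t\in[0,T]}\|f(t)-g(t)\|_{\mathbb{H}^1}^2+\int_0^T\|f(s)-g(s)\|_{\mathbb{H}^3}^2\,\mathrm{d}s,
$$
so it suffices to show $\mathcal{N}(Y_n,y_n)\to 0$ in probability as $n\to\infty$.

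Fix $\delta>0$ and $R\in\mathbb{N}$. Writing
$$
\mathbb{P}\bigl(\mathcal{N}(Y_n,y_n)>\delta\bigr)\le\mathbb{P}\bigl(\mathcal{N}(Y_n,y_n)>\delta,\ \tau^R_n=T\bigr)+\mathbb{P}\bigl(\tau^R_n<T\bigr),
$$
I would handle the two terms separately. The second term is controlled by uniform energy estimates: \eqref{adddd} yields $\sup_n\mathbb{E}\bigl(\sup_{t\in[0,T]}\|Y_n(t)\|_{\mathbb{H}^1}^2+\int_0^T\|Y_n\|_{\mathbb{H}^3}^2\,\mathrm{d}s\bigr)\le C$, while Lemma \ref{the61-1} applied pathwise gives, $\bar{\mathbb{P}}$-a.s., $\sup_{t\in[0,T]}\|y_n(t)\|_{\mathbb{H}^1}^2+\int_0^T\|y_n\|_{\mathbb{H}^3}^2\,\mathrm{d}s\le C_K$ (since $\varphi_{\varepsilon_n}\in S^K$ a.s.). Thus by Chebyshev's inequality
$$
\mathbb{P}\bigl(\tau^R_n<T\bigr)\le\frac{C+C_K}{R},
$$
uniformly in $n$. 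For the first term, on $\{\tau^R_n=T\}$ we have $\mathcal{N}(Y_n,y_n)\le\sup_{t\in[0,T\wedge\tau_n^R]}\|Y_n-y_n\|_{\mathbb{H}^1}^2+\int_0^{T\wedge\tau_n^R}\|Y_n-y_n\|_{\mathbb{H}^3}^2\,\mathrm{d}s$, and Chebyshev together with Lemma \ref{lem666-1*} give
$$
\mathbb{P}\bigl(\mathcal{N}(Y_n,y_n)>\delta,\ \tau^R_n=T\bigr)\le\delta^{-1}\mathbb{E}\Bigl(\sup_{t\in[0,T\wedge\tau^R_n]}\|Y_n-y_n\|_{\mathbb{H}^1}^2+\int_0^{T\wedge\tau^R_n}\|Y_n-y_n\|_{\mathbb{H}^3}^2\,\mathrm{d}s\Bigr)\xrightarrow[n\to\infty]{}0.
$$

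Sending first $n\to\infty$ and then $R\to\infty$ yields $\mathbb{P}(\mathcal{N}(Y_n,y_n)>\delta)\to 0$, and as $\delta>0$ was arbitrary this proves convergence of $Y_n-y_n$ to $0$ in probability in $\mathcal{Z}_T$. There is no real obstacle here beyond bookkeeping: the key inputs (uniform stochastic moment bound \eqref{adddd} on $Y_n$, uniform deterministic bound on $y_n$ coming from $\varphi_{\varepsilon_n}\in S^K$ via Lemma \ref{the61-1}, and the localized mean-square convergence from Lemma \ref{lem666-1*}) have all been set up, and the only point requiring care is that the bound on $y_n$ is deterministic and uniform in $\omega$, so the event $\{\tau^R_n<T\}$ effectively reduces to a condition on the random process $Y_n$ alone, which is what makes Chebyshev produce a bound vanishing as $R\to\infty$ independent of $n$.
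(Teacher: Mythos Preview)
Your proposal is correct and follows essentially the same route as the paper: split the probability according to $\{\tau^R_n=T\}$ versus $\{\tau^R_n<T\}$, apply Chebyshev with Lemma~\ref{lem666-1*} on the first piece, and use the uniform bounds \eqref{adddd} and Lemma~\ref{the61-1} on the second, then let $n\to\infty$ followed by $R\to\infty$. Your write-up is in fact slightly more careful than the paper's, since you make explicit that the $y_n$ bound is pathwise deterministic (so the $\{\tau^R_n<T\}$ event is really controlled by $Y_n$ alone) and you note that the Skorokhod topology on $\mathbb{D}([0,T];\mathbb{H}^1)$ is weaker than the uniform one, which justifies working with $\mathcal{N}$.
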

\begin{proof}[\emph{\textbf{Proof}}] Using the result of Lemma \ref{lem666-1*} and inequality \eqref{adddd} and applying the Chebyshev inequality, for $\delta>0$ and $\varepsilon>0$ we have
\begin{equation*}\label{sys6677-2}
\begin{split}
&\mathbb{P}\left(\sup_{t\in[0,T]}\|Y_n(t)-y_n(t)\|_{\mathbb{H}^1}^2+\int_0^{T}\|Y_n(t)-y_n(t)\|_{\mathbb{H}^3}^2\,\mathrm{d}t\geq \delta\right)\\
&\leq\mathbb{P}\left(\left\{\sup_{t\in[0,T]}\|Y_n(t)-y_n(t)\|_{\mathbb{H}^1}^2+\int_0^{T}\|Y_n(t)-y_n(t)\|_{\mathbb{H}^3}^2\,\mathrm{d}t\geq \delta\right\}\cap\{\tau_n=T\}\right)\\
&+\mathbb{P}\left(\left\{\sup_{t\in[0,T]}\|Y_n(t)-y_n(t)\|_{\mathbb{H}^1}^2+\int_0^{T}\|Y_n(t)-y_n(t)\|_{\mathbb{H}^3}^2\,\mathrm{d}t\geq \delta\right\}\cap\{\tau_n<T\}\right)\\
&\leq\frac{\mathbb{E}\left(\sup_{t\in[0,T\wedge\tau_n]}\|Y_n(t)-y_n(t)\|_{\mathbb{H}^1}^2+\int_0^{T\wedge\tau_n}\|Y_n(t)-y_n(t)\|_{\mathbb{H}^3}^2\,\mathrm{d}t\right)}{\delta}\\
&+\mathbb{P}\left(\sup_{t\in[0,T]}\|Y_n(t)\|_{\mathbb{H}^1}^2+\int_0^{T}\|Y_n(t)\|_{\mathbb{H}^3}^2\,\mathrm{d}t\geq R\right)\\
&+\mathbb{P}\left(\sup_{t\in[0,T]}\|y_n(t)\|_{\mathbb{H}^1}^2+\int_0^{T}\|y_n(t)\|_{\mathbb{H}^3}^2\,\mathrm{d}t\geq R\right)\\
&\leq\frac{\mathbb{E}\left(\sup_{t\in[0,T\wedge\tau_n]}\|Y_n(t)-y_n(t)\|_{\mathbb{H}^1}^2+\int_0^{T\wedge\tau_n}\|Y_n(t)-y_n(t)\|_{\mathbb{H}^3}^2\,\mathrm{d}t\right)}{\delta}+\frac{C}{R}\\
&\leq\varepsilon.
\end{split}
\end{equation*}
Here for the above estimate, we have use the fact that
\begin{equation*}
\begin{split}
&\frac{\mathbb{E}\left(\sup_{t\in[0,T\wedge\tau_n]}\|Y_n(t)-y_n(t)\|_{\mathbb{H}^1}^2+\int_0^{T\wedge\tau_n}\|Y_n(t)-y_n(t)\|_{\mathbb{H}^3}^2\,\mathrm{d}t\right)}{\delta}\leq\frac{\varepsilon}{2},
\end{split}
\end{equation*}
and $\frac{C}{R}\leq\frac{\varepsilon}{2}$ if $N_0$ and $R$ big enough. The proof is thus complete.\end{proof}

Condition 2 is a consequence of the following Lemma.
\begin{lemma}\label{lem666-3}
$
\mathcal{J}^{\varepsilon_n}(\varepsilon_n\eta^{\varepsilon_n^{-1}\varphi_{\varepsilon_n}})~\textrm{converges in law to}~\mathcal{J}^0(\varphi)~\textrm{in}~\mathcal{Z}_T.
$
\end{lemma}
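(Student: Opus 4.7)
The plan is to combine Lemma \ref{lem666-1*} (or its probabilistic version Lemma \ref{lem666-2}) with the continuity result Lemma \ref{lem666-1} via a Slutsky-type argument. Set $Y_n:=\mathcal{J}^{\varepsilon_n}(\varepsilon_n\eta^{\varepsilon_n^{-1}\varphi_{\varepsilon_n}})$ and $y_n:=\mathcal{J}^0(\varphi_{\varepsilon_n})$. The target is $\mathcal{J}^0(\varphi)$. We will argue that $y_n$ converges in law to $\mathcal{J}^0(\varphi)$, that $Y_n-y_n\to 0$ in probability on $\mathcal{Z}_T$, and then invoke a standard convergence-in-law plus convergence-in-probability principle.

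First, I would handle $y_n\Rightarrow\mathcal{J}^0(\varphi)$ in $\mathcal{Z}_T$. By hypothesis $\varphi_{\varepsilon_n}$ converges in law to $\varphi$ in $S^K$. Since $S^K$ is a Polish space (it is compact and metrizable by the topology described in Section \ref{subsec6-1}), the Skorokhod representation theorem yields a probability space $(\tilde\Omega,\tilde{\mathcal F},\tilde{\mathbb P})$ and $S^K$-valued random variables $\tilde\varphi_{\varepsilon_n},\tilde\varphi$ with $\mathscr L(\tilde\varphi_{\varepsilon_n})=\mathscr L(\varphi_{\varepsilon_n})$, $\mathscr L(\tilde\varphi)=\mathscr L(\varphi)$, and $\tilde\varphi_{\varepsilon_n}\to\tilde\varphi$ in $S^K$ $\tilde{\mathbb P}$-a.s. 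The second part of Lemma \ref{lem666-1} asserts that the law of $\mathcal{J}^0(\theta)$ depends only on the law of $\theta$, and Lemma \ref{lem666-1} itself gives that $\mathcal{J}^0(\tilde\varphi_{\varepsilon_n})\to\mathcal{J}^0(\tilde\varphi)$ in $\mathcal{Z}_T$ $\tilde{\mathbb P}$-a.s. Hence $\mathcal{J}^0(\tilde\varphi_{\varepsilon_n})$ converges in law, and therefore so does $y_n=\mathcal{J}^0(\varphi_{\varepsilon_n})$, with limit $\mathcal{J}^0(\varphi)$.

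Second, Lemma \ref{lem666-2} already gives $Y_n-y_n\to 0$ in probability in the topology of $\mathcal{Z}_T$; more precisely, for every $\delta>0$,
\begin{equation*}
\mathbb{P}\Bigl(\sup_{t\in[0,T]}\|Y_n(t)-y_n(t)\|_{\mathbb{H}^1}^2+\int_0^T\|Y_n(t)-y_n(t)\|_{\mathbb{H}^3}^2\,\mathrm{d}t\geq\delta\Bigr)\longrightarrow 0.
\end{equation*}
Since this controls the distance on $\mathcal{Z}_T=\mathbb{D}([0,T];\mathbb{H}^1)\cap L^2(0,T;\mathbb{H}^3)$ (the Skorokhod metric on $\mathbb{D}([0,T];\mathbb{H}^1)$ is dominated by the uniform distance), we obtain $d_{\mathcal{Z}_T}(Y_n,y_n)\to 0$ in probability.

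Finally, combining the two steps: for any bounded Lipschitz function $F:\mathcal{Z}_T\to\mathbb{R}$ with Lipschitz constant $L$, one has
\begin{equation*}
|\mathbb{E}F(Y_n)-\mathbb{E}F(y_n)|\leq L\,\mathbb{E}\bigl(d_{\mathcal{Z}_T}(Y_n,y_n)\wedge (2\|F\|_\infty/L)\bigr)\longrightarrow 0,
\end{equation*}
by bounded convergence once $d_{\mathcal{Z}_T}(Y_n,y_n)\to 0$ in probability, and $\mathbb{E}F(y_n)\to \mathbb{E}F(\mathcal{J}^0(\varphi))$ by the first step. Hence $\mathbb{E}F(Y_n)\to \mathbb{E}F(\mathcal{J}^0(\varphi))$ for all bounded Lipschitz $F$, which yields the desired convergence in law of $Y_n$ to $\mathcal{J}^0(\varphi)$ on $\mathcal{Z}_T$. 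The main technical obstacle has been absorbed by the previous two lemmas (tightness of $Y_n$ via the uniform estimate \eqref{adddd} and the delicate pathwise estimate leading to Lemma \ref{lem666-1*}); here only the elementary Slutsky-type step remains. Once both Condition 1 and Condition 2 are verified, Theorem \ref{the2} follows from the variational representation of Budhiraja–Dupuis–Maroulas \cite{budhiraja2011variational,budhiraja2013large}.
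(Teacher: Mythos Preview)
Your proposal is correct and follows essentially the same route as the paper: both use the Skorokhod representation for the controls together with Lemma \ref{lem666-1} to get $y_n\Rightarrow\mathcal{J}^0(\varphi)$, then invoke Lemma \ref{lem666-2} for $Y_n-y_n\to 0$ in probability, and conclude via bounded Lipschitz test functions and dominated convergence. The only cosmetic difference is that the paper extracts an a.s.\ convergent subsequence from Lemma \ref{lem666-2} before applying dominated convergence, whereas you use convergence in probability directly in the Slutsky step; your formulation is in fact slightly cleaner.
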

\begin{proof}[\emph{\textbf{Proof}}] According to  Lemma \ref{lem666-1}, we have
$
\mathcal{J}^0(\varphi_{\varepsilon_n})~\textrm{converges strongly to}~\mathcal{J}^0(\varphi)~\textrm{in}~\mathcal{Z}_T,
$
which implies that $\mathscr{L}(\mathcal{J}^0(\varphi_{\varepsilon_n}))$ converges weakly to $\mathscr{L}(\mathcal{J}^0(\varphi))$. Since $S^N$ is a separable metric space, by the Skorokhod theorem, there exists a new probability space $(\tilde{\Omega},\tilde{\mathcal{F}},\tilde{\mathbb{P}})$ and on this space, there exist random variables $\tilde{\varphi}_{\varepsilon_n}$ and $\tilde{\varphi}$, which have the same laws as $\varphi_{\varepsilon_n}$ and $\varphi$ respectively with $\tilde{\varphi}_{\varepsilon_n}\rightarrow\tilde{\varphi}$ in $S^N$, $\tilde{\mathbb{P}}$-a.s. Then
$
\mathcal{J}^0(\tilde{\varphi}_{\varepsilon_n})~\textrm{pointwise converges to}~\mathcal{J}^0(\tilde{\varphi})~\textrm{in}~\mathcal{Z}_T.
$
Additionally, from the second part of Lemma \ref{lem666-1}, we have
\begin{equation*}
\begin{split}
&\mathscr{L}(\mathcal{J}^0(\tilde{\varphi}_{\varepsilon_n}))=\mathscr{L}(\mathcal{J}^0(\varphi_{\varepsilon_n}))~\textrm{and}~\mathscr{L}(\mathcal{J}^0(\tilde{\varphi}))=\mathscr{L}(\mathcal{J}^0(\varphi)).
\end{split}
\end{equation*}
Since by Lemma \ref{lem666-2}, we can choose a subsequence (keeping the same notation) such that
\begin{equation*}
\begin{split}
\mathcal{J}^{\varepsilon_n}(\varepsilon_n\eta^{\varepsilon_n^{-1}\tilde{\varphi}_{\varepsilon_n}})-\mathcal{J}^{0}(\tilde{\varphi}_{\varepsilon_n})~\textrm{converges to}~0,~\tilde{\mathbb{P}}\textrm{-a.s}.
\end{split}
\end{equation*}
Thus for any bounded and globally Lipschitz continuous function $K:\mathcal{Z}_T\rightarrow\mathbb{R}$ we have
\begin{equation*}
\begin{split}
&\left|\int_{\mathcal{Z}_T}K(x)\mathrm{d}\mathscr{L}(\mathcal{J}^{\varepsilon_n}(\varepsilon_n\eta^{\varepsilon_n^{-1}\varphi_{\varepsilon_n}}))-\int_{\mathcal{Z}_T}K(x)\mathrm{d}\mathscr{L}(\mathcal{J}^0(\varphi))\right|\\
&=\left|\int_{\mathcal{Z}_T}K(x)\mathrm{d}\mathscr{L}(\mathcal{J}^{\varepsilon_n}(\varepsilon_n\eta^{\varepsilon_n^{-1}\tilde{\varphi}_{\varepsilon_n}}))-\int_{\mathcal{Z}_T}K(x)\mathrm{d}\mathscr{L}(\mathcal{J}^0(\tilde{\varphi}))\right|\\
&\leq\int_{\tilde{\Omega}}\left|K(\mathcal{J}^{\varepsilon_n}(\varepsilon_n\eta^{\varepsilon_n^{-1}\tilde{\varphi}_{\varepsilon_n}}))-K(\mathcal{J}^{0}(\tilde{\varphi}_{\varepsilon_n}))\right|\mathrm{d}\tilde{\mathbb{P}}+\int_{\tilde{\Omega}}\left|K(\mathcal{J}^{0}(\tilde{\varphi}_{\varepsilon_n}))-K(\mathcal{J}^{0}(\tilde{\varphi}))\right|\mathrm{d}\tilde{\mathbb{P}}\\
&\rightarrow0~\textrm{as}~n\rightarrow\infty.
\end{split}
\end{equation*}
Here the above result comes from the Dominated convergence theorem. \end{proof}

\section{Appendix}
Let $(\mathbb{S},\varrho)$ be a complete and separable metric space. Let $(X_n)_{n\in\mathbb{N}}$ be a sequence of c\`{a}dl\`{a}g $\mathbb{F}$-adapted $\mathbb{S}$-valued processes.
\begin{definition}\label{def4-1}(\cite{metivier1988stochastic}) Let $f\in\mathbb{ D}([0,T];\mathbb{S})$ and let $\delta>0$ be given. A modulus of $f$ is defined by
\begin{equation*}
\begin{split}
w_{[0,T],\mathbb{S}}(f,\delta):=\inf_{\Pi_{\delta}}\max_{t_i\in\omega}\sup_{t_i\leq s<t\leq t_{i+1}\leq T}\varrho(f(t),f(s)),
\end{split}
\end{equation*}
where $\Pi_{\delta}$ is the set of all increasing sequences $\omega=\{0=t_0<t_1<...<t_n=T\}$ with the property
$
t_{i+1}-t_i\geq\delta,~i=0,~1,...,~n-1.
$
\end{definition}

Analogous to the Arzel\`{a}-Ascoli theorem for the space of continuous functions, we introduce the following criterion for relative compactness of a subset of the space $\mathbb{D}([0,T];\mathbb{S})$.
\begin{lemma}\label{lem4-1} (\cite{joffe1986weak,metivier1988stochastic}) A set $\mathcal{K}\subset \mathbb{D}([0,T];\mathbb{S})$ has compact closure if and only if it satisfies the following conditions:
\begin{enumerate}
\item[(1)] there exists a dense subset $I\subset[0,T]$ such that for every $t\in I$ the set $f(t),~f\in\mathcal{ K}$ has compact closure in $\mathbb{S}$;
\item[(2)] $\lim_{\delta\rightarrow0}\sup_{f\in\mathcal{K}}w_{[0,T],\mathbb{S}}(f,\delta)=0$.
\end{enumerate}
\end{lemma}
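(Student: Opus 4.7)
The plan is to prove both directions by adapting the Arzelà--Ascoli strategy to the Skorokhod space $(\mathbb{D}([0,T];\mathbb{S}),\delta_T)$, exploiting in an essential way the time-change structure appearing in the metric $\delta_T$. The result is classical (Joffe--M\'etivier, Billingsley), so the goal is to organise it cleanly rather than introduce new ideas.

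For the necessity direction I would argue as follows. Assume $\mathcal{K}$ has compact closure, so $\overline{\mathcal{K}}$ is $\delta_T$-totally bounded. For condition~(1): at every $t\in[0,T]$ which is a continuity point for every element of a countable $\delta_T$-dense subfamily of $\overline{\mathcal{K}}$, the evaluation map $f\mapsto f(t)$ is $\delta_T$-continuous into $(\mathbb{S},\varrho)$; the set of such $t$ is the complement of a countable union of countable sets, hence dense. The image of $\mathcal{K}$ under a continuous map with compact closure is relatively compact in $\mathbb{S}$, which gives~(1). For condition~(2): cover $\overline{\mathcal{K}}$ by finitely many $\delta_T$-balls of radius $\varepsilon/3$ centred at $g_1,\dots,g_N$; since each $g_i$ is a single c\`adl\`ag function, $w_{[0,T],\mathbb{S}}(g_i,\delta)\to0$ individually, so pick $\delta$ small enough that $\max_i w(g_i,\delta)<\varepsilon/3$. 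For $f\in\mathcal{K}$ with $\delta_T(f,g_i)<\varepsilon/3$ there is $\lambda\in\Lambda_T$ realising the infimum in $\delta_T$ up to $\varepsilon/3$; a direct triangle-inequality computation involving the three summands of $\delta_T$ transfers the modulus bound from $g_i$ to $f$ with a loss of at most $\varepsilon/3$, yielding $w(f,\delta)<\varepsilon$ uniformly.

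The substantive direction is sufficiency. Given $(f_n)\subset\mathcal{K}$, I first perform a Cantor diagonal extraction along a countable dense set $\{t_k\}\subset I$: by~(1) and compactness of $\{f(t_k):f\in\mathcal{K}\}$, pass to a subsequence (still denoted $f_n$) along which $f_n(t_k)$ converges in $\mathbb{S}$ for every~$k$. Next, fix $m\in\mathbb{N}$; using~(2) choose $\delta_m>0$ such that $w_{[0,T],\mathbb{S}}(f_n,\delta_m)<1/m$ uniformly in $n$, and fix a partition $0=s_0^m<\cdots<s_{p_m}^m=T$ with mesh between $\delta_m$ and $2\delta_m$ and nodes in $I$. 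Then define a candidate limit $f^m$ to be constant on each $[s_i^m,s_{i+1}^m)$ with the common value $\lim_n f_n(s_i^m)$. Refining $m\to\infty$ and passing to a further diagonal subsequence produces a c\`adl\`ag $f\in\mathbb{D}([0,T];\mathbb{S})$, with jumps precisely at those $s\in[0,T]$ where the oscillation across the finer partitions fails to vanish. Finally, I construct time changes $\lambda_n\in\Lambda_T$ by mapping the nodes of the partition of $f$ to the nearest ``matching'' nodes of a corresponding partition of $f_n$ (aligning the finitely many large jumps first, then extending piecewise linearly), and verify that the three summands in $\delta_T(f_n,f)$ all tend to $0$.

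The main obstacle is this last step: keeping the logarithmic derivative $\sup_{s\neq t}\bigl|\log\tfrac{\lambda_n(t)-\lambda_n(s)}{t-s}\bigr|$ small while simultaneously compensating for the fact that the large jumps of $f_n$ and $f$ occur at slightly different times. The standard resolution is a two-scale argument: choose $\varepsilon>0$, isolate the finitely many jumps of magnitude $\geq\varepsilon$ of the limit $f$, use the uniform modulus bound~(2) to guarantee that for large $n$ each $f_n$ has a corresponding large jump within distance $\delta_m$, and align these finitely many points by a piecewise-linear $\lambda_n$ whose slopes differ from $1$ by at most $O(\delta_m)$; the remaining ``small oscillation'' part is absorbed by $w_{[0,T],\mathbb{S}}(f,\delta_m)+w_{[0,T],\mathbb{S}}(f_n,\delta_m)<2/m$. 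Sending $m\to\infty$ along the diagonal subsequence yields $\delta_T(f_n,f)\to0$, and therefore $\mathcal{K}$ has compact closure in $(\mathbb{D}([0,T];\mathbb{S}),\delta_T)$.
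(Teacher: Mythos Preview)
The paper does not prove this lemma at all: it is stated in the Appendix purely as a citation of the classical compactness criterion from \cite{joffe1986weak,metivier1988stochastic}, with no accompanying argument. Your outline is a reasonable sketch of the standard Arzel\`a--Ascoli-type proof one finds in those references (or in Billingsley), so there is nothing to compare against and no discrepancy to flag.

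One minor remark on your necessity argument for condition~(1): the evaluation map $f\mapsto f(t)$ is \emph{not} continuous on $(\mathbb{D}([0,T];\mathbb{S}),\delta_T)$ at a fixed $t$ merely because $t$ is a continuity point of a countable dense family; you need $t$ to be a continuity point of the specific limit function in question. The cleaner route is to use that for a relatively compact family the projections $\{f(t),f(t-):f\in\overline{\mathcal{K}}\}$ are contained in a compact set for \emph{every} $t$ (this follows from total boundedness plus the modulus bound), which is slightly stronger than what you need. This is a detail of exposition rather than a genuine gap.
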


The following Lemma provides a useful consequence of the Aldous condition.
\begin{lemma}\label{lem4-2} (\cite{motyl2013stochastic}) Assume that $(X_n)_{n\in\mathbb{N}}$ satisfies the Aldous condition . Let $\mathscr{L}(X_n)$ be the law of $X_n$ on $\mathbb{D}([0,T];\mathbb{S})$. Then for every $\varepsilon>0$ there exists a subset $A_{\varepsilon}\subset \mathbb{D}([0,T];\mathbb{S})$ such that $\sup_{n\in\mathbb{N}}\mathscr{L}(X_n)(A_{\varepsilon})\geq1-\varepsilon$, and $\lim_{\delta\rightarrow0}\sup_{f\in A_{\varepsilon}}w_{[0,T],\mathbb{S}}(f,\delta)=0$.
\end{lemma}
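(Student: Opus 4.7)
The plan is to strengthen the Aldous condition, which controls only the two-point increments $\varrho(X_n(\tau_n+\theta),X_n(\tau_n))$ at stopping times, into a uniform-in-$n$ probabilistic estimate on the partition-based modulus $w_{[0,T],\mathbb{S}}(X_n,\delta)$ itself. The central intermediate claim (cf. Aldous 1978; Joffe–M\'etivier 1986) is that the Aldous condition implies: for every $\eta>0$ and $\varepsilon'>0$ there exists $\delta>0$ with
\begin{equation*}
\sup_{n\in\mathbb{N}}\mathbb{P}\bigl(w_{[0,T],\mathbb{S}}(X_n,\delta)\geq\eta\bigr)\leq\varepsilon'.
\end{equation*}
Once this upgrade is in hand, the lemma follows by a routine countable-intersection construction; the bulk of the work is thus the proof of the displayed inequality.

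To establish the intermediate inequality I would introduce the greedy stopping-time sequence $\tau_n^0:=0$ and $\tau_n^{i+1}:=\inf\{t>\tau_n^i:\varrho(X_n(t),X_n(\tau_n^i))>\eta/4\}\wedge T$, which records the epochs at which $X_n$ has oscillated by more than $\eta/4$ since the previous marker. On the event $\{\min_i(\tau_n^{i+1}-\tau_n^i)\geq\delta\}\cap\{N_n^\delta\leq K\}$, where $N_n^\delta$ counts the markers before $T$, the $\{\tau_n^i\}$ (after a minor adjustment to make the endpoints deterministic, e.g. by merging adjacent intervals shorter than $\delta$) form a $\delta$-admissible partition witnessing $w_{[0,T],\mathbb{S}}(X_n,\delta)\leq\eta/2$. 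The probability of the complementary event is controlled by summing instances of the Aldous condition applied at the stopping times $\tau_n^i$ with increment $\delta$, together with a $K$ chosen large enough to absorb the a priori bound on $N_n^\delta$ obtained from the first-exit structure.

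Given the intermediate inequality, pick a sequence $\delta_k\downarrow 0$ such that
\begin{equation*}
\sup_{n\in\mathbb{N}}\mathbb{P}\bigl(w_{[0,T],\mathbb{S}}(X_n,\delta_k)\geq 1/k\bigr)\leq\varepsilon/2^k,
\end{equation*}
and define
\begin{equation*}
A_\varepsilon:=\bigcap_{k=1}^\infty\bigl\{f\in\mathbb{D}([0,T];\mathbb{S}):w_{[0,T],\mathbb{S}}(f,\delta_k)\leq 1/k\bigr\}.
\end{equation*}
A union bound yields $\mathscr{L}(X_n)(A_\varepsilon^c)\leq\sum_{k\geq 1}\varepsilon/2^k=\varepsilon$ uniformly in $n$, hence $\mathscr{L}(X_n)(A_\varepsilon)\geq 1-\varepsilon$. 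Moreover $w_{[0,T],\mathbb{S}}(f,\cdot)$ is non-decreasing in $\delta$, since enlarging $\delta$ shrinks the admissible partition set $\Pi_\delta$ in Definition \ref{def4-1}; therefore for any $\delta\in(0,\delta_k)$ and any $f\in A_\varepsilon$ we have $w_{[0,T],\mathbb{S}}(f,\delta)\leq w_{[0,T],\mathbb{S}}(f,\delta_k)\leq 1/k$, which gives $\lim_{\delta\to 0}\sup_{f\in A_\varepsilon}w_{[0,T],\mathbb{S}}(f,\delta)=0$.

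The main obstacle is the greedy stopping-time step in the intermediate inequality: one must uniformly bound the number $N_n^\delta$ of large-oscillation epochs and propagate the Aldous estimate through all these epochs without losing uniformity in $n$. This is delicate in a general Polish space $\mathbb{S}$ where coordinate-wise arguments are unavailable, but it can be handled by iterating the Aldous condition a finite number of times depending only on $\eta$ and $T$. All remaining steps (monotonicity of $w$, union bound, countable intersection) are bookkeeping.
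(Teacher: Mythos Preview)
The paper does not provide its own proof of this lemma; it is stated with a citation to \cite{motyl2013stochastic} and used as a black box in the Appendix. Your outlined argument---upgrading the Aldous condition to a uniform-in-$n$ estimate on $w_{[0,T],\mathbb{S}}(X_n,\delta)$ via greedy stopping times, then extracting $A_\varepsilon$ by a countable intersection and union bound---is the standard route in the cited literature (Aldous, Joffe--M\'etivier, Motyl), and the sketch is correct, including the monotonicity of $w(\cdot,\delta)$ in $\delta$.
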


Let $\mathbb{B}^1:=\{f\in \mathbb{H}^1:\|f\|_{\mathbb{H}^1}\leq r^1\}$. Let $\mathbb{B}_w^1$ denotes the ball $\mathbb{B}^1$ endowed with the weak topology. It is clear that $\mathbb{B}_w^1$ is metrizable \cite{brezis1983analyse}. Let us consider the following space
\begin{equation*}
\begin{split}
\mathbb{D}([0,T];\mathbb{B}^1_w):=\{f\in \mathbb{D}([0,T];\mathbb{H}_w^1):\sup_{t\in[0,T]}\|f(t)\|_{\mathbb{H}^1}\leq r^1\}.
\end{split}
\end{equation*}
A criterion for convergence of a sequence in $\mathbb{D}([0,T];\mathbb{B}^1_w)$ is as follows.
\begin{lemma}\label{adlem4-1} (\cite{motyl2013stochastic,brzezniak2019weak}) Let $f_n:[0,T]\rightarrow \mathbb{H}^1,~n\in\mathbb{N}$, be functions such that
\begin{enumerate}
\item[(1)] $\sup_{n\in\mathbb{N}}\sup_{t\in[0,T]}\|f_n(t)\|_{\mathbb{H}^1}\leq r^1$;
\item[(2)] $f_n\rightarrow f$ in $\mathbb{D}([0,T];(\mathbb{H}^{\beta_1})^*)$,
\end{enumerate}
then $f_n,~f\in\mathbb{D}([0,T];\mathbb{B}_w^1)$ and $f_n\rightarrow f$ in $\mathbb{D}([0,T];\mathbb{B}_w^1)$ as $n\rightarrow\infty$.
\end{lemma}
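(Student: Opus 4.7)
\smallskip

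The plan is to exploit the following key observation: on the norm-bounded ball $\mathbb{B}^1\subset\mathbb{H}^1$, the weak topology of $\mathbb{H}^1$ coincides with the topology inherited from $(\mathbb{H}^{\beta_1})^*$. Indeed, since $\mathbb{H}^{\beta_1}\hookrightarrow\mathbb{H}^1$ densely (for $\beta_1>1$) and $\mathbb{H}^1\hookrightarrow\mathbb{L}^2\hookrightarrow(\mathbb{H}^{\beta_1})^*$, if $(u_n)\subset\mathbb{H}^1$ satisfies $\sup_n\|u_n\|_{\mathbb{H}^1}\leq r^1$ and $u_n\to u$ in $(\mathbb{H}^{\beta_1})^*$, a routine Banach--Alaoglu subsequence argument (each weak subsequential limit in $\mathbb{H}^1$ must coincide with $u$, since weak convergence in $\mathbb{H}^1$ tests against every $\phi\in\mathbb{H}^{\beta_1}\subset\mathbb{L}^2$) yields $u_n\rightharpoonup u$ weakly in $\mathbb{H}^1$; in particular $\|u\|_{\mathbb{H}^1}\leq r^1$ by weak lower semicontinuity.

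I would first verify membership in $\mathbb{D}([0,T];\mathbb{B}^1_w)$. For each $n$, the trajectory $f_n$ is bounded by $r^1$ in $\mathbb{H}^1$-norm and c\`adl\`ag in $(\mathbb{H}^{\beta_1})^*$. Applied pointwise to the right and left limits $f_n(t+)$, $f_n(t-)$ in $(\mathbb{H}^{\beta_1})^*$, the preceding observation upgrades these into right and left weak limits in $\mathbb{H}^1$, hence $f_n\in\mathbb{D}([0,T];\mathbb{B}^1_w)$. For the limit $f$, convergence in $\mathbb{D}([0,T];(\mathbb{H}^{\beta_1})^*)$ yields pointwise convergence at continuity points of $f$; at such points, $\|f(t)\|_{\mathbb{H}^1}\leq r^1$ by weak lower semicontinuity. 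The right-continuity of $f$ in $(\mathbb{H}^{\beta_1})^*$, together with the norm bound extended through the same subsequence argument, then gives $\sup_t\|f(t)\|_{\mathbb{H}^1}\leq r^1$.

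Next I would establish the Skorokhod convergence. Since $\mathbb{H}^{\beta_1}$ is separable, fix a countable dense subset $\{\phi_k\}_{k\geq1}\subset\mathbb{H}^{\beta_1}$ and metrize $\mathbb{B}^1_w$ by
\begin{equation*}
d_w(u,v):=\sum_{k\geq1}2^{-k}\frac{|\langle u-v,\phi_k\rangle_{(\mathbb{H}^{\beta_1})^*,\mathbb{H}^{\beta_1}}|}{1+|\langle u-v,\phi_k\rangle_{(\mathbb{H}^{\beta_1})^*,\mathbb{H}^{\beta_1}}|},
\end{equation*}
which indeed metrizes the weak topology on $\mathbb{B}^1$. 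By hypothesis (2) and the definition of the Skorokhod metric $\delta_T$ on $\mathbb{D}([0,T];(\mathbb{H}^{\beta_1})^*)$, there exist $\lambda_n\in\Lambda_T$ with $\sup_{t}\|f_n(\lambda_n(t))-f(t)\|_{(\mathbb{H}^{\beta_1})^*}\to0$, $\sup_t|\lambda_n(t)-t|\to0$, and the logarithmic slope defect vanishing. For each fixed $k$, the duality bound
\begin{equation*}
\sup_t|\langle f_n(\lambda_n(t))-f(t),\phi_k\rangle|\leq \|\phi_k\|_{\mathbb{H}^{\beta_1}}\sup_t\|f_n(\lambda_n(t))-f(t)\|_{(\mathbb{H}^{\beta_1})^*}\longrightarrow 0,
\end{equation*}
combined with dominated convergence on the $2^{-k}$-series (each summand is bounded by $2^{-k}$) gives $\sup_t d_w(f_n(\lambda_n(t)),f(t))\to 0$. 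This together with the control on $\lambda_n$ furnishes the desired convergence in $\mathbb{D}([0,T];\mathbb{B}^1_w)$.

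The main obstacle is not the pointwise-in-$t$ upgrade from $(\mathbb{H}^{\beta_1})^*$-convergence to weak $\mathbb{H}^1$-convergence (which is immediate), but rather its \emph{time-uniform} version required by the Skorokhod topology. The crucial trick is precisely the choice of a summable metric $d_w$ with test functions drawn from $\mathbb{H}^{\beta_1}$: each test pairing is uniformly controlled by the $(\mathbb{H}^{\beta_1})^*$-distance after the common time change $\lambda_n$, so dominated convergence on the series converts termwise uniform decay into uniform decay of the full metric. A secondary technical point is that one must confirm that the Skorokhod topology on $\mathbb{D}([0,T];\mathbb{B}^1_w)$ induced by $d_w$ agrees with the subspace topology inherited from $\mathbb{D}([0,T];\mathbb{H}^1_w)$, which follows from the fact that on $\mathbb{B}^1$ both the weak topology and $d_w$ generate the same Borel structure.
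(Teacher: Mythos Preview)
The paper does not give its own proof of this lemma; it is cited directly from Motyl and Brze\'zniak--Manna. Your argument is correct and is essentially the standard proof found in those references: the core idea is that on the norm-bounded ball $\mathbb{B}^1$ the weak $\mathbb{H}^1$-topology coincides with the trace of the $(\mathbb{H}^{\beta_1})^*$-topology, so the single family of time changes $\lambda_n$ coming from hypothesis~(2) suffices to witness convergence in the Skorokhod metric built on $d_w$.

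One small imprecision: in your final sentence you justify the compatibility of the Skorokhod topology on $\mathbb{D}([0,T];\mathbb{B}^1_w)$ with the subspace topology from $\mathbb{D}([0,T];\mathbb{H}^1_w)$ by invoking equality of Borel structures; that is not the right reason. What you actually need (and what you have already essentially shown) is that the set $\{(\cdot,\phi)_{\mathbb{L}^2}:\phi\in\mathbb{H}^{\beta_1}\}$ is dense in $(\mathbb{H}^1)^*$, so that for any $h\in\mathbb{H}^1$ the real-valued function $t\mapsto (f_n(\lambda_n(t))-f(t),h)_{\mathbb{H}^1}$ can be approximated uniformly on $[0,T]$ by $t\mapsto (f_n(\lambda_n(t))-f(t),\phi_k)_{\mathbb{L}^2}$, using the uniform bound $r^1$; this directly yields $(f_n(\cdot),h)_{\mathbb{H}^1}\to(f(\cdot),h)_{\mathbb{H}^1}$ in $\mathbb{D}([0,T];\mathbb{R})$ for every $h$, which is precisely the convergence required by the paper's definition of $\mathbb{D}([0,T];\mathbb{H}^1_w)$.
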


\section*{Acknowledgements}
This work was partially supported by the National Natural Science Foundation of China (Grant No. 12231008), and the National Key Research and Development Program of China (Grant No.  2023YFC2206100).

\bibliographystyle{plain}%
\bibliography{SLLBar}

\end{document}